%%%%%%%%%%%%%%%%%%%%%%%%%%%%%%%
%
% Kuramoto Model with Inertia under generic initial data
% The previous version of this paper has been split into two separate papers with new titles. This is the first. The second is arXiv:2508.11241 (title: Inertia Perturbation Theory for the Inertial Kuramoto Model).
%%%%%%%%%%%%%%%%%%%%%%%%%%%%%%%
\documentclass[10pt]{amsart}

\usepackage{amsmath}
\usepackage{amsfonts}
\usepackage{amscd}
\usepackage{amsthm}
\usepackage{amssymb}
\usepackage{mathrsfs}
\usepackage{enumerate}
\usepackage{bbm}
\usepackage{bm}
\usepackage{multirow} 
\usepackage{makecell}
\usepackage{booktabs, here}
\usepackage{indentfirst}
\usepackage{epsfig}
\usepackage{latexsym}
\usepackage{float}
\usepackage{epstopdf}
\usepackage{kotex}
\usepackage{enumitem}

\usepackage{caption}
\usepackage{subcaption}
\usepackage{extpfeil}
\usepackage{graphicx}
\usepackage{colortbl}
\usepackage{tikz}

%\usepackage[left]{lineno}
%\linenumbers
%\usepackage{refcheck}
\usepackage{hyperref}

\numberwithin{equation}{section}

\topmargin-0.1in \textwidth6.in \textheight8.5in \oddsidemargin0in
\evensidemargin0in

\title[Asymptotic phase-locking in the inertial Kuramoto model]{Quantitative relaxation dynamics from generic initial configurations in the inertial Kuramoto model}

\author[Cho]{Hangjun Cho}
\address[Hangjun Cho]{\newline Department of Mechanical Engineering \newline University of Washington, Seattle, WA 98195, United States }
\email{hangjun@uw.edu}

\author[Dong]{Jiu-Gang Dong}
\address[Jiu-Gang Dong]{\newline School of Mathematical Sciences \newline Dalian University of Technology, Dalian 116024, People's Republic of China}
\email{jgdong@dlut.edu.cn}

\author[Ha]{Seung-Yeal Ha}
\address[Seung-Yeal Ha]{\newline Department of Mathematical Sciences and Research Institute of Mathematics \newline Seoul National University, Seoul 08826, Republic of Korea}
\email{syha@snu.ac.kr}

\author[Ryoo]{Seung-Yeon Ryoo}
\address[Seung-Yeon Ryoo]{\newline The Division of Physics, Mathematics and Astronomy \newline California Institute of Technology, Pasadena California 91125, United States}
\email{sryoo@caltech.edu}

\newtheorem{theorem}{Theorem}[section]
\newtheorem{lemma}{Lemma}[section]
\newtheorem{corollary}{Corollary}[section]
\newtheorem{proposition}{Proposition}[section]
\newtheorem{remark}{Remark}[section]
\newtheorem{definition}{Definition}[section]
\newtheorem{example}{Example}[section]
\newtheorem{question}{Question}[section]
\newtheorem{conjecture}{Conjecture}[section]

%%%%%%%%%%%%%%%%%%%%%%%%%%%%%%%%%

\newcommand{\bbr}{\mathbb{R}}

\allowdisplaybreaks
%%%%%%%%%%%%%%%%
\begin{document}
%%%%%%%%%%%%%%%%

\date{\today}

\subjclass{34D05, 34D06, 34C15, 82C22}
\keywords{Inertia, Kuramoto oscillator, phase-locking, relaxation dynamics}

\thanks{\textbf{Acknowledgment.}
We thank S. Olmi for a helpful reference recommendation.
The work of H. Cho was supported by the National Research Foundation(NRF) of Korea grant funded by the Korea government(MSIT) (RS-2023-00253171), the work of J.-G. Dong was supported by the National Key R\&D Program of China (No. 2023YFA1009200) and the National Natural Science Foundation of China through grant 12171069, the work of S.-Y. Ha was supported by NRF grant (NRF-RS2025-00514472), and the work of S.-Y. Ryoo was partially supported by the fellowship of Korea Foundation for Advanced Studies.}

\begin{abstract}
We study the relaxation dynamics of the inertial Kuramoto model toward a phase-locked state from a generic initial phase configuration. For this, we propose a sufficient framework in terms of initial data and system parameters for asymptotic phase-locking. It can be roughly stated as set of conditions such as a positive initial order parameter, a coupling strength sufficiently larger than initial frequency diameter and intrinsic frequency diameter, but less than the inverse of inertia. Under the proposed framework, generic initial configuration undergoes three dynamic stages (initial layer, condensation and relaxation stages)  before it reaches a phase-locked state asymptotically. The first stage is the initial layer stage in analogy with fluid mechanics, during which the effect of the initial natural frequency distribution is dominant, compared to that of the sinusoidal coupling between oscillators. The second stage is the condensation stage, during which the order parameter increases, and at the end of which a majority cluster is contained in a sufficiently small arc. Finally, the third stage is the persistence and relaxation stage, during which the majority cluster remains stable (persistence) and the total configuration relaxes toward a phase-locked state asymptotically (relaxation). The intricate proof involves with several key tools such as the quasi-monotonicity of the order parameter (for the condensation stage), a nonlinear Gr\"onwall inequality on the diameter of the majority cluster (for the persistence stage), and a variant of the classical {\L}ojasiewicz gradient theorem (for the relaxation stage). 
\end{abstract}

\maketitle

\tableofcontents

%%%%%%%%%%%%%%%%%%%%%%%%%%%%%%%%
%
%
%         Introduction
%
%
%%%%%%%%%%%%%%%%%%%%%%%%%%%%%%%%
\section{Introduction} \label{sec:1}
\setcounter{equation}{0}
\emph{Synchronization} refers to an adjustment of rhythms in interacting oscillatory systems, and it can be regarded as the formation of consensus in phases and frequencies among oscillators. The gradual appearance of synchronization from a desynchronized state is referred to as an \emph{emergent behavior}, and this is often observed in natural systems. To name a few, swarming of fish, flocking of birds, or aggregation of bacteria \cite{A-B-F-H-P-P-J,B-B,P-R-K,T-B,V-Z} etc. Despite its ubiquitous appearance in nature, synchronization was mathematically formulated only half a century ago by two pioneers Arthur Winfree \cite{Wi2, Wi1} and Yoshiki Kuramoto \cite{Ku2,Ku}. They proposed mathematically tractable phase models that describe the dynamics of weakly interacting limit-cycle oscillators, and identified fundamental synchronous behavior.

The novel feature of Winfree and Kuramoto models is that they both exhibit phase transition phenomena from disordered states (incoherent states) to partially ordered states (partially phase-locked states) and then to fully ordered states (completely phase-locked states), as the \emph{coupling strength}, a variable representing the amount of interaction between the individuals, exceeds certain critical threshold \cite{A-S,Cr, K-B}. Due to this phase transition like feature, Winfree and Kuramoto's mathematical approach to synchronous phenomena, and more generally the theory of weakly coupled oscillators, has received lots of attention from control theory, neuroscience, and statistical physics communities \cite{A-B, B-T11,B-T13,D-B14,Ermentrout19,H-K-P-Z,Hoppensteadt12,Rodrigues16,Str}.

In this article, we are mainly interested in the \emph{inertial Kuramoto model} which corresponds to the second-order correction of the Kuramoto model introduced by Arthur Bergen and David Hill \cite{B-H} to model electric networks with generators, and by Bard Ermentrout \cite{Er} to model synchronous flashing of the firefly species Pteroptyx malaccae.
Due to its second-order nature, the inertial Kuramoto model possesses several novel features absent in the Kuramoto model such as first-order hysteretic transition \cite{T-L-O1,T-L-O2, Olmi14,Barre16}.

To set up the stage, we begin with a brief description of the inertial Kuramoto model. Fix a positive integer $N$, the number of particles, and for $i\in [N]\coloneqq \{1,\cdots,N\}$, let $\theta_i = \theta_i(t)\in \mathbb{R}$ and $\omega_i = {\dot \theta}_i$ be the phase and (instantaneous) frequency of the $i$-th Kuramoto oscillator, given as a real-valued function of nonnegative time $t\ge 0$. The dynamics of the phase ensemble $\{\theta_i\}_{i=1}^N$ under the inertial Kuramoto model is governed by the following Cauchy problem:
\begin{equation}
\begin{cases} \label{A-1}
\displaystyle m \ddot\theta_i + \dot\theta_i = \nu_i +\frac{\kappa}{N}\sum_{j=1}^N \sin(\theta_j - \theta_i),\quad t > 0,\\
\displaystyle (\theta_i, {\dot \theta}_i) \Big|_{t = 0+} = (\theta_i^0, \omega_i^0), \quad i\in [N].
\end{cases}
\end{equation}
Here, $m, \kappa$ and $\nu_i$ are nonnegative constants representing \emph{(positive uniform\footnote{``Uniform'' refers to the fact that $m$ does not depend on $i$. The ``multi-rate Kuramoto model'' of \cite{D-B11} considers the case when $m$ is different for each oscillator $\theta_i$. We will not consider this case in this article.}) inertia}, \emph{coupling strength} and natural (intrinsic) frequency of the $i$-th oscillators, respectively. 

By the standard Cauchy-Lipschitz theory, the Cauchy problem \eqref{A-1} admits a global unique solution, which must be real analytic in terms of time and all other parameters by the Cauchy–Kovalevskaya theorem. Thus, in this paper, neither uniqueness, global existence, nor smoothness of solutions to \eqref{A-1} will be an issue. In what follows, we discuss main results of this paper. \newline

\subsection{Main results} \label{sec:1.1}
The first set of results is concerned with the emergence of complete and partial phase-lockings. Roughly, it says that if initial configuration and system parameters satisfy 
\begin{align}
\begin{aligned} \label{eq:R^0}
& R(t) \coloneqq \left|\frac{1}{N} \sum_{j=1}^N e^{\mathrm{i}\theta_j(t)} \right|, \quad R(0) = R^0 > 0, \\
&  \max \Bigg \{ \frac{\displaystyle\max_{1\le i,j\le N}|\nu_i-\nu_j|}{\kappa},~~\frac{\displaystyle\max_{1\le i,j\le N}|\omega_i^0-\omega_j^0|}{\kappa},~~m\kappa \Bigg \} \ll |R^0|^2,
\end{aligned}
\end{align}
then the solution to \eqref{A-1} converges to a single traveling solution (see Theorem \ref{simplemainthm} below). Here, the symbol $\ll$ simply means the left-hand side is very smaller than the right-hand side in a non-rigorous manner. It will only be used when discussing heuristics; we do not assign a precise meaning. 
\begin{theorem}\label{simplemainthm}
Suppose the initial data and system parameters satisfy\footnote{A simple dimensional analysis shows that $R^0$ is dimensionless, while $\mathcal{{V}}$, $\Omega$, $\kappa$, and $1/m$ have the dimension of the inverse of time,  hence the forms of the left-hand sides of \eqref{eq:vanillacondition}. For a more rigorous dimensional analysis, see the time dilatation symmetries discussed in subsection \ref{subsec:sym}.}
    \begin{equation}\label{eq:vanillacondition}
\frac{\displaystyle\max_{1\le i,j\le N}|\nu_i-\nu_j|}{\kappa}\le x|R^0|^2,\quad m\kappa \le y|R^0|^2,\quad \mathrm{and}\quad \frac{\displaystyle\max_{1\le i,j\le N}|\omega_i^0-\omega_j^0|}{\kappa}\le z |R^0|^2,
\end{equation}
where $x,y,z$ are positive real numbers satisfying
\begin{align}\label{eq:xyz-cond}
\begin{aligned}
\inf_{\eta>0}&\Bigg((1-e^{-\eta})y\left(\frac{1}{2}(z+\eta x)+(1-e^{-\eta})^2y(\frac 34 z+\eta x+2\eta)\right)\\
&+\sqrt{3.068\left(y(x+2)+\max\{1,\eta\}e^{-\max\{1,\eta\}}yz+\frac x2+\frac{e^{-\eta}}{1-e^{-\eta}}\frac z2\right)}< 1.
\end{aligned}
\end{align}
Then, the following assertions hold. \newline
\begin{enumerate}
    \item  (Asymptotic phase-locking):~The global solution $(\Theta, \Omega := {\dot \Theta})$ (with $\Theta=(\theta_1,\ldots,\theta_N)$) to the Cauchy problem \eqref{A-1} exhibits ``asymptotic phase-locking'':
\[
\exists \lim_{t\to\infty} (\theta_i(t)-\nu_c t),\quad \mathrm{and}\quad \lim_{t\to\infty} \dot{\theta}_i(t)=\nu_c\quad \mathrm{for~all~} i \in [N],
\]
where $\nu_c=\frac 1N\sum_{i=1}^N\nu_i$ is the average of natural frequencies. 
\vspace{0.2cm}
\item  (Asymptotic partial phase-locking):~There exists a subset $\mathcal{A}\subset [N]$ with $|\mathcal{A}|>\frac N2$ and integers $k_i$ for each $i\in \mathcal{A}$ such that
\[
\limsup_{t\to\infty}\max_{i,j\in \mathcal{A}} \Big |(\theta_i(t)-2k_i\pi)-(\theta_j(t)-2k_j\pi) \Big |<\frac \pi 2
\]
and a constant $c = c(x, y, z, R^0)$ such that for any $i,j\in \mathcal{A}$ with $\nu_i\ge \nu_j$,
\[
\frac{\nu_i-\nu_j}{\kappa}\le \lim_{t\to\infty}(\theta_i(t)-\theta_j(t))\le c\frac{\nu_i-\nu_j}{\kappa}.
\]
\end{enumerate}
\end{theorem}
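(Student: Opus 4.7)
The plan is to follow the three-stage dynamical decomposition (initial layer, condensation, persistence and relaxation) advertised in the abstract, and to aggregate the quantitative losses from each stage into the single algebraic condition \eqref{eq:xyz-cond}. I would track three nondimensional defect quantities throughout: the natural-frequency diameter $D_\nu/\kappa$, the instantaneous frequency diameter $D_\omega(t)/\kappa$, and an order-parameter deficit measuring how close $R(t)$ stays to $R^0$. The role of the numbers $x,y,z$ in \eqref{eq:vanillacondition} is precisely to bound these three quantities (together with $m\kappa$) at $t=0$, and the free parameter $\eta$ in \eqref{eq:xyz-cond} is the length of the initial layer window measured in units of $m$.

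\textbf{Initial layer and condensation.} Rewriting \eqref{A-1} in first-order form $m\dot\omega_i+\omega_i=\nu_i+\frac{\kappa}{N}\sum_j\sin(\theta_j-\theta_i)$, a Duhamel argument on $\max_i\omega_i-\min_i\omega_i$ shows that on the fast time window $[0,\eta m]$ the frequency defect is multiplied by $e^{-\eta}$ plus a forcing contribution of size $(1-e^{-\eta})(D_\nu+\kappa)$, while the phases themselves move only by $O(m)$ times the initial frequency diameter. Consequently $R(\eta m)$ remains within a controllable distance of $R^0$; this accounts for the $(1-e^{-\eta})$ and $e^{-\eta}$ prefactors in \eqref{eq:xyz-cond}. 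In Stage 2 I would differentiate $R^2=\bigl|\frac{1}{N}\sum_j e^{\mathrm{i}\theta_j}\bigr|^2$ and, after absorbing the inertial correction (now controllable via the reduced $D_\omega$ from Stage 1), obtain a differential inequality forcing $R(t)$ upward. This is the promised quasi-monotonicity of the order parameter, iterated until $R$ surpasses the threshold past which a majority of oscillators provably lies in an arc of length strictly less than $\pi/2$ around the mean direction of the complex mean field.

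\textbf{Persistence and relaxation.} Once such a majority cluster $\mathcal{A}$ exists, I would derive a second-order nonlinear Gr\"onwall-type inequality on its diameter $D_{\mathcal{A}}(t)$, schematically $m\ddot D_{\mathcal{A}}+\dot D_{\mathcal{A}}+\kappa(R-\epsilon)\sin D_{\mathcal{A}}\le D_\nu$, whose coefficient analysis keeps $D_{\mathcal{A}}$ uniformly strictly below $\pi/2$ for all sufficiently large $t$ (persistence). For true convergence, I would invoke a variant of the {\L}ojasiewicz gradient theorem applied to the real-analytic Kuramoto energy in the frame rotating at $\nu_c$; persistence supplies the confinement needed to extract a {\L}ojasiewicz exponent, and combined with the inertial damping this yields finite trajectory length, hence existence of the phase limits in claim (i) and $\dot\theta_i\to\nu_c$ via the stationary identity $\nu_i-\nu_c=\frac{\kappa}{N}\sum_j\sin(\theta_i^\infty-\theta_j^\infty)$. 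Linearizing this identity on the small-diameter cluster $\mathcal A$ delivers both inequalities in claim (ii), with the constant $c$ tracking the condition number of the linearization.

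\textbf{Main obstacle.} The chief technical difficulty will be gluing the three stages so that the three separate defect budgets $x|R^0|^2$, $y|R^0|^2$, $z|R^0|^2$ consolidate into the single algebraic condition \eqref{eq:xyz-cond}: the parameter $\eta$ appears simultaneously inside exponentials and multiplicatively through the Stage 1 estimates, while the square-root term in \eqref{eq:xyz-cond} couples $y$ to $x$ and $z$ through the Stage 2 order-parameter increment. Carrying every stage in symbolic form and optimizing over $\eta$ only at the very end should reproduce the shape of \eqref{eq:xyz-cond}; the explicit constant $3.068$ most plausibly originates from a sharp Stage 3 linearization of $\sin$ on arcs strictly smaller than $\pi/2$ combined with the sharpest coercivity constant for the Hessian of the Kuramoto energy on the majority cluster.
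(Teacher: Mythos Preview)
Your three-stage skeleton matches the paper's architecture, but Stage 2 contains a genuine conceptual gap that would stall the argument. You propose pushing $R(t)$ upward ``until $R$ surpasses the threshold past which a majority of oscillators provably lies in an arc.'' This is not the mechanism, and in fact cannot work: since $R^0$ may be as small as $O(1/\sqrt N)$, and the quasi-monotonicity inequality only keeps $R(t)\ge \delta R^0$, the order parameter alone never crosses any $N$-independent threshold. The paper's key additional variable is the mean-square deviation $\Delta(t)=\tfrac1N\sum_k\sin^2(\theta_k-\phi)$. The differential inequality takes the form $\dot R\ge \kappa\sqrt\Delta(1-e^{-t/m})\bigl(R\sqrt\Delta-\xi(\eta)\bigr)$, which yields a dichotomy: either $\dot R>0$, or $\sqrt\Delta\le\xi(\eta)/R$. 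Since $R$ is bounded, one extracts a time $t_0\ge\eta m$ at which simultaneously $R(t_0)\ge\delta R^0$ \emph{and} $\Delta(t_0)\le(\xi(\eta)/\delta R^0)^2$; it is this pair of conditions (a Chebyshev-type counting lemma on the circle) that produces the majority cluster in a short arc. Without introducing $\Delta$ you will not be able to locate the cluster.

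Two further points. In Stage 3 you write a second-order inequality $m\ddot D_{\mathcal A}+\dot D_{\mathcal A}+\kappa(R-\epsilon)\sin D_{\mathcal A}\le D_\nu$ on the cluster diameter. The paper deliberately avoids this: $D_{\mathcal A}$ is a maximum over indices and its second derivative is not well controlled, and second-order Gr\"onwall comparisons are generally unavailable. Instead, the Duhamel formula is used to approximate $\dot\theta_i(t)$ by the first-order Kuramoto vector field plus explicit error terms (this is where the ``time-delay error'' bounded by $m\kappa(\mathcal D(\mathcal V)+2\kappa)$ enters $\xi(\eta)$), and then a \emph{first-order} Dini-derivative argument on $D_{\mathcal A}$ closes the persistence step. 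Finally, the constant $3.068\approx 1/0.3259$ does not come from a linearization of $\sin$ or a Hessian coercivity constant; it arises from a numerical optimization in which the cluster parameters $(\lambda,\ell)$ are chosen as explicit piecewise functions of $\delta R^0$, and $0.3259$ is the worst-case value of $\sin\tfrac\ell2\bigl(\lambda\cos\tfrac\ell2-(1-\lambda)\bigr)/(\delta R^0)^2$ over that choice. The split in \eqref{eq:xyz-cond} is then: the polynomial-in-$y$ term is the Stage-A loss $\zeta(\eta)/R^0\le 1-\delta$, and the square-root term encodes $\xi(\eta)\le 0.3259(\delta R^0)^2$, with $\delta$ eliminated at the end.
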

\begin{proof}
We will prove Theorem \ref{simplemainthm} assuming Theorem \ref{T3.1} in Appendix \ref{app:mainthm}.
\end{proof}
\begin{remark}
Condition \eqref{eq:xyz-cond} is satisfied, for example, by
\[
(x,y,z,\eta)=(0.5,0.015,0.12,1), (0.3, 0.05, 0.76, 3).
\]
These constants are artifacts of our proof and we do not claim that they are optimal, and Theorem \ref{simplemainthm} will follow from a more general framework, namely Theorem \ref{T3.1}.
\end{remark}
In literature \cite{Choi15,C-H-M,C-H-Y1,C-L,D-B12,D-B11,H-J-K,W-Q}, the complete synchronization problem (or asymptotic phase-locking) has been investigated for the inertial Kuramoto model \eqref{A-1}  for a restricted initial configuration. Numerical simulations suggest that the inertial Kuramoto model exhibits asymptotic phase-locking (as defined in the statement of Theorem \ref{simplemainthm} (1)) for generic initial data in the large coupling regime. Thus, we are led to ask the following question.
\begin{question}\label{ques:critical-kappa}
\emph{(Existence of critical coupling strength)}~ For a fixed natural frequency vector ${\mathcal V}=(\nu_1,\ldots,\nu_N)$ and Lebesgue a.e.~initial data $(\Theta^0,\Omega^0)$ (with $\Theta^0 = (\theta_1^0,\ldots,\theta_N^0)$ and $\Omega^0=(\omega_1^0,\ldots,\omega_N^0)$), does there exist a critical value $\kappa_c = \kappa_c({\mathcal V}, \Theta^0, \Omega^0)$ such that $\kappa>\kappa_c$ implies asymptotic phase-locking in system \eqref{A-1}, regardless of the inertia $m>0$?
\end{question}
From this point of view, the contribution of Theorem \ref{simplemainthm} is that it guarantees a range $\kappa\in [\kappa_1,\kappa_2]$, where $\kappa_1$ is a function of ${\mathcal V},~\Theta^0,~\Omega^0$ and $\kappa_2= {\mathcal O}(|R^0|^2/m)$, at which asymptotic phase-locking occurs. To our knowledge, Theorem \ref{simplemainthm} is the first in the literature to provide a sufficient framework for asymptotic phase-locking for the inertial Kuramoto model \eqref{A-1} in the generality of generic initial data.

As for the physical significance of the second-order model \eqref{A-1}, Ermentrout \cite{Er} considered \eqref{A-1} as a model for frequency modulation, essentially writing the ODE of \eqref{A-1} as
\[
\ddot{\theta}_i=\frac 1m (\nu_i-\dot{\theta}_i)+\frac{\kappa}{mN}\sum_{j=1}^N \sin(\theta_j-\theta_i).
\]
Note that the right-hand side can be regarded as competition between two forcing terms. More precisely, the term $\frac 1m (\nu_i-\dot{\theta}_i)$ denotes the tendency of the frequency to return to the natural frequency $\nu_i$, whereas the term $\frac{\kappa}{mN}\sin(\theta_j-\theta_i)$ represents the enforcing consensus via the phase response of the $i$-th oscillator to the $j$-th oscillator.

Bergen and Hill \cite{B-H} considered the Cauchy problem \eqref{A-1} as the swing dynamics for a power network consisting of $N$ electrical generators \cite{C,Kundur,S-P98}: the inertial term $m\ddot{\theta}_i$ corresponds to the generator inertia; the dissipative terms $\dot{\theta}_i$ correspond to the loads or the mechanical damping; the intrinsic frequencies $\nu_i$ correspond to power injections; the nonlinear interaction terms $\frac \kappa N \sin (\theta_j-\theta_i)$ correspond to the power flows along transmission lines; and synchronization is interpreted as robustness or transient stability \cite{D13,D-B10,D-B12,D-B14,F-N-P,G-Z-L-W,S-U-S-P}. From this point of view, the inertial Kuramoto model has found an application as an elementary model for ``smart grids'' \cite{D-C-B}. The case $m=0$, i.e., the model \eqref{A-2}, signifies that the nodes have loads and no generation. One could also consider systems with pure generation and zero load or damping \cite{H-J-Z} as well.

There has been recent interest in low-inertia power grids \cite{M-D-H-H-V}: conventional fossil fuel-based power plants, which often involve steam and hydroelectric turbines, tend to have high inertia, whereas renewable energy sources such as solar and wind energy tend to have low inertia. How does one understand the stability and synchronization properties of such low-inertia power grids? In this light, the present work develops a theory of synchronization applicable in the low-inertia setting.\footnote{The literature suggests conflicting results, with some suggesting that low inertia contributes to synchronization \cite{D-B12}, while others suggest that low inertia destabilizes system \eqref{A-1} \cite{A-B00,C-C10}.} 

As by-products of the arguments employed in the proof of Theorem \ref{simplemainthm}, we can obtain improved results for specific situations. First, we obtain a condition for partial phase-locking (see Definition \ref{def:partial}), which states that if a majority of the oscillators is contained in a sufficiently small arc, it becomes self-sustaining and limits the dynamics of other oscillators. See Theorem \ref{L4.4}. A cruder version of this idea has already appeared in \cite{H-J-K}; their results are subsumed under Theorem \ref{L4.4}. Second, we provide a framework more general than that of Theorem \ref{simplemainthm}, namely Theorem \ref{T3.1}.  Third, we deal with a complete phase-locking for a three-oscillator system for {\it all initial data}:
\begin{theorem}\label{thm:n=32nd}
Suppose that system parameters satisfy
\[
N=3,\quad m\max_{1\le i,j\le N}|\nu_i-\nu_j|+2m\kappa +\frac{\displaystyle\max_{1\le i,j\le N}|\nu_i-\nu_j|}{2\kappa}
<\frac{1}{8}\sqrt{\frac{1}{6}(69 - 11\sqrt{33})} \approx 0.123003.
\]
Then, the solution to \eqref{A-1} exhibits asymptotic phase-locking.
\end{theorem}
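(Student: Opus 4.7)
The aim is to reduce the all-initial-data statement for $N=3$ to the large-$R^0$ framework of Theorem \ref{simplemainthm} (or, more generally, Theorem \ref{T3.1}). The obstruction is that those results require the initial order parameter $R^0$ to be bounded below in terms of $x,y,z$, while here $R^0$ may be arbitrarily small. For three oscillators a direct expansion gives
\[
9 R^2 \;=\; |Z|^2 \;=\; 3 + 2\cos(\theta_1-\theta_2) + 2\cos(\theta_2-\theta_3) + 2\cos(\theta_1-\theta_3), \qquad Z = \sum_{j=1}^3 e^{\mathrm{i}\theta_j},
\]
so $R = 0$ pins the configuration, up to relabeling and a global rotation, to the equilateral triangle $(\theta,\theta+2\pi/3,\theta+4\pi/3)$. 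The plan is to show that this equilateral configuration is dynamically unstable and that $R(t^*) \geq R^*$ at some explicit finite time $t^*$, after which Theorem \ref{simplemainthm} takes over.

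\textbf{Step 1 (Frequency relaxation).} The damping term $\dot\theta_i$ in \eqref{A-1} drives the velocities toward the natural frequencies on the time scale $m$. A Duhamel estimate on the second-order ODE gives
\[
|\dot\theta_i(t) - \dot\theta_j(t)| \;\leq\; e^{-t/m}|\omega_i^0 - \omega_j^0| + (1 - e^{-t/m})\bigl(|\nu_i - \nu_j| + 2\kappa\bigr),
\]
so that after a time of order $m$ the velocity spread is controlled by $|\nu_i - \nu_j| + m\kappa$ up to exponentially small residuals from the initial condition. This step is also where the hybrid term $m\max|\nu_i - \nu_j|$ of the hypothesis enters.

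\textbf{Step 2 (Escape from the equilateral triangle).} I would compute $\ddot{|Z|^2}$ along solutions of \eqref{A-1}, substitute the equations of motion, and separate the coupling contribution from the inertial and intrinsic-frequency perturbations. Near the equilateral configuration, the coupling piece is strictly positive and yields a lower bound of the form $\ddot{|Z|^2} \geq c_0\kappa^2 - (\text{corrections in } m \text{ and } \max|\nu_i - \nu_j|)$ for an explicit constant $c_0 > 0$. Under the smallness assumption the corrections are strictly dominated, hence $|Z|^2$ escapes a neighborhood of $0$ and $R$ reaches a chosen threshold $R^*$ at some explicit finite time $t^*$.

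\textbf{Step 3 (Reduction).} Restart the dynamics at $t^*$ with new initial data $(\Theta(t^*),\dot\Theta(t^*))$. Combining Steps 1--2, one has $R(t^*) \geq R^*$ together with a frequency bound $|\dot\theta_i(t^*) - \dot\theta_j(t^*)| \lesssim |\nu_i - \nu_j| + m\kappa$. Verifying the conditions \eqref{eq:vanillacondition}--\eqref{eq:xyz-cond} of Theorem \ref{simplemainthm} with these new data, and jointly optimizing over the free parameters $(x,y,z,\eta)$ and over the choice of $R^*$, yields the explicit numerical bound $\frac{1}{8}\sqrt{(69 - 11\sqrt{33})/6}$ appearing in the hypothesis; asymptotic phase-locking on $[0,\infty)$ is then inherited from Theorem \ref{simplemainthm}.

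\textbf{Main obstacle.} The principal difficulty lies in Step 2: one must produce the threshold $R^*$ and the escape time $t^*$ uniformly over all initial phase configurations \emph{and} all initial velocities, including configurations very close to equilateral whose initial velocities are cunningly chosen to keep $|Z|^2$ near zero for as long as possible. This requires a combined energy and order-parameter estimate on the second-order ODE in which the coupling's coercive effect on $|Z|^2$ must overpower both the large initial kinetic energy (absorbed via the damping $\dot\theta_i$) and the driving by non-identical natural frequencies. The peculiar algebraic form of the constant $\frac{1}{8}\sqrt{(69 - 11\sqrt{33})/6}$ reflects this delicate joint optimization and is the reason why the argument is confined to $N=3$, where the structure of the equilateral equilibrium can be exploited through explicit computation.
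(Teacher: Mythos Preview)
Your approach is fundamentally different from the paper's, and Step~2 contains a genuine gap that you yourself flag but do not resolve. The claimed lower bound $\ddot{|Z|^2}\ge c_0\kappa^2-(\text{corrections})$ with $c_0>0$ is simply false: at the equilateral configuration with identical $\nu_i$ and zero velocities, the system is an exact equilibrium, so $|Z|^2\equiv 0$ and all its time derivatives vanish. More generally, the escape time from a neighborhood of the equilateral triangle blows up as the initial data approach that (unstable) equilibrium, so no \emph{uniform} $t^*$ can exist. One could try to argue separately that trajectories which never escape must converge to the equilateral state (hence are phase-locked), but this requires a stable-manifold analysis for the second-order system that you have not supplied, and it is unclear how it would ever yield the sharp constant.

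The paper's argument avoids the order parameter entirely and is much shorter. The key observations are: (i) for $N=3$, any two oscillators already form a $\tfrac{2}{3}$-majority; (ii) by Theorem~\ref{thm:finite-collision} (proved via a Sturm--Picone comparison for the mismatch functional, valid in the regime $m\kappa\le\tfrac14$), asymptotic phase-locking is equivalent to finiteness of collisions. One then argues by contradiction: if two oscillators $\theta_1,\theta_2$ collide infinitely often, pick a collision time $t_1\ge\eta m$; at that instant $\mathcal{D}(\{\theta_1,\theta_2\})=0$, so $\{1,2\}$ is a majority cluster of arclength $0$. Theorem~\ref{L4.4} (with $\lambda=\tfrac23$, $\mathcal{B}=[3]$, and $\ell$ chosen at the maximizer $\theta_*$ of $f_{2/3}$) then gives asymptotic phase-locking directly, and the finiteness-of-collisions equivalence closes the contradiction. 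The numerical constant $\tfrac18\sqrt{(69-11\sqrt{33})/6}$ is exactly $\tfrac12\max_\theta f_{2/3}(\theta)$, i.e.\ it comes from the threshold $\xi(\infty)<\tfrac12 f_\lambda(\ell)$ in the hypothesis of Theorem~\ref{L4.4}, not from any optimization over the parameters $(x,y,z,\eta)$ of Theorem~\ref{simplemainthm} as you suggest.
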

\begin{proof}
We provide the proof in Appendix \ref{app:collision}, after proving the equivalence of asymptotic phase-locking and finiteness of collisions in the small inertia regime $m\kappa \le \frac{1}{4}$.
\end{proof}

Besides Theorem \ref{simplemainthm} and Theorem \ref{thm:n=32nd}, we have a plethora of new results in this paper, namely, Theorem \ref{T3.1},  Theorem \ref{L4.4}, Theorem \ref{thm:finite-collision}, Proposition \ref{prop:sym}, and Corollary \ref{cor:partialphaselocking-initial}.
%%%%%%%%%%%%%%%%%%%%%%
\bigskip

%%%%%%%%%%%%%%%%%%%%%%%%%%%%%%%%%

\subsection{Roadmap}  \label{sec:1.2} The rest of this paper is organized as follows.  Below, we summarize some notations and conventions to be used throughout the paper.  In Section \ref{sec:galilean}, we first recall the (first-order) Kuramoto model and related results on asymptotic phase-locking and then demonstrate Galilean invariance and the corresponding momentum conservation of \eqref{A-1} and \eqref{A-2}. We use this to provide natural definitions of complete and partial phase-lockings. This sheds light on why we obtain single traveling solutions with group frequency $\nu_c$ in the conclusions of Theorem \ref{simplemainthm}. We then pose our main questions on the emergence of phase-locking. We also explain our basic rationale behind analysis of \eqref{A-1}, which is to approximate it to the Kuramoto model \eqref{A-2}. This process requires the smallness of $\displaystyle\max_{1\le i,j\le N}|\omega_i^0-\omega_j^0|/\kappa$ and $m\kappa$.

In Section \ref{sec:2-divided}, we explore three synchronization mechanisms of models \eqref{A-1} and \eqref{A-2}, namely stability of majority clusters (subsection \ref{subsec:1.4}), quasi-monotonicity of the order parameter (subsection \ref{subsec:1.5}), and inertial gradient flow formulation of \eqref{A-1} and resulting application of the {\L}ojasiewicz gradient theorem (subsection \ref{subsec:1.6}). These mechanisms require, in addition, the smallness of $\displaystyle\max_{1\le i,j\le N}|\nu_i-\nu_j|/\kappa$. We conjecture that this third mechanism can be used to fully characterize the asymptotic behaviors of models \eqref{A-1} and \eqref{A-2}.

In Section \ref{sec:3}, we study the properties of the majority cluster to establish a sufficient criteria for partial phase-locking. 

In Section \ref{sec:5}, we present detailed proofs for presented main results. Finally, Section \ref{sec:6} is devoted to a brief summary of  the main results and discussions for future work. In appendix sections, we provide the various technical details alluded throughout the paper.

\subsection{Notations and conventions} We fix some notations and conventions for the remainder of the paper. Let $(\theta_1,\ldots,\theta_N)$ be a solution to \eqref{A-1} or \eqref{A-2}. We denote $\omega_i(t)\coloneqq \dot\theta_i(t)$. We use capital Greek letters to denote the collection of the corresponding lower Greek letters:
\begin{equation*}
\Theta\coloneqq  (\theta_1, \ldots, \theta_N), \quad \Omega\coloneqq (\omega_1, \ldots, \omega_N), \quad \mathcal{V}\coloneqq (\nu_1, \ldots, \nu_N).
\end{equation*}
We will use $\omega_i$ and $\dot{\theta}_i$ interchangeably and $\Omega$ and $\dot{\Theta}$ interchangeably throughout this article.
We denote ${\bf 1}_{[N]} = (1, \cdots, 1) \in {\mathbb R}^N$. For vectors $X=(x_i)^N_{i=1}\in \mathbb{R}^N$ and $Y=(y_i)^N_{i=1}\in \mathbb{R}^N$, we denote their exterior product $X\wedge Y\in \bigwedge^2 \mathbb{R}^N$. For the standard basis $e_1,\cdots,e_N$ of $\mathbb{R}^N$, we consider $e_i\wedge e_j$, $i,j\in [N]$, $i<j$ to be the standard basis of $\bigwedge^2\mathbb{R}^N$. Thus, in these coordinates, we may write
\[
X\wedge Y= (x_iy_j-x_jy_i)_{i,j\in [N],i<j}.
\]
As usual, we use $\|\cdot\|_p$ to denote the $\ell_p$-norm in $\bbr^{N}$:
\begin{equation*}
\|\Theta\|_{p} \coloneqq \left( \sum_{i=1}^{N} |\theta_i|^p \right)^{\frac{1}{p}}, ~ 1 \leq p < \infty, \quad  \|\Theta\|_{\infty} \coloneqq \max_{1\leq i \leq N} |\theta_i|.
\end{equation*}
We use $\mathcal{D}$ to denote the diameter: for a vector $X= ( x_i )^N_{i=1}\in \mathbb{R}^N$,
\[
\mathcal{D}(X)\coloneqq \max_{1\le i,j\le N}|x_i-x_j|.
\]
For example, for the above configurations $\Theta, \Omega$ and ${\mathcal V}$, we denote
\begin{align*}
\begin{aligned}
{\mathcal D}(\Theta) \coloneqq \max_{1\leq i,j\leq N}|\theta_i -\theta_j|,\quad {\mathcal D}(\Omega) \coloneqq \max_{1\leq i,j\leq N}|\omega_i -\omega_j|,\quad {\mathcal D}(\mathcal{V}) \coloneqq \max_{1\leq i,j\leq N}|\nu_i - \nu_j|.
\end{aligned}
\end{align*}
For $X\in \mathbb{R}^N$, we have
\[
\mathcal{D}(X)=\|X\wedge {\bf 1}_{[N]}\|_\infty,
\]
where we endow $\bigwedge^2\mathbb{R}^N$ the $\ell_\infty$ norm with respect to its standard basis. We observe the triangle inequality
\begin{equation*}
|\mathcal{D}(A)-\mathcal{D}(B)|\le \mathcal{D}(A-B),\quad A,B\in \mathbb{R}^N.
\end{equation*}
We observe the inequality
\[
\frac 12\mathcal{D}(X)\le \|X\|_\infty,\quad X\in \mathbb{R}^N,
\]
which follows from the identity
\begin{equation*}
    \|X\|_\infty-\frac 12\mathcal{D}(X)=\frac 12\left|\max_{i\in [N]} x_i+\min_{i\in [N]} x_i\right|,\quad X=(x_1,\cdots,x_N)\in \mathbb{R}^N.
\end{equation*}
We also denote the variances as follows:
\begin{align*}
\begin{aligned}
\operatorname{Var}(\mathcal{V}) \coloneqq \frac 1N\sum_{i=1}^N |\nu_i-\nu_c|^2,\quad \operatorname{Var}(\Omega^0) \coloneqq \frac 1N\sum_{i=1}^N |\omega_i^0-\omega_c|^2.
\end{aligned}
\end{align*}
It is well known that
\begin{align*}
\operatorname{Var}(X)\le \frac{\mathcal{D}(X)^2}{4},\quad X\in \mathbb{R}^N.
\end{align*}
For $\mathcal{A}\subset [N]$ and $X=(x_i)_{i=1}^N\in \mathbb{R}^N$, we define the restricted vector
\[
X_\mathcal{A}\coloneqq (x_i)_{i\in\mathcal{A}}\in \mathbb{R}^{|\mathcal{A}|}.
\]
The concepts of $\mathcal{D}(X_\mathcal{A})$ and $\|X_\mathcal{A}\|_p$ are defined in the same manner:
\[
\mathcal{D}(\Theta_\mathcal{A})\coloneqq \max_{i,j\in \mathcal{A}}|\theta_i-\theta_j|,\quad {\mathcal D}(\Omega_\mathcal{A}) \coloneqq \max_{i,j\in \mathcal{A}}|\omega_i -\omega_j|,\quad {\mathcal D}(\mathcal{V}_\mathcal{A}) \coloneqq \max_{i,j\in \mathcal{A}}|\nu_i - \nu_j|.
\]
and
\begin{equation*}
\|\Theta_\mathcal{A}\|_{p} \coloneqq \Big( \sum_{i\in\mathcal{A}} |\theta_i|^p \Big)^{\frac{1}{p}}, ~ 1 \leq p < \infty, \quad  \|\Theta_\mathcal{A}\|_{\infty} \coloneqq \max_{i\in\mathcal{A}} |\theta_i|.
\end{equation*}
Throughout the paper, we call $m$, $\kappa$, and $\mathcal{V}$ system parameters, $\Theta^0$ and $\Omega^0$ initial data, and all other external parameters free parameters. We sometimes say `{\it parameters}' to refer to any of these variables.

Although \eqref{A-1} describes a dynamical system on $({\mathbb S}^1)^N \times {\mathbb R}^N$, we will interpret it as a dynamical system on $\mathbb R^N \times {\mathbb R}^N$, to avoid the existence issue of the scalar potential $P$ defined in \eqref{B-5-1}. However, we will still use the geometric concept of an arc, i.e., a connected component of ${\mathbb S}^1$, as this will help to visualize the configuration geometrically. Anytime this terminology is used, various statements must be understood modulo $2\pi$. For example, whenever we say some oscillators $\Theta_\mathcal{A}$ are contained in an arc of length $\ell$, we will express this with $\mathcal{D}(\Theta_\mathcal{A})\le \ell$; when we say this it will be clear that we can make harmless $2\pi$-translations of $\Theta_\mathcal{A}$ that do not affect the conclusions of our theorems nor the logical structure of their proofs.

%%%%%%%%%%%%%%%%%%%%%%%%%
\section{Preliminaries}\label{sec:galilean}
\setcounter{equation}{0}
In this section, we study several preparatory facts to be used in later sections. First, we briefly review the related result for the Kuramoto model on asymptotic phase-locking, and the translation invariance property of the inertial Kuramoto model, phase-locked states as relative equilibria and symmetries related to the inertial Kuramoto model. 

\subsection{The Kuramoto model}
\label{subsec:first-order}
The (globally coupled\footnote{This means that the weight for the terms $\sin(\theta_j-\theta_i)$ are uniform. The case of weighted connectivity terms $a_{ij}\sin(\theta_j-\theta_i)$ is considered for example in \cite{Jadbabaie04}. Again, we will not work in this generality.}) first-order Kuramoto model, originally proposed by Kuramoto in \cite{Ku2,Ku}, is the model formally obtained\footnote{Notice that when we take the limit $m\to 0+$ and formally pass from system \eqref{A-1} to system \eqref{A-2}, we `forget' the initial velocity data $\{\omega_i^0\}_{i=1}^N$. This is because the solution to \eqref{A-1} converges to the solution to \eqref{A-2} as $m\to 0+$ in the $C[0,\infty)$ and $C^\infty(0,\infty)$ topologies, but not necessarily in the $C^1[0,\infty)$ topology.} from \eqref{A-1} by taking zero inertia $m=0$ and unit damping coefficient:\footnote{This refers to the coefficient of $\dot\theta_i$. Generally, we could also consider an arbitrary damping coefficient $\gamma>0$ and replace $\dot{\theta}_i$ by $\gamma \dot{\theta}_i$, but of course then we may divide the equations \eqref{A-1} and \eqref{A-2} by $\gamma$. From a different point of view, we may say that we are considering systems with nonzero damping coefficient.}
\begin{align}\label{A-2}
\begin{cases}
\displaystyle \dot\theta_i = \nu_i + \frac{\kappa}{N}\sum_{j=1}^N \sin(\theta_j - \theta_i),\quad t > 0,\\
\displaystyle \theta_i\Big|_{t = 0+} = \theta_i^0,\quad i\in [N].
\end{cases}
\end{align}
Again, by the standard Cauchy-Lipschitz theory and Cauchy-Kovalevskaya theorem, the model \eqref{A-2} admits global unique analytic solutions.

There have been lots of studies \cite{Aeyels04,Bronski12,Chopra09,C-H-J-K,D-B11,D-B14,D-H-K,D-X,Mirollo05,H-K-R,H-R,Jadbabaie04,Van93} on the asymptotic dynamics of the models \eqref{A-2} and their variants. Indeed, the aforementioned references provide several sufficient frameworks for the complete synchronization problem in which all the relative frequencies tend to zero asymptotically in a large coupling regime, for initial data restricted in a half-circle when the intrinsic velocities $\nu_i$ are distinct, and for a generic initial configuration when the intrinsic velocities $\nu_i$ are identical. Similarly to \eqref{A-1}, the complete synchronization problem seems to be numerically true for all generic data in a large coupling regime, even for nonidentical intrinsic velocities $\nu_i$ \cite{D-B11,H-R}. However, to verify this simple fact is difficult.

In recent works \cite{H-K-R,H-R}, the authors provided sufficient conditions for generic initial data and coupling strength leading to the complete synchronization via the gradient flow formulation of \eqref{A-2} and technical estimates on the phase diameter and order parameter for the Kuramoto model \eqref{A-2}. This affirmatively answered the variant of Question \ref{ques:critical-kappa} for the first-order Kuramoto model \eqref{A-2}. The statement is as follows.

\begin{theorem}[{\cite[Theorem 1.1]{H-K-R}},{\cite[Theorem 3.2]{H-R}}]\label{thm:1stKu-vanilla}
If the initial data and system parameters satisfy 
\begin{equation} \label{B-15-1} 
R^0 > 0, \quad \kappa >1.6\frac{\max_{1\le i,j\le N}|\nu_i-\nu_j|}{|R^0|^2},
\end{equation}
then the global solution $\Theta = \Theta(t)$ to \eqref{A-2} exhibits asymptotic phase-locking: for all $i\in [N]$,
\[
\exists \lim_{t\to\infty} (\theta_i(t)-\nu_c t),\quad \mathrm{and}\quad \lim_{t\to\infty} \dot{\theta}_i(t)=\nu_c.
\]
\end{theorem}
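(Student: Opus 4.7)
The proof would combine the gradient-flow structure of \eqref{A-2}, a quantitative quasi-monotonicity of the order parameter $R(t)$, and a {\L}ojasiewicz argument on the resulting compact trajectory. First, by the translation invariance $\theta_i(t)\mapsto\theta_i(t)-\nu_c t$ (which preserves $R(t)$, $R^0$, and $\mathcal{D}(\mathcal{V})$), I would reduce to the case $\sum_j\nu_j=0$, turning \eqref{A-2} into the full analytic gradient flow $\dot\Theta=-\nabla\Psi(\Theta)$ with the potential
\begin{equation*}
\Psi(\Theta)\coloneqq -\frac{\kappa}{2N}\sum_{i,j=1}^N\cos(\theta_j-\theta_i)-\mathcal{V}\cdot\Theta,
\end{equation*}
which, since $\sum_j\nu_j=0$, descends to $\mathbb{R}^N/\mathbb{R}\mathbf{1}_{[N]}$. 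The conclusion then becomes $\dot\theta_i(t)\to 0$ and convergence of $\theta_i(t)$ modulo constant shifts.

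The main quantitative step is to establish $R(t)\ge R^0$ for all $t\ge 0$ under the hypothesis \eqref{B-15-1}. Writing $Re^{\mathrm{i}\phi}=\tfrac{1}{N}\sum_j e^{\mathrm{i}\theta_j}$ and using $\dot\theta_j=\nu_j+\kappa R\sin(\phi-\theta_j)$, direct differentiation gives
\begin{equation*}
\frac{d}{dt}R^2=\frac{2R}{N}\sum_j\nu_j\sin(\phi-\theta_j)+\frac{2\kappa R^2}{N}\sum_j\sin^2(\phi-\theta_j).
\end{equation*}
Setting $S^2\coloneqq\tfrac{1}{N}\sum_j\sin^2(\phi-\theta_j)$, the second term is $2\kappa R^2 S^2\ge 0$, while Cauchy--Schwarz (using $\sum_j\nu_j=0$ and $\operatorname{Var}(\mathcal{V})\le\mathcal{D}(\mathcal{V})^2/4$) bounds the first in absolute value by $R\,\mathcal{D}(\mathcal{V})\,S$. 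At a putative first dip time $t^*$ with $R(t^*)=R^0$ and $\tfrac{d}{dt}R^2(t^*)\le 0$, the resulting inequality forces $S(t^*)\le\mathcal{D}(\mathcal{V})/(2\kappa R^0)$; combined with the geometric fact that having $R=R^0$ together with $S\ll R^0$ forces the configuration to be nearly bimodal around $\phi$ and $\phi+\pi$, a second-order ODE comparison along the trajectory (balancing the excess of $2\kappa R^2 S^2$ over the perturbative term against the maximal decay rate of $R$) rules this out under the threshold \eqref{B-15-1}, producing the explicit constant $1.6$.

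Once $R(t)\ge R^0$ for all $t$, the majority cluster $\mathcal{A}(t)\coloneqq\{i:\cos(\theta_i(t)-\phi(t))\ge R^0/2\}$ contains more than $N/2$ oscillators and has $\mathcal{D}(\Theta_{\mathcal{A}(t)})<\pi$ uniformly in $t$, which pins the quotient trajectory $[\Theta(t)]\in\mathbb{R}^N/\mathbb{R}\mathbf{1}_{[N]}$ in a compact set. The energy identity $\tfrac{d}{dt}\Psi=-\|\dot\Theta\|^2$ and boundedness of $\Psi$ on this set give $\int_0^\infty\|\dot\Theta\|^2\,dt<\infty$, and the classical {\L}ojasiewicz gradient theorem for the real-analytic $\Psi$ with compact forward orbit then yields $\int_0^\infty\|\dot\Theta\|\,dt<\infty$, so $[\Theta(t)]$ converges to a critical point of $\Psi$ and $\dot\theta_i(t)\to 0$ in the rotating frame.

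\textbf{Main obstacle.} The hard part is extracting the sharp constant $1.6$ in the quasi-monotonicity step: a crude Cauchy--Schwarz already gives a threshold of the form $\kappa|R^0|^2>C\mathcal{D}(\mathcal{V})$ with $C$ of order unity, but obtaining $C=1.6$ demands a careful ODE comparison using higher-order information about the configuration at the putative dip time, together with a sharp geometric bound on the nearly-bimodal configurations compatible with $R=R^0$ and small $S$. By contrast, once the majority cluster has been secured and the quotient trajectory is confined to a compact set, the {\L}ojasiewicz step in the final stage is essentially automatic.
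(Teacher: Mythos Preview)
The paper does not prove this theorem; it is quoted from \cite{H-K-R,H-R} as a known result for the first-order model, so there is no ``paper's own proof'' to compare against. That said, the paper's proof of the inertial analogue (Theorem~\ref{T3.1}, Section~\ref{sec:5}) closely follows the scheme of \cite{H-K-R,H-R}, so one can infer the intended argument and compare.

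Your outline has the right three ingredients (quasi-monotonicity of $R$, majority cluster, \L ojasiewicz), but two of the steps are miscast in a way that creates a real gap. First, the claim ``$R(t)\ge R^0$ for all $t\ge 0$'' is stronger than what is actually proved or needed: in the nonidentical case $R$ can genuinely decrease, and your dip-time argument as written only controls $S(t^*)$ without ruling out $\dot R<0$ nearby. What the actual argument (cf.\ Lemma~\ref{L4.2} here, specialized to $m=0$) shows is the dichotomy $\dot R\ge \kappa\sqrt{\Delta}\bigl(R\sqrt{\Delta}-\tfrac{\mathcal{D}(\mathcal{V})}{2\kappa}\bigr)$, from which one extracts a \emph{single finite time} $t_0$ with $R(t_0)\ge \delta R^0$ and $\Delta(t_0)\le (\mathcal{D}(\mathcal{V})/(2\kappa\delta R^0))^2$. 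The constant $1.6$ comes from optimizing the free parameters $\delta,\lambda,\ell$ in this step together with the clustering lemma (Lemma~\ref{L4.3}), not from a higher-order ODE comparison at a dip.

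Second, and more seriously, your passage from ``majority cluster $\mathcal{A}(t)$ at each time'' to ``quotient trajectory is compact'' is invalid: a time-dependent index set with $\mathcal{D}(\Theta_{\mathcal{A}(t)})<\pi$ does not bound $\mathcal{D}(\Theta(t))$, since oscillators can drift in and out of $\mathcal{A}(t)$ while the full configuration winds around the circle. The cited proofs instead fix $\mathcal{A}$ at the single time $t_0$ above and then run a separate \emph{stability} argument (a first-order Gr\"onwall inequality on $\mathcal{D}(\Theta_\mathcal{A})$, see Theorem~\ref{thm:1stpartial}) to show this \emph{fixed} cluster persists; the remaining oscillators are then trapped by the cluster, giving $\sup_t\mathcal{D}(\Theta(t))<\infty$. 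Only after this boundedness is established does the \L ojasiewicz step apply. Your sketch skips Synchronization Mechanism~II entirely, and without it the compactness needed for \L ojasiewicz is unjustified.
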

\begin{remark}
Theorem \ref{simplemainthm} was inspired by this counterpart theorem for the model \eqref{A-2}.
\end{remark}

For initial data $\left(\Theta^0,\Omega^0\right)$, intrinsic frequencies  $\mathcal{V}$, a coupling strength $\kappa$ and  $m>0$, we temporarily denote by $\Theta(m,t)$, $t\ge 0$, the solution to the Cauchy problem \eqref{A-1}, and denote by $\Theta(0,t)$, $t\ge 0$, the solution to the Cauchy problem \eqref{A-2}.
A quantitative version of Tikhonov's theorem implies
\[ \omega_i(m,t)-\omega_i(0,t)= {\mathcal O}(e^{-t/m})+ {\mathcal O}(m). \]
Thus, we will consider an ``initial time layer''\footnote{We adopted this jargon from fluid mechanics.} of the form $[0,\eta m]$. The synchronization analysis of this paper, such as partial phase-locking (Theorem \ref{L4.4}) or the quasi-monotonicity estimate of the order parameter (Lemma \ref{L2.4}), will be performed after this initial time layer $[0,\eta m]$. 
The classical Tikhonov theorem does not provide quantitative bounds, so we defer such analysis to a forthcoming paper. As a corollary, we are also able to show convergence in $C^\infty(0,\infty)$.

\subsection{Galilean Invariance}
One defining feature of \eqref{A-1} is its sinusoidal coupling $\sin(\theta_j-\theta_i)$. This nonlinearity severely limits the applicability of existing classical tools, particularly those catering to linear equations, for analyzing \eqref{A-1} and \eqref{A-2}. 
\begin{lemma}\label{L2.1}
Let $(\Theta, \Omega)$ be a global solution to \eqref{A-1}. The phase and frequency averages satisfy
\begin{align}
\begin{aligned} \label{eq:galconserve}
\theta_c(t) &= m\omega_c^0(1-e^{-t/m}) + \nu_c \left(t-m+me^{-t/m}\right) + \theta_c^0, \\
\omega_c(t) &= \omega_c^0 e^{-t/m} + \nu_c\left(1-e^{-t/m}\right), \quad t \geq 0,
\end{aligned}
\end{align}
where the subscript $c$ denotes the average over particles:
\begin{equation} \label{B-2}
\theta_c^0 \coloneqq \frac{1}{N} \sum_{i=1}^{N}  \theta_i^0,\quad
\theta_c \coloneqq \frac{1}{N}\sum_{i=1}^N\theta_i,\quad
\omega_c^0 \coloneqq \frac{1}{N} \sum_{i=1}^{N} \omega_i^0,\quad
\omega_c \coloneqq \frac{1}{N}\sum_{i=1}^N\omega_i,\quad
\nu_c = \frac{1}{N}\sum_{i=1}^N \nu_i.
\end{equation}
\end{lemma}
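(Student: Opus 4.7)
The plan is to reduce the lemma to a linear scalar ODE by averaging the coupled system \eqref{A-1} over $i \in [N]$ and exploiting the antisymmetry of the coupling kernel, then integrating the resulting first-order linear ODE in closed form.

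More concretely, I would first sum the $i$-th equation in \eqref{A-1} over $i$ and divide by $N$. The key observation is that the interaction contribution
\[
\frac{\kappa}{N^2}\sum_{i=1}^N\sum_{j=1}^N \sin(\theta_j-\theta_i)
\]
vanishes identically, since the substitution $(i,j)\mapsto (j,i)$ together with the oddness of sine forces the double sum to equal its own negative. Consequently, the averaged phase $\theta_c$ defined in \eqref{B-2} satisfies the decoupled linear ODE
\[
m\,\ddot{\theta}_c + \dot{\theta}_c = \nu_c,
\]
with initial conditions $\theta_c(0)=\theta_c^0$ and $\dot{\theta}_c(0)=\omega_c^0$.

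Next I would rewrite this as a first-order ODE in $\omega_c=\dot{\theta}_c$, namely $m\dot{\omega}_c+\omega_c=\nu_c$. Multiplying by the integrating factor $e^{t/m}/m$ and integrating from $0$ to $t$ gives the second line of \eqref{eq:galconserve} directly:
\[
\omega_c(t) = \omega_c^0 e^{-t/m} + \nu_c(1-e^{-t/m}).
\]
Finally, one more integration from $0$ to $t$, using $\int_0^t e^{-s/m}\,ds = m(1-e^{-t/m})$, yields the first line of \eqref{eq:galconserve}.

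There is essentially no technical obstacle here; the entire content of the lemma is the antisymmetry cancellation, which reduces a nonlinear coupled system to a scalar linear ODE with constant coefficients. The remaining steps are routine constant-coefficient ODE integration, and no analytic subtleties arise because global smoothness of $(\Theta,\Omega)$ has already been guaranteed by the Cauchy–Kovalevskaya argument mentioned after \eqref{A-1}.
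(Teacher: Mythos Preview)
Your proposal is correct and follows essentially the same approach as the paper: both average the system over $i$, exploit the oddness of $\sin$ to kill the coupling sum, reduce to the scalar linear ODE $m\dot{\omega}_c+\omega_c=\nu_c$ with $\dot{\theta}_c=\omega_c$, and integrate. The only cosmetic difference is that the paper writes the first-order system $\dot{\theta}_c=\omega_c$, $\dot{\omega}_c=\frac{\nu_c-\omega_c}{m}$ directly rather than passing through the second-order form, but the content is identical.
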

\begin{proof} 
We sum up the both sides of
\[
\dot\theta_i = \omega_i, \quad \dot\omega_i = \frac{1}{m} \Big( \nu_i - \omega_i + \frac{\kappa}{N}\sum_{j=1}^N \sin(\theta_j -\theta_i) \Big)
\]
with respect to $i$, and then use the defining relations of mean values \eqref{B-2} to find 
\[
{\dot \theta}_c = \omega_c, \quad  {\dot \omega}_c = \frac{\nu_c}{m} - \frac{\omega_c}{m}, \quad t > 0.
\]
We integrate the above ODEs to find the desired estimates. 
\end{proof}
\begin{remark}\label{R2.1}
Below, we provide several comments. 
\begin{enumerate}
\item
It follows from \eqref{eq:galconserve} that
\[ \lim_{t \to \infty} |\theta_c(t) - (\theta_c^0  + m \omega_c^0 + \nu_c(t-m))| = 0, \quad \lim_{t \to \infty} |\omega_c(t) - \nu_c| = 0.\]
\item
For $\nu_c = 0$, it follows from \eqref{eq:galconserve} that 
\[ \sup_{0 \leq t < \infty} |\theta_c(t)| \leq m |\omega_c^0 | +  |\theta_c^0|, \qquad  \sup_{0 \leq t < \infty} |\omega_c(t)| \leq  |\omega_c^0|, \]
so that
\[
|\theta_i(t) | \leq |\theta_i(t) - \theta_c(t)| +  |\theta_c(t)| \leq \frac{1}{N} \sum_{i=1}^{N} |\theta_i(t) - \theta_j(t)| + \sup_{0 \leq t < \infty} |\theta_c(t)| \leq {\mathcal D}(\Theta(t)) +  m |\omega_c^0 | +  |\theta_c^0|.
\]
This yields the equivalence between the uniform boundedness of the maximal phase and the phase diameter in the case $\nu_c = 0$:
\begin{equation} \label{B-3}
\sup_{0 \leq t < \infty} |\theta_i(t)| < \infty \quad \Longleftrightarrow \quad \sup_{0 \leq t < \infty} {\mathcal D}(\Theta(t)) < \infty.
\end{equation}
\end{enumerate}
\end{remark}
The conserved quantities of Lemma \ref{L2.1} correspond to a linear Galilean symmetry of \eqref{A-1}, given as follows. For $\nu,\omega,\theta\in \mathbb{R}$, we define
\begin{equation}\label{eq:galilean}
\begin{cases}
\displaystyle \tilde{\nu}_i\coloneqq\nu_i-\nu,\quad \tilde{\theta}_i^0\coloneqq \theta_i^0-\theta,\quad \tilde{\omega}_i^0\coloneqq \omega_i^0-\omega,\\
\displaystyle \tilde{\theta}_i(t)\coloneqq \theta_i(t)-\theta-m\omega \left(1-e^{-t/m}\right)-\nu \left(t-m+me^{-t/m}\right),\\
\displaystyle \tilde{\omega}_i(t)\coloneqq \omega_i(t)-\omega e^{-t/m}-\nu \left(1-e^{-t/m}\right).
\end{cases}
\end{equation}
\begin{proposition}\label{prop:sym}
Let $(\Theta(t),\dot{\Theta}(t))$ be a global solution to the Cauchy problem \eqref{A-1} with initial data $(\Theta^0,\dot{\Theta}^0)$, 
Then, the following assertions hold.
\begin{enumerate}
\item
The transformed state $(\tilde{\Theta}(t),\dot{\tilde{\Theta}}(t))$ in \eqref{eq:galilean} is a global solution to the Cauchy problem \eqref{A-1} with initial data $(\tilde{\Theta}^0,\dot{\tilde{\Theta}}^0)$. 
\vspace{0.1cm}
\item
If we set $\nu=\nu_c$ and $\omega=\omega^0_c$, $\Theta(t)$ is a phase-locked state, meaning that $\theta_i(t)-\theta_j(t)$ is constant with respect to $t$ for all $i,j$, if and only if $\tilde{\Theta}(t)$ is constant.
\vspace{0.1cm}
\item
If we set $\nu=\nu_c$, then $\Theta(t)$ exhibits asymptotic phase-locking if and only if $\tilde{\Theta}(t)$ converges.
\end{enumerate}
\end{proposition}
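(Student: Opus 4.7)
The plan is to break the proof into the three parts of the statement. For part (1), observe that the Galilean correction $c(t) \coloneqq \theta + m\omega(1-e^{-t/m}) + \nu(t - m + me^{-t/m})$ is precisely the solution of the inertial Kuramoto ODE in the degenerate case of uniform data $\theta_i^0 \equiv \theta$, $\omega_i^0 \equiv \omega$, $\nu_i \equiv \nu$; a short calculation gives $\dot c(t) = \omega e^{-t/m} + \nu(1-e^{-t/m})$ and $m\ddot c + \dot c = \nu$. Since $c(t)$ does not depend on $i$, one has the key identity $\tilde\theta_j(t) - \tilde\theta_i(t) = \theta_j(t) - \theta_i(t)$, so the sinusoidal coupling is invariant under the transformation. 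Differentiating \eqref{eq:galilean} then yields $\dot{\tilde\theta}_i = \tilde\omega_i$, and
\[
m\ddot{\tilde\theta}_i + \dot{\tilde\theta}_i = (m\ddot\theta_i + \dot\theta_i) - (m\ddot c + \dot c) = \nu_i - \nu + \frac{\kappa}{N}\sum_j \sin(\tilde\theta_j - \tilde\theta_i) = \tilde\nu_i + \frac{\kappa}{N}\sum_j \sin(\tilde\theta_j - \tilde\theta_i),
\]
matching the transformed ODE. The initial conditions $\tilde\theta_i(0) = \tilde\theta_i^0$ and $\tilde\omega_i(0) = \tilde\omega_i^0$ follow from $c(0) = \theta$ and $\dot c(0) = \omega$.

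For part (2), choose $\nu = \nu_c$ and $\omega = \omega_c^0$ so that, by Lemma \ref{L2.1}, $c(t) - \theta = \theta_c(t) - \theta_c^0$ and $\dot c(t) = \omega_c(t)$. If $\Theta(t)$ is phase-locked, then all $\omega_i(t)$ coincide with their mean $\omega_c(t)$, so from \eqref{eq:galilean} we read off $\tilde\omega_i(t) = \omega_i(t) - \dot c(t) \equiv 0$, forcing $\tilde\theta_i(t)$ to be constant in $t$. Conversely, $\tilde\theta_i$ being constant gives $\tilde\omega_i \equiv 0$ for every $i$, whence $\omega_i(t) = \dot c(t)$ is the same function of $t$ for every $i$, so $\theta_i - \theta_j$ is constant and $\Theta$ is phase-locked.

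For part (3), set $\nu = \nu_c$ only; the parameters $\theta$ and $\omega$ play no role since they contribute only constants and exponentially decaying terms. Directly from \eqref{eq:galilean}, the difference $\tilde\theta_i(t) - (\theta_i(t) - \nu_c t)$ is a constant plus $O(e^{-t/m})$, so $\lim_{t\to\infty}\tilde\theta_i(t)$ exists if and only if $\lim_{t\to\infty}(\theta_i(t) - \nu_c t)$ exists. The remaining delicate point is to upgrade convergence of $\tilde\Theta$ to the frequency condition $\dot\theta_i \to \nu_c$, equivalently $\tilde\omega_i \to 0$. Here I would view the tilde ODE as the scalar linear equation $m\dot{\tilde\omega}_i + \tilde\omega_i = f_i(t)$, where $f_i(t) \coloneqq \tilde\nu_i + \tfrac{\kappa}{N}\sum_j \sin(\tilde\theta_j(t) - \tilde\theta_i(t))$ converges to some limit $F_i$ by continuity. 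Variation of constants then gives $\tilde\omega_i(t) \to F_i$, and the identity $\tilde\theta_i(t) - \tilde\theta_i(0) = \int_0^t \tilde\omega_i(s)\,ds$ combined with the assumed convergence of $\tilde\theta_i$ forces $F_i = 0$. The reverse implication is immediate from the explicit formulas. The main obstacle is precisely this asymptotic-analysis step in part (3); parts (1) and (2) amount to bookkeeping once the conceptual picture of subtracting a reference trajectory is in place.
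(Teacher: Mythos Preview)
Your proof is correct, but parts (2) and (3) take a different route from the paper's.

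For part (2), the paper argues via the conservation law: with $\nu=\nu_c$ and $\omega=\omega_c^0$, the average $\frac{1}{N}\sum_i\tilde\theta_i(t)$ is constant (equal to $\tilde\theta_c^0$), so constancy of all pairwise differences $\tilde\theta_i-\tilde\theta_j$ forces each $\tilde\theta_i$ to be constant via the identity $\tilde\theta_i=\tilde\theta_c^0+\frac{1}{N}\sum_j(\tilde\theta_i-\tilde\theta_j)$. Your frequency-based argument is equally valid and arguably more direct.

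For part (3), note that the paper takes ``asymptotic phase-locking'' in the sense of Definition~\ref{D1.1}, namely convergence of all pairwise differences $\theta_i-\theta_j$; it does \emph{not} require frequency convergence $\dot\theta_i\to\nu_c$ as part of the definition. The paper's proof is then entirely positional: since $\tilde\theta_i-\tilde\theta_j=\theta_i-\theta_j$, the pairwise differences converge, and since the average $\frac{1}{N}\sum_i\tilde\theta_i(t)=m(\omega_c^0-\omega)(1-e^{-t/m})+\tilde\theta_c^0$ converges, each $\tilde\theta_i$ converges. Your approach instead works with the stronger characterization $\lim(\theta_i-\nu_c t)$ exists (which is equivalent, via Lemma~\ref{L2.1}, but you do not explicitly bridge this), and your ``delicate point'' about showing $\tilde\omega_i\to 0$ via variation of constants is correct but unnecessary under the paper's definition. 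What you gain is an explicit proof of frequency convergence along the way (cf.\ Remark~\ref{rem:pls}(1)); what the paper gains is brevity, since the conservation-law decomposition $\tilde\theta_i=(\text{average})+(\text{pairwise differences})$ handles both parts (2) and (3) uniformly.
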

\begin{proof}
It is easy to see that $(\tilde{\theta}_i, \tilde{\nu}_i)$ satisfies the inertial Kuramoto model with the stipulated parameters, i.e.,
\begin{equation*}
\begin{cases} %\label{E-19}
\displaystyle m\ddot{\tilde{\theta}}_i + \dot{\tilde{\theta}}_i = \tilde{\nu}_i +\frac{\kappa}{N}\sum_{j=1}^N \sin(\tilde{\theta}_j - \tilde{\theta}_i),\quad t>0,\\
\displaystyle (\tilde{\theta}_i, \dot{\tilde \theta}_i) \Big|_{t = 0+} = (\theta^0_i-\theta, \omega^0_i - \omega),\quad i\in[N].
\end{cases}
\end{equation*}

From $\theta_i(t)-\theta_j(t)=\tilde{\theta}_i(t)-\tilde{\theta}_j(t)$, it is easy to see that if $\tilde{\Theta}(t)$ is constant then $\Theta(t)$ is a phase-locked state, and that if $\tilde{\Theta}(t)$ converges then $\Theta(t)$ exhibits asymptotic phase-locking.

Conversely, if $\nu=\nu_c$ and $\omega=\omega_c^0$ and $\Theta(t)$ is a phase-locked state, then from the fact that the pairwise differences $\tilde{\theta}_i(t)-\tilde{\theta}_j(t)$ are constant and from the conservation law
\[
\frac 1N\sum_{i=1}^N\tilde{\theta}_i(t)\stackrel{\eqref{eq:galconserve},\eqref{eq:galilean}}{=}\frac 1N\sum_{i=1}^N\tilde{\theta}_i^0 \eqqcolon \tilde{\theta}_c^0,
\]
it follows that $\tilde{\Theta}(t)$ is constant. To show this, one may sum up the constants $\tilde{\theta}_i(t)-\tilde{\theta}_j(t)$ over the index $j$ (with $i$ fixed). In this case, this yields 
$\sum_{j=1}^N (\tilde{\theta}_i(t)-\tilde{\theta}_j(t))/N = \tilde{\theta}_i(t) - \tilde{\theta}_c^0$. If $\nu=\nu_c$ and $\Theta(t)$ exhibits asymptotic phase-locking, then again from the fact that the pairwise differences $\tilde{\theta}_i(t)-\tilde{\theta}_j(t)$ converge and from the fact that the average normalized phase
\[
\frac 1N\sum_{i=1}^N\tilde{\theta}_i(t)\stackrel{\eqref{eq:galconserve},\eqref{eq:galilean}}{=}m(\omega_c^0-\omega)(1-e^{-t/m})+\frac 1N\sum_{i=1}^N\tilde{\theta}_i^0
\]
converges, it follows that $\tilde{\Theta}(t)$ converges.
\end{proof}

\begin{remark}
Because the conditions we require in our synchronization Theorems \ref{simplemainthm} and \ref{T3.1} are invariant under Galilean symmetry \eqref{eq:galilean}, %{\color{blue} one can assume the following}
we may assume in the rest of this paper that
\[
\nu_c=0\quad \mathrm{and} \quad\omega_c^0=0.
\]
\end{remark}

\subsection{Phase-locking as relative equilibria}
The Galilean symmetry of Proposition \ref{prop:sym} is the basis of the definition of relative equilibria for \eqref{A-1}, and the statements and arguments of this paper are invariant under this symmetry. More precisely, due to \eqref{eq:galconserve}, system \eqref{A-1} can possess (absolute) equilibria only if $\nu_c= 0$. Nevertheless, even when $\nu_c\neq 0$, we can take the Galilean transformation \eqref{eq:galilean} with $\nu=\nu_c$, $\theta=\theta^0_c$, and $\omega=\omega_c^0$, after which we have $\theta_c(t)\equiv 0$ and $\omega_c(t)\equiv 0$ and it makes sense to discuss equilibria of the transformed variables. Solutions that transform under \eqref{eq:galilean} into equilibria are called \textit{phase-locked states}, i.e., they are equilibria relative to an appropriately rotating frame with asymptotic velocity $\nu_c$. Solutions are said to exhibit \textit{asymptotic phase-locking} if their Galilean transforms converge to equilibria; these are the single traveling solutions appearing in Theorem \ref{simplemainthm}. It is not hard to see that these definitions are equivalent to the following definitions. %(see Proposition \ref{prop:sym}).
\begin{definition}\label{D1.1}
Let $(\Theta(t), {\dot \Theta}(t))$ be a global solution to the Cauchy problem \eqref{A-1} or \eqref{A-2}.
\begin{enumerate}
\item
$(\Theta(t), {\dot \Theta}(t))$ is a \emph{phase-locked state} if for all $i,j\in [N]$, $\theta_i(t)-\theta_j(t)$ is constant with respect to $t$.
\item
$(\Theta(t), {\dot \Theta}(t))$ exhibits \emph{asymptotic phase-locking} if
\[  \exists~  \lim_{t \to \infty} (\theta_i(t) - \theta_j(t) ), \quad \forall~ i, j \in [N]. \]
\item
$(\Theta(t), {\dot \Theta}(t))$ exhibits \emph{complete frequency synchronization} if
\[  \lim_{t \to \infty} \max_{i, j \in [N]} |{\dot \theta}_i(t) - {\dot \theta}_j(t) | = 0. \]
\item
$(\Theta(t), {\dot \Theta}(t))$ exhibits \emph{complete phase synchronization} if
\[  \lim_{t \to \infty} \left(\theta_i(t) - \theta_j(t)\right)  \in 2\pi \mathbb{Z},\quad \forall i,j\in [N]. \]
\end{enumerate}
\end{definition}
\begin{remark}\label{rem:pls}
Below, we provide several comments on Definition \ref{D1.1}.
\begin{enumerate}
    \item Asymptotic phase-locking implies complete frequency synchronization; one can easily see this from the Duhamel principle, later presented in \eqref{B-4}.
    \vspace{0.1cm}
    \item Complete phase synchronization can happen only if $\nu_i=\nu_j$ for all $i,j\in [N]$. Again, this is due to the Duhamel principle \eqref{B-4}.
\vspace{0.1cm}
 \item Phase-locked states for \eqref{A-1} coincide with phase-locked states for \eqref{A-2}, up to rotation of the circle, when $\omega^0_i\neq\omega_j^0$ for some $i,j$.
For a fixed initial phase $\Theta^0$, the constant map $\Theta(t) = \Theta^0$ is a phase-locked state for both \eqref{A-1} and \eqref{A-2}.
If $\nu_c = \nu_i + \frac{\kappa}{N}\sum_{j=1}^N\sin(\theta_j^0-\theta_i^0)$ for all $i,j$, then the linear trajectory $\Theta(t)=\nu_c t \mathbf{1}_{[N]} +\Theta^0$ is a phase-locked state for both \eqref{A-1} and \eqref{A-2}. A difference arises when, in addtion, $\omega_i^0 = \omega_j^0 \eqqcolon \omega^0$ for all $i,j$. In this case, $\Theta(t) = [m\omega^0(1-e^{-t/m})+\nu_c(t-m+me^{-t/m})]\mathbf{1}_{[N]}+\Theta^0$ is a phase-locked state for \eqref{A-1}.
\end{enumerate}
    
\end{remark}
Having fixed other parameters and increasing $\kappa$ to large values, it can be observed numerically that phase-locked states emerge after a certain critical threshold \cite{D-B14, V-M}. However, before reaching that threshold, one sees partially ordered behavior among the oscillators \cite{B-W}. The following definition captures this concept.
\begin{definition}[{\cite[Definition 4.1]{H-K-R}}, {\cite[Definition 2.2]{H-R}}]\label{def:partial}
Let $(\Theta(t), {\dot \Theta}(t))$ be a global solution to the Cauchy problem \eqref{A-1}.
\begin{enumerate}
\item
Given $\mathcal{A}\subset [N]$, we say the solution $(\Theta(t), {\dot \Theta}(t))$ exhibits \emph{$\mathcal{A}$-partial phase-locking} if
\[ 
\sup_{t\ge 0}\sup_{i,j\in \mathcal{A}}|\theta_i(t)-\theta_j(t)|<\infty.
\]
\item
Given $\lambda\in (0,1]$, we say the solution $(\Theta(t), {\dot \Theta}(t))$ exhibits \emph{$\lambda$-partial phase-locking} if there exists $\mathcal{A}\subset[N]$ with $|\mathcal{A}|\ge \lambda N$ such that the solution $(\Theta(t), {\dot \Theta}(t))$ exhibits $\mathcal{A}$-partial phase-locking.
\end{enumerate}
\end{definition}
\begin{remark}
In fact, $[N]$-partial phase locking, or equivalently $1$-partial phase-locking, is equivalent to asymptotic phase-locking. See Remark \ref{R2.2}.
\end{remark}

%%%%%%%%%%%%%%%%%%%%%%%%%%%%%%%%
\subsection{Symmetries of the inertial Kuramoto model}\label{subsec:sym}
In this subsection, we list several symmetries related to the inertial Kuramoto model in what follows. (Here, $\Delta(\Theta)$ is defined in \eqref{eq:Delta}.)
\vspace{0.1cm}
\begin{enumerate}
\item  (Translation symmetry):~ the quantities $\mathcal{D}(\mathcal{V})$, $\mathcal{D}(\Omega^0)$, $\mathcal{D}(\Theta)$, $R(\Theta)$ and $\Delta(\Theta)$ are invariant under the transformation \eqref{eq:galilean}.
\vspace{0.1cm}
\item  (Dilation symmetry):~the `normalized intrinsic frequencies' $\nu_i/\kappa$, `normalized inertia' $m\kappa$, and `normalized initial velocites' $\omega_i^0/\kappa$ are invariant under the time dilation symmetry: for fixed $\alpha>0$,
\begin{align}\label{eq:dilatation}
\begin{aligned}
&\kappa\mapsto \alpha\kappa,\quad \nu_i\mapsto\alpha\nu_i,\quad m\mapsto m/\alpha,\\
&\theta_i^0\mapsto \theta_i^0, \quad \omega_i^0\mapsto \alpha \omega_i^0,\quad \theta_i(t)\mapsto \theta_i(\alpha t),\quad \omega_i(t)\mapsto \alpha\omega_i(\alpha t),
\end{aligned}
\end{align}
\vspace{0.1cm}
\item (Reflection symmetry): the quantities $\mathcal{D}(\mathcal{V})$, $\mathcal{D}(\Omega^0)$, $\mathcal{D}(\Theta)$, $R(\Theta)$ and $\Delta(\Theta)$ are invariant under the following transformation:
\begin{equation*}%\label{eq:reflection}
\nu_i\mapsto -\nu_i,\quad \theta^0_i\mapsto -\theta^0_i,\quad \omega_i^0\mapsto -\omega_i^0,\quad \theta_i(t)\mapsto -\theta_i(t),\quad \omega_i(t)\mapsto -\omega_i(t).
\end{equation*}
\item (Particle exchange symmetry)\footnote{ In variants of \eqref{A-1} involving connectivity weights other than all-to-all uniform coupling of the model \eqref{A-1}, the particle exchange symmetry should also act on the connectivity weights $a_{ij}$, sending $a_{ij}\mapsto a_{\pi^{-1}(i)\pi^{-1}(j)}$.}: the quantities $\mathcal{D}(\mathcal{V})$, $\mathcal{D}(\Omega^0)$, $\mathcal{D}(\Theta)$, $R(\Theta)$, and $\Delta(\Theta)$ are invariant under the following transformation:
\begin{equation}\label{eq:particle-exchange}
  \nu_i\mapsto \nu_{\pi^{-1}(i)},\quad \theta_i^0\mapsto \theta_{\pi^{-1}(i)}^0,\quad \omega_i^0\mapsto \omega_{\pi^{-1}(i)}^0
\end{equation}
for fixed $\pi\in S_N$ (the symmetric group on $N$ elements).
\end{enumerate}
\vspace{0.2cm}

Note that the quantities $m\kappa$, $\mathcal{D}(\mathcal{V})/\kappa$, and $\mathcal{D}(\Omega^0)$, that we are requiring to be small in our synchronization framework, are invariant under all four symmetries listed above. When establishing theorems regarding the Kuramoto model \eqref{A-1}, it would be ``natural'' for those theorems to have assumptions and statements that are invariant under the above symmetries. In some cases, if we have a non-symmetric statement regarding the Kuramoto model \eqref{A-1}, we can find a symmetric counterpart. For example, if the solution to \eqref{A-1} with initial data $(\Theta^0,\dot{\Theta}^0)$ and parameters $\kappa$, $m$, and $\mathcal{V}$ exhibits asymptotic phase-locking, so does the solution to \eqref{A-1} with initial data $(\Theta^0,\dot{\tilde{\Theta}}^0)$ and parameters $\tilde{\kappa}$, $\tilde{m}$, and $\tilde{\mathcal{V}}$, if $\dot{\tilde{\Theta}}^0/\tilde{\kappa}=\dot{\Theta}^0/\kappa$, $\tilde{m}\tilde{\kappa}=m\kappa$, and $\tilde{\mathcal{V}}/\tilde{\kappa}=\mathcal{V}/\kappa$. Another example is the concepts of ``phase-locked states'' and ``asymptotic phase-locking'', given in Definition \ref{D1.1}. These are equivalent to the concept of ``equilibria'' and ``convergence to equilibria'' after we transform system \eqref{A-1} using the Galilean transformation \eqref{eq:galilean} with $\nu=\nu_c$ and $\omega=\omega_c$, and these concepts are invariant under the above symmetries.

\subsection{A dynamic approximation scheme} \label{sec:2.1}
In this subsection, we will show that when $m\kappa$, $m\mathcal{D}(\mathcal{V})$, $m\mathcal{D}(\Omega)$ are small and $t/m$ is large, $\dot\Theta(t)$ can be approximately computed by the value given by the ODE of \eqref{A-2}. Such an approximation is vital since Gr\"onwall's inequality applies to first-order differential inequalities but not to second-order differential inequalities. For this, we first rewrite the $N$-dimensional second-order Cauchy problem \eqref{A-1} as the $2N$-dimensional Cauchy problem
\begin{equation}\label{B-1}
\begin{cases}
\displaystyle \dot\theta_i = \omega_i, \quad t >0, \\
\displaystyle \dot\omega_i = \frac{1}{m} \Big( \nu_i - \omega_i + \frac{\kappa}{N}\sum_{j=1}^N \sin(\theta_j -\theta_i) \Big),\\
\displaystyle  (\theta_i, \omega_i) \Big|_{t = 0+} = (\theta^{0}_i, \omega^0_i),\quad i\in[N].
\end{cases}
\end{equation}
Again, system \eqref{B-1} admits a unique global solution and describes the same system as \eqref{A-1}.\footnote{To be precise, the application of the Cauchy-Lipschitz theory to \eqref{A-1} is through the system \eqref{B-1}, so the uniqueness and global existence of solutions is first established for \eqref{B-1} and then transferred to \eqref{A-1}.} One advantage of this point of view is that it allows us to apply the Duhamel principle to $\omega_i$: write the ODE of $\omega_i$ as
\[
\begin{cases}
\displaystyle \left(\frac{d}{dt}+\frac 1m\right)\omega_i=\frac{1}{m} \left( \nu_i + \frac{\kappa}{N}\sum_{j=1}^N \sin(\theta_j -\theta_i) \right),~t>0, \\
\displaystyle  \omega_i(0)=\omega_i^0,
\end{cases}
\]
where we treat $m^{-1}\left(\nu_i+\frac{\kappa}{N}\sum_{j=1}^N \sin(\theta_j -\theta_i)\right)$ as an extraneous source term. Since $G(t)=e^{-t/m}$ solves
\[
\left(\frac{d}{dt}+\frac 1m\right)G=0,~t>0,\quad G(0)=1,
\]
we invoke Duhamel's principle to obtain
\begin{align}\label{B-4}
\begin{aligned}
\omega_i(t) &=\omega^0_iG(t)+\int_0^t G(t-s)\frac{1}{m} \left( \nu_i + \frac{\kappa}{N}\sum_{j=1}^N \sin(\theta_j(s) -\theta_i(s)) \right)ds\\
&=  \omega^0_i e^{-t/m} + \nu_i (1 -e^{-t/m}) + \frac{\kappa}{Nm}\sum_{j=1}^N \int_0^t e^{-(t-s)/m}\sin(\theta_j(s) -\theta_i(s)) ds.
\end{aligned}
\end{align}
As this is a weighted time-delayed version of the first-order model \eqref{A-2}, we could interpret the effect of inertia as a weighted time delay in interactions.

In fact, we have the following decomposition:
\begin{align}\label{eq:omega-decomposition}
\begin{aligned}
\omega_i(t) =&  \underbrace{\frac{\kappa\left(1-e^{-t/m}\right)}{N}\sum_{j=1}^N \sin(\theta_j(t)-\theta_i(t))}_{\mathrm{nonlinear~interaction~term}}+\underbrace{\omega^0_i e^{-t/m}}_{\mathrm{initial~frequency~term}} + \underbrace{\nu_i (1 -e^{-t/m})}_{\mathrm{intrinsic~frequency~term}} \\
&+ \underbrace{\frac{\kappa}{Nm}\sum_{j=1}^N \int_0^t e^{-(t-s)/m}\left(\sin(\theta_j(s) -\theta_i(s))-\sin(\theta_j(t) -\theta_i(t))\right) ds}_{\mathrm{time-delay~error~term}}.
\end{aligned}
\end{align}
As we will see in Subsection \ref{subsec:1.4} and Subsection \ref{subsec:1.5}, the ``nonlinear interaction term'' works to increase the degree of synchronization, measured either in terms of the order parameter (see \eqref{eq:Rphi} for the definition of order parameter) or the diameter of a majority cluster (see Definition \ref{def:majority-cluster}). However, it is unclear how the other terms, namely the ``initial frequency term'', ``intrinsic frequency term'', and ``time delay error terms'', contribute to synchronization; in the short term, they may even work towards desynchronizing the system, in the sense of order parameters or diameters of majority clusters. We also note that the nonlinear interaction term comes with the factor $1-e^{-t/m}$ and hence it is very small when $t\ll m$.

A proof strategy for Theorem \ref{simplemainthm} can be described as a passive game in which we, who wish to synchronize the system, are playing against an adversary who wishes to desynchronize the system. We may only passively set the nonlinear interaction term as it is, while the adversary is free to choose the other terms: they are even allowed to choose $\omega_i^0$ and $\nu_i$ adaptively and can manipulate the history of the particle, within certain limitations necessitated by the physics of the model \eqref{A-1}.

At times $t\ll m$, the adversary is free to set $\omega_i(t)$ within some physical limitations because, due to $1-e^{-t/m}\ll 1$, our feeble nonlinear interaction term contributes virtually nothing to \eqref{eq:omega-decomposition}. We only passively observe the adversary desynchronize the system (in whatever sense). After waiting until $t\gg m$, we activate the nonlinear interaction term while the adversary conspires the other three terms against us (again within physical limitations). To guarantee synchronization within our game, the total effect of the first term should triumph over the total effect of the last three terms.

Here are the physical limitations we impose on the adversary. Along with enforcing the diameters $\mathcal{D}(\mathcal{V})$ and $\mathcal{D}(\Omega^0)$, we also set
\[
\mathcal{D}(\Omega(t))\le \max\{\mathcal{D}(\Omega^0),\mathcal{D}(\mathcal{V})+2\kappa\},
\]
which is indeed true for \eqref{A-1} due to Lemma \ref{L2.2} below. For us to have any hope of winning this game, the physical limitations imposed against the adversary must:
\begin{enumerate}
\item limit their total influence on the system during the initial layer $t\ll m$; this can be enforced if
\begin{equation}\label{eq:small-influ}
m\cdot \sup_{i,j\in [N],~t\ge 0}|\omega_i(t)-\omega_j(t)|=m\cdot \sup_{t\ge 0}\mathcal{D}(\Omega(t))\le \max\{m\mathcal{D}(\Omega^0),m\mathcal{D}(\mathcal{V})+2m\kappa\}\ll 1
\end{equation}
(recall that we only care about the phase differences);
\item limit their choice of $\omega_i^0$ and $\nu_i$  compared to $\kappa$ (which is the scale of our nonlinear interaction term):
\begin{equation}\label{eq:small-param}
\mathcal{D}(\Omega^0)/\kappa,~\mathcal{D}(\mathcal{V})/\kappa\ll 1;
\end{equation}
\item make the time delay error term diminutive compared to $\kappa$: because of the crude bound
\begin{align*}
\begin{aligned}
& \frac 1m \int_0^t e^{-(t-s)/m}\left|\sin(\theta_l(s) -\theta_i(s))-\sin(\theta_l(t) -\theta_i(t))\right| ds \\
& \hspace{1cm} \le \frac 1m \int_0^t e^{-(t-s)/m}(t-s) ds\cdot \sup_{t\ge 0}\mathcal{D}(\Omega(t)) \le m\cdot \sup_{t\ge 0}\mathcal{D}(\Omega(t))
\end{aligned}
\end{align*}
this can be enforced using \eqref{eq:small-influ}.
\end{enumerate}

The conditions \eqref{eq:small-influ} and \eqref{eq:small-param} can be established if the system parameters satisfy
\begin{equation}\label{eq:framework-small-param}
\mathcal{D}(\Omega^0)/\kappa,~\mathcal{D}(\mathcal{V})/\kappa,~m\kappa \ll 1.
\end{equation}
These conditions are roughly our proposed framework for synchronization in Theorem \ref{simplemainthm} and Theorem \ref{T3.1} and are necessary for us to win the game.

Heuristically, the small $m$ regime has two benefits. The first is that the initial layer exposure to an adversarial attack, in the form of $\omega_i^0$ set unfavorably towards synchronization, is short and the effect towards the dynamics of the phase differences $\theta_i-\theta_j$ is minuscule. The second is that, after the initial layer, we can quickly recover a dominant first-order term that makes the model behave like the first-order Kuramoto model. In reality, the Cauchy problem \eqref{A-1} is not a game, at least in the sense we described: no adversary is working against us with the authority to arbitrarily set $\mathcal{V}$, $\Omega$, and the time delay error term to their whim. This myopic viewpoint of \eqref{A-1} led to the restrictions $\mathcal{D}(\Omega^0)/\kappa \ll 1$ and $m\kappa\ll 1$ in \eqref{eq:framework-small-param} which are potentially unnecessary,\footnote{The smallness of $\mathcal{D}(\mathcal{V})/\kappa$ is necessary; see \cite{D-B11}.} as suggested by the simulations in the next section. To dispense with the conditions $\mathcal{D}(\Omega^0)/\kappa \ll 1$ and $m\kappa\ll 1$, we would have to come up with a viewpoint that is robust to minor changes in the intrinsic velocity term and major changes in the initial velocity term and the time delay error term, possibly even making these three terms cooperate towards synchronization; for a candidate of such a framework, see {\bf Conjecture \ref{conj:R}.} Next, we derive crude bounds on $\dot\Theta(t)$, which corresponds to the physical limitations placed on the adversary in the above game.
\begin{lemma}[Finite propagation speed {\cite[Lemma 2.2]{C-L}}, {\cite[Lemmas 1{~and~}4]{H-J-K}}]\label{L2.2}
Let $(\Theta, \Omega)$ be a global solution to \eqref{B-1}. Then, for $i, j \in [N]$ and $t\ge 0$,  one has
\begin{align*}
\begin{aligned}
& (1)~e^{-t/m}\omega_i^0 +(1-e^{-t/m})(\nu_i - \kappa) \le \omega_i(t) \le e^{-t/m}\omega_i^0 +(1-e^{-t/m})(\nu_i + \kappa). \\
& (2)~ |\omega_i(t)-\omega_j(t)| \le e^{-t/m}|\omega_i^0-\omega_j^0| +(1-e^{-t/m})(|\nu_i-\nu_j| + 2\kappa). \\
& (3)~\mathcal{D}(\Omega(t))\le e^{-t/m}{\mathcal D}(\Omega^0)  + (1-e^{-t/m})\left( {\mathcal D}({\mathcal V}) + 2 \kappa\right). \\
& (4)~\int_0^t e^{-(t-s)/m}\left|\left(\theta_i(s)-\theta_j(s)\right)-\left(\theta_i(t)-\theta_j(t)\right)\right|ds \\
& \hspace{1cm} \le m|\omega^0_i-\omega_j^0|te^{-t/m}\left(1-e^{-t/m}\right)+m^2 (|\nu_i-\nu_j|+2\kappa)\left(1-e^{-t/m}\right)^3.
\end{aligned}
\end{align*}    
\end{lemma}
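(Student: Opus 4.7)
\textbf{Proof proposal for Lemma \ref{L2.2}.} Part (1) follows immediately from the Duhamel formula \eqref{B-4}. Since $|\sin|\le 1$, the nonlinear convolution term in \eqref{B-4} is bounded in absolute value by $\frac{\kappa}{m}\int_0^t e^{-(t-s)/m}ds=\kappa(1-e^{-t/m})$, which, added to the linear pieces $\omega_i^0 e^{-t/m}+\nu_i(1-e^{-t/m})$, yields the two-sided bound. For part (2), I would subtract the Duhamel formula for $\omega_j$ from that for $\omega_i$ to obtain
\[
\omega_i(t)-\omega_j(t)=e^{-t/m}(\omega_i^0-\omega_j^0)+(\nu_i-\nu_j)(1-e^{-t/m})+\frac{\kappa}{Nm}\sum_{l=1}^N\int_0^t e^{-(t-s)/m}[\sin(\theta_l(s)-\theta_i(s))-\sin(\theta_l(s)-\theta_j(s))]\,ds,
\]
and then use the crude bound $|\sin a-\sin b|\le 2$ to control the convolution by $2\kappa(1-e^{-t/m})$. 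Part (3) is obtained by maximizing (2) over $i,j\in[N]$.

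Part (4) is the substantive estimate. The plan is to rewrite
\[
(\theta_i(s)-\theta_j(s))-(\theta_i(t)-\theta_j(t))=-\int_s^t(\omega_i(u)-\omega_j(u))\,du,
\]
apply part (2) inside the integral, and then swap the order of integration by Fubini:
\[
\int_0^t e^{-(t-s)/m}\int_s^t|\omega_i(u)-\omega_j(u)|\,du\,ds
=\int_0^t|\omega_i(u)-\omega_j(u)|\cdot m(e^{-(t-u)/m}-e^{-t/m})\,du.
\]
Inserting the bound from (2) splits the integrand into an $|\omega_i^0-\omega_j^0|$ piece and a $(|\nu_i-\nu_j|+2\kappa)$ piece, each of which I would evaluate explicitly. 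The $\omega^0$-piece reduces, after a short calculation, to $m|\omega_i^0-\omega_j^0|e^{-t/m}[t-m(1-e^{-t/m})]$. Comparing this with the target $m|\omega_i^0-\omega_j^0|t e^{-t/m}(1-e^{-t/m})$ reduces to the elementary inequality $te^{-t/m}\le m(1-e^{-t/m})$ for $t\ge 0$, i.e.\ $(x+1)e^{-x}\le 1$ with $x=t/m$, which follows by monotonicity from its value $1$ at $x=0$.

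The $\nu$-piece reduces, after analogous bookkeeping, to $m^2(|\nu_i-\nu_j|+2\kappa)\bigl[(1-e^{-2t/m})-2(t/m)e^{-t/m}\bigr]$. The main obstacle, as I see it, is then showing the pointwise inequality
\[
1-e^{-2x}-2xe^{-x}\le(1-e^{-x})^3,\qquad x\ge 0,
\]
which comparing the two sides at the endpoints $x=0,\infty$ makes non-obvious. I would set $v=e^{-x}\in(0,1]$ and reduce this to $\phi(v)\coloneqq -v^2+4v-3-2\ln v\ge 0$; then $\phi(1)=0$ and
\[
\phi'(v)=-2v+4-\tfrac{2}{v}=-\tfrac{2(v-1)^2}{v}\le 0,
\]
so $\phi$ is decreasing on $(0,1]$ with $\phi(1)=0$, which gives $\phi(v)\ge 0$ throughout and hence the desired bound. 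Combining the two pieces yields the claimed estimate in (4).
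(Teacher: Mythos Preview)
Your proof is correct and essentially identical to the paper's: Duhamel for (1)--(3), and for (4) the same integration of the bound (2) against the kernel $e^{-(t-s)/m}$, arriving at the same exact expressions $m|\omega_i^0-\omega_j^0|(te^{-t/m}-me^{-t/m}+me^{-2t/m})$ and $m^2(|\nu_i-\nu_j|+2\kappa)(1-2(t/m)e^{-t/m}-e^{-2t/m})$ before invoking the inequalities $(x+1)e^{-x}\le 1$ and $1-2xe^{-x}-e^{-2x}\le (1-e^{-x})^3$. The only differences are cosmetic---you swap the integrals via Fubini while the paper integrates the inner integral first---and you additionally supply short proofs of the two calculus inequalities that the paper merely states.
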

\begin{proof}
(i)~We use \eqref{B-4} and  with the estimate
\begin{equation*}
\left|  \int_0^t e^{-(t-s)/m}\sin(\theta_j(s) -\theta_i(s)) ds \right| \le  \int_0^t e^{-(t-s)/m} ds  \le   m (1-e^{-t/m}). 
\end{equation*}
to derive the desired estimate.   \newline

\noindent (ii)~We use the estimate (1) to find 
\[
e^{-t/m}(\omega^0_i - \omega_j^0)  + (1 -e^{-t/m})(\nu_i - \nu_j-2\kappa)\le \omega_i(t) - \omega_j(t) \le e^{-t/m}(\omega^0_i - \omega_j^0)  + (1 -e^{-t/m})(\nu_i - \nu_j+2\kappa).
\]
This implies the desired estimate for $|\omega_i - \omega_j|$.  \newline

\noindent (iii)~We take the maximum in (2) over all $i,j\in [N]$ to find the desired estimate. \newline

\noindent (iv)~We use the estimate (2) to see that for $0\le s\le t$,
\begin{align}
\begin{aligned} \label{NN-4}
&\left|\left(\theta_i(s)-\theta_j(s)\right)-\left(\theta_i(t)-\theta_j(t)\right)\right|\\
& \hspace{0.5cm} \le \int_s^t |\omega_i(\tau)-\omega_j(\tau)|d\tau \stackrel{(2)}{\le} \int_s^t \left(|\omega_i^0-\omega_j^0| e^{-\tau/m} + \left( |\nu_i-\nu_j| + 2 \kappa\right) (1-e^{-\tau/m})\right)d\tau\\
& \hspace{0.5cm} \le |\omega_i^0-\omega_j^0|m\left( e^{-s/m}-e^{-t/m}\right) + \left( |\nu_i-\nu_j| + 2 \kappa\right) (t-s-me^{-s/m}+me^{-t/m}).
\end{aligned}
\end{align}
Now, we multiply $e^{-(t-s)/m}$ to \eqref{NN-4} and integrate the resulting relation to obtain 
\begin{align*}
&\int_0^t e^{-(t-s)/m}\left|\left(\theta_i(s)-\theta_j(s)\right)-\left(\theta_i(t)-\theta_j(t)\right)\right|ds\\
& \hspace{0.5cm} \le m|\omega_i^0-\omega_j^0|\left(te^{-t/m}-me^{-t/m}+me^{-2t/m}\right) + m (|\nu_i-\nu_j|+2\kappa)\left(m-2te^{-t/m}-me^{-2t/m}\right)\\
&  \hspace{0.5cm}  \le m|\omega_i^0-\omega_j^0|te^{-t/m}\left(1-e^{-t/m}\right) +m^2 (|\nu_i-\nu_j|+2\kappa)\left(1-e^{-t/m}\right)^3,
\end{align*}
where we have used the calculus inequalities
\[
xe^{-x}-e^{-x}+e^{-2x}\le xe^{-x}\left(1-e^{-x}\right),~1-2xe^{-x}-e^{-2x}\le \left(1-e^{-x}\right)^3,\quad x\ge 0.
\]
\end{proof}
In \eqref{eq:omega-decomposition}, the ``nonlinear interaction term'', ``initial frequency term'', and the ``intrinsic frrequency term'' are computable given the initial conditions $\mathcal{V}$ and $\Omega^0$ and the current condition $\Theta(t)$. Lemma \ref{L2.2} (4) tames the ``time-delay error term.'' This allows us to gain an even better approximation of $\dot\Theta(t)$. Furthermore, this allows for controlling $\dot\Theta_\mathcal{A}(t)$ given knowledge of $\Theta_\mathcal{A}(t)$, $\Omega_\mathcal{A}^0$, and $\mathcal{V}_\mathcal{A}$, for a subset $\mathcal{A}\subset [N]$; see Lemma \ref{L:approxaut} below. This is needed when $\mathcal{A}$ is a majority cluster concentrated on a small arc (see Definition \ref{def:majority-cluster}). We again stress the importance of this ``partial controlling lemma'' in that it will allow us to use the first-order Gr\"onwall inequalities.

\begin{lemma}\label{L:approxaut}
Let $(\Theta,\Omega)$ be a global solution to \eqref{A-1}. For $\mathcal{A}\subset [N]$ and $t\ge 0$,  the following statements hold. 
\begin{enumerate}
\item
For $i\in \mathcal{A}$,
\begin{align*}
&\left|\dot{\theta}_i(t)-\omega_i^0 e^{-t/m}-\nu_i\left(1-e^{-t/m}\right)-\frac \kappa N\sum_{l\in \mathcal{A}}\sin\left(\theta_l(t)-\theta_i(t)\right)\left(1-e^{-t/m}\right)\right|\\
& \hspace{0.5cm} \le \frac{|\mathcal{A}|}{N}\kappa\mathcal{D}(\Omega^0_\mathcal{A})te^{-t/m}\left(1-e^{-t/m}\right) +\frac{|\mathcal{A}|}{N}m\kappa (\mathcal{D}(\mathcal{V}_\mathcal{A})+2\kappa)\left(1-e^{-t/m}\right)^3 +\frac{N-|\mathcal{A}|}{N}\kappa (1-e^{-t/m}).
\end{align*}
\item
For $i,j\in \mathcal{A}$,
\begin{align*}
&\Bigg|\dot\theta_i(t)-\dot\theta_j(t)-(\omega^0_i-\omega^0_j) e^{-t/m}-(\nu_i-\nu_j) (1 -e^{-t/m})\\
& \hspace{0.5cm} -  \frac{\kappa}{N}\left(1-e^{-t/m}\right)\sum_{l\in \mathcal{A}}\left(\sin(\theta_l(t)-\theta_i(t))-\sin(\theta_l(t)-\theta_j(t))\right)\Bigg|\\
& \hspace{0.5cm} \le  2\kappa \left(\mathcal{D}(\Omega^0_\mathcal{A})te^{-t/m}\left(1-e^{-t/m}\right)+m (\mathcal{D}(\mathcal{V}_\mathcal{A})+2\kappa)\left(1-e^{-t/m}\right)^3\right)\\
& \hspace{0.5cm} +\frac{N-|\mathcal{A}|}{N}2\kappa\left|\sin\frac{\theta_i(t) -\theta_j(t)}{2}\right|\left(1-e^{-t/m}\right).
\end{align*}
\item
For $\mathcal{A}=[N]$ and $i\in [N]$, 
\begin{align*}
&\left|\dot{\theta}_i(t)-\omega_i^0 e^{-t/m}-\nu_i\left(1-e^{-t/m}\right)-\frac \kappa N\sum_{l=1}^N\sin\left(\theta_l(t)-\theta_i(t)\right)\left(1-e^{-t/m}\right)\right|\\
& \hspace{0.5cm} \le \kappa\mathcal{D}(\Omega^0)te^{-t/m}\left(1-e^{-t/m}\right) +m\kappa (\mathcal{D}(\mathcal{V})+2\kappa)\left(1-e^{-t/m}\right)^3.
\end{align*}
\end{enumerate}
\end{lemma}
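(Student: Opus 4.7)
The starting point is the Duhamel representation \eqref{B-4}, which expresses $\omega_i(t)$ as the sum of the decaying initial term $\omega_i^0 e^{-t/m}$, the relaxation term $\nu_i(1-e^{-t/m})$, and the weighted time-average $\frac{\kappa}{Nm}\sum_{j=1}^N\int_0^t e^{-(t-s)/m}\sin(\theta_j(s)-\theta_i(s))\,ds$. All three parts of the lemma will follow from estimating how well this weighted time-average can be approximated by its instantaneous time-$t$ counterpart restricted to the index set $\mathcal{A}$.

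For part (1), the plan is to split the sum $\sum_{j=1}^N$ into the contributions from $\mathcal{A}$ and from its complement. For each $l\in\mathcal{A}$, the convolution with $e^{-(t-s)/m}$ of $\sin(\theta_l(t)-\theta_i(t))$ is computed exactly as $m(1-e^{-t/m})\sin(\theta_l(t)-\theta_i(t))$, while the residual
\[
\int_0^t e^{-(t-s)/m}\bigl[\sin(\theta_l(s)-\theta_i(s))-\sin(\theta_l(t)-\theta_i(t))\bigr]\,ds
\]
is controlled by combining the $1$-Lipschitz property of $\sin$ with Lemma \ref{L2.2} (4) applied to the pair $(l,i)\in\mathcal{A}\times\mathcal{A}$. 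Because both indices lie in $\mathcal{A}$, the quantities $|\omega_l^0-\omega_i^0|$ and $|\nu_l-\nu_i|$ that appear in Lemma \ref{L2.2} (4) are bounded by $\mathcal{D}(\Omega^0_\mathcal{A})$ and $\mathcal{D}(\mathcal{V}_\mathcal{A})$, respectively. Summing over the $|\mathcal{A}|$ indices and prefactoring by $\kappa/(Nm)$ produces the first two terms on the right-hand side of (1). For $l\notin\mathcal{A}$ the trivial bound $|\sin|\le 1$ delivers the third contribution $\tfrac{N-|\mathcal{A}|}{N}\kappa(1-e^{-t/m})$. Part (3) is then the specialization $\mathcal{A}=[N]$, for which the complementary block vanishes and $|\mathcal{A}|/N=1$.

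The main obstacle is part (2), where the trivial $|\sin|\le 1$ bound on the complementary block would yield the wrong $(1-e^{-t/m})$ coefficient and fail to produce the desired smallness factor $\sin\tfrac{\theta_i(t)-\theta_j(t)}{2}$. The crucial device is the sum-to-product identity
\[
\sin(\theta_l(s)-\theta_i(s))-\sin(\theta_l(s)-\theta_j(s))=-2\cos\!\left(\theta_l(s)-\frac{\theta_i(s)+\theta_j(s)}{2}\right)\sin\frac{\theta_i(s)-\theta_j(s)}{2},
\]
which confines the dependence on $\theta_l$ to a cosine bounded by $1$ and leaves a factor depending only on indices in $\mathcal{A}$. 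Replacing $\sin\tfrac{\theta_i(s)-\theta_j(s)}{2}$ by its time-$t$ value via Lipschitz continuity and then invoking Lemma \ref{L2.2} (4) for the pair $(i,j)\in\mathcal{A}\times\mathcal{A}$ generates a secondary time-delay error that, crucially, only involves $|\omega_i^0-\omega_j^0|\le\mathcal{D}(\Omega^0_\mathcal{A})$ and $|\nu_i-\nu_j|\le\mathcal{D}(\mathcal{V}_\mathcal{A})$. For $l\in\mathcal{A}$ I will use the same Duhamel-plus-Lemma \ref{L2.2} (4) estimate as in part (1), but with errors arising from both pairs $(l,i)$ and $(l,j)$, giving an extra factor of $2$. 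Combining the two contributions and using the bookkeeping $1+|\mathcal{A}|/N\le 2$ to merge the time-delay coefficients produces the stated estimate.
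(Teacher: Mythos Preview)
Your proposal is correct and follows essentially the same route as the paper: Duhamel formula \eqref{B-4}, split the sum into $\mathcal{A}$ and its complement, control the $\mathcal{A}$-block time-delay error via Lemma \ref{L2.2}(4), bound the complement trivially in part (1), and in part (2) use the sum-to-product identity on the complement before swapping $\sin\tfrac{\theta_i(s)-\theta_j(s)}{2}$ for its time-$t$ value and absorbing the resulting $(i,j)$ time-delay error with the same bookkeeping $1+|\mathcal{A}|/N\le 2$. The paper's proof is identical in structure and in every key estimate.
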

\begin{proof}
\noindent (i)~It follows from \eqref{eq:omega-decomposition} that 
\begin{align}\label{eq:omega-decomposition-partial}
\begin{aligned}
\omega_i(t) &=  \frac{\kappa\left(1-e^{-t/m}\right)}{N}\sum_{l\in \mathcal{A}}\sin(\theta_l(t)-\theta_i(t))+\omega^0_i e^{-t/m} + \nu_i (1 -e^{-t/m}) \\
&+ \frac{\kappa}{Nm}\sum_{l\in \mathcal{A}} \int_0^t e^{-(t-s)/m}\left(\sin(\theta_l(s) -\theta_i(s))-\sin(\theta_l(t) -\theta_i(t))\right) ds\\
&+ \frac{\kappa}{Nm}\sum_{l\in [N]\setminus \mathcal{A}} \int_0^t e^{-(t-s)/m}\sin(\theta_l(s) -\theta_i(s)) ds.
\end{aligned}
\end{align}
Then, for $i\in \mathcal{A}$,
\begin{align*}
&\left|\dot{\theta}_i(t)-\omega_i^0 e^{-t/m}-\nu_i\left(1-e^{-t/m}\right)-\frac \kappa N\left(1-e^{-t/m}\right)\sum_{l\in \mathcal{A}}\sin\left(\theta_l(t)-\theta_i(t)\right)\right|\\
& \hspace{0.5cm} \stackrel{\mathclap{\eqref{eq:omega-decomposition-partial}}}{\le} \frac{\kappa}{Nm}\sum_{l\in \mathcal{A}}\int_0^t e^{-(t-s)/m}\left|\sin\left(\theta_l(s)-\theta_i(s)\right)-\sin\left(\theta_l(t)-\theta_i(t)\right)\right|ds\\
&  \hspace{0.5cm} \quad + \frac{\kappa}{Nm}\sum_{l\in [N]\setminus \mathcal{A}} \int_0^t e^{-(t-s)/m}\left|\sin(\theta_l(s) -\theta_i(s))\right| ds\\
&  \hspace{0.5cm}\le \frac{\kappa}{Nm}\sum_{l\in \mathcal{A}}\int_0^t e^{-(t-s)/m}\left|\left(\theta_l(s)-\theta_i(s)\right)-\left(\theta_l(t)-\theta_i(t)\right)\right|ds\\
&  \hspace{0.5cm} \quad+ \frac{N-|\mathcal{A}|}{N}\frac{\kappa}{m}\int_0^t e^{-(t-s)/m} ds\\
&  \hspace{0.5cm} \stackrel{\mathclap{\mathrm{Lemma~}\ref{L2.2}\mathrm{(4)}}}{\le}\qquad\frac{|\mathcal{A}|}{N}\kappa\mathcal{D}(\Omega^0_\mathcal{A})te^{-t/m}\left(1-e^{-t/m}\right) +\frac{|\mathcal{A}|}{N}m\kappa (\mathcal{D}(\mathcal{V}_\mathcal{A})+2\kappa)\left(1-e^{-t/m}\right)^3 \\
&  \hspace{0.5cm} \quad +\frac{N-|\mathcal{A}|}{N}\kappa (1-e^{-t/m}).
\end{align*}

\vspace{0.2cm}

\noindent (ii)~For $i,j\in \mathcal{A}$, we use \eqref{eq:omega-decomposition-partial} and the same argument as in (i) to find 
\begin{align*}
&\Bigg|\dot\theta_i(t)-\dot\theta_j(t)-(\omega^0_i-\omega^0_j) e^{-t/m}-(\nu_i-\nu_j) (1 -e^{-t/m})\\
& \hspace{1cm} -  \frac{\kappa}{N}\left(1-e^{-t/m}\right)\sum_{l\in \mathcal{A}}\left(\sin(\theta_l(t)-\theta_i(t))-\sin(\theta_l(t)-\theta_j(t))\right)\Bigg|\\
& \hspace{0.5cm} \le \left|\frac{\kappa}{Nm}\sum_{l\in \mathcal{A}} \int_0^t e^{-(t-s)/m}\left(\sin(\theta_l(s) -\theta_i(s))-\sin(\theta_l(t) -\theta_i(t))\right) ds\right|\\
& \hspace{0.5cm}  \quad+\left|\frac{\kappa}{Nm}\sum_{l\in \mathcal{A}} \int_0^t e^{-(t-s)/m}\left(\sin(\theta_l(s) -\theta_j(s))-\sin(\theta_l(t) -\theta_j(t))\right) ds\right|\\
& \hspace{0.5cm}  \quad+ \left|\frac{\kappa}{Nm}\sum_{l\in [N]\setminus \mathcal{A}} \int_0^t e^{-(t-s)/m}\left(\sin(\theta_l(s) -\theta_i(s))-\sin(\theta_l(s) -\theta_j(s))\right) ds\right|\\
&  \hspace{0.5cm}  \le   \frac{\kappa}{Nm}\sum_{l\in \mathcal{A}} \int_0^t e^{-(t-s)/m}\left|(\theta_l(s) -\theta_i(s))-(\theta_l(t) -\theta_i(t))\right| ds\\
&  \hspace{0.5cm}  \quad +\frac{\kappa}{Nm}\sum_{l\in \mathcal{A}} \int_0^t e^{-(t-s)/m}\left|(\theta_l(s) -\theta_j(s))-(\theta_l(t) -\theta_j(t))\right| ds\\
& \hspace{0.5cm}  \quad + \frac{N-|\mathcal{A}|}{N}\frac{2\kappa}{m} \int_0^t e^{-(t-s)/m}\left|\sin\frac{\theta_i(s) -\theta_j(s)}{2}\right| ds\\
&  \hspace{0.5cm}  \stackrel{\mathclap{\mathrm{Lemma~}\ref{L2.2}\mathrm{(4)}}}{\le}  \qquad 2\cdot\frac{|\mathcal{A}|}{N} \kappa \left(\mathcal{D}(\Omega^0_\mathcal{A})te^{-t/m}\left(1-e^{-t/m}\right)+m (\mathcal{D}(\mathcal{V}_\mathcal{A})+2\kappa)\left(1-e^{-t/m}\right)^3\right)\\
& \hspace{0.5cm}  \quad + \frac{N-|\mathcal{A}|}{N}\frac{2\kappa}{m} \int_0^t e^{-(t-s)/m}\left|\sin\frac{\theta_i(s) -\theta_j(s)}{2}-\sin\frac{\theta_i(t) -\theta_j(t)}{2}\right| ds\\
& \hspace{0.5cm}  \quad + \frac{N-|\mathcal{A}|}{N}\frac{2\kappa}{m} \int_0^t e^{-(t-s)/m}\left|\sin\frac{\theta_i(t) -\theta_j(t)}{2}\right| ds\\
&  \hspace{0.5cm}  \le  2\cdot\frac{|\mathcal{A}|}{N} \kappa \left(\mathcal{D}(\Omega^0_\mathcal{A})te^{-t/m}\left(1-e^{-t/m}\right)+m (\mathcal{D}(\mathcal{V}_\mathcal{A})+2\kappa)\left(1-e^{-t/m}\right)^3\right)\\
&  \hspace{0.5cm}  \quad + \frac{N-|\mathcal{A}|}{N}\frac{\kappa}{m} \int_0^t e^{-(t-s)/m}\left|(\theta_i(s) -\theta_j(s))-(\theta_i(t) -\theta_j(t))\right| ds\\
&  \hspace{0.5cm}  \quad +\frac{N-|\mathcal{A}|}{N}2\kappa\left|\sin\frac{\theta_i(t) -\theta_j(t)}{2}\right|\left(1-e^{-t/m}\right)\\
\stackrel{\mathclap{\mathrm{Lemma~}\ref{L2.2}\mathrm{(4)}}}{\le} &\qquad 2\cdot \frac{|\mathcal{A}|}{N} \kappa \left(\mathcal{D}(\Omega^0_\mathcal{A})te^{-t/m}\left(1-e^{-t/m}\right)+m (\mathcal{D}(\mathcal{V}_\mathcal{A})+2\kappa)\left(1-e^{-t/m}\right)^3\right)\\
& \hspace{0.5cm}  \quad + \frac{N-|\mathcal{A}|}{N}\kappa\left(\mathcal{D}(\Omega^0_\mathcal{A})te^{-t/m}\left(1-e^{-t/m}\right)+m (\mathcal{D}(\mathcal{V}_\mathcal{A})+2\kappa)\left(1-e^{-t/m}\right)^3\right)\\
& \hspace{0.5cm}  \quad +\frac{N-|\mathcal{A}|}{N}2\kappa\left|\sin\frac{\theta_i(t) -\theta_j(t)}{2}\right|\left(1-e^{-t/m}\right)\\
&  \hspace{0.5cm}  \le  2\kappa \left(\mathcal{D}(\Omega^0_\mathcal{A})te^{-t/m}\left(1-e^{-t/m}\right)+m (\mathcal{D}(\mathcal{V}_\mathcal{A})+2\kappa)\left(1-e^{-t/m}\right)^3\right)\\
& \hspace{0.5cm}  \quad +\frac{N-|\mathcal{A}|}{N}2\kappa\left|\sin\frac{\theta_i(t) -\theta_j(t)}{2}\right|\left(1-e^{-t/m}\right).
\end{align*}

\vspace{0.2cm}

\noindent (iii)~In (1), we set $\mathcal{A}=[N]$ to find the desired estimate. 
\end{proof}

\begin{remark}
To check that our framework is applicable, we see that the additive error is bounded by
\[
\kappa\mathcal{D}(\Omega^0)te^{-t/m}\left(1-e^{-t/m}\right) +m\kappa (\mathcal{D}(\mathcal{V})+2\kappa)\left(1-e^{-t/m}\right)^3\le m\kappa \mathcal{D}(\Omega^0)+m\kappa (\mathcal{D}(\mathcal{V})+2\kappa),
\]
which is indeed $\ll \kappa$ under our crude framework \eqref{eq:framework-small-param}.
\end{remark}
The approximation scheme in this subsection raises the following question:
\begin{question}
``Given the system parameters $m$, $\kappa$, $\mathcal{V}$, initial frequency data $\dot\Theta(T)$, a time $t\ge 0$, and position data $\Theta(t)$ at time $t$, is the velocity data $\dot\Theta(t)$ uniquely determined? If so, how can we compute $\dot\Theta(t)$?"
\end{question}
We defer a partial answer and a rigorous analysis to a forthcoming paper.

%%%%%%%%%%%%%%%%%%%%%%%%%
\section{Synchronization Mechanism, Conjectures and Strategy}\label{sec:2-divided}
So far, we have not yet described why the nonlinear interaction term in \eqref{eq:omega-decomposition} contributes to synchronization. In this section, we will describe three mechanisms behind this in the next three subsections. In the fourth subsection, we will delineate our main framework for synchronization of \eqref{A-1}, namely Theorem \ref{T3.1}. For convenience, related previous results are provided in Appendix \ref{app:suppt}.

\subsection{Synchronization Mechanism I. Inertial gradient flow formulation of \eqref{A-1} and the {\L}ojasiewicz gradient theorem}\label{subsec:1.6}

The inertial Kuramoto model \eqref{A-1}, or equivalently the second-order model \eqref{B-1}, admits an inertial gradient flow formulation. For this, we define the analytic potential $P = P(\Theta)$ as
\begin{equation} \label{B-5-1}
P(\Theta) \coloneqq -\sum_{k=1}^N \nu_k\theta_k + \frac{\kappa}{2}\sum_{k,l=1}^N \Big(1 - \cos(\theta_k -\theta_l) \Big).
\end{equation}
Then system \eqref{A-1} can be rewritten as an inertial gradient flow system:
\begin{equation} \label{B-6}
m\ddot\Theta + \dot\Theta = -\nabla_{\Theta} P(\Theta).
\end{equation}
We introduce the zero set of the potential force $-\nabla_{\Theta} P(\Theta)$:
\[
\mathcal{S}\coloneqq \{ (\Theta, {\bf 0}) \in \mathbb{R}^{2N} \,:\, \nabla_\Theta P(\Theta) = 0\}.
\]
The classical {\L}ojasiewicz gradient theorem \cite{L2}, which is a consequence of the {\L}ojasiewicz gradient inequality \cite{L1}, states that a bounded solution to a \emph{first-order} gradient flow of a real analytic potential converges asymptotically. This was extended to the generality of inertial gradient flow systems in \cite[Theorem 1.1]{H-J}; it was observed in \cite[Proposition 2.1]{C-L} that this formulation applies to the inertial Kuramoto model \eqref{A-1} via the formulation \eqref{B-6}.

\begin{proposition}[{\cite[Proposition 2.1]{C-L}}, {\cite[Theorem 1.1]{H-J}}, {\cite[Corollary 5.1]{B-B-J}}]\label{P2.1}
Let $(\Theta, \Omega)$ be a global solution to \eqref{B-1} satisfying the following zero-sum condition and a priori uniform bound estimate:
\[ %\label{B-7}
\nu_c = 0, \quad  \|\Theta\|_{W^{1,\infty}} \coloneqq \|\Theta \|_{L^\infty(\bbr_+)} + \| \Omega\|_{L^\infty(\bbr_+)} <\infty.
\]
Then, there exists $(\Theta^\infty, {\bf 0}) \in\mathcal{S}$ such that
\begin{align*}
\lim_{t\to\infty } \Big( \|\Theta(t) -\Theta^\infty \|_{\infty} + \|\Omega(t)\|_{\infty}  \Big) =0.
\end{align*}
Moreover, the convergence is at least algebraic: there exist constants $c,C>0$ such that
\[
\|\Theta(t) -\Theta^\infty \|_{\infty}\le Ct^{-c},\quad t\ge 0.
\]
\end{proposition}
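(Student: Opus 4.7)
The plan is to treat \eqref{A-1} as the inertial gradient flow \eqref{B-6} for the real analytic potential $P$ in \eqref{B-5-1} and apply the Haraux--Jendoubi extension of the {\L}ojasiewicz theorem. First, I would introduce the natural mechanical energy
\[
E(t) \coloneqq \tfrac{m}{2}\|\dot{\Theta}(t)\|_2^2 + P(\Theta(t))
\]
and differentiate using \eqref{B-6} to obtain the dissipation identity $\dot E(t) = -\|\dot{\Theta}(t)\|_2^2 \le 0$. The $W^{1,\infty}$ bound on $(\Theta,\Omega)$ keeps the orbit inside a compact set, on which $P$ is bounded below; consequently $E(t)$ converges to some $E_\infty \in \mathbb{R}$ and $\int_0^\infty \|\dot{\Theta}\|_2^2\,dt < \infty$. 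From \eqref{A-1}, boundedness of $\dot{\Theta}$ and $\sin(\theta_j-\theta_i)$ yields boundedness of $\ddot{\Theta}$, so $\dot{\Theta}$ is uniformly continuous, which together with the $L^2$ bound forces $\dot{\Theta}(t) \to 0$ as $t \to \infty$. Hence the $\omega$-limit set $\omega(\Theta^0,\Omega^0)$ is a nonempty compact connected subset of $\mathcal{S}$, and $P$ takes the constant value $E_\infty$ on it.

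The second step is to fix any $(\Theta^\infty,\mathbf{0}) \in \omega(\Theta^0,\Omega^0)$ and invoke the {\L}ojasiewicz gradient inequality for the real analytic function $P$: there exist $\theta \in (0,\tfrac{1}{2}]$, $C>0$, and a neighborhood $U$ of $\Theta^\infty$ such that
\[
|P(\Theta) - E_\infty|^{1-\theta} \le C\,\|\nabla P(\Theta)\|_2, \qquad \Theta \in U.
\]
Following Haraux--Jendoubi, I would then introduce a modified Lyapunov function of the form
\[
H(t) \coloneqq \bigl(E(t) - E_\infty\bigr)^\theta + \eta\,\langle m\dot{\Theta}(t),\, \Theta(t) - \Theta^\infty\rangle,
\]
and, for a suitably small $\eta>0$ and for times $t$ large enough that $\Theta(t)$ stays in $U$ (which can be arranged along a subsequence converging to $\Theta^\infty$ and then propagated by a continuity/trapping argument), show the differential inequality
\[
-\dot H(t) \ge c\,\|\dot{\Theta}(t)\|_2
\]
for some $c>0$. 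The proof of this is the technical heart: expanding $\dot H$ using \eqref{B-6} and the identity $\dot E = -\|\dot\Theta\|_2^2$, one cross-controls the indefinite term $\eta\,m\|\dot\Theta\|_2^2$ with the dissipation and uses the {\L}ojasiewicz bound to dominate $\eta\,\langle \nabla P,\, \Theta-\Theta^\infty\rangle$ by $\|\dot\Theta\|_2$.

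Integrating the differential inequality in $t$ yields $\dot{\Theta} \in L^1(0,\infty)$; combined with $\dot{\Theta}(t)\to 0$, this gives $\Theta(t)\to\Theta^\infty$, $\Omega(t)\to 0$, so $(\Theta^\infty,\mathbf{0})\in\mathcal{S}$ is a \emph{single} limit point. For the algebraic decay rate, I would close the argument by showing that the positive quantity $G(t) \coloneqq E(t) - E_\infty + \tfrac{m}{2}\|\dot\Theta\|_2^2$ (or $H(t)$ itself) satisfies a differential inequality of the form $\dot G \le -c\, G^{2(1-\theta)}$, via another application of the {\L}ojasiewicz inequality; solving this ODE inequality yields polynomial decay of $\|\Theta(t)-\Theta^\infty\|_\infty + \|\Omega(t)\|_\infty$ with an exponent determined by $\theta$. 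The main obstacle is the construction and sign analysis of $H(t)$: unlike the first-order setting where $\dot\Theta = -\nabla P$ makes the argument immediate, here one must carefully balance the kinetic term $\tfrac{m}{2}\|\dot\Theta\|_2^2$ against the dissipation $-\|\dot\Theta\|_2^2$ and the position--velocity coupling to extract the decisive $\|\dot\Theta\|_2$ lower bound on $-\dot H$.
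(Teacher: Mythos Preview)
The paper does not supply its own proof of this proposition; it is stated as a citation of \cite[Proposition~2.1]{C-L}, \cite[Theorem~1.1]{H-J}, and \cite[Corollary~5.1]{B-B-J}, with no accompanying proof environment. Your proposal correctly reconstructs the Haraux--Jendoubi argument that underlies those references: the energy dissipation $\dot E=-\|\dot\Theta\|_2^2$, the Barbalat step to get $\dot\Theta\to0$ and $\omega(\Theta^0,\Omega^0)\subset\mathcal{S}$, the {\L}ojasiewicz inequality near a limit point, and the modified Lyapunov function with a position--velocity cross term to upgrade $\dot\Theta\in L^2$ to $\dot\Theta\in L^1$. This is exactly the route of \cite{H-J}, so your sketch is faithful to what the paper invokes.

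One small caution on the form of $H$: the term $(E(t)-E_\infty)^\theta$ requires $E(t)\ge E_\infty$, which holds here since $E$ is nonincreasing, but the cross term $\eta\langle m\dot\Theta,\Theta-\Theta^\infty\rangle$ can be negative; in \cite{H-J} the functional is built so that the trapping argument near $\Theta^\infty$ keeps $H$ well-defined and the differential inequality valid. Your description of this as ``the technical heart'' and ``main obstacle'' is accurate, and the references you are effectively reproducing handle it by choosing $\eta$ small relative to the local {\L}ojasiewicz constants and running a standard continuation argument.
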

\begin{remark} \label{R2.2}
We give several comments on the content of Proposition \ref{P2.1}.
\begin{enumerate}
    \item It follows from Lemma \ref{L2.2} that
\[
 \| \Omega\|_{L^\infty(\bbr_+)} <\infty.
\]
Thus, in order to apply the result of Proposition \ref{P2.1}, it suffices to check the following uniform bound on $\Theta$:
\begin{equation} \label{B-7-1}
\|\Theta \|_{L^\infty(\bbr_+)} < \infty.
\end{equation}
Furthermore, by \eqref{B-3} of Remark \ref{R2.1}, the condition \eqref{B-7-1} is equivalent to the condition
\begin{equation} \label{B-7-2}
\sup_{0 \leq t < \infty} {\mathcal D}(\Theta(t)) < \infty,
\end{equation}
invariant under the Galilean transformation \eqref{eq:galilean}.
Thus, this seemingly weaker condition \eqref{B-7-2} is equivalent to asymptotic phase-locking of the Kuramoto phase flow $\Theta$, i.e., to show asymptotic phase-locking, it is enough to show \eqref{B-7-2}.
\vspace{0.1cm}
    \item In the small inertia regime $m\kappa\le \frac 14$, another equivalent formulation of asymptotic phase-locking is the finiteness of collisions, shown in \cite{C-D-H}. For a simple proof and strengthening of this fact, we refer to Appendix \ref{app:collision} and Theorem \ref{thm:finite-collision}.
\end{enumerate}
\end{remark}
In general, it is impossible to construct a Lyapunov functional for \eqref{A-1} or \eqref{A-2} with the torus $\mathbb{T}^N$ as the configuration space because of an example of system parameters $\mathcal{V}$ and initial data $\Theta^0$, $\Omega^0$ given below, such that the solution does not achieve asymptotic phase-locking no matter the choice of $\kappa$ and $m$. 
\begin{example}\label{ex:nonsync}
Consider parameters for \eqref{A-1} where we have a decomposition
\[
[N]=\bigsqcup_{\alpha=1}^M \mathcal{A}_\alpha,~M\ge 2,~A_\alpha\neq \emptyset,
\]
with respect to which the initial data satisfying
\[
\forall \alpha\in [M], \quad \sum_{i\in \mathcal{A}_\alpha}e^{\mathrm{i}\theta_i^0}=0,
\]
and the intrinsic and initial frequencies satisfying
\[
\forall~\alpha\in [M], ~~\forall~ i,j\in \mathcal{A}_\alpha, \quad \nu_i=\nu_j\eqqcolon\tilde{\nu}_\alpha \mathrm{~and~}\omega_i^0=\omega_j^0\eqqcolon\tilde{\omega}_\alpha^0, \]
and 
\[ \nu_i\neq \nu_j\quad\forall i\in \mathcal{A}_{\alpha_1},~\forall j\in \mathcal{A}_{\alpha_2},\quad \alpha_1\neq\alpha_2\in[M].
\]
Then, the solution to \eqref{A-1} is given as
\[
\theta_i(t)=m\omega_i^0(1-e^{-t/m})+\nu_i(t-m+me^{-t/m})+\theta_i^0,\quad \omega_i(t)=\omega_i^0 e^{-t/m}+\nu_i (1-e^{-t/m}),
\]
since solutions to \eqref{A-1} are unique and the above solution satisfies
\begin{align*}
\begin{aligned}
& \sum_{i=1}^Ne^{\mathrm{i}\theta_i(t)}=\sum_{\alpha\in [M]}e^{\mathrm{i}(m\tilde{\omega}_\alpha^0(1-e^{-t/m})+\tilde{\nu}_\alpha(t-m+me^{-t/m}))}\sum_{i\in \mathcal{A}_\alpha}e^{\mathrm{i}\theta_i^0}=0, \\
&  \sum_{j=1}^N\sin(\theta_j(t)-\theta_i(t))=\Im\left(e^{-\mathrm{i}\theta_i(t)}\sum_{j=1}^Ne^{\mathrm{i}\theta_j(t)}\right)=0, \quad 
m\dot\omega_i(t)+\omega_i(t)=\nu_i.
\end{aligned}
\end{align*}
A special case of this solution was given in \cite[Example 2.2]{H-R}:
\[ N=4,\quad \nu_1 = \nu_2 \neq \nu_3 = \nu_4,  \quad  \theta_1 = \nu_1 t,\quad \theta_2 =\nu_1 t + \pi,\quad \theta_3 = \nu_3 t,\quad \theta_4 = \nu_3 t + \pi.
\]
Recalling the definition \eqref{eq:R^0} of the order parameter $R$, we observe that 
\[ R(\Theta(t))=0 \quad \mbox{for all $t\ge 0$}. \]
It can be shown that these are the only configurations for which $R=0$ for all $t\ge 0$.

Observe that no matter the choice of $\kappa$ or $m$, the above solution does not achieve asymptotic phase-locking. Hence, we cannot expect asymptotic phase-locking for arbitrary initial data even in the large coupling regime. Nevertheless, the condition $R>0$ which is true for Lebesgue-almost every initial data\footnote{Indeed, assume $N\ge 2$, since when $N=1$, the case $R=0$ cannot happen. Let $X$ denote the set of bipolar configurations $\Theta\in \mathbb{R}^N$. Then $X$ is a one-dimensional closed submanifold of $\mathbb{R}^N$. On the open set $O=\mathbb{R}^N\setminus X$, the map $f:O\to \mathbb{C}$, $f(\Theta)=\frac 1N\sum_{i=1}^N e^{{\mathrm i}\theta_i}$, has $0$ as a regular value, so $f^{-1}(0)$ is an $(N-2)$-dimensional submanifold of $O$. Therefore the set of $\Theta$ such that $R(\Theta)=0$ forms a subset of $\mathbb{R}^N$ of Hausdorff dimension at most $\max\{1,N-2\}$.} rules out this pathological initial data. The best result we can hope for is asymptotic phase-locking for \emph{generic} initial data in the large coupling regime.

\end{example}
Despite Example \ref{ex:nonsync}, we can still ask if one can construct \emph{weak Lyapunov functionals} for \eqref{A-1} and \eqref{A-2}, namely functions $f:\mathbb{T}\times \mathbb{R}^N\to\mathbb{R}$ and $g:\mathbb{T}^N\to \mathbb{R}$ such that $\frac{d}{dt}f(\Theta,\dot\Theta)\ge 0$ for any solution $\Theta$ to \eqref{A-1}, and $\frac{d}{dt}g(\Theta)\ge 0$ for any solution $\Theta$ to \eqref{A-2}, with equality happening only at phase-locked states traveling at constant speed, and the configurations of Example \ref{ex:nonsync}. We pose this as a question below.
\begin{conjecture}\label{conj:lyapunov}\,
    \begin{enumerate}
        \item (Weak form) There is a constant $c\ge \frac 12$ such that if $\kappa>c\mathcal{D}(\mathcal{V})$, then \eqref{A-1} and \eqref{A-2} admit weak Lyapunov functionals.
        \vspace{0.1cm}
        \item (Strong form) Denoting the critical coupling strength\footnote{This is the coupling strength above which phase-locked states exist. It is given as a function of $\mathcal{V}$ \cite{V-M}.} as $\kappa_c(\mathcal{V})$, if $\kappa>\kappa_c(\mathcal{V})$, then \eqref{A-1} and \eqref{A-2} admit weak Lyapunov functionals.
    \end{enumerate}
\end{conjecture}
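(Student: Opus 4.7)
The natural candidates coming from the inertial gradient flow structure \eqref{B-6}---namely the potential $P$ for \eqref{A-2} and the mechanical energy $E(\Theta,\dot\Theta) = \tfrac{m}{2}\|\dot\Theta\|_2^2 + P(\Theta)$ for \eqref{A-1}---satisfy $\dot P = \dot E = -\|\dot\Theta\|_2^2 \le 0$ after Galilean reduction to $\nu_c = 0$, but they fail the conjecture's requirements on two counts. First, the linear piece $-\sum_k \nu_k \theta_k$ in $P$ prevents descent to the torus when $\mathcal V \neq 0$. Second, on every Example \ref{ex:nonsync} solution one has $\|\dot\Theta\|_2^2 = \|\mathcal V\|_2^2 > 0$, so $P$ and $E$ are strictly decreasing precisely where the conjecture requires $\dot g$ and $\dot f$ to vanish. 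The plan is therefore to replace $P$ by a torus-intrinsic functional whose time-derivative is killed on the Example \ref{ex:nonsync} locus by virtue of the cluster-centroid cancellation $\sum_{i \in \mathcal A_\alpha} e^{\mathrm i \theta_i} = 0$.

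My candidate for $g \colon \mathbb T^N \to \mathbb R$ is
\[
g(\Theta) := -\tfrac{1}{2}\|\Phi(\Theta)\|_2^2, \qquad \Phi_i(\Theta) := \nu_i + \frac{\kappa}{N}\sum_{j=1}^N \sin(\theta_j - \theta_i),
\]
where $\Phi$ is $2\pi$-periodic in each $\theta_k$ and coincides with $\dot\Theta$ along \eqref{A-2}. Differentiating $g$ using $\ddot\theta_i = \frac{\kappa}{N}\sum_j \cos(\theta_j - \theta_i)(\dot\theta_j - \dot\theta_i)$ and symmetrizing in $(i,j)$ yields the key identity
\[
\dot g = \frac{\kappa}{2N}\sum_{i,j=1}^N \cos(\theta_j - \theta_i)(\dot\theta_i - \dot\theta_j)^2,
\]
manifestly nonnegative in the half-circle confinement regime where all $\cos(\theta_j - \theta_i) \ge 0$. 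It vanishes at every phase-locked state, where $\dot\theta_i - \dot\theta_j \equiv 0$, and, crucially, at every Example \ref{ex:nonsync} configuration: splitting the double sum by clusters and using $\dot\theta_i \equiv \tilde\nu_\alpha$ for $i \in \mathcal A_\alpha$ reduces it to $\sum_{\alpha \neq \beta}(\tilde\nu_\alpha - \tilde\nu_\beta)^2 \operatorname{Re}(\overline{C_\alpha} C_\beta) = 0$, because every cluster centroid $C_\alpha \coloneqq \sum_{i \in \mathcal A_\alpha} e^{\mathrm i \theta_i}$ vanishes by assumption. For the inertial model I would try $f(\Theta,\dot\Theta) := g(\Theta) - \tfrac{1}{2}\|\dot\Theta - \Phi(\Theta)\|_2^2$ (or a tuned combination with a torus-intrinsic kinetic piece), exploiting the relaxation identity $m\ddot\Theta = \Phi(\Theta) - \dot\Theta$ to absorb the additional cross-terms in the small-inertia regime.

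The main obstacle will be verifying $\dot g \ge 0$ (and $\dot f \ge 0$) \emph{outside} the half-circle region, where some $\cos(\theta_j - \theta_i)$ are negative and the cosine-weighted graph-Laplacian quadratic form controlling $\dot g$ ceases to be positive semidefinite. For the weak form I would attempt the following strategy: invoke the quasi-monotonicity of the order parameter from Subsection \ref{subsec:1.5} to show that for $\kappa > c\, \mathcal D(\mathcal V)$ with $c \ge 1/2$, the sublevel sets of $-R^2$ are eventually forward-invariant for initial data away from the pathological bipolar locus, thereby confining trajectories to a regime in which the sign-indefinite cross-terms in $\dot g$ can be absorbed into the dominant positive diagonal piece via a Cauchy--Schwarz estimate analogous to the variance-dissipation argument underlying Theorem \ref{thm:1stKu-vanilla}. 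The strong form ($\kappa > \kappa_c(\mathcal V)$) is substantially more delicate near the bifurcation; there I would first classify the $\omega$-limit sets on the torus via a {\L}ojasiewicz-type analysis of the non-gradient vector field, and only then back-engineer $g$ through a LaSalle-type construction. I expect the delicate cancellation along Example \ref{ex:nonsync}-adjacent trajectories---where the dominant positive term in $\dot g$ becomes small while the sign-indefinite cross-terms remain order unity---to be the technical heart of the argument, and this may well be where the truth value of the conjecture is ultimately decided.
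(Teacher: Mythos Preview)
This statement is a \emph{conjecture} in the paper, not a theorem: the paper explicitly poses it as an open question and provides no proof. So there is no ``paper's own proof'' to compare against, and your proposal should be read as a strategy sketch toward an open problem rather than a proof to be checked for completeness.

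That said, your proposal contains a genuine gap that would have to be closed before the argument could go through. Your identity
\[
\dot g \;=\; \frac{\kappa}{2N}\sum_{i,j}\cos(\theta_j-\theta_i)\,(\dot\theta_i-\dot\theta_j)^2
\]
is correct, and the observation that it vanishes on the Example~\ref{ex:nonsync} configurations via the cluster-centroid cancellation is a nice structural check. But the conjecture asks for a functional $g:\mathbb{T}^N\to\mathbb{R}$ with $\dot g\ge 0$ along \emph{every} solution, and your candidate fails this globally: outside the half-circle the cosine-weighted Laplacian is indefinite and $\dot g$ can be strictly negative. Your proposed repair---use the quasi-monotonicity of $R$ from Subsection~\ref{subsec:1.5} to first confine trajectories and then verify $\dot g\ge 0$ in the confined region---is circular. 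If one already knew that every trajectory is eventually confined to a region where some functional is monotone, the conjecture would be essentially answered by other means; the whole point of a weak Lyapunov functional on $\mathbb{T}^N$ is to obtain monotonicity \emph{without} first proving confinement. At best your argument would establish ``$\dot g\ge 0$ for $t\ge T(\Theta^0)$'', which is not what the conjecture asks.

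There is a second, independent issue you do not address: even if one could arrange $\dot g\ge 0$ globally, the conjecture also requires that equality hold \emph{only} at phase-locked states and Example~\ref{ex:nonsync} configurations. Since the quadratic form $\sum_{i,j}\cos(\theta_j-\theta_i)(v_i-v_j)^2$ is indefinite on the full torus, its zero set (intersected with the velocity constraint $v=\Phi(\Theta)$) is in general larger than those two families, so the ``only if'' direction would need a separate argument that you have not sketched. In short: the candidate and the identity are reasonable starting points, but as written the proposal does not close either the global sign condition or the characterization of the equality set, and the suggested route through quasi-monotonicity does not avoid the circularity.
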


%%%%%%%%%%%%%%%%%%%%%%%%%
\subsection{Synchronization Mechanism II. Stability of majority clusters}\label{subsec:1.4}
Another viewpoint on the role of the term $\frac 1N\sum_{l=1}^N\sin(\theta_l-\theta_i)$ in synchronization is the following. In $\dot\theta_j-\dot\theta_i$, there is the term
\begin{equation*}%\label{eq:trig}
    \frac 1N\sum_{l=1}^N\sin(\theta_l-\theta_j)-\frac 1N\sum_{l=1}^N\sin(\theta_l-\theta_i)=-2\sin\left(\frac{\theta_j-\theta_i}{2}\right)\cdot \frac 1N\sum_{l=1}^N \cos\left(\theta_l-\frac{\theta_i+\theta_j}{2}\right).
\end{equation*}
Assuming without loss of generality that $\theta_i<\theta_j\le \theta_i+\pi$, this will be negative if and only if
\begin{equation}\label{eq:imbalance}
\frac 1N\sum_{l=1}^N \cos\left(\theta_l-\frac{\theta_i+\theta_j}{2}\right)>0,
\end{equation}
contributing to $\theta_i$ and $\theta_j$ being pulled towards each other.

If $\{\theta_i\}_{i=1}^N$ is evenly distributed throughout the circle, the relation \eqref{eq:imbalance} would not likely happen for many pairs of $i$ and $j$, and even if the left-hand side of \eqref{eq:imbalance} were positive, it would be small\footnote{The heuristic is that if $\{\theta_i\}_{i=1}^N$ is evenly distributed throughout the circle, then the order parameter $R$ would be small, but $\left|\frac 1N\sum_{l=1}^N \cos\left(\theta_l-\frac{\theta_i+\theta_j}{2}\right)\right|\le R$ since it is the inner product of the centroid $\frac 1N\sum_l e^{\mathrm{i}\theta_l}$ and the unit vector $e^{\mathrm{i}(\theta_i+\theta_j)/2}$.} and the synchronous effects can be ignored. However, if a majority of the oscillators, say $\{\theta_i\}_{i\in \mathcal{A}}$, with $|\mathcal{A}|\ge \lambda N$ and $\lambda >\frac 12$, are concentrated in a sufficiently small arc, say of length $\ell\in (0,\pi)$, this can happen for $i,j\in \mathcal{A}$. In particular, when
\[
i=\operatorname{argmin}_{l\in \mathcal{A}}\theta_l,\quad j=\operatorname{argmax}_{l\in \mathcal{A}}\theta_l,
\]
then \eqref{eq:imbalance} will be guaranteed when
\[
\frac 1N\sum_{l\in \mathcal{A}} \cos\left(\theta_l-\frac{\theta_i+\theta_j}{2}\right)+\frac 1N\sum_{l\notin \mathcal{A}} \cos\left(\theta_l-\frac{\theta_i+\theta_j}{2}\right)\ge \lambda \cos\frac\ell 2-(1-\lambda)>0,
\]
or
\begin{equation}\label{eq:arclen-heuristic}
\ell \in\left(0,2\cos^{-1} \left( \frac{1}{\lambda} - 1 \right)\right)
\end{equation}
Thus, we introduce the following definition.
\begin{definition}\label{def:majority-cluster}
    Fix $\lambda\in (0,1]$ and $\ell\in (0,2\pi)$. Given a vector $\Theta\in \mathbb{R}^N$ and a subset $\mathcal{A}\subset [N]$, we say that $\Theta_\mathcal{A}$ is a \emph{$\lambda$-cluster of arc length $\le \ell$} if $|\mathcal{A}|\ge \lambda N$ and, up to $2\pi$-translations, we have
    \[
    \mathcal{D}\left(\Theta_\mathcal{A}\right)\le \ell.
    \]
\end{definition}
The above heuristic says roughly that whenever $\lambda\in (\frac 12,1]$ and $\ell\in (0,\pi)$ satisfy \eqref{eq:arclen-heuristic}, a $\lambda$-cluster of arclength $\le \ell$ should be stable, possibly under additional assumptions. Theorem \ref{L4.4} given later in Section \ref{sec:3} confirms this heuristic.

%%%%%%%%%%%%%%%%%%%%%%%%%
\subsection{Synchronization Mechanism III. Quasi-monotonicity of the order parameter}\label{subsec:1.5}

Recall the definition of the order parameter \eqref{eq:R^0}. More generally, given a phase vector $\Theta\in \mathbb{R}^N$, we introduce the \emph{amplitude order parameter} $R=R(\Theta)$ and \emph{phase order parameter} $\phi=\phi(\Theta)$ as
\begin{equation}\label{eq:Rphi}
Re^{\mathrm{i}\phi}\coloneqq \frac 1N \sum_{j=1}^N e^{\mathrm{i}\theta_j}.
\end{equation}
Note that $R$ and $\phi$ are well-defined as long as the right-hand side of \eqref{eq:Rphi} is not zero, and when we simply say \emph{order parameter}, we mean the amplitude order parameter $R$.

Given a solution $\Theta(t)$ to \eqref{A-1}, we denote $R(t)=R(\Theta(t))$ and $\phi(t)=\phi(\Theta(t))$. Then $R$ is a well-defined continuous real-valued function of $t$. On a time interval where $R$ is positive, $R$ is a smooth function of $t$, and there is a smooth section of $\phi$ that is unique up to modulo $2\pi$ shifts.

In some sense, the functional $R$ measures the overall degree of synchronization, with $R$ being close to $1$ or $0$ signifying synchrony or asynchrony, respectively. The functional $\phi$ can be thought of as a representative phase value for the position of a typical particle; it is a strong representative when $R$ is close to $1$ and a weak representative when $R$ is close to $0$.

We divide both sides of \eqref{eq:Rphi} by $e^{\mathrm{i}\theta_i}$ and compare the real and imaginary parts to find 
\begin{equation} \label{B-10}
R\sin(\phi -\theta_i) = \frac{1}{N}\sum_{j=1}^N \sin(\theta_j - \theta_i), \quad  R\cos(\phi -\theta_i) = \frac{1}{N}\sum_{j=1}^N \cos(\theta_j - \theta_i).
\end{equation}
On the other hand, we again divide both sides of \eqref{eq:Rphi} by $e^{\mathrm{i}\phi}$ and compare the real and imaginary parts to obtain
\begin{align}\label{B-12}
R = \frac{1}{N}\sum_{j=1}^N \cos(\theta_j -\phi),\quad 0=\frac{1}{N}\sum_{j=1}^N\sin(\theta_j -\phi).
\end{align}
Also, we have
\begin{align}\label{B-12-1}
R^2 \overset{\eqref{B-12}\mbox{$_1$}}{=} \frac{1}{N}\sum_{j=1}^N R\cos(\theta_j - \phi)
\overset{\eqref{B-10}\mbox{$_2$}}{=}\frac{1}{N^2}\sum_{i,j=1}^N \cos(\theta_j -\theta_i).
\end{align}

We have the following description of the dynamics of $\Theta$ and $R$.
\begin{lemma} \label{L2.3}  Let $(\Theta, \Omega)$ be a global solution to \eqref{B-1}. Then, the following assertions hold.
\begin{enumerate}
\item
The order parameter $R$ satisfies 
\begin{align}\label{B-13}
\dot R = -\frac{1}{N}\sum_{j=1}^N \sin(\theta_j - \phi) \dot \theta_j.
\end{align}
\item
The inertial Kuramoto model can be rewritten using the order parameters $(R, \phi)$ as follows:
\[
m {\ddot \theta}_i + \dot\theta_i = \nu_i - \kappa R \sin(\theta_i - \phi), \quad i \in [N].
\]
\end{enumerate}
\end{lemma}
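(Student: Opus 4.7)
The proof proposal splits naturally according to the two assertions, both of which should be direct consequences of differentiating or rearranging the defining identity \eqref{eq:Rphi}.

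For assertion (1), my plan is to differentiate the complex identity $Re^{\mathrm{i}\phi} = \tfrac{1}{N}\sum_{j=1}^N e^{\mathrm{i}\theta_j}$ with respect to $t$, valid on any time interval where $R>0$ and hence $\phi$ is smooth. This yields
\[
(\dot R + \mathrm{i} R\dot\phi)e^{\mathrm{i}\phi} = \frac{\mathrm{i}}{N}\sum_{j=1}^N \dot\theta_j\, e^{\mathrm{i}\theta_j}.
\]
Multiplying both sides by $e^{-\mathrm{i}\phi}$ and expanding $\mathrm{i}e^{\mathrm{i}(\theta_j-\phi)} = -\sin(\theta_j-\phi) + \mathrm{i}\cos(\theta_j-\phi)$, I read off real and imaginary parts. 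The real part gives exactly \eqref{B-13}, while the imaginary part yields as a bonus the formula $R\dot\phi = \tfrac{1}{N}\sum_{j=1}^N \dot\theta_j\cos(\theta_j-\phi)$ (which could be useful later but is not needed here).

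For assertion (2), I would invoke the already-derived identity \eqref{B-10}$_1$, namely $R\sin(\phi-\theta_i) = \tfrac{1}{N}\sum_{j=1}^N \sin(\theta_j-\theta_i)$, which follows from dividing \eqref{eq:Rphi} by $e^{\mathrm{i}\theta_i}$ and taking imaginary parts. Using the oddness $\sin(\phi-\theta_i) = -\sin(\theta_i-\phi)$, this rewrites the coupling term in \eqref{A-1} as $\tfrac{\kappa}{N}\sum_{j=1}^N \sin(\theta_j-\theta_i) = -\kappa R\sin(\theta_i-\phi)$, and substituting into \eqref{A-1} gives precisely the claimed mean-field form.

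There is no genuine obstacle here; the main care lies in noting that $R$ and $\phi$ are only smooth on intervals where $R>0$ (so the formula \eqref{B-13} is a pointwise identity there), whereas $R$ itself is merely continuous across zeros, and in keeping the sign conventions straight between $\sin(\theta_j-\phi)$ and $\sin(\phi-\theta_j)$. Both assertions are short enough that the proof will consist essentially of the two computations sketched above.
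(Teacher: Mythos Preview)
Your proposal is correct and essentially matches the paper's proof. For (1), the paper differentiates the real identity $R=\tfrac{1}{N}\sum_j\cos(\theta_j-\phi)$ and then invokes $\sum_j\sin(\theta_j-\phi)=0$ to eliminate the $\dot\phi$ term, which is exactly the real-part computation you carry out in one stroke via the complex identity; for (2), both you and the paper simply substitute \eqref{B-10}$_1$ into \eqref{A-1}.
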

\begin{proof}
\noindent (i)~We differentiate both sides of \eqref{B-12}$_1$ with respect to $t$ to get the time derivative of $R$:
\[
\dot R =- \frac{1}{N}\sum_{j=1}^N \sin(\theta_j - \phi)(\dot\theta_j -\dot\phi) \overset{\eqref{B-12}\mbox{$_2$}}{=} -\frac{1}{N}\sum_{j=1}^N \sin(\theta_j - \phi) \dot \theta_j.
\]
\vspace{0.2cm}

\noindent (ii)~We use $\eqref{B-10}_1$ to rewrite \eqref{A-1} in mean-field form. 
\end{proof}

From Lemma \ref{L2.3}, we have
\[
\dot R = \frac{\kappa R}{N}\sum_{j=1}^N \sin^2(\theta_j - \phi)+\frac{1}{N}\sum_{j=1}^N \sin(\theta_j - \phi)(m\ddot\theta_j-\nu_j),
\]
and so we can see that the sinusoidal coupling term of \eqref{A-1} tends to increase $R$, while the effect of the natural frequencies and the inertia are uncertain. Another way to see this is that by \eqref{B-12-1}, the potential can be written as
\begin{align*}
P(\Theta) = -\sum_{k=1}^N \nu_k\theta_k + \frac{\kappa N^2}{2} \left(1 - R^2 \right).
\end{align*}
We may imagine that $R$ is a functional that is trying to increase along the dynamics of the nonlinear couplings, while the ``impurities'' caused by $\mathcal{V}$ not being identical over the oscillators and the time-delay effect of the inertia may disrupt the monotonic behavior of $R$. Motivated by the formula for $\dot{R}$, we introduce the mean-square deviation
\begin{equation}\label{eq:Delta}
\Delta\coloneqq\Delta(\Theta)\coloneqq \frac{1}{N} \sum_{k=1}^N \sin^2(\theta_k -\phi),
\end{equation}
which will be used in quantifing the quasi-monotonicity of $R$. Note that the functional $\Delta$ measures the closeness to either a completely synchronized state or a bi-polar configuration, i.e., states where $\theta_i-\theta_j\in \pi \mathbb{Z}$ for all $i,j\in [N]$.

In the case of the first-order model \eqref{A-2} with the same identical natural frequencies $\nu_i=\nu_j$ for all $i,j\in [N]$, we have the identity
\[
\dot{R}=\kappa R\Delta=\frac 1{\kappa R N}\sum_{i=1}^N\dot\theta_i^2\ge 0.
\]
This immediately shows that $R$ is monotonically increasing in $t$, that $R^2$ is a Lyapunov functional, and that it can be used to show that any solution to \eqref{A-2} with identical frequencies must converge. In the second-order model \eqref{A-1} with identical natural frequencies $\nu_i=\nu_j$, there is an energy dissipation formula\footnote{More precisely, \cite{C-H-M} uses the expression $\frac 1N\sum_{i=1}^N\omega_i^2$ instead of $\operatorname{Var}(\Omega)$. Our substitution is harmless by the Galilean transformation \eqref{eq:galilean}.} \cite[Proposition 4.1]{C-H-M}
\begin{equation}\label{eq:energy-dissipation}
\frac{d}{dt}\left(\frac{\kappa(1-R^2)}{2}+\frac m2\operatorname{Var}(\Omega)\right)=-\operatorname{Var}(\Omega).
\end{equation}
See Appendix \ref{app:suppt-2} for the resulting convergence statements.

However, in the nonidentical case, $R(t)^2$ fails to be a Lyapunov functional, but we will show that it can serve as a proxy for a Lyapunov functional of the model \eqref{A-1}. 
Ideally, we would like to have a Lyapunov functional for \eqref{A-1} defined on the torus in the nonidentical case, i.e., we would like to answer Conjecture \ref{conj:lyapunov}. The Lyapunov functional may perhaps be given as a perturbation of $\frac{\kappa(1-R^2)}{2}+\frac m2\operatorname{Var}(\Omega)$. In the presence of nonidentical frequencies and the inertia term, $\dot{R}$ might decrease for certain configurations $(\Theta,\Omega)$. Nevertheless, we will obtain a ``quasi-monotonicity formula'' as follows. We will later use this to control the dynamics of \eqref{A-1}.
\begin{lemma}[Quasi-monotonicity formula for $R$]\label{L2.4}
Let $(\Theta,\Omega)$ be a global solution to \eqref{B-1}. Then, the order parameter $R$ defined as in \eqref{eq:Rphi} satisfies the following differential inequality:
\begin{align} \label{B-14-0}
\begin{aligned}
\dot R(t) \ge& \kappa R(t)\Delta(t)\left(1-e^{-t/m}\right)- \sqrt{\Delta(t)} \mathcal{D}(\Omega^0)e^{-t/m}/2-\sqrt{\Delta(t)}\mathcal{D}(\mathcal{V})(1-e^{-t/m})/2 \\
&-\kappa\sqrt{\Delta(t)} \left(1-e^{-t/m}\right)\left(\mathcal{D}(\Omega^0)te^{-t/m} + m(\mathcal{D}(\mathcal{V})+2\kappa)\left(1-e^{-t/m}\right)^2\right),
\end{aligned}
\end{align}
\color{black}
for $t \ge 0$. Moreover, for $\eta>0$, we have
\begin{equation}\label{B-14-000}
R(t)\ge R^0-\zeta(m,\kappa,\mathcal{V},\Omega^0,\eta),\quad t\le \eta m,
\end{equation}
and
\begin{equation}\label{B-14-00}
\dot R(t) \geq \kappa\sqrt{\Delta(t)}(1-e^{-t/m}) \left( R(t) \sqrt{\Delta(t)} - \xi(m,\kappa,\mathcal{V},\Omega^0,\eta) \right),\quad t \geq \eta m,
\end{equation}
where $\zeta(m,\kappa,\mathcal{V},\Omega^0,\eta)$ and $\xi(m,\kappa,\mathcal{V},\Omega^0,\eta)$ are the dimensionless quantities defined as follows:
\begin{equation}\label{B-14-0-1}
\begin{cases}
\displaystyle \zeta(m,\kappa,\mathcal{V},\Omega^0,\eta)\coloneqq\frac{m(1-e^{-\eta})}{2}\left[\mathcal{D}(\Omega^0)+\mathcal{D}(\mathcal{V})\eta\right]
+m^2\kappa\left(1-e^{-\eta}\right)^3\left[\frac 34  \mathcal{D}(\Omega^0)+  (\mathcal{D}(\mathcal{V})+2\kappa)\eta \right],\\
\displaystyle \xi(m,\kappa,\mathcal{V},\Omega^0,\eta)\coloneqq \left(\mathcal{D}(\mathcal{V})+2\kappa\right)m+\mathcal{D}(\Omega^0)m\max\{1,\eta\}e^{-\max\{1,\eta\}}+\frac{\mathcal{D}(\mathcal{V})}{2\kappa}+\frac{\mathcal{D}(\Omega^0)}{2\kappa}\frac{e^{-\eta}}{1-e^{-\eta}}.
\end{cases}
\end{equation}
\end{lemma}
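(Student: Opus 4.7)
\medskip

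The plan is to start from the identity $\dot R = -\frac{1}{N}\sum_{j=1}^N \sin(\theta_j - \phi)\dot\theta_j$ of Lemma \ref{L2.3}(i) and substitute the approximation of $\dot\theta_j$ furnished by Lemma \ref{L:approxaut}(iii) with $\mathcal{A}=[N]$. Writing $\dot\theta_j = \omega_j^0 e^{-t/m} + \nu_j(1-e^{-t/m}) + \frac{\kappa}{N}\sum_{l=1}^N\sin(\theta_l-\theta_j)(1-e^{-t/m}) + E_j(t)$, where $|E_j(t)| \le \kappa\mathcal{D}(\Omega^0)te^{-t/m}(1-e^{-t/m}) + m\kappa(\mathcal{D}(\mathcal{V})+2\kappa)(1-e^{-t/m})^3$, the contribution of the interaction term is explicitly computable: by \eqref{B-10}$_1$, $\frac{\kappa}{N}\sum_l \sin(\theta_l-\theta_j) = \kappa R \sin(\phi-\theta_j)$, and hence
\[
-\frac{1}{N}\sum_{j=1}^N \sin(\theta_j-\phi)\cdot \kappa R \sin(\phi-\theta_j)(1-e^{-t/m}) = \kappa R\Delta(t)(1-e^{-t/m}),
\]
which is the main positive term.

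Next I would bound the three remaining contributions using Cauchy--Schwarz together with the orthogonality relation $\sum_{j=1}^N \sin(\theta_j-\phi) = 0$ from \eqref{B-12}$_2$, which lets me subtract off an arbitrary constant from $\omega_j^0$ and $\nu_j$ before bounding. Taking the constant to be the midrange, I get $\bigl|\frac{1}{N}\sum_j \sin(\theta_j-\phi)\omega_j^0\bigr| \le \sqrt{\Delta}\cdot\mathcal{D}(\Omega^0)/2$ and similarly for $\nu_j$; the error $E_j$ is handled by the crude bound $\bigl|\frac{1}{N}\sum_j\sin(\theta_j-\phi)E_j\bigr| \le \sqrt{\Delta}\cdot \max_j |E_j|$ (actually the sharper Cauchy--Schwarz form), yielding exactly the four terms of \eqref{B-14-0}.

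For \eqref{B-14-000}, I integrate \eqref{B-14-0} from $0$ to $t \le \eta m$, dropping the nonnegative main term $\kappa R \Delta(1-e^{-s/m})$ and using the trivial bound $\sqrt{\Delta(s)}\le 1$. The integrals evaluate explicitly: $\int_0^{\eta m}(1-e^{-s/m})\,ds = m(\eta - 1 + e^{-\eta}) \le m\eta(1-e^{-\eta})$ for the $\mathcal{V}$-term, $\int_0^{\eta m}e^{-s/m}\,ds = m(1-e^{-\eta})$ for the $\Omega^0$-term, $\int_0^{\eta m}(1-e^{-s/m})se^{-s/m}\,ds$ bounded by $\tfrac{3}{4}m^2(1-e^{-\eta})^3$ using the elementary calculus inequality $x e^{-x} - e^{-x} + e^{-2x} \le \tfrac{3}{4}(1-e^{-x})^3$ already alluded to in Lemma \ref{L2.2}(iv), and $\int_0^{\eta m}(1-e^{-s/m})^3\,ds \le m\eta(1-e^{-\eta})^3$. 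Assembling these gives $\zeta$.

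For \eqref{B-14-00}, I factor $\kappa\sqrt{\Delta}(1-e^{-t/m})$ out of the inequality \eqref{B-14-0}, obtaining
\[
\dot R \ge \kappa\sqrt{\Delta}(1-e^{-t/m})\Bigl[R\sqrt{\Delta} - \tfrac{\mathcal{D}(\mathcal{V})}{2\kappa} - \tfrac{\mathcal{D}(\Omega^0)}{2\kappa}\tfrac{e^{-t/m}}{1-e^{-t/m}} - \mathcal{D}(\Omega^0)te^{-t/m} - m(\mathcal{D}(\mathcal{V})+2\kappa)(1-e^{-t/m})^2\Bigr].
\]
On $t \ge \eta m$, the function $e^{-t/m}/(1-e^{-t/m})$ is maximized at $t = \eta m$ giving $e^{-\eta}/(1-e^{-\eta})$; the function $te^{-t/m}$ on $[\eta m,\infty)$ is bounded by $m\max\{1,\eta\}e^{-\max\{1,\eta\}}$ since $xe^{-x}$ peaks at $x=1$; and $(1-e^{-t/m})^2 \le 1$. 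These bounds sum to $\xi$.

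The main obstacle is purely bookkeeping: keeping the four error terms cleanly separated and remembering to invoke $\sum_j \sin(\theta_j-\phi)=0$ so that $|\nu_j|$ and $|\omega_j^0|$ get replaced by diameters $\mathcal{D}(\mathcal{V})/2$ and $\mathcal{D}(\Omega^0)/2$. No single step is conceptually hard once Lemma \ref{L:approxaut}(iii) is in hand; the subtlety is recognizing that Lemma \ref{L:approxaut}(iii), combined with the mean-zero structure of $\sin(\theta_j-\phi)$ relative to $\phi$, is precisely strong enough to convert the natural obstruction (that $R^2$ is not a Lyapunov functional in the nonidentical/inertial case) into a quantitative quasi-monotonicity with explicit dependence on the three smallness parameters $\mathcal{D}(\Omega^0)/\kappa$, $\mathcal{D}(\mathcal{V})/\kappa$, and $m\kappa$.
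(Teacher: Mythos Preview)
Your proposal is correct and follows essentially the same route as the paper: decompose $\dot\theta_j$ via Lemma~\ref{L:approxaut}(iii), identify the main term $\kappa R\Delta(1-e^{-t/m})$ through \eqref{B-10}$_1$, exploit $\sum_j\sin(\theta_j-\phi)=0$ together with Cauchy--Schwarz to convert the $\omega_j^0$ and $\nu_j$ contributions into $\sqrt{\Delta}\,\mathcal{D}(\cdot)/2$, then integrate on $[0,\eta m]$ for $\zeta$ and factor on $[\eta m,\infty)$ for $\xi$. One small slip: the calculus inequality you quote for $\int_0^{\eta m}(1-e^{-s/m})se^{-s/m}\,ds$ is not the one from Lemma~\ref{L2.2}(iv) (that lemma uses $xe^{-x}-e^{-x}+e^{-2x}\le xe^{-x}(1-e^{-x})$); computing the integral exactly gives $m^2\bigl(\tfrac34 - e^{-\eta}-\eta e^{-\eta}+\tfrac{\eta}{2}e^{-2\eta}+\tfrac14 e^{-2\eta}\bigr)$, and the inequality you actually need is $\tfrac34 - e^{-x}-xe^{-x}+\tfrac{x}{2}e^{-2x}+\tfrac14 e^{-2x}\le \tfrac34(1-e^{-x})^3$, which is equally elementary and yields your stated bound.
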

\begin{proof}
We begin by deriving the differential inequality \eqref{B-14-0} for $R$.
We begin by noting that by the Cauchy-Schwartz inequality,
\begin{equation*}%\label{Ap-1}
\displaystyle \frac{1}{N} \sum_{k=1}^N |\sin(\theta_k -\phi)| \leq \sqrt{\frac 1N\sum_{k=1}^N |\sin(\theta_k - \phi)|^2} \stackrel{\eqref{eq:Delta}}{=} \sqrt{\Delta},
\end{equation*}
and by Lemma \ref{L:approxaut} and $\eqref{B-10}_1$,
\begin{align}\label{Ap-1-1}
\begin{aligned}
&\left|\dot{\theta}_i(t)-\omega_i^0 e^{-t/m}-\nu_i\left(1-e^{-t/m}\right)-\kappa R(t)\sin\left(\phi(t)-\theta_i(t)\right)\left(1-e^{-t/m}\right)\right|\\
& \hspace{1cm} \le \kappa \left(1-e^{-t/m}\right)\left(\mathcal{D}(\Omega^0)te^{-t/m} +m(\mathcal{D}(\mathcal{V})+2\kappa)\left(1-e^{-t/m}\right)^2\right).
\end{aligned}
\end{align}
We estimate $\dot R$ in the following manner:
\begin{align}
\begin{aligned} \label{Ap-2}
&\dot R(t)~ \stackrel{\mathclap{\eqref{B-13}}}{=} -\frac{1}{N}\sum_{j=1}^N \sin(\theta_j(t) - \phi(t)) \dot \theta_j(t) ~\stackrel{\mathclap{\eqref{B-12}_2}}{=}-\frac{1}{N}\sum_{j=1}^N \sin(\theta_j(t) - \phi(t)) \left(\dot \theta_j(t)-\omega_c(t)\right)\\
&  \hspace{0.7cm} \stackrel{\mathclap{ \substack{ \mathrm{Lem ~}\ref{L2.1}, \\ \eqref{Ap-1-1} }}}{=} - \frac{1}{N}\sum_{j=1}^N \sin(\theta_j(t) - \phi(t)) \left(\omega^0_j-\omega_c^0\right)e^{-t/m}-\frac{1}{N}\sum_{j=1}^N \sin(\theta_j(t) - \phi(t))\left(\nu_j-\nu_c\right)(1-e^{-t/m}) \\
&  \hspace{0.7cm}  +\kappa R(t)\cdot \frac 1N\sum_{j=1}^N \sin^2(\theta_j(t)-\phi(t))\left(1-e^{-t/m}\right)\\
& \hspace{0.7cm} -\frac{1}{N}\sum_{j=1}^N\left|\sin(\theta_j(t)-\phi(t))\right|\cdot \kappa \left(1-e^{-t/m}\right)  \left(\mathcal{D}(\Omega^0)te^{-t/m} + m(\mathcal{D}(\mathcal{V})+2\kappa)\left(1-e^{-t/m}\right)^2\right)\\
& \hspace{0.7cm}  {\ge}   - \sqrt{\Delta(t)} \sqrt{\operatorname{Var}(\Omega^0)}e^{-t/m}-\sqrt{\Delta(t)}\sqrt{\operatorname{Var}(\mathcal{V})}(1-e^{-t/m}) +\kappa R(t)\Delta(t)\left(1-e^{-t/m}\right)\\
& \hspace{0.7cm}  -\kappa\sqrt{\Delta(t)} \left(1-e^{-t/m}\right)\left(\mathcal{D}(\Omega^0)te^{-t/m} + m(\mathcal{D}(\mathcal{V})+2\kappa)\left(1-e^{-t/m}\right)^2\right)\\
& \hspace{0.7cm}  \ge   - \sqrt{\Delta(t)} \mathcal{D}(\Omega^0)e^{-t/m}/2-\sqrt{\Delta(t)}\mathcal{D}(\mathcal{V})(1-e^{-t/m})/2 +\kappa R(t)\Delta(t)\left(1-e^{-t/m}\right)\\
& \hspace{0.7cm}  -\kappa\sqrt{\Delta(t)} \left(1-e^{-t/m}\right)\left(\mathcal{D}(\Omega^0)te^{-t/m} + m(\mathcal{D}(\mathcal{V})+2\kappa)\left(1-e^{-t/m}\right)^2\right),
\end{aligned}
\end{align}
where in the penultimate inequality, we used the Cauchy-Schwarz inequality, and in the last inequality, we used 
\[ \sqrt{\operatorname{Var}(X)}\le \mathcal{D}(X)/2 \quad \mbox{for $X\in \mathbb{R}^N$}. \]
Next, we consider two cases. \newline

\noindent $\bullet$~Case A (Dynamics of $R$ after initial-layer): For $t \geq \eta m$, we have
\begin{align*}
    \dot R(t)\ge & \kappa \sqrt{\Delta(t)}\left(1-e^{-t/m}\right)\left(R(t)\sqrt{\Delta(t)} - \frac{\mathcal{D}(\Omega^0)}{2\kappa}\cdot\frac{e^{-t/m}}{1-e^{-t/m}}-\frac{\mathcal{D}(\mathcal{V})}{2\kappa}  \right)\\
& -\kappa\sqrt{\Delta(t)} \left(1-e^{-t/m}\right)\left(m\mathcal{D}(\Omega^0)\cdot \frac tm e^{-t/m} + m(\mathcal{D}(\mathcal{V})+2\kappa)\left(1-e^{-t/m}\right)^2\right)\\
\ge & \kappa \sqrt{\Delta(t)}\left(1-e^{-t/m}\right)\left(R(t)\sqrt{\Delta(t)} - \frac{\mathcal{D}(\Omega^0)}{2\kappa}\cdot\frac{e^{-\eta}}{1-e^{-\eta}}-\frac{\mathcal{D}(\mathcal{V})}{2\kappa}  \right)\\
& -\kappa\sqrt{\Delta(t)} \left(1-e^{-t/m}\right)\left(m\mathcal{D}(\Omega^0)\cdot \max\{1,\eta\} e^{-\max\{1,\eta\}} + m(\mathcal{D}(\mathcal{V})+2\kappa)\right)\\
\ge & \kappa \sqrt{\Delta(t)}\left(1-e^{-t/m}\right)\left(R(t)\sqrt{\Delta(t)}-\xi(\eta)\right).
\end{align*}

\vspace{0.2cm}

\noindent $\bullet$~Case B (Dynamics of $R$ in the initial-layer ): For $t \in [0, \eta m]$, we use \eqref{Ap-2} and $\Delta\le 1$ to find
\begin{align*}
R(t) \ge& R^0 + \underbrace{\int_0^t \kappa (1-e^{-\frac{s}{m}}) R(s) \Delta(s) ds}_{\geq 0} 
-  \int_0^{\eta m}\frac{\mathcal{D}(\Omega^0)}{2}e^{-s/m}ds-\int_0^{\eta m}\frac{\mathcal{D}(\mathcal{V})}{2}(1-e^{-s/m})ds \\
&-\int_0^{\eta m}\kappa \mathcal{D}(\Omega^0)se^{-s/m}\left(1-e^{-s/m}\right)ds - \int_0^{\eta m}m\kappa(\mathcal{D}(\mathcal{V})+2\kappa)\left(1-e^{-s/m}\right)^3ds\\
&\ge 
 R^0+0 -  \frac{m\mathcal{D}(\Omega^0)}{2}\left(1-e^{-\eta}\right)-\frac{m\mathcal{D}(\mathcal{V})}{2}\left(\eta-1+e^{-\eta}\right) \\
& -m^2\kappa \mathcal{D}(\Omega^0)\left(\frac 34 -e^{-\eta}-\eta e^{-\eta}+\frac {\eta e^{-2\eta}}{2}+\frac {e^{-2\eta}}{4}\right) \\
& - m^2\kappa (\mathcal{D}(\mathcal{V})+2\kappa)\left(-\frac{11}6+\eta+3e^{-\eta}-\frac {3e^{-2\eta}}{2}+\frac  {e^{-3\eta}}3\right)\\
&\ge 
 R^0-  \left(1-e^{-\eta}\right)\left[\frac{m\mathcal{D}(\Omega^0)}{2}+\frac{m\mathcal{D}(\mathcal{V})\eta}{2}\right]
-\left(1-e^{-\eta}\right)^3\left[\frac{3}{4} m^2\kappa \mathcal{D}(\Omega^0) + m^2\kappa (\mathcal{D}(\mathcal{V})+2\kappa)\eta \right]\\
& \ge R^0 - \zeta(\eta),
\end{align*}
where in the penultimate inequality we used the following calculus inequalities:
\begin{align*}
&x-1+e^{-x}\le x\left(1-e^{-x}\right),~ \frac 34-e^{-x}-xe^{-x}+\frac x2 e^{-2x}+\frac 14 e^{-2x}\le \frac 34 \left(1-e^{-x}\right)^3,\\
&-\frac {11}6+x+3e^{-x}-\frac 32 e^{-2x}+\frac 13 e^{-3x}\le x\left(1-e^{-x}\right)^3,\quad x\ge 0.
\end{align*}

The inequality \eqref{B-14-00} follows from \eqref{B-14-0}.
\end{proof}
\begin{remark}
Below, we provide several comments of Lemma \ref{L2.4}.
\begin{enumerate}
\item
In order to describe the relaxation dynamics at later times, it is convenient to have a positive lower bound on $R$, as $R$ is the strength of the nonlinear interaction $\frac 1N \sum_j \sin(\theta_j-\theta_i)$ in \eqref{A-1}. In fact, as can be seen above, the nonlinear interactions tend to increase $R$, so we have a positive feedback: a larger $R$ leads to greater nonlinear interactions which in turn enlarges $R$. Unfortunately, given uniformly random initial data on $\mathbb{S}^1$, the expected value of $R^2$ is $1/N$, as can be seen from  \eqref{B-12-1}. In other words, most initial data have $R$ on the order of $1/\sqrt{N}$, which is extremely small in the large particle limit $N\to\infty$. Perhaps the main difficulty of the complete synchronization problem for \eqref{A-1} is to prove or disprove how synchrony ``erupts'' from the almost disordered state $R\approx 0$, producing a value of $R$ above some positive universal constant.
\vspace{0.1cm}
\item
For notational simplicity, we suppress dependence of $\zeta$ and $\xi$ on $\kappa, m, {\mathcal V}$ and $\Omega^0$:
\[
\zeta(\eta)\coloneqq  \zeta(m, \kappa, {\mathcal V}, \Omega^0, \eta),\quad \xi(\eta)\coloneqq  \xi(m, \kappa, {\mathcal V}, \Omega^0, \eta).
\]
Note that $\zeta(\eta)$ and $\xi(\eta)$ are independent of $\Theta^0$, and 
\[
\lim_{\eta\rightarrow 0}\zeta(\eta)=0,\quad\lim_{\eta\rightarrow \infty}\zeta(\eta)=\infty,\quad\lim_{\eta\rightarrow 0}\xi(\eta)=\infty,
\]
and
\[
\xi(\infty)\coloneqq \xi(m,\kappa,\mathcal{V},\Omega^0,\infty)\coloneqq\lim_{\eta\rightarrow \infty}\xi(\eta)=m\mathcal{D}(\mathcal{V})+2m\kappa+\frac{\mathcal{D}(\mathcal{V})}{2\kappa}.
\]
The framework \eqref{C-1} says that $\zeta(\eta)$ and $\xi(\eta)$ are small.
\vspace{0.1cm}
\item
The dominant terms in \eqref{B-14-0} as $t\to \infty$ are
\[
\kappa R(t)\Delta(t)-\left(\mathcal{D}(\mathcal{V})/2+m\kappa \mathcal{D}(\mathcal{V})+2m\kappa^2\right)\sqrt{\Delta(t)}.
\]
The first term $\kappa R(t)\Delta(t)$ is due to the sinusoidal couplings and thus act to increase $R(t)$, while the second term
\[
\left(\mathcal{D}(\mathcal{V})/2+m\kappa \mathcal{D}(\mathcal{V})+2m\kappa^2\right)\sqrt{\Delta(t)},
\]
is the maximal rate at which the linear terms $\nu_i$ and the inertial term $m\ddot\theta_i$ may conspire to decrease the order parameter. This heuristic is true only for large $t$, in which case we have \eqref{B-14-00}, and for small $t$ we only have the crude bound of \eqref{B-14-000}. Our strategy in Section \ref{sec:5} is as follows: first, for small time $t\le \eta m$, we can control the amount of fluctuations of $R(t)$, namely $\zeta(\eta)$, by requiring $m$ to be small; this shields us from the possible initial ``adversarial attack'' to diminish the amount of synchronization. This is the idea of Lemma \ref{L4.1}.

Next, for a large time $t\ge \eta m$, we have two scenarios: either $\dot R\ge 0$, which is good since this means the system is synchronizing, or we have $\dot{R}<0$. By the above heuristic, this roughly means
\begin{align*}
\Delta(t)\le \left(\mathcal{D}(\mathcal{V})/2\kappa +m \mathcal{D}(\mathcal{V})+2m\kappa\right)^2/R(t)^2.
\end{align*}
 Thus, having a lower bound on $R(t)$, we can make $\Delta(t)$ as small as we wish by making $\mathcal{D}(\mathcal{V})/\kappa$ and $m\kappa$ small (i.e., under the framework of \eqref{eq:framework-small-param}). This is quantified in Lemma \ref{L4.2}: there exists a time at which we have a lower bound on $R\ge c>0$ and an upper bound on $\Delta$. Roughly, this means that $\Theta$ is close to a bipolar state with $\frac{(1+c)N}{2}$ oscillators at $\phi$ and $\frac{(1-c)N}{2}$ oscillators at $\phi+\pi$, so that $\Theta$ has roughly $\frac{(1+c)N}{2}$ oscillators (a majority) concentrated around $\phi$; this argument is quantified in Lemma \ref{L4.3}. This allows us to use Synchronization Mechanism II, the stability of majority clusters, discussed in Subsection \ref{subsec:1.4}.
\end{enumerate}
\end{remark}
In the rest of subsection, we provide some simulations of the dynamics of the order parameter and the mean squared deviation while varying the dimensionless quantities\footnote{These are the quantities invariant under the time dilatation symmetry \eqref{eq:dilatation}.} $m\kappa$, $\mathcal{D}(\mathcal{V})/\kappa$, and $\mathcal{D}(\Omega^0)/\kappa$.

We use the fourth-order Runge-Kutta method with a time step of $0.01$ and set $N=50$. For each numeric simulation, we fix some prescribed value for $m$ and $\kappa$, and we take initial phase data uniformly distributed on the interval $[0,2\pi]$, initial frequency data uniformly distributed on the interval $\left[-\frac{\mathcal{D}(\Omega^0)}{2},\frac{\mathcal{D}(\Omega^0)}{2}\right]$, and intrinsic frequency data uniformly distributed on the interval $\left[-\frac{\mathcal{D}(\mathcal{V})}{2},\frac{\mathcal{D}(\mathcal{V})}{2}\right]$.

In the first experiment, whose results are displayed in Figure \ref{Fig1-1}, we take $m\kappa$ and $\mathcal{D}(\Omega^0)/\kappa$ constant and vary $\mathcal{D}(\mathcal{V})/\kappa$. When $\kappa=1$ and $m=1$ (blue line), $\mathcal{D}(\mathcal{V})/\kappa$ is too large and the system fails to achieve synchronization, signified by the fact that not only do the order parameter $R$ and the mean squared deviation $\Delta$ fail to converge to a fixed value, but also $R$ frequents a neighborhood of zero while $\Delta$ fluctuates near high values. But, as soon as $\mathcal{D}(\mathcal{V})/\kappa \le \frac 12$, not only does the solution exhibit asymptotic phase-locking but also both $R$ and $\Delta$ converge to values close to 1 and 0, respectively. This suggests that the smallness of $\mathcal{D}(\mathcal{V})/\kappa$ is a favorable environment for asymptotic phase-locking.

	\begin{figure}[htbp]
	\centering
		\begin{subfigure}[b]{0.47\textwidth}
		\includegraphics[width=\textwidth]{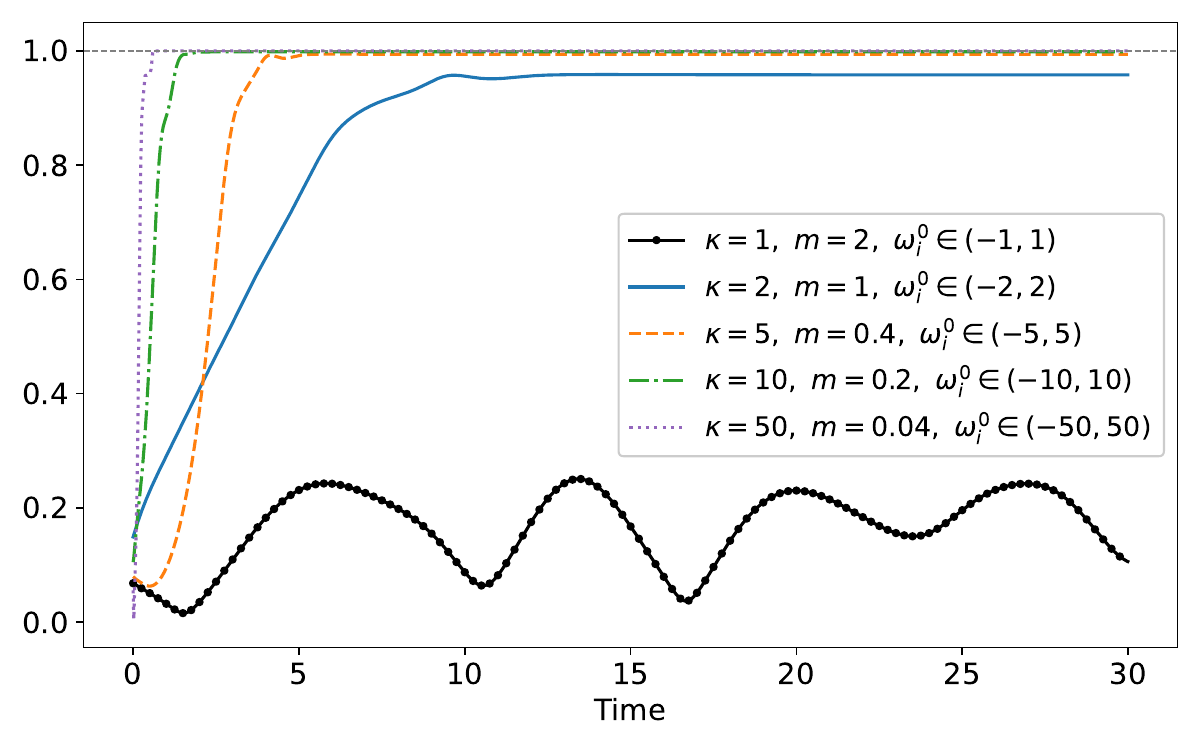}
		\caption{Temporal evolution of $R$.}
	\end{subfigure}
	\vspace{0.1cm}
	\begin{subfigure}[b]{0.47\textwidth}
		\includegraphics[width=\textwidth]{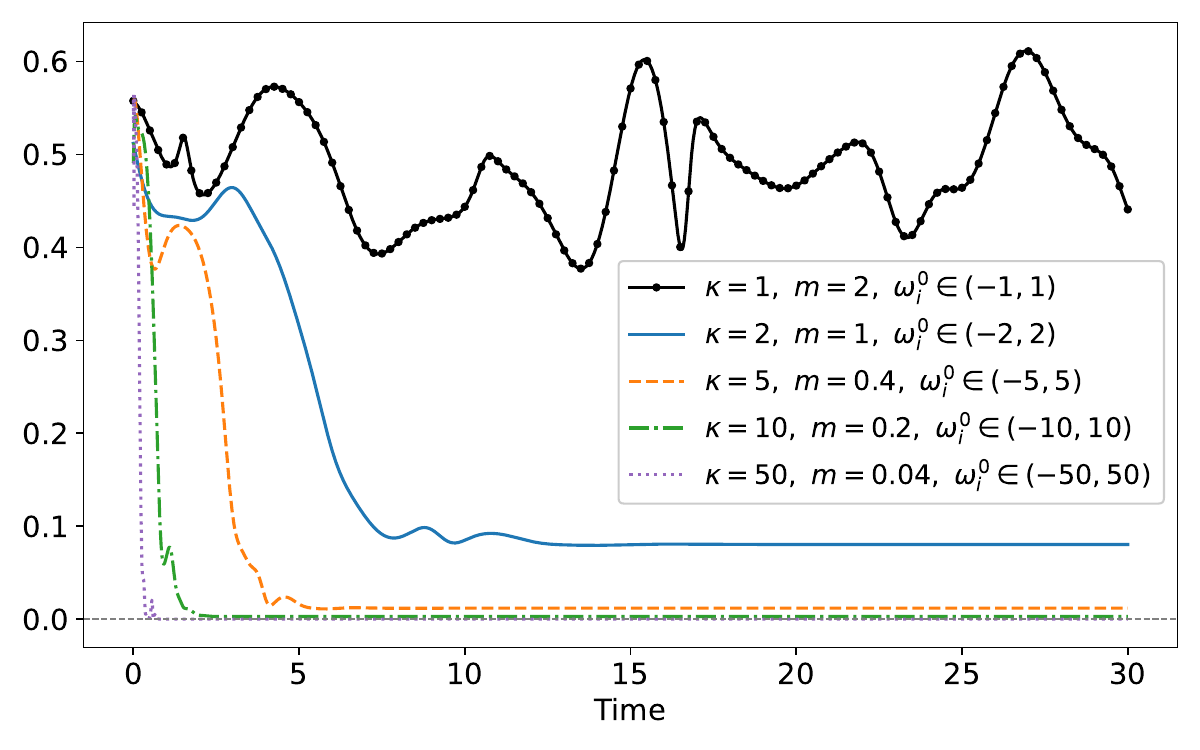}
		\caption{Temporal evolution of $\Delta$.}
	\end{subfigure}
	\caption{(Color online) Simulations of $R$ and $\Delta$ varying ${\mathcal D}({\mathcal V})/\kappa$ and ${\mathcal D}(\Omega^0)$. We used a single set of intrinsic frequency data uniformly distributed over the interval $[-1,1]$ for all simulations. }
	\label{Fig1-1}
	\end{figure}

When $\mathcal{D}(\mathcal{V})/\kappa \le \frac 12$, the limiting order parameter is close to $1$. Thus, it appears that the limiting configuration, which is a phase-locked state, is contained in a quarter circle. However, when $\mathcal{D}(\mathcal{V})/\kappa<1$, there is a unique phase-locked state for \eqref{A-1} and \eqref{A-2} in the quarter-circle; see \cite{C-H-J-K} and Remark \ref{rem:pls} (3). This phase-locked state may be computed from the equations
\[
R=\frac 1N\sum_{i=1}^N\cos(\theta_i-\phi),\quad \sin(\theta_i-\phi)=\frac{\nu_i-\nu_c}{\kappa R}, \quad \forall i\in [N].
\]
From the zeroth order approximation $R=1+o(1)$ (in the sense as $\mathcal{D}(\mathcal{V})/\kappa\to 0$), we have 
\[ \sin(\theta_i-\phi)=(1+o(1))\frac{\nu_i-\nu_c}{\kappa}, \]
and
\[
R =\frac 1N\sum_{i=1}^N\sqrt{1-(1+o(1))\frac{(\nu_i-\nu_c)^2}{\kappa^2}}=\frac 1N\sum_{i=1}^N \left(1-\left(\frac 12+o(1)\right)\frac{(\nu_i-\nu_c)^2}{\kappa^2}\right) =1-\left(\frac 12+o(1)\right)\frac{\operatorname{Var}(\mathcal{V})}{\kappa^2}.
\]
Indeed, the values of $\frac{1-R(30)}{(\operatorname{Var}(\mathcal{V})/\kappa)^2}$ from the second to the fifth experiments in Figure \ref{Fig1-1} are 0.5676, 0.5089, 0.5022, 0.5001, respectively, and are close to 0.5.

What about the other factors? In Figure \ref{Fig1-2}, we keep $\mathcal{D}(\mathcal{V})/\kappa$ and $\mathcal{D}(\Omega^0)/\kappa$ constant and vary $m\kappa$. In this case, the same conclusion holds, i.e., the order parameter $R$ converges to a value close to $1$ while the mean-squared deviation $\Delta$ converges to a value close to $0$; the only difference is the time scale at which this happens, which is multiplicatively delayed proportional to $m$.\footnote{This is the scale of the time delayed interaction.} This suggests that as long as $\mathcal{D}(\mathcal{V})/\kappa$ is small, variations in $m\kappa$ do not present material differences to the asymptotic dynamics.

	\begin{figure}[htbp]
	\centering
		\begin{subfigure}[b]{0.47\textwidth}
		\includegraphics[width=\textwidth]{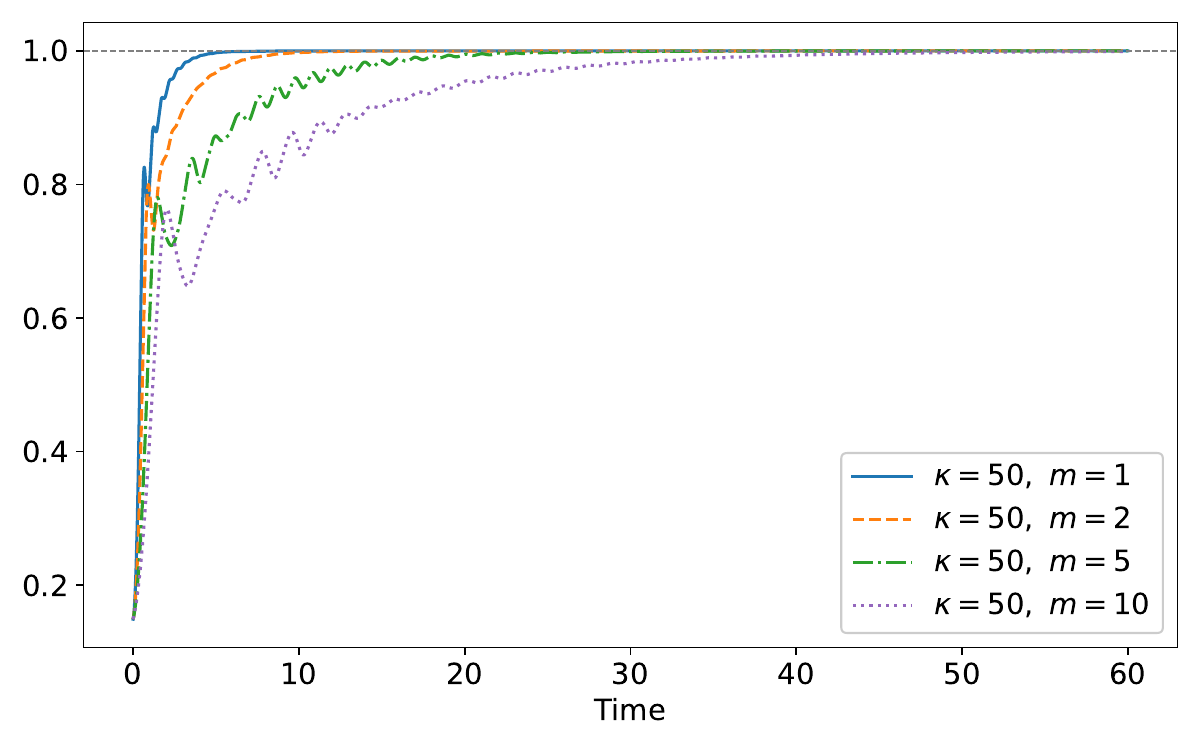}
		\caption{Temporal evolution of $R$.}
	\end{subfigure}
	\vspace{0.1cm}
	\begin{subfigure}[b]{0.47\textwidth}
		\includegraphics[width=\textwidth]{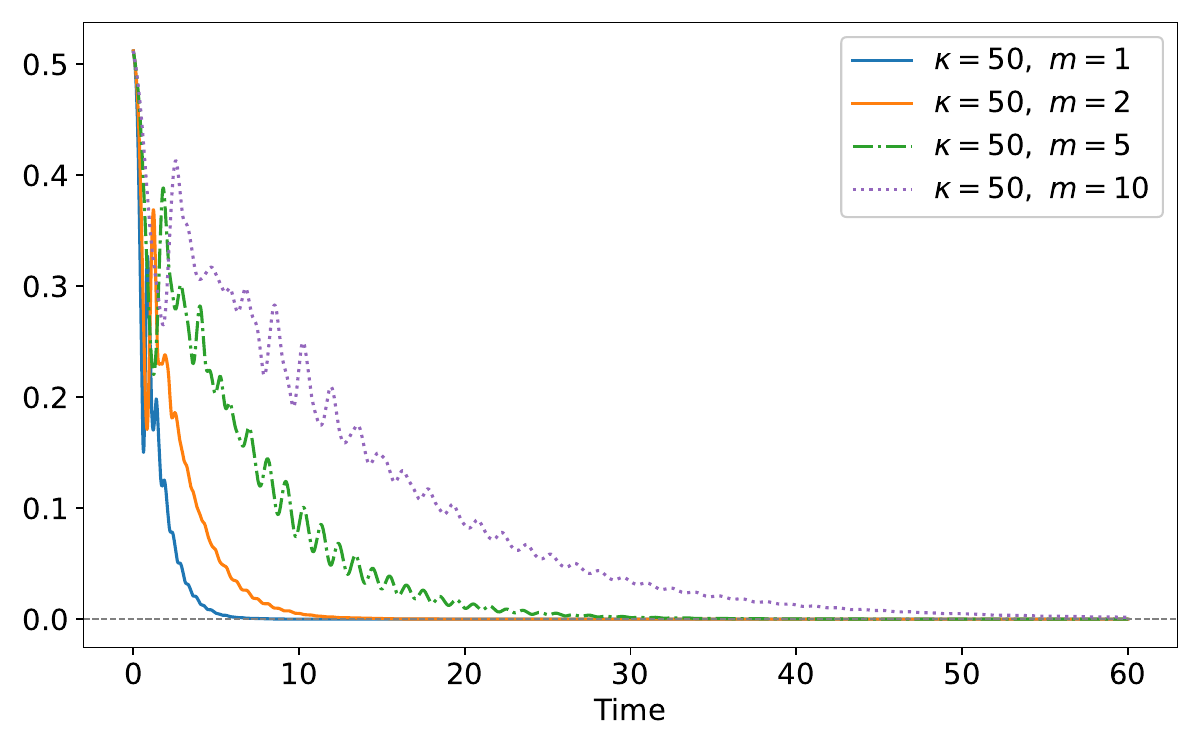}
		\caption{Temporal evolution of $\Delta$.}
	\end{subfigure}
	\caption{(Color online) Simulations of $R$ and $\Delta$ varying $m\kappa$. We used a single set of initial frequency data and natural frequencies, uniformly distributed over the interval $[-1,1]$, for all simulations, respectively.} 
	\label{Fig1-2}
	\end{figure}

In Figure \ref{Fig1-3}, we keep $\mathcal{D}(\mathcal{V})/\kappa$ and $m\kappa$ constant and vary $\mathcal{D}(\Omega^0)/\kappa$. In this case, the same conclusion holds with the additively delayed time scale proportional to $m\mathcal{D}(W^0)/\kappa$.\footnote{This is the time required to recover from a hypothetical adversarial attack on $\Omega^0$.} Again, this suggests that as long as $\mathcal{D}(\mathcal{V})/\kappa$ is small, the magnitude of $\mathcal{D}(\Omega^0)/\kappa$ does not significantly affect the asymptotic dynamics.

 	\begin{figure}[htbp]
	\centering
		\begin{subfigure}[b]{0.47\textwidth}
		\includegraphics[width=\textwidth]{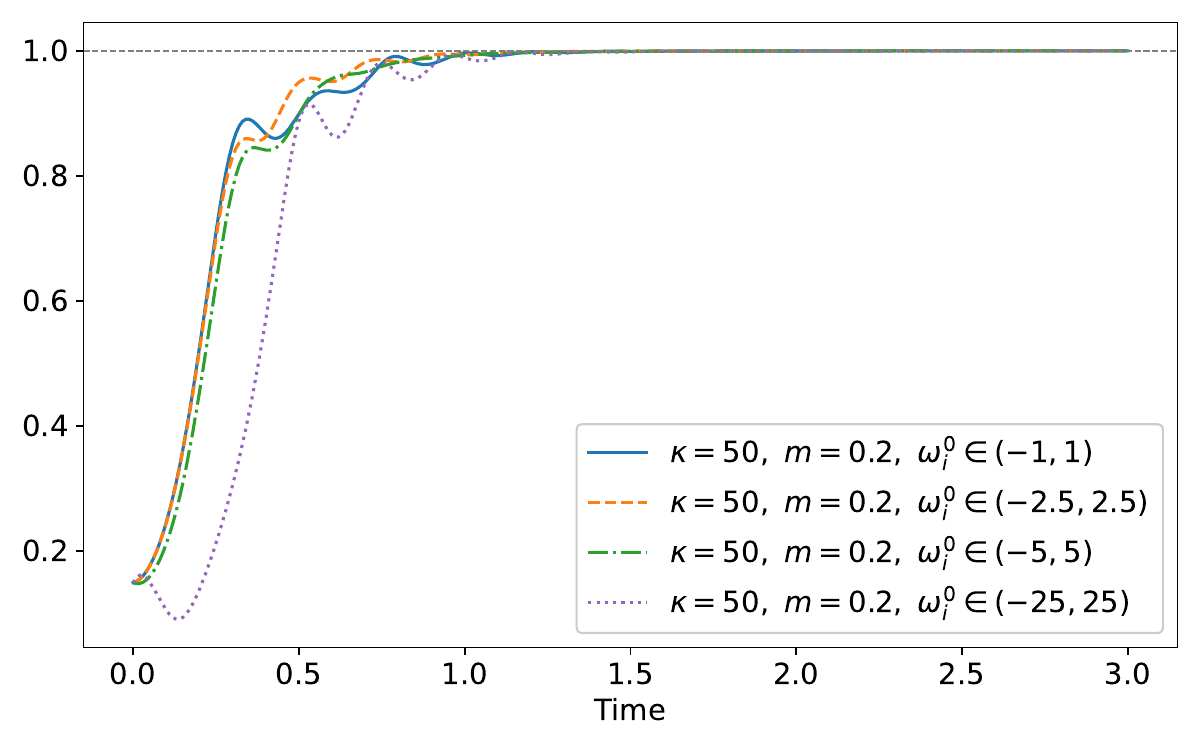}
		\caption{Temporal evolution of $R$.}
	\end{subfigure}
	\vspace{0.1cm}
	\begin{subfigure}[b]{0.47\textwidth}
		\includegraphics[width=\textwidth]{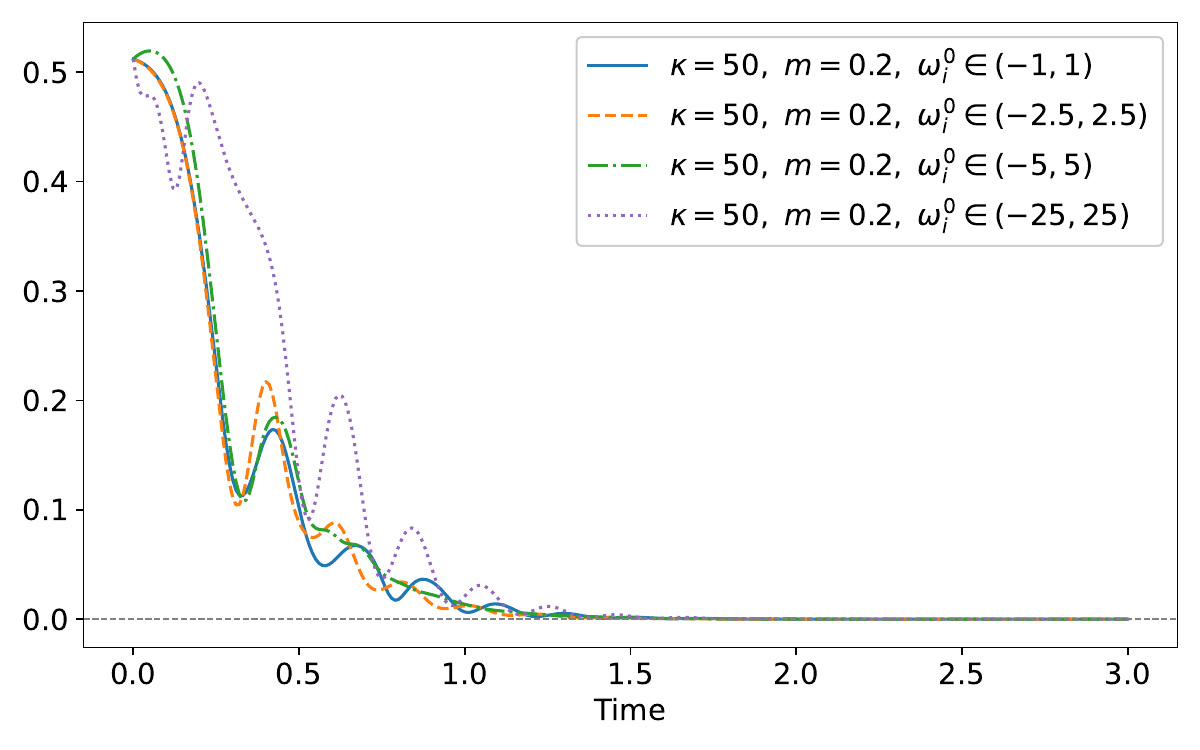}
		\caption{Temporal evolution of $\Delta$.}
	\end{subfigure}
	\caption{(Color online) Simulations of $R$ and $\Delta$ varying ${\mathcal D}(\Omega^0)/\kappa$. We used a single set of intrinsic frequency data uniformly distributed over the interval $[-1,1]$ for all simulations.} 
	\label{Fig1-3}
	\end{figure}

In summary, asymptotic phase-locking appears to occur if $\mathcal{D}(\mathcal{V})/\kappa$ is small, while the $m\kappa$ is the multiplicative time scale of the synchronization, and $\mathcal{D}(\Omega^0)/\kappa$ acts as an additive delay for synchronization to happen. We collect our observations into the following conjecture.
\begin{conjecture}\label{conj:R}
\,
    \begin{enumerate}
        \item (Weak form) For any $\varepsilon>0$, there exists $\delta=\delta(\varepsilon)$ such that if $\mathcal{D}(\mathcal{V})/\kappa<\delta$, then for generic initial data $\Theta^0$, we have
    \[
    \liminf_{t\to\infty}R(t)\ge 1-\varepsilon.
    \]
    \item (Strong form) For any $\varepsilon \in (0,\frac 12)$, there exists a sufficiently small constant $c_\varepsilon>0$ such that $\operatorname{Var}(\mathcal{V})/\kappa<c_\varepsilon$, then for generic initial data $\Theta^0$, we have
    \[
    1-\left(\frac 12+\varepsilon\right)\frac{\operatorname{Var}(\mathcal{V})}{\kappa^2} \le \liminf_{t\to\infty} R(t)\le \limsup_{t\to\infty} R(t)\le 1-\left(\frac 12-\varepsilon\right)\frac{\operatorname{Var}(\mathcal{V})}{\kappa^2}.
    \]
    If, in addition, $\mathcal{D}(\mathcal{V})/\kappa<c_\varepsilon$, then $\Theta(t)$ converges to the unique phase-locked state of \eqref{A-2} confined in the quarter circle \cite{C-H-J-K}.
  \end{enumerate}
  \end{conjecture}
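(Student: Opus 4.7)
The plan is to attack the weak form via an inertia-robust Lyapunov argument combined with the analysis of majority clusters developed in Sections~\ref{sec:2-divided}--\ref{sec:3}. The natural starting point is the gradient-flow formulation \eqref{B-6}, whose total energy
\[
E(\Theta,\Omega) \coloneqq \frac{m}{2}\|\Omega\|_2^{\,2} + \tilde{P}(\Theta), \qquad \tilde{P}(\Theta)\coloneqq -\sum_{k=1}^N \nu_k(\theta_k-\theta_c) + \frac{\kappa N}{2}(1-R^2),
\]
satisfies $\dot{E} = -\|\Omega\|_2^{\,2}\le 0$ regardless of the size of $m$. Under the standing normalization $\nu_c = 0$ and a priori diameter control $\sup_{t\ge 0}\mathcal{D}(\Theta(t))<\infty$, this identity combined with Proposition~\ref{P2.1} already yields asymptotic phase-locking. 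Thus the whole task reduces to establishing such a diameter bound from genericity alone, that is, from $R^0>0$ and $\mathcal{D}(\mathcal{V})/\kappa<\delta(\varepsilon)$, but with \emph{no} smallness assumed on $m\kappa$ or $\mathcal{D}(\Omega^0)/\kappa$.

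I would carry this out in three steps. First, use the energy inequality to extract a deterministic time $T_*$ beyond which $\|\Omega(t)\|_2$ is small in an averaged sense, and exploit this together with the finite-propagation-speed Lemma~\ref{L2.2} to conclude that $\mathcal{D}(\Theta(t))$ grows at most linearly in $t$ on $[0,T_*]$. Second, on $[T_*,\infty)$ the dynamics is close, in the sense of Lemma~\ref{L:approxaut}, to a first-order gradient descent of $\tilde{P}$; one then adapts the quasi-monotonicity Lemma~\ref{L2.4} to a dissipation-driven variant that uses only the already-accumulated energy loss in place of the small-$m\kappa$ hypothesis. This should produce a time at which $R$ exceeds a threshold depending only on $\varepsilon$, triggering a concentrated majority cluster. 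Third, feed this cluster into Theorem~\ref{L4.4} to obtain $\mathcal{A}$-partial phase-locking for some $\mathcal{A}$ with $|\mathcal{A}|>N/2$, close the loop on the a priori diameter bound via Remark~\ref{R2.2}, and invoke Proposition~\ref{P2.1}. Genericity is used to exclude initial data in the stable manifold of the pathological zero-$R$ equilibria of Example~\ref{ex:nonsync}; since these form a real-analytic subvariety of positive codimension, Lebesgue-a.e. initial data avoid them.

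For the strong form, once convergence to a phase-locked state $\Theta^\infty$ with $R^\infty$ close to $1$ is secured, the quantitative expansion follows from solving the standard consistency relations
\[
R^\infty\sin(\theta_i^\infty-\phi^\infty) = \frac{\nu_i-\nu_c}{\kappa},\qquad R^\infty = \frac{1}{N}\sum_{i=1}^N \cos(\theta_i^\infty-\phi^\infty),
\]
by a Banach fixed-point argument on $R^\infty$ near $1$, yielding $R^\infty = 1 - \operatorname{Var}(\mathcal{V})/(2\kappa^2) + O(\operatorname{Var}(\mathcal{V})^2/\kappa^4)$. Uniqueness of the quarter-circle phase-locked state (\cite{C-H-J-K}, and Remark~\ref{rem:pls}(3)) then pins down the limit.

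The main obstacle is unambiguously step two above: in the large-inertia regime $m\kappa \gg 1$, the first-order approximation underlying Lemma~\ref{L:approxaut} fails (as formalized by Proposition~\ref{prop:determinability}), so the quasi-monotonicity of $R$ from Lemma~\ref{L2.4} is unavailable in the form needed here. Replacing it would essentially require the weak Lyapunov functional of Conjecture~\ref{conj:lyapunov}, or a yet-to-be-found monotone quantity built out of both $R$ and the kinetic energy $\operatorname{Var}(\Omega)$. Without such a tool, the argument cannot proceed past the initial dissipation phase when $m$ is large, which is precisely why we state the result as a conjecture rather than a theorem.
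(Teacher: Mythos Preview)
This statement is a conjecture, and the paper does not prove it; it only remarks that Theorem~\ref{thm:qualitative_mto0} gives partial progress toward part~(1). Your write-up correctly treats it as open and correctly locates the central obstruction in the large-inertia regime, where the first-order approximation behind Lemma~\ref{L:approxaut} and Lemma~\ref{L2.4} breaks down (cf.~Proposition~\ref{prop:determinability}) and one would need something like the weak Lyapunov functional of Conjecture~\ref{conj:lyapunov}. In that respect your diagnosis matches the paper's.

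That said, the outline has gaps beyond the one you flag. Your step~1 is circular: the energy $E(\Theta,\Omega)=\frac{m}{2}\|\Omega\|_2^2+\tilde P(\Theta)$ is not bounded below unless you already control $\mathcal{D}(\Theta)$, because the linear term $-\sum_k \nu_k(\theta_k-\theta_c)$ (even under $\nu_c=0$) is only bounded by $\mathcal{D}(\mathcal{V})\cdot\mathcal{D}(\Theta)$. So the dissipation identity $\dot E=-\|\Omega\|_2^2$ cannot by itself produce a time $T_*$ with small averaged kinetic energy without presupposing the very diameter bound you are trying to establish. (Also, the coefficient in $\tilde P$ should be $\frac{\kappa N^2}{2}(1-R^2)$, not $\frac{\kappa N}{2}$; see the line after \eqref{B-12-1}.) Your step~3 is likewise obstructed in the large-$m\kappa$ regime: Theorem~\ref{L4.4} itself requires \eqref{eq:xi-partial}, which contains the summand $2m\kappa$ and hence forces $m\kappa<\tfrac14$; so even if a majority cluster were produced, the persistence mechanism you invoke is unavailable without small inertia. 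In short, all three steps, not just the second, currently depend on $m\kappa\ll 1$, which is why the statement remains conjectural.
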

In the following, we present partial progress toward {\bf Conjecture \ref{conj:R}} (1), without providing a full proof. A quantitative analysis of the $C^\infty(0,\infty)$ convergence, together with the main result of this paper, leads to a complete proof, which we postpone to a subsequent paper.
\begin{theorem}
Let $\Theta^0\in \mathbb{R}^N$ be such that $R^0>0$, and let $\varepsilon>0$. Let $\Theta(t)$ be the solution to the Cauchy problem \eqref{A-1}. Then, there exist sufficiently small numbers $a,b,c>0$ depending only on $\Theta^0$ and $\varepsilon$ such that if the initial data $\Omega^0$ and system parameters $\kappa$, $m$, and $\mathcal{V}$ satisfy
\[
D(\mathcal{V})/\kappa< a,\quad D(\Omega^0)/\kappa< b,\quad m\kappa <c,
\]
then asymptotic phase-locking occurs for $\Theta(t)$ with the following lower bounds for $R$:
\begin{align*}
\lim_{t\to\infty}R(t)>
\begin{cases}
    1-\varepsilon&\mathrm{if~}N=2,\\
    1-\frac 2N-\varepsilon&\mathrm{if~}N=3\mathrm{~or~}\eqref{eq:init-nonidentical}\mathrm{~holds},\\
    R^0-\varepsilon&\mathrm{otherwise},
\end{cases}
\end{align*}
where \eqref{eq:init-nonidentical} is the following condition:
\begin{equation}\label{eq:init-nonidentical}
\theta_i^0\not\equiv \theta_j^0 \mod 2\pi,\quad \forall i\neq j\in [N].
\end{equation}
\end{theorem}

%%%%%%%%%%%%%%%%%%
\subsection{Piecing three mechanisms together}\label{subsec:piecing}
In this subsection, we summarize our strategy for the proof of Theorem \ref{simplemainthm}, which is to combine three synchronization mechanisms of subsections \ref{subsec:1.6}, \ref{subsec:1.4}, and \ref{subsec:1.5} together. \newline

First, we briefly delineate our sufficient framework in terms of parameters and initial data. Compared to the simple conditions \eqref{B-15-1} for the Kuramoto model without inertia, our framework will be described by four free parameters $\lambda, \ell, \eta, \delta$. More precisely, the first two parameters, $\lambda$ and $\ell$, are the size and diameter of a majority phase cluster that will emerge from the given initial configuration in finite time. The third parameter $\eta$ is responsible for the range of the initial layer time zone $[0,\eta m]$. The last parameter $\delta$ is a lower bound for the ratio $R(t)/ R^0$ in the initial layer time zone. The ranges of the above parameters can be summarized as follows. 
\[ \ell\in(0,\pi), \quad  \lambda\in(0.5,1], \quad \eta\in(0,\infty), \quad  \delta\in(0,1). \]
  Now, we are ready to present our sufficient framework $\eqref{C-1}$:~Let $(\Theta^0, \Omega^0)$ and ${\mathcal V} = \{\nu_i \}$ be given initial data and a set of natural frequencies. Then, we assume that the parameters and initial data satisfy the following set of conditions $({\mathcal F})$:
\begin{equation}  \label{C-1}
\begin{cases}
\displaystyle ({\mathcal F}_1):~~R^0>0, \quad \zeta(\eta)
\leq (1-\delta)R^0. \\
\displaystyle  ({\mathcal F}_2):~
\delta R^0\ge \lambda+(1-\lambda)\cos\frac{\ell}{2} \quad \mathrm{or}\quad 2\lambda + \left( \frac{\xi(\eta)}{\delta R^0} \right)^2\frac{1}{1-\cos(\ell/2)} \leq 1+ \delta R^0. \\
\displaystyle  ({\mathcal F}_3):~~ \xi(\eta)<\sin \frac{\ell}{2} \left( \lambda \cos\frac{\ell}{2} - (1-\lambda) \right). \\
\displaystyle  ({\mathcal F}_4):~~
\frac{\mathcal{D}(\mathcal{V})}{\kappa} +4m\kappa+2m\mathcal{D}(\mathcal{V})< \frac{(2\lambda-1)^{3/2}}{\sqrt{2\lambda}}\frac{2-\lambda}{\sqrt{\lambda/2}+(1-\lambda)}.
\end{cases}
\end{equation}
Here, $\zeta(\eta)=\zeta(m,\kappa,\mathcal{V},\Omega^0,\eta)$ and $\xi(\eta)=\xi(m,\kappa,\mathcal{V},\Omega^0,\eta)$ are dimensionless quantities defined in \eqref{B-14-0-1}.

Note that condition $({\mathcal F}_3)$ necessitates $\lambda\cos\frac{\ell}{2}-(1-\lambda)>0$. Framework $({\mathcal F})$ in $\eqref{C-1}$ is significantly different from those in the literature: framework \eqref{C-1} is the first to apply to generic initial data (the condition $R^0 > 0$ is satisfied for Lebesgue a.e.~initial position data $\Theta^0$ in the position state space ${\mathbb R}^N$), not imposing any restriction on the cardinality or diameter of initial data.
\begin{theorem}\label{T3.1}
Suppose the conditions $({\mathcal F}_1) - ({\mathcal F}_3)$ hold, and let $(\Theta, \Omega)$ be a global solution to the Cauchy problem \eqref{B-1}. Then the following assertions hold. \newline
\begin{enumerate}
\item\emph{(Asymptotic phase-locking)}:~There exists a constant state $\Theta^{\infty} \in {\mathbb R}^N$ such that 
\[
\lim_{t\to\infty } \Big( \| \Theta(t) - \nu_c t {\bf 1}_{[N]} - {\Theta}^\infty \|_{\infty} + \| \Omega(t) - \nu_c {\bf 1}_{[N]} \|_{\infty}  \Big) = 0.
\]
\item \emph{(Finite-time emergence and persistence of a majority cluster)}:~There exist a nonnegative time $t_* > 0$ and a subset $\mathcal{A}$ with $|\mathcal{A}|\ge \lambda N$ such that the majority cluster $\Theta_\mathcal{A} \coloneqq ( \theta_i)_{i\in \mathcal{A}}$ is confined modulo $2\pi$, after some time $t_*$, in an arc whose length is  less than or equal to $ \ell < \pi$; this means that there are integers $k_i $ for $i\in \mathcal{A}$ such that
\[
\mathcal{D}\left((\theta_{i}(t) - 2k_{i}\pi)_{i\in\mathcal{A}}\right)<\ell,\quad t\ge t_*.
\]
There is a unique maximal (with respect to inclusion) such $\mathcal{A}$.
\vspace{0.1cm}
\item  \emph{(Linear arrangement of majority cluster)}:~If in addition \eqref{C-1}-(${\mathcal F}_4$) holds, then the majority cluster $\Theta_\mathcal{A}$ is arranged according to its natural frequencies: there is a constant $c$ depending only on $\frac{\mathcal{D}(\mathcal{V})}{\kappa}$, $m\kappa$, and $\lambda$ such that for any $i,j\in \mathcal{A}$ with $\nu_i\ge \nu_j$, we have
\[
\frac{\nu_i-\nu_j}{\kappa}\le \lim_{t\to\infty}(\theta_i(t)-\theta_j(t)-2(k_i-k_j)\pi)\le c\frac{\nu_i-\nu_j}{\kappa}.
\]
\end{enumerate}
\end{theorem}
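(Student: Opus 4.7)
\medskip

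\noindent\textbf{Proof plan.} The plan is to implement the three-stage scenario sketched in the abstract (initial layer, condensation, persistence-and-relaxation) and then extract part (3) as a perturbative calculation at the phase-locked limit. The free parameters $\eta,\delta,\lambda,\ell$ play the following roles: $\eta$ delimits the initial layer $[0,\eta m]$ during which the nonlinear interaction $(1-e^{-t/m})$ is still suppressed; $\delta$ is a loss budget for $R$ across that layer; $\lambda$ and $\ell$ are the size and arclength of the majority cluster that we will extract at the end of the condensation stage. Throughout we work under the Galilean normalization $\nu_c=\omega_c^0=0$ allowed by Proposition \ref{prop:sym}.

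\emph{Initial layer and condensation.} During $[0,\eta m]$, the bound $R(t)\ge R^0-\zeta(\eta)$ from \eqref{B-14-000} together with $(\mathcal F_1)$ yields $R(t)\ge \delta R^0$. After $t=\eta m$ we switch to the sharper quasi-monotonicity estimate \eqref{B-14-00}. The core of the condensation step is the following dichotomy: as long as $R(t)\ge \delta R^0$, either $\sqrt{\Delta(t)}\le \xi(\eta)/(\delta R^0)$, or $\dot R(t)>0$. Thus, letting $t_*$ be the first time at which $R$ equals its minimum on $[\eta m,\infty)$, at $t_*$ we must have either (i) $R$ has already climbed to the threshold $\lambda+(1-\lambda)\cos(\ell/2)$ triggering the first clause of $(\mathcal F_2)$, or (ii) $\sqrt{\Delta(t_*)}\le \xi(\eta)/(\delta R^0)$, in which case the second clause of $(\mathcal F_2)$ together with the identities $R=\tfrac1N\sum\cos(\theta_i-\phi)$ and $\Delta=\tfrac1N\sum\sin^2(\theta_i-\phi)$ force at least $\lambda N$ oscillators to lie within angular distance $\ell/2$ of $\phi(t_*)$. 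In either case, we produce a subset $\mathcal A\subset[N]$ of cardinality $\ge\lambda N$ with $\mathcal D(\Theta_\mathcal A(t_*))\le \ell$ (modulo $2\pi$). The precise bookkeeping is where we use both clauses of $(\mathcal F_2)$: the first handles the easy regime in which $R^0$ is already close to $1$, while the second handles the generic small-$R^0$ regime.

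\emph{Persistence via majority cluster stability.} The stability criterion encoded in $(\mathcal F_3)$ is exactly the hypothesis of Theorem \ref{L4.4}, which, by the mechanism of Subsection \ref{subsec:1.4}, guarantees that once the cluster $\Theta_\mathcal A$ is contained in an arc of length $<\ell$, it remains so for all subsequent time. Here we invoke the partial-controlling Lemma \ref{L:approxaut}(2), applied to $\mathcal A$, which converts the second-order dynamics on $\Theta_\mathcal A$ into a first-order closed-form differential inequality for $\mathcal D(\Theta_\mathcal A)$, to which a nonlinear Gr\"onwall argument applies. Uniqueness of the maximal such $\mathcal A$ follows from the observation that any two $\ell$-confined majorities (with $\ell<\pi$) must overlap in more than $N/2$ indices and so can be merged. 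This proves part (2).

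\emph{Relaxation and linear arrangement.} With $\mathcal D(\Theta_\mathcal A)$ bounded uniformly in time and $|\mathcal A|>N/2$, the estimate of Lemma \ref{L:approxaut}(1) applied to oscillators $i\notin\mathcal A$ produces a net restoring torque whenever such an $i$ drifts far from $\phi$; a straightforward comparison then shows $\sup_t\mathcal D(\Theta(t))<\infty$. Combined with Lemma \ref{L2.2}, this provides the a priori $W^{1,\infty}$ bound required by Proposition \ref{P2.1}, so the {\L}ojasiewicz gradient theorem delivers asymptotic phase-locking, proving part (1). For part (3), asymptotic phase-locking gives $\dot\theta_i\to 0$, hence $\nu_i+\frac{\kappa}{N}\sum_k\sin(\theta_k^\infty-\theta_i^\infty)=0$ for all $i$. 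Subtracting for $i,j\in\mathcal A$ and using $\sin x\le x$ and sum-to-product expansions, $(\mathcal F_4)$ gives a lower bound on the coefficient of $(\theta_i^\infty-\theta_j^\infty)$ of the form $1-O\bigl(\mathcal D(\mathcal V)/\kappa+m\kappa\bigr)$ bounded away from zero; inverting yields the sandwich bounds with an explicit $c$ depending only on $\mathcal D(\mathcal V)/\kappa$, $m\kappa$, and $\lambda$.

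\emph{Main obstacle.} The delicate step is the condensation dichotomy at $t_*$: we cannot let $R$ become arbitrarily small, yet the worst-case initial $R^0$ can already be of order $1/\sqrt N$. The two clauses of $(\mathcal F_2)$ are engineered precisely so that the bipolar decomposition forced by $\sqrt\Delta\le \xi(\eta)/(\delta R^0)$ yields an arc of length $\le \ell$, and making the interplay between $\zeta(\eta)$ (initial-layer loss) and $\xi(\eta)$ (post-layer drift) compatible with the cluster-stability threshold of $(\mathcal F_3)$ is what constrains the choice of $\eta$ and dictates the shape of \eqref{eq:xyz-cond}.
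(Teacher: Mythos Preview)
Your three-stage plan is the paper's proof (Section \ref{sec:5}): initial-layer control of $R$ via $(\mathcal F_1)$ and \eqref{B-14-000}, condensation via the quasi-monotonicity \eqref{B-14-00} to find a time with small $\Delta$ and $R\ge\delta R^0$, extraction of a majority cluster via $(\mathcal F_2)$ and Lemma \ref{L4.3}, then persistence and relaxation via Theorem \ref{L4.4} (with $\mathcal B=[N]$) and Proposition \ref{P2.1}.

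There is one genuine gap in your condensation step. Taking $t_*$ as ``the first time at which $R$ equals its minimum on $[\eta m,\infty)$'' fails when $R$ is monotone nondecreasing from $\eta m$: the minimum is then at $\eta m$, where $\Delta(\eta m)$ is not controlled, and your case (i) misreads the first clause of $(\mathcal F_2)$---that clause is the \emph{static} hypothesis $\delta R^0\ge\lambda+(1-\lambda)\cos(\ell/2)$, not something $R$ ``climbs to''. The paper's Lemma \ref{L4.2} closes this: if $\Delta(\eta m)\le(\xi(\eta)/\delta R^0)^2$ already, take $t_0=\eta m$; otherwise follow the maximal interval on which $\Delta>(\xi/R)^2$, and use either its finite right endpoint or, when the interval is unbounded, the integrability $\int_{\eta m}^\infty\dot R\,dt\le 1$ together with $\dot R\gtrsim\sqrt\Delta-\xi/R$ to force $\liminf\sqrt\Delta<\xi/(\delta R^0)$. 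Your minimum-time heuristic captures the interior-critical-point case but not this boundary/monotone case.

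For part (3) you take a genuinely different, shorter route than the paper: you work at the phase-locked limit $\Theta^\infty$ (already granted by part (1)) and read the sandwich off from the equilibrium relation $\nu_i-\nu_j=\tfrac{2\kappa}{N}\sin\tfrac{\theta_i^\infty-\theta_j^\infty}{2}\sum_k\cos\bigl(\theta_k^\infty-\tfrac{\theta_i^\infty+\theta_j^\infty}{2}\bigr)$, bounding the cosine sum above by $1$ and below by $\lambda\cos\phi_1-(1-\lambda)$ using the cluster-diameter bound from Theorem \ref{L4.4}(1). The paper instead argues dynamically (Theorem \ref{L4.4}(2)), using the Sturm--Picone comparison (Lemma \ref{lem:sturm-picone}) and Lemma \ref{lem:cutoff-comparison} on the second-order ODE for $\theta_i-\theta_j$ \emph{before} convergence is established; this makes the linear-arrangement statement self-contained within Theorem \ref{L4.4}, whereas your argument requires part (1) as input. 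Both yield the same constant $c$ expressed through $\phi_1\bigl(\lambda,2\xi(\infty)\bigr)$, so the dependence on $m\kappa$ enters only through the diameter bound, not through any equilibrium identity.

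Two minor corrections: your merging claim ``overlap in more than $N/2$ indices'' should read $\ge(2\lambda-1)N$ (see Lemma \ref{L4.5}); and the confinement of $i\notin\mathcal A$ in the paper is a trichotomy argument (third assertion of Theorem \ref{L4.4}) rather than a restoring-torque estimate.
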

\begin{proof}
We will prove statement (2) of Theorem \ref{T3.1} first in Section \ref{sec:5}. Then, in Section \ref{sec:5}, we will derive statement (1) of Theorem \ref{T3.1} from statement (2) using Proposition \ref{P2.1}. On the other hand, the condition \eqref{C-1}-(${\mathcal F}_4$) enables us to tell an additional structural property, called linear arrangement, of the majority cluster described in (2) of Theorem \ref{T3.1}. See Theorem \ref{L4.4}.
\end{proof}

%%%%%%%%%%%%%%%%%%%%%%%%%%%
%
%
%		Section 3
%
%
%%%%%%%%%%%%%%%%%%%%%%%%%%%
\section{Partial phase-locking of majority clusters} \label{sec:3}
\setcounter{equation}{0}
In this section, we establish a version of this partial phase-locking result for the inertial Kuramoto model \eqref{A-1}.  We remark that a similar result has been established in \cite{H-J-K} for the inertial Kuramoto model \eqref{A-1} (see Appendix \ref{app:suppt}). We establish a stronger version of this theorem in this paper (see Corollary \ref{cor:partialphaselocking-initial}).  Theorem \ref{thm:1stpartial} in Section \ref{sec:3} says this is true for the first-order model \eqref{A-2}. It tells us that not only these majority (i.e., $\lambda>\frac 12$) clusters are stable, but they can also control other certain oscillators as well.  In what follows, we describe partial phase-locking in \eqref{A-1} due to majority clusters. For this, we begin by defining the function $f_\lambda:\bbr\rightarrow\bbr$  as
\begin{equation}\label{eq:fgamma}
f_\lambda(\theta) =2\sin\frac{\theta}{2}\left(\lambda\cos\frac{\theta}{2}-(1-\lambda)\right)=\lambda\sin\theta-2(1-\lambda)\sin\frac{\theta}{2},\quad \theta\in \mathbb{R}, \quad \lambda\in \Big (\frac 12, 1 \Big ].
\end{equation}
Then $f_\lambda$ has the following properties.
\begin{lemma}[{\cite[Lemma 4.2]{H-R}}]\label{lem:phi-def}
The function $f_\lambda$ defined in \eqref{eq:fgamma} satisfies the following properties.
\begin{enumerate}
\item
The function $f_\lambda$ has zeros at $\theta=0$ and $\theta=2\cos^{-1} \Big( \frac{1-\lambda}{\lambda} \Big)$, and is positive on the interval $\Big (0,2\cos^{-1}\frac{1-\lambda}{\lambda} \Big)$.
\item
On the interval $(0,2\cos^{-1}\frac{1-\lambda}{\lambda})$, the function $f_\lambda$ is strictly concave and attains its maximum at the unique zero $\theta_*=2\cos^{-1} \Big( \frac{1-\lambda+\sqrt{(1-\lambda)^2+8\lambda^2}}{4\lambda} \Big)$ of $f_{ \lambda}(\theta_*)=0$ in $(0,2\cos^{-1}\frac{1-\lambda}{\lambda})$. 
\end{enumerate}
\end{lemma}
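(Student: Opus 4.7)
The plan is to read everything off from the two equivalent forms of $f_\lambda$ given in the statement. For part (1), I use the factored form $f_\lambda(\theta) = 2\sin(\theta/2)\bigl(\lambda\cos(\theta/2) - (1-\lambda)\bigr)$: the zeros on the relevant range come from $\sin(\theta/2) = 0$, giving $\theta = 0$, and from $\lambda\cos(\theta/2) = 1-\lambda$, giving $\theta = 2\cos^{-1}((1-\lambda)/\lambda)$. Since $\lambda \in (1/2, 1]$ forces $(1-\lambda)/\lambda \in [0, 1)$, the $\cos^{-1}$ is well-defined and lies in $(0, \pi/2]$. On the open interval between these two zeros, both factors are strictly positive, hence $f_\lambda > 0$.

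For part (2), I will differentiate twice and rewrite the second derivative in a factored form that makes its sign transparent. From $f_\lambda(\theta) = \lambda\sin\theta - 2(1-\lambda)\sin(\theta/2)$ one obtains $f''_\lambda(\theta) = -\lambda\sin\theta + \tfrac{1-\lambda}{2}\sin(\theta/2)$, which via $\sin\theta = 2\sin(\theta/2)\cos(\theta/2)$ becomes $f''_\lambda(\theta) = \tfrac{\sin(\theta/2)}{2}\bigl((1-\lambda) - 4\lambda\cos(\theta/2)\bigr)$. On the open interval of interest, $\cos(\theta/2) > (1-\lambda)/\lambda$, so $4\lambda\cos(\theta/2) > 4(1-\lambda)$, and therefore $(1-\lambda) - 4\lambda\cos(\theta/2) < -3(1-\lambda) \leq 0$. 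Combined with $\sin(\theta/2) > 0$, this yields strict concavity $f''_\lambda < 0$.

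Strict concavity together with the boundary vanishing from part (1) forces $f_\lambda$ to attain its maximum at a unique interior critical point. To locate it, set $f'_\lambda(\theta_*) = \lambda\cos\theta_* - (1-\lambda)\cos(\theta_*/2) = 0$, substitute $c = \cos(\theta_*/2)$ and $\cos\theta_* = 2c^2 - 1$, and solve the resulting quadratic $2\lambda c^2 - (1-\lambda)c - \lambda = 0$. By Vieta's formulas the product of its roots is $-1/2$, so exactly one root is positive; that root is $c = \frac{1-\lambda + \sqrt{(1-\lambda)^2 + 8\lambda^2}}{4\lambda}$, which lies in $((1-\lambda)/\lambda, 1)$ as required, yielding the stated formula for $\theta_*$. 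The argument is elementary and I anticipate no real obstacle; the only minor thing to track is discarding the negative root of the quadratic, which is immediate from the sign of the product of roots.
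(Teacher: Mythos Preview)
Your proof is correct. The paper does not give its own proof of this lemma; it simply quotes the result from \cite[Lemma 4.2]{H-R}, so there is nothing to compare against. Your argument is the natural elementary verification: the factored form $f_\lambda(\theta)=2\sin(\theta/2)\bigl(\lambda\cos(\theta/2)-(1-\lambda)\bigr)$ makes part (1) immediate, and the factorization $f''_\lambda(\theta)=\tfrac{1}{2}\sin(\theta/2)\bigl((1-\lambda)-4\lambda\cos(\theta/2)\bigr)$ together with $\cos(\theta/2)>(1-\lambda)/\lambda$ on the interval gives strict concavity cleanly. One small remark: rather than checking directly that the positive root $c$ lies in $\bigl((1-\lambda)/\lambda,1\bigr)$, you can simply note that strict concavity and the boundary zeros already guarantee a unique critical point in the open interval, whose $\cos(\theta_*/2)$ is therefore positive and must coincide with the unique positive root of the quadratic; this makes the range verification automatic.
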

\vspace{0.2cm}

\noindent Thus, for $\delta\in (0,f_\lambda (\theta_*))$, the equation $f_\lambda (\theta)=\delta$ has two zeros $\phi_1=\phi_1(\lambda, \delta)$ and $\phi_2=\phi_2(\lambda, \delta)$ in $(0,2\cos^{-1}\frac{1-\lambda}{\lambda})$, with the ordering
\begin{equation}\label{eq:phi1phi2-ordering}
0<\phi_1<\theta_*<\phi_2<2\cos^{-1} \left( \frac{1-\lambda}{\lambda} \right).
\end{equation}
These are the angles that the majority clusters will form.
\begin{definition}\label{def:phi1phi2}
    For $\lambda\in (1/2,1]$ and $\delta\in \mathbb{R}$ such that
    \begin{align*}
    0<\delta &< \max_{\theta \in (0,2\cos^{-1}( \frac{1}{\lambda} - 1 ) )}f_\lambda(\theta)=f_\lambda(\theta_*)\\
    &=\frac{\left(-3(1-\lambda)+\sqrt{9\lambda^2-2\lambda+1}\right)\sqrt{3\lambda^2+2\lambda-1-(1-\lambda)\sqrt{9\lambda^2-2\lambda+1}}}{4\sqrt{2}\lambda},
    \end{align*}
    we denote by $\phi_1(\lambda,\delta)$ the smaller root and by $\phi_2(\lambda,\delta)$ the larger root among the two distinct roots of the following trigonometric equation in $\theta$:
\[
\lambda\sin\theta-2(1-\lambda)\sin\frac{\theta}{2} = \delta, \quad \theta\in\left(0,2\cos^{-1}\frac{1-\lambda}{\lambda} \right).
\]
\end{definition}
In the next lemma, we collect some facts.
\begin{lemma}[{\cite[Lemma 4.3]{H-R}}]\label{f-estimates}
Let $\theta_*, \phi_1$ and $\phi_2$ be defined as above. Then, the following estimates hold.
\begin{align*}
& (i)~\phi_1< \frac{3\pi\delta}{4(2\lambda-1)}, \quad  \cos^{-1} \left( \frac{1-\lambda}{\lambda} \right) \le\theta_*. \\
& (ii)~f_\lambda\left(\cos^{-1}\frac{1-\lambda}{\lambda} \right)=\frac{(2\lambda-1)^{3/2}}{\sqrt{2\lambda}}\frac{2-\lambda}{\sqrt{\lambda/2}+(1-\lambda)}.
\end{align*}
\end{lemma}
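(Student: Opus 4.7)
For statement (i), my plan is to exploit the factorization $f_\lambda(\theta) = 2\sin(\theta/2)(\lambda\cos(\theta/2) - (1-\lambda))$ and lower-bound each factor separately on the interval $(0,\theta_*]$ containing $\phi_1$. A preliminary step is to show $\theta_*\le \pi/2$: using the explicit formula $\cos(\theta_*/2) = \frac{(1-\lambda)+\sqrt{(1-\lambda)^2+8\lambda^2}}{4\lambda}$, the inequality $\cos(\theta_*/2)\ge 1/\sqrt{2}$ reduces, after clearing denominators and squaring, to $4\sqrt{2}\,\lambda(1-\lambda)\ge 0$, which is immediate. Consequently $\phi_1/2\in (0,\pi/4]$, and Jordan's inequality yields $\sin(\phi_1/2) > \phi_1/\pi$. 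For the second factor, monotonicity of $\cos$ on $[0,\pi/2]$ gives
\[
\lambda\cos(\phi_1/2) - (1-\lambda) \ge \lambda\cos(\theta_*/2) - (1-\lambda) = \tfrac{1}{4}\left(\sqrt{(1-\lambda)^2 + 8\lambda^2} - 3(1-\lambda)\right),
\]
and I would then prove the key bound $\lambda\cos(\theta_*/2)-(1-\lambda) \ge \tfrac{2(2\lambda-1)}{3}$, which upon isolating the radical and squaring reduces to the identity $8(2\lambda-1)^2\ge 0$. Multiplying the two factor bounds yields $f_\lambda(\phi_1) > \tfrac{4(2\lambda-1)\phi_1}{3\pi}$, which with $\delta=f_\lambda(\phi_1)$ rearranges to the claimed upper bound on $\phi_1$.

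For the second half of (i), I would invoke the critical-point relation $\lambda\cos\theta_* = (1-\lambda)\cos(\theta_*/2)$ coming from $f_\lambda'(\theta_*)=0$. Since $\theta_*>0$ forces $\cos(\theta_*/2)<1$, this yields $\cos\theta_* \le (1-\lambda)/\lambda$, and monotonicity of $\cos$ on $[0,\pi]$ then delivers $\theta_*\ge \cos^{-1}((1-\lambda)/\lambda)$.

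For statement (ii), I plan to evaluate $f_\lambda$ directly at $\theta=\cos^{-1}((1-\lambda)/\lambda)$. With $\cos\theta=(1-\lambda)/\lambda$, the difference-of-squares identity $\lambda^2-(1-\lambda)^2 = 2\lambda-1$ together with the half-angle formulas immediately yield $\sin\theta = \sqrt{2\lambda-1}/\lambda$, $\cos(\theta/2)=1/\sqrt{2\lambda}$, and $\sin(\theta/2) = \sqrt{(2\lambda-1)/(2\lambda)}$. Substituting these into $\lambda\sin\theta-2(1-\lambda)\sin(\theta/2)$, pulling out the common factor $\sqrt{2\lambda-1}/\sqrt{2\lambda}$, and rationalizing via the algebraic identity $\lambda-2(1-\lambda)^2 = (2\lambda-1)(2-\lambda)$ transforms the result into the claimed expression after using $\sqrt{2\lambda}/2 = \sqrt{\lambda/2}$.

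The step I expect to be the main obstacle is the inequality $\lambda\cos(\theta_*/2) - (1-\lambda) \ge \tfrac{2(2\lambda-1)}{3}$ from part (i); although $\theta_*$ has a somewhat unwieldy closed form, it is a pleasant surprise that this bound reduces to a perfect-square identity and hence holds uniformly over $\lambda\in (1/2,1]$. All remaining steps are routine trigonometric and algebraic manipulations.
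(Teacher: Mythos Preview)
The paper does not include its own proof of this lemma; it merely cites \cite[Lemma 4.3]{H-R}. Your argument is correct and self-contained: the factor-by-factor lower bound on $f_\lambda(\phi_1)$ via Jordan's inequality and the clean reduction of $\lambda\cos(\theta_*/2)-(1-\lambda)\ge \tfrac{2}{3}(2\lambda-1)$ to $8(2\lambda-1)^2\ge 0$ both check out, as do the critical-point argument for $\theta_*\ge \cos^{-1}((1-\lambda)/\lambda)$ and the direct half-angle evaluation for (ii). One minor remark: the preliminary bound $\theta_*\le \pi/2$ is not actually needed, since $\phi_1<2\cos^{-1}((1-\lambda)/\lambda)\le \pi$ already places $\phi_1/2$ in $(0,\pi/2)$ where Jordan applies, and $\cos$ is monotone on all of $[0,\pi]$; but including it does no harm.
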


We state our partial-locking theorem for \eqref{A-1} as follows.

\begin{theorem}\label{L4.4}
Suppose that the free real parameters $\lambda, \ell>0$ and index set ${\mathcal A} \subset [N]$ satisfy
\begin{equation}\label{eq:gammaellrange}
\frac{1}{2} < \lambda \leq 1, \quad \ell \in\left(0,2\cos^{-1} \Big( \frac{1}{\lambda} - 1 \Big) \right), \quad |\mathcal{A}|\ge \lambda N,
\end{equation}
and that the system parameters and the free real parameter $\eta>0$ satisfy the following variant of \eqref{C-1}-$({\mathcal F}_3)$ for the index set $\mathcal{A}\subset [N]$:
\begin{align}\label{eq:xi-partial}
\begin{aligned}
\xi(m,\kappa,\mathcal{V}_\mathcal{A},\Omega^0_\mathcal{A},\eta)&=m\mathcal{D}(\mathcal{V}_\mathcal{A})+2m\kappa+\frac{\mathcal{D}(\mathcal{V}_\mathcal{A})}{2\kappa} \\
&+\mathcal{D}(\Omega^0_\mathcal{A})m\max\{1,\eta\}e^{-\max\{1,\eta\}}+\frac{\mathcal{D}(\Omega^0_\mathcal{A})}{2\kappa}\frac{e^{-\eta}}{1-e^{-\eta}}\\
&<\frac\lambda 2\sin \ell-(1-\lambda)\sin\frac{\ell}{2},
\end{aligned}
\end{align}
and let $(\Theta,\Omega)$ be a global solution to \eqref{B-1}. Assume there exists a time $t_1 \geq \eta m$ such that the subensemble $\Theta_\mathcal{A}=(\theta_\ell)_{\ell\in \mathcal{A}}$ satisfies
\begin{align*}
\mathcal{D}(\Theta_\mathcal{A}(t_1))\le \ell.
\end{align*}
Then, the following assertions hold.
\begin{enumerate}
\item (Stability of the majority cluster): One has
\begin{align*}
\sup_{t\ge t_1}\mathcal{D}(\Theta_\mathcal{A}(t))\le \ell,
\end{align*}
and
\begin{align*}
\begin{aligned}
 \limsup_{t\rightarrow\infty}\mathcal{D}(\Theta_\mathcal{A}(t)) &\le \phi_1\left(\lambda, 2m\mathcal{D}(\mathcal{V}_\mathcal{A})+4m\kappa+\frac{\mathcal{D}(\mathcal{V}_\mathcal{A})}{\kappa}\right) \\
 &\stackrel{\mathrm{Lemma~}\ref{f-estimates}(i)}{<}\frac{3\pi}{4(2\lambda-1)}\left(2m\mathcal{D}(\mathcal{V}_\mathcal{A})+4m\kappa+\frac{\mathcal{D}(\mathcal{V}_\mathcal{A})}{\kappa}\right).
\end{aligned}
\end{align*}
\item (Partial linear arrangement) If we assume in addition that
\begin{equation}\label{additional0}
2m\mathcal{D}(\mathcal{V}_\mathcal{A})+4m\kappa+\frac{\mathcal{D}(\mathcal{V}_\mathcal{A})}{\kappa}< \frac{(2\lambda-1)^{3/2}}{\sqrt{2\lambda}}\frac{2-\lambda}{\sqrt{\lambda/2}+(1-\lambda)},
\end{equation}
then the oscillators of $\Theta_\mathcal{A}$ becomes linearly ordered according to their natural frequencies: for $i,j\in \mathcal{A}$, with $\nu_i\ge \nu_j$,
\begin{equation}\label{eq:A-linear-ordered}
\hspace{1cm} \frac{\nu_i-\nu_j}{\kappa}\le \liminf_{t\rightarrow\infty}[\theta_i(t)-\theta_j(t)] \le \limsup_{t\rightarrow\infty}[\theta_i(t)-\theta_j(t)]  \le \frac{\phi_1}{2 \sin(\phi_1/2)(\lambda\cos \phi_1 -(1-\lambda)) }\frac{\nu_i-\nu_j}{\kappa},
\end{equation}
where $\phi_1=\phi_1\left(\lambda, 2m\mathcal{D}(\mathcal{V}_\mathcal{A})+4m\kappa+\frac{\mathcal{D}(\mathcal{V}_\mathcal{A})}{\kappa}\right)$.
\end{enumerate}
Now we assume that there is an index set $\mathcal{B}\subset [N]$ with $\mathcal{B}\supset \mathcal{A}$ satisfying the following variant of \eqref{eq:xi-partial}:
\begin{equation}\label{eq:xi-partial-B}
\xi(m,\kappa,\mathcal{V}_\mathcal{B},\Omega^0_\mathcal{B},\infty)=m\mathcal{D}(\mathcal{V}_\mathcal{B})+2m\kappa+\frac{\mathcal{D}(\mathcal{V}_\mathcal{B})}{2\kappa}<\frac\lambda 2\sin \ell-(1-\lambda)\sin\frac{\ell}{2}.
\end{equation}
Then, the following statements hold.
\begin{enumerate}[resume]
\item (The majority cluster $\mathcal{A}$ confines $\mathcal{B}$) The ensemble $\Theta_\mathcal{B}$ is partially phase-locked:
\[
\sup_{t\ge 0}\mathcal{D}(\Theta_\mathcal{B}(t))<\infty.
\]
In particular, if $\mathcal{B}=[N]$, then asymptotic phase-locking occurs.

\item (Uniqueness of the maximal majority cluster) There is a unique index set $\mathcal{A}_\mathrm{max}\subset [N]$ with
\[
\mathcal{A}\subset \mathcal{A}_\mathrm{max}\subset \mathcal{B}
\]
possessing the following properties (a) and (b):
\begin{enumerate}
    \item (The ensemble $\Theta_{\mathcal{A}_\mathrm{max}}$ forms a cluster) By possibly replacing $\theta_i$ by $\theta_i-2k_i\pi$ for a suitable integer $k_i\in \mathbb{Z}$ over all $i\in \mathcal{A}_\mathrm{max}\setminus\mathcal{A}$, we have
    \begin{equation}\label{eq:max-cluster}
    \limsup_{t\rightarrow\infty}\mathcal{D}(\Theta_{\mathcal{A}_\mathrm{max}}(t))\le \phi_1\left(\lambda, 2m\mathcal{D}(\mathcal{V}_\mathcal{B})+4m\kappa+\frac{\mathcal{D}(\mathcal{V}_\mathcal{B})}{\kappa}\right).
    \end{equation}
    \item (Maximality and quantitative separation)
Any enlargement of $\mathcal{A}_\mathrm{max}$ fails to form a cluster: if $\mathcal{A}_\mathrm{max}\subsetneq \mathcal{B}$, then
    \begin{equation}\label{eq:cluster-maximality}
    \liminf_{t\to\infty}\min_{\substack{i\in \mathcal{B}\setminus\mathcal{A}_\mathrm{max}\\ k\in \mathbb{Z}}}\mathcal{D}(\Theta_{\mathcal{A}_\mathrm{max}}(t)\cup \{\theta_i(t)-2\pi k\})\ge \phi_2\left(\lambda, 2m\mathcal{D}(\mathcal{V}_\mathcal{B})+4m\kappa+\frac{\mathcal{D}(\mathcal{V}_\mathcal{B})}{\kappa}\right),
    \end{equation}
    and we have the following separation estimate:
    \begin{align}
    \begin{aligned} \label{eq:cluster-separation}
 & \liminf_{t\to\infty}\min_{\substack{i\in \mathcal{B}\setminus \mathcal{A}_{\mathrm{max}}\\ j\in \mathcal{A}_{\mathrm{max}}\\ k\in \mathbb{Z}}}|\theta_i(t)-2\pi k-\theta_j(t)| \\
 & \hspace{1cm} \ge \phi_2\left(\lambda, 2m\mathcal{D}(\mathcal{V}_\mathcal{B})+4m\kappa+\frac{\mathcal{D}(\mathcal{V}_\mathcal{B})}{\kappa}\right)-\phi_1\left(\lambda, 2m\mathcal{D}(\mathcal{V}_\mathcal{B})+4m\kappa+\frac{\mathcal{D}(\mathcal{V}_\mathcal{B})}{\kappa}\right).
\end{aligned}
\end{align}
\end{enumerate}
Again, under additional conditions, we have linear arrangement of $\Theta_{\mathcal{A}_\mathrm{max}}$:
\begin{enumerate}[resume]
    \item (Linear arrangement) If we assume in addition that
\begin{equation*}
2m\mathcal{D}(\mathcal{V}_\mathcal{B})+4m\kappa+\frac{\mathcal{D}(\mathcal{V}_\mathcal{B})}{\kappa}< \frac{(2\lambda-1)^{3/2}}{\sqrt{2\lambda}}\frac{2-\lambda}{\sqrt{\lambda/2}+(1-\lambda)},
\end{equation*}
then the oscillators of $\Theta_{\mathcal{A}_\mathrm{max}}$ becomes, after suitable $2\pi$-translations, linearly ordered according to their natural frequencies: for $i,j\in \mathcal{A}_\mathrm{max}$, with $\nu_i\ge \nu_j$,
\[
\frac{\nu_i-\nu_j}{\kappa}\le \liminf_{t\rightarrow\infty}[\theta_i(t)-\theta_j(t)] \le \limsup_{t\rightarrow\infty}[\theta_i(t)-\theta_j(t)] \le \frac{\phi_1}{2\sin(\phi_1/2)\left(\lambda\cos\phi_1-(1-\lambda)\right)}\frac{\nu_i-\nu_j}{\kappa},
\]
where $\phi_1=\phi_1\left(\lambda,2m\mathcal{D}(\mathcal{V}_\mathcal{B})+4m\kappa+\frac{\mathcal{D}(\mathcal{V}_\mathcal{B})}{\kappa}\right)$.
\end{enumerate}
\end{enumerate}
\end{theorem}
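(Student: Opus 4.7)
The plan is to base the entire proof on the ``first-order reduction'' of Lemma \ref{L:approxaut}(ii), which lets us view the within-cluster differences $\theta_j-\theta_i$ (for $i,j\in\mathcal{A}$) as solutions to a first-order ODE driven by a cluster pull-back plus an error controlled by the quantity $\xi(m,\kappa,\mathcal{V}_\mathcal{A},\Omega^0_\mathcal{A},\eta)$ from \eqref{B-14-0-1}. Combined with the product-to-sum identity $\sin(\theta_l-\theta_j)-\sin(\theta_l-\theta_i)=-2\sin((\theta_j-\theta_i)/2)\cos(\theta_l-(\theta_i+\theta_j)/2)$, the pull-back on extreme indices $i_*,j_*\in\mathcal{A}$ becomes quantifiable via the majority-cluster heuristic of subsection \ref{subsec:1.4}, producing $f_\lambda$ once the outsider contribution $\tfrac{N-|\mathcal{A}|}{N}2\kappa|\sin(\mathcal{D}/2)|$ in Lemma \ref{L:approxaut}(ii) is absorbed via $|\mathcal{A}|/N\ge \lambda$. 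The bookkeeping of linear contributions will, at $t\ge \eta m$ (using $e^{-t/m}/(1-e^{-t/m})\le e^{-\eta}/(1-e^{-\eta})$ and $\sup_{t\ge \eta m}te^{-t/m}=m\max\{1,\eta\}e^{-\max\{1,\eta\}}$), produce exactly $\xi$ from \eqref{B-14-0-1}, so that hypothesis \eqref{eq:xi-partial} rearranges to the clean inequality $2\xi<f_\lambda(\ell)$.

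For part (1), I would set $\mathcal{M}(t)\coloneqq \mathcal{D}(\Theta_\mathcal{A}(t))$, work with Dini derivatives (since $\mathcal{M}$ is only Lipschitz across times where the extreme indices change), and derive
\begin{equation*}
D^+\mathcal{M}(t)\le -\kappa(1-e^{-t/m})\bigl[f_\lambda(\mathcal{M}(t))-2\xi\bigr],\quad t\ge t_1.
\end{equation*}
Since this is strictly negative at $\mathcal{M}=\ell$, a standard barrier argument gives $\mathcal{M}(t)\le \ell$ on $[t_1,\infty)$. For the asymptotic bound I would let $t\to\infty$, so that the $\Omega^0_\mathcal{A}$-dependent part of $\xi$ decays, replacing $\xi$ by $\xi_\infty=m\mathcal{D}(\mathcal{V}_\mathcal{A})+2m\kappa+\mathcal{D}(\mathcal{V}_\mathcal{A})/(2\kappa)$, whose double is exactly the parameter appearing in the $\phi_1$ bound. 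The two-zero structure of $f_\lambda=2\xi_\infty$ on $(0,2\cos^{-1}((1-\lambda)/\lambda))$ together with $\mathcal{M}\le \ell$ then forces $\limsup\mathcal{M}\le \phi_1(\lambda,2\xi_\infty)$.

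For part (2), I would fix $(i,j)\in\mathcal{A}^2$ with $\nu_i\ge \nu_j$ and apply Lemma \ref{L:approxaut}(ii) directly to this pair. Asymptotically the exponential transient vanishes, and by (1) the cluster is confined in $\phi_1$; under \eqref{additional0} (which, via Lemma \ref{f-estimates}(ii), ensures $\phi_1<\cos^{-1}((1-\lambda)/\lambda)$, hence $\lambda\cos\phi_1-(1-\lambda)>0$), the sum $\tfrac{1}{N}\sum_{l\in\mathcal{A}}[\sin(\theta_l-\theta_j)-\sin(\theta_l-\theta_i)]$ pulls back with strictly positive coefficient, and balancing against $\nu_i-\nu_j$ in the steady state $\dot\theta_i-\dot\theta_j\to 0$ yields \eqref{eq:A-linear-ordered} after elementary manipulations (the upper bound uses $x/\sin(x/2)\le \phi_1/\sin(\phi_1/2)$ for $x\in[0,\phi_1]$; the lower bound uses only $|\sin|\le 1$). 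For parts (3) and (4), my plan is to extend the barrier analysis to the gap between an outsider $k\in \mathcal{B}\setminus\mathcal{A}$ and the now-tight $\mathcal{A}$-cluster; with $\mathcal{V}_\mathcal{B}$ replacing $\mathcal{V}_\mathcal{A}$ in the bookkeeping, the inequality $f_\lambda>2\xi^\mathcal{B}_\infty$ holds on the entire intermediate interval $(\phi_1,\phi_2)$, producing a dichotomy in which each $k\in \mathcal{B}$ is eventually trapped either inside the $\phi_1$-neighborhood or outside the $\phi_2$-neighborhood of $\Theta_\mathcal{A}$ (after appropriate $2\pi$-translation). I would then define $\mathcal{A}_\mathrm{max}$ as $\mathcal{A}$ together with the indices of the former type, deduce \eqref{eq:max-cluster} by rerunning (1) with $\mathcal{A}_\mathrm{max}$, read off \eqref{eq:cluster-maximality} and \eqref{eq:cluster-separation} from the $\phi_2$ threshold, and obtain (4)(c) by rerunning (2) with $\mathcal{A}_\mathrm{max}$.

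The hard part will be making the dichotomy in (3)--(4) fully rigorous: a priori a trajectory $\theta_k(t)$ could oscillate in and out of the $(\phi_1,\phi_2)$-annulus, especially given the $2\pi$ ambiguity of ``distance to the cluster'' on the circle. I would control this by combining the barrier inequality with an integral (rather than pointwise) estimate showing that the time spent in the annulus per crossing is bounded and that consecutive crossings are in the same direction, effectively assigning each outsider a well-defined winding-augmented separation whose limit set lies strictly below $\phi_1$ or strictly above $\phi_2$.
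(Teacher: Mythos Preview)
Your plan for part (1) is essentially what the paper does: invoke Lemma \ref{L:approxaut}(ii) at the extremal indices, collapse the $\mathcal{A}$-sum via the product-to-sum identity, absorb the outsider term via $|\mathcal{A}|/N\ge\lambda$, and obtain the Dini inequality $D^+\mathcal{D}(\Theta_\mathcal{A})\le -\kappa(1-e^{-t/m})[f_\lambda(\mathcal{D}(\Theta_\mathcal{A}))-2\xi]$. The paper then sends $\eta\to\infty$ exactly as you describe to sharpen $2\xi$ to $2\xi_\infty$ for the $\limsup$ bound.

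There is, however, a real gap in your plan for part (2). You propose to stay with the first-order reduction of Lemma \ref{L:approxaut}(ii), but that lemma carries an \emph{irreducible} additive error $2m\kappa(\mathcal{D}(\mathcal{V}_\mathcal{A})+2\kappa)$ that survives as $t\to\infty$. If you run your ``steady-state balance'' through this approximation, the best lower bound you can extract is
\[
\liminf_{t\to\infty}(\theta_i-\theta_j)\ \ge\ \frac{\nu_i-\nu_j}{\kappa}\ -\ \bigl(2m\mathcal{D}(\mathcal{V}_\mathcal{A})+4m\kappa\bigr),
\]
which is strictly weaker than the claimed $\frac{\nu_i-\nu_j}{\kappa}$. (Your remark that ``the lower bound uses only $|\sin|\le 1$'' is exactly the paper's estimate \eqref{crude-mf0}, but the paper applies it to the \emph{exact} identity \eqref{individualdiff}, not to the approximate one.) The paper therefore abandons the first-order reduction for part (2) and works directly with the second-order difference ODE $m(\ddot\theta_i-\ddot\theta_j)+(\dot\theta_i-\dot\theta_j)=(\nu_i-\nu_j)-2\kappa\sin\frac{\theta_i-\theta_j}{2}\cdot\frac{1}{N}\sum_k\cos(\theta_k-\tfrac{\theta_i+\theta_j}{2})$, then feeds this into second-order comparison tools (a Sturm--Picone argument for sign persistence when $\nu_i=\nu_j$, and a damped-oscillator comparison lemma for the two-sided bound when $\nu_i>\nu_j$). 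Your invocation of ``the steady state $\dot\theta_i-\dot\theta_j\to 0$'' is also unjustified at this point, since frequency synchronization has not been established for $\mathcal{A}$ alone.

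For parts (3)--(4) your strategy differs from the paper's but is in the right spirit. For (3) the paper takes a simpler route than your annulus dichotomy: it observes that if an outsider $i\in\mathcal{B}\setminus\mathcal{A}$ ever lands (mod $2\pi$) inside the $\ell$-arc occupied by $\Theta_\mathcal{A}$, then one can apply part (1) to the enlarged set $\mathcal{A}\cup\{i\}$ and trap it forever; this immediately yields $\sup_t\mathcal{D}(\Theta_\mathcal{B})<\infty$ without any barrier analysis on $(\phi_1,\phi_2)$. For (4) the paper's key device is a \emph{merging lemma}: any two majority clusters $\tilde{\mathcal{A}}_1,\tilde{\mathcal{A}}_2$ (each satisfying a $\phi_2$-type confinement at some time) must coalesce, because $D^+\mathcal{D}(\Theta_{\tilde{\mathcal{A}}_1\cup\tilde{\mathcal{A}}_2})<0$ throughout the range $[\ell,2\ell]$. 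This makes the family of candidate clusters closed under union, so a maximal element exists; the separation estimates \eqref{eq:cluster-maximality}--\eqref{eq:cluster-separation} then fall out from the definition. Your alternative of building $\mathcal{A}_\mathrm{max}$ by adjoining dichotomy-captured indices one at a time may be workable, but you would still need the merging mechanism (or an equivalent) to show the resulting union has small diameter at a \emph{common} time, since different outsiders are captured at different times.
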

\begin{proof}
Since the proofs are very lengthy, we leave them in the following subsections. 
\end{proof}
\begin{remark}
Below, we comment on the contents of serval assertions appearing in Theorem \ref{L4.4}. We refer to Theorem \ref{thm:1stpartial} in Appendix \ref{app:suppt-1} for a version of Theorem \ref{L4.4} in the simpler case of the first-order model \eqref{A-2}. The assertions in Theorem \ref{L4.4} can be rephrased as follows. 
\begin{enumerate}
    \item Once a majority $\mathcal{A}$ of the phase oscillators is concentrated (modulo $2\pi$) in an arc of sufficiently small length $\ell$ at some finite time $t_1\ge \eta m$ bounded away from $0$, they must always stay in an arc of length at most $\ell$ after that time if the coupling strength is sufficiently large compared to $\mathcal{D}(\mathcal{V})$, $\mathcal{D}(\Omega^0)$, and $1/m$. Thus, $\mathcal{A}$ becomes a stable majority cluster.
    \vspace{0.1cm}
    \item With stronger assumptions on the smallness of the normalized natural frequency diameter $\mathcal{D}(\mathcal{V}_\mathcal{A})/\kappa$ and normalized inertia $m\kappa$, the members of the majority cluster $\mathcal{A}$ eventually rearrange themselves according to the linear order of their natural frequencies $\nu_i$.
       \vspace{0.1cm}
    \item The movement of other members in $\mathcal{B}\setminus\mathcal{A}$ is heavily restricted by the majority cluster $\mathcal{A}$, since they become part of the majority cluster $\mathcal{A}$ if they cross paths with them.
       \vspace{0.1cm}
    \item The members of $\mathcal{B}$ which join the majority cluster $\mathcal{A}$ attract each other as well. This allows us to identify a unique maximal majority cluster $\mathcal{A}_\mathrm{max}$ with $\mathcal{A}\subset \mathcal{A}_\mathrm{max}\subset \mathcal{B}$, from which $\mathcal{B}\setminus \mathcal{A}_\mathrm{max}$ distances itself.\footnote{It is tempting to view $\Theta_{\mathcal{A}_\mathrm{max}}$ as repelling $\Theta_{\mathcal{B}\setminus\mathcal{A}_\mathrm{max}}$, but this is not the case. What is happening is that $\Theta_{\mathcal{A}_\mathrm{max}}$ is doing its best to include $\Theta_{\mathcal{B}\setminus \mathcal{A}_\mathrm{max}}$, but $\Theta_{[N]\setminus \mathcal{A}_\mathrm{max}}$ is on the opposite side of the circle, engaging in a tug-of-war with $\Theta_{\mathcal{A}_\mathrm{max}}$, pulling $\Theta_{\mathcal{B}\setminus \mathcal{A}_{\mathrm{max}}}$ away from $\Theta_{\mathcal{A}_{\mathrm{max}}}$.}
\end{enumerate}
Simply put, a concentrated majority cluster is stable and attractive.
\end{remark}
As an application of Theorem \ref{L4.4}, we use the finite speed of \eqref{B-1} to show the stability of majority clusters for initial data as follows.
\begin{corollary}\label{cor:partialphaselocking-initial}
    Suppose that the free real parameters $\lambda,\ell$ satisfy
\[
\frac{1}{2} < \lambda \le 1, \quad \ell \in\left(0,2\cos^{-1} \left( \frac{1}{\lambda} - 1 \right) \right),
\]
and that the system parameters, free real parameter $\eta>0$, and index set $\mathcal{A}\subset [N]$ satisfy $|\mathcal{A}|\ge \lambda N$, \eqref{eq:xi-partial}, and
\begin{align*}
\mathcal{D}(\Theta_\mathcal{A}^0)\le \ell-m(1-e^{-\eta})\mathcal{D}(\Omega_\mathcal{A})-(\eta m-m+me^{-\eta})(\mathcal{D}(\mathcal{V}_\mathcal{A})+2\kappa),
\end{align*}
and let $(\Theta,\Omega)$ be a global solution to \eqref{B-1}. Then, the conclusion of Theorem \ref{L4.4} holds with $t_1=\eta m$.
\end{corollary}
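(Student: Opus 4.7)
The goal is to reduce Corollary \ref{cor:partialphaselocking-initial} directly to Theorem \ref{L4.4} by verifying the single missing hypothesis, namely that $\mathcal{D}(\Theta_\mathcal{A}(\eta m)) \le \ell$ at the choice $t_1 = \eta m$. Once this is established, every other condition in the premise of Theorem \ref{L4.4} (the ranges of $\lambda$, $\ell$, $\eta$, the size bound $|\mathcal{A}|\ge \lambda N$, and the variant \eqref{eq:xi-partial}) is shared verbatim with the present corollary, so invoking Theorem \ref{L4.4} yields the conclusion immediately.

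The plan for obtaining the diameter estimate at $t=\eta m$ is to combine the trivial $C^1$-integration bound
\[
|\theta_i(t)-\theta_j(t)| \le |\theta_i^0-\theta_j^0| + \int_0^t |\omega_i(s)-\omega_j(s)|\, ds
\]
with the pointwise finite propagation speed estimate from Lemma \ref{L2.2}(2),
\[
|\omega_i(s)-\omega_j(s)| \le e^{-s/m}|\omega_i^0-\omega_j^0| + (1-e^{-s/m})\bigl(|\nu_i-\nu_j|+2\kappa\bigr).
\]
Since these inequalities are indexwise, I would take the supremum over $i,j\in\mathcal{A}$ \emph{before} integrating, giving
\[
\mathcal{D}(\Theta_\mathcal{A}(t)) \le \mathcal{D}(\Theta_\mathcal{A}^0) + \mathcal{D}(\Omega_\mathcal{A}^0)\int_0^t e^{-s/m}\,ds + \bigl(\mathcal{D}(\mathcal{V}_\mathcal{A})+2\kappa\bigr)\int_0^t(1-e^{-s/m})\,ds.
\]
Evaluating the two elementary integrals at $t = \eta m$ yields $m(1-e^{-\eta})$ and $\eta m - m + m e^{-\eta}$ respectively, so that
\[
\mathcal{D}(\Theta_\mathcal{A}(\eta m)) \le \mathcal{D}(\Theta_\mathcal{A}^0) + m(1-e^{-\eta})\mathcal{D}(\Omega_\mathcal{A}^0) + (\eta m - m + m e^{-\eta})\bigl(\mathcal{D}(\mathcal{V}_\mathcal{A})+2\kappa\bigr).
\]

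By the standing hypothesis of the corollary, the right-hand side is $\le \ell$. Therefore $\mathcal{D}(\Theta_\mathcal{A}(\eta m))\le \ell$, which is precisely the hypothesis required to invoke Theorem \ref{L4.4} with $t_1 = \eta m$; all conclusions (1)--(4) of that theorem then transfer without modification. There is essentially no obstacle beyond keeping the arithmetic of the two integrals honest; the only subtle point worth emphasizing is that Lemma \ref{L2.2}(2) is a pairwise estimate valid for every fixed pair $(i,j)$, so the diameter over $\mathcal{A}$ is controlled solely by $\mathcal{D}(\Omega_\mathcal{A}^0)$ and $\mathcal{D}(\mathcal{V}_\mathcal{A})$ rather than their global counterparts, which is exactly what matches the form of the hypothesis.
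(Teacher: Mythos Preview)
Your proposal is correct and follows essentially the same route as the paper: use Lemma~\ref{L2.2}(2) to bound $|\omega_i(s)-\omega_j(s)|$ pairwise over $i,j\in\mathcal{A}$, integrate from $0$ to $\eta m$ to obtain $\mathcal{D}(\Theta_\mathcal{A}(\eta m))\le \ell$, and then invoke Theorem~\ref{L4.4} with $t_1=\eta m$. The only cosmetic difference is that the paper writes the estimate as a bound on $\mathcal{D}(\Theta_\mathcal{A}(\eta m))-\mathcal{D}(\Theta_\mathcal{A}^0)$ directly, whereas you bound each $|\theta_i(t)-\theta_j(t)|$ first and then take the supremum; the content is identical.
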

\begin{proof}
    By Lemma \ref{L2.2} (2), we have that for $i,j\in \mathcal{A}$ and $t\ge 0$,
    \begin{align*}
        |\dot{\theta}_i-\dot{\theta}_j|\le e^{-t/m}|\omega_i^0-\omega_j^0|+(1-e^{-t/m})(|\nu_i-\nu_j|+2\kappa)\le e^{-t/m}\mathcal{D}(\Omega_\mathcal{A})+(1-e^{-t/m})(\mathcal{D}(\mathcal{V}_\mathcal{A})+2\kappa)
    \end{align*}
    and hence, we have
    \begin{align*}
        \mathcal{D}(\Theta_\mathcal{A}(\eta m))-\mathcal{D}(\Theta_\mathcal{A}^0)&\le \int_0^{\eta m} \left(e^{-t/m}\mathcal{D}(\Omega_\mathcal{A})+(1-e^{-t/m})(\mathcal{D}(\mathcal{V}_\mathcal{A})+2\kappa)\right)dt\\
        &=m(1-e^{-\eta})\mathcal{D}(\Omega_\mathcal{A})+(\eta m-m+me^{-\eta})(\mathcal{D}(\mathcal{V}_\mathcal{A})+2\kappa).
    \end{align*}
By the condition of corollary, we have
    \[
    \mathcal{D}(\Theta_\mathcal{A}(\eta m))\le \ell.
    \]
 Then, we may apply Theorem \ref{L4.4} with $t_1=\eta m$ to get the desired estimate. 
\end{proof}

\begin{remark}
In general, even though the subensemble $\Theta_\mathcal{A}$, viewed as particles on the unit circle $\mathbb{S}^1$, may lie on a small arc on the circle, it need not be the case when they are viewed as particles on the real line $\mathbb{R}$. See Figure \ref{Fig2} for a demonstration of this phenomenon. Nevertheless, owing to the $2\pi\mathbb{Z}$-translation invariance of \eqref{A-1}, Theorem \ref{L4.4} may be employed harmlessly for our purposes of proving partial or asymptotic phase-locking whenever we have that  $\Theta_\mathcal{A}$ lies on a small arc on the circle.
	\begin{figure}[htbp]
	\centering
		\begin{subfigure}[b]{0.4\textwidth}
		\includegraphics[width=\textwidth]{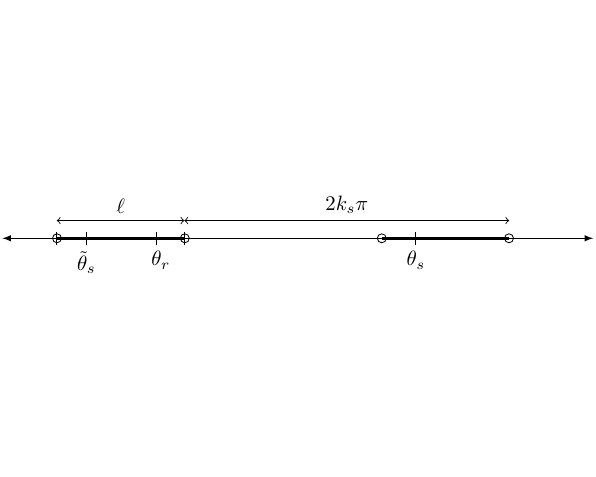}
		\caption{Geometric configuration on $\mathbb{R}^1$.}
	\end{subfigure}
	\hspace{0.3cm}
	\begin{subfigure}[b]{0.4\textwidth}
		\includegraphics[width=\textwidth]{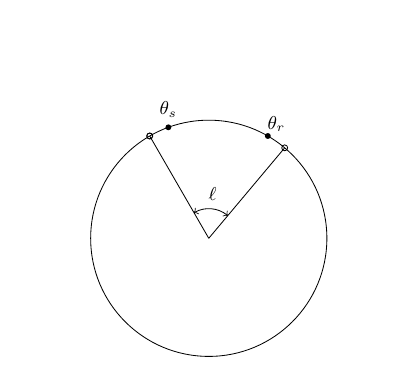}
		\caption{Geometric configuration on $\mathbb{T}^1$.}
	\end{subfigure}
	\caption{Schematic descriptions for virtual rearrangement; $\theta_s$ and $\theta_r$ are on the same arc on ${\mathbb T}^1$ but may not be in the same on ${\mathbb R}^1$. Considering $2\pi k_s$ translation (for some $k_s$) enable us to regard $\tilde \theta_s\coloneqq \theta_s -2 k_s\pi$ as $\theta_s$ in our estimations. }
	\label{Fig2}
	\end{figure}
\end{remark}	
We are now ready to prove several assertions in Theorem \ref{L4.4} one by one.
In the proof, we will employ the Sturm--Picone comparison principle. The precise statements and proofs of the relevant lemmas will be provided in Appendix \ref{app:sturm-picone}. Assuming these lemmas, we proceed with the proof of Theorem \ref{L4.4}.
%%%%%%%%%%%%%%%%%%%%%%%%%%%%%%%%
\subsection{Proof of the first assertion in Theorem \ref{L4.4} (Stability of $\Theta_\mathcal{A}$)}
In this subsection, we show the first assertion that the majority cluster formed at $t = t_1 \ge \eta m$ persists afterwards. \newline

\noindent Recall that there exists an index set $\mathcal{A}$ with $|\mathcal{A}|\ge \lambda N$ such that the corresponding subensemble $\Theta_\mathcal{A}= ( \theta_i )_{i\in \mathcal{A}}$ satisfies
\[
\mathcal{D}(\Theta_\mathcal{A}(t_1))\le \ell,
\]
for some $t_1 \geq \eta m$. Next, we show that
\begin{equation} \label{Bp-1}
{\mathcal D}(\Theta_\mathcal{A}(t)) \le \ell,\quad \forall t\ge t_1,
\end{equation}
and
\begin{equation}\label{Bp-2}
\limsup_{t\to\infty}\mathcal{D}(\Theta_\mathcal{A}(t))\le \phi_1\left(\lambda,2m\mathcal{D}(\mathcal{V}_\mathcal{A})+4m\kappa+\frac{\mathcal{D}(\mathcal{V}_\mathcal{A})}{\kappa}\right).
\end{equation}
By definition of $f_\lambda$, condition \eqref{eq:xi-partial} becomes
\begin{align*}
2\xi(m,\kappa,\mathcal{V}_\mathcal{A},\Omega^0_\mathcal{A},\eta)<f_\lambda(\ell),
\end{align*}
and since $f_\lambda$ has its maximum in $\Big [0,2\cos^{-1}\frac{1-\lambda}{\lambda} \Big]$ at $\theta_*$, we also have 
\[
2\xi(m,\kappa,\mathcal{V}_\mathcal{A},\Omega^0_\mathcal{A},\eta)<
f_\lambda(\theta_*).
\] 
By Lemma \ref{lem:phi-def}, the equation $f_\lambda(\theta)=2\xi(m,\kappa,\mathcal{V}_\mathcal{A},\Omega^0_\mathcal{A},\eta)$ has two zeros $\theta=\phi_1(\eta)$ and $\theta=\phi_2(\eta)$ in the interval $(0,2\cos^{-1}\frac{1-\lambda}{\lambda})$, which, by the strict concavity of $f_\lambda$, have the ordering
\[
0<\phi_1(\eta)<\ell<\phi_2(\eta)<2\cos^{-1} \left( \frac{1-\lambda}{\lambda} \right).
\]
Choose any $\psi_1,\psi_2\in\bbr$ with
\begin{equation}\label{eq:psi1-psi2-choice}
\phi_1(\eta)<\psi_1\le \ell\le \psi_2<\phi_2(\eta),\quad \psi_1<\psi_2.
\end{equation}
Again by the strict concavity of $f_\lambda$, there exists a positive constant $c>0$ such that
\begin{equation}\label{eq:psi-compact}
2\xi(m,\kappa,\mathcal{V}_\mathcal{A},\Omega^0_\mathcal{A},\eta)+c<f_\lambda(\theta),\quad \theta\in [\psi_1,\psi_2].
\end{equation}

We will show that whenever $\mathcal{D}(\Theta_\mathcal{A})\in [\psi_1,\psi_2]$ on the time interval $[t_1,\infty)$, it has a negative upper Dini (time) derivative: $D^+\mathcal{D}(\Theta_\mathcal{A})<0$. Note that we may compute the upper Dini derivative as follows:
\begin{equation}\label{eq:dini}
D^+\mathcal{D}(\Theta_\mathcal{A})=\max_{\substack{i,j\in \mathcal{A}\\ \theta_i=\max_{k\in \mathcal{A}} \theta_k\\ \theta_j=\min_{k\in \mathcal{A}} \theta_k}}\left(\dot\theta_i-\dot\theta_j\right)
\end{equation}

\noindent By Invoking Lemma \ref{L:approxaut}, for $i,j\in \mathcal{A}$ we have
\begin{align*}
& \dot\theta_i(t)-\dot\theta_j(t) \le   \frac{\kappa\left(1-e^{-t/m}\right)}{N}\sum_{l\in \mathcal{A}}\left(\sin(\theta_l(t)-\theta_i(t))-\sin(\theta_l(t)-\theta_j(t))\right)\\
& \hspace{0.7cm} +(\omega^0_i-\omega^0_j) e^{-t/m} + (\nu_i-\nu_j) (1 -e^{-t/m}) \\
&  \hspace{0.7cm} + 2\kappa \left(\mathcal{D}(\Omega^0_\mathcal{A})te^{-t/m}\left(1-e^{-t/m}\right)+m (\mathcal{D}(\mathcal{V}_\mathcal{A})+2\kappa)\left(1-e^{-t/m}\right)^3\right)\\
&  \hspace{0.7cm} + \frac{N-|\mathcal{A}|}{N}2\kappa\left|\sin\frac{\theta_i(t) -\theta_j(t)}{2}\right|\left(1-e^{-t/m}\right)\\
&  \hspace{0.7cm} \le   -\frac{2\kappa\left(1-e^{-t/m}\right)}{N}\sin\frac{\theta_i(t)-\theta_j(t)}{2}
\sum_{l\in \mathcal{A}}\cos\left(\theta_l(t)-\frac{\theta_i(t)+\theta_j(t)}{2}\right)\\
&  \hspace{0.7cm} +\mathcal{D}(\Omega^0_\mathcal{A}) e^{-t/m} + \mathcal{D}(\mathcal{V}_\mathcal{A}) (1 -e^{-t/m}) \\
&  \hspace{0.7cm}+ 2\kappa \left(\mathcal{D}(\Omega^0_\mathcal{A})te^{-t/m}\left(1-e^{-t/m}\right)+m (\mathcal{D}(\mathcal{V}_\mathcal{A})+2\kappa)\left(1-e^{-t/m}\right)^3\right)\\
& \hspace{0.7cm} +\frac{N-|\mathcal{A}|}{N}2\kappa\left|\sin\frac{\theta_i(t) -\theta_j(t)}{2}\right|\left(1-e^{-t/m}\right).
\end{align*}
If $t\ge t_1$ is such that $\mathcal{D}(\Theta_\mathcal{A}(t))\le \pi$, and $i,j\in \mathcal{A}$ are so that 
\[ \theta_i(t)=\max_{k\in \mathcal{A}}\theta_k(t) \quad \mbox{and} \quad \theta_j(t)=\min_{k\in \mathcal{A}}\theta_k(t), \]
then we have
\[
\cos\left(\theta_l(t)-\frac{\theta_i(t)+\theta_j(t)}{2}\right)\ge \cos\frac{\theta_i(t)-\theta_j(t)}{2},\quad l\in \mathcal{A},
\]
so that, continuing the above estimate and $|\mathcal{A}|\ge \lambda N$,
\begin{align*}
    \dot\theta_i(t)-\dot\theta_j(t) \le &  -2\lambda\kappa\left(1-e^{-t/m}\right)\sin\frac{\theta_i(t)-\theta_j(t)}{2}
\cos\frac{\theta_i(t)-\theta_j(t)}{2}\\
&+\mathcal{D}(\Omega^0_\mathcal{A}) e^{-t/m} + \mathcal{D}(\mathcal{V}_\mathcal{A}) (1 -e^{-t/m}) \\
&+ 2\kappa \left(\mathcal{D}(\Omega^0_\mathcal{A})te^{-t/m}\left(1-e^{-t/m}\right)+m (\mathcal{D}(\mathcal{V}_\mathcal{A})+2\kappa)\left(1-e^{-t/m}\right)^3\right)\\
&+(1-\lambda)2\kappa\sin\frac{\theta_i(t) -\theta_j(t)}{2}\left(1-e^{-t/m}\right)\\
\le &  -2\kappa\left(1-e^{-t/m}\right)\sin\frac{\theta_i(t)-\theta_j(t)}{2}
\left(\lambda \cos\frac{\theta_i(t)-\theta_j(t)}{2}-(1-\lambda)\right)\\
&+\mathcal{D}(\Omega^0_\mathcal{A}) e^{-t/m} + \mathcal{D}(\mathcal{V}_\mathcal{A}) (1 -e^{-t/m}) \\
&+ 2\kappa \left(\mathcal{D}(\Omega^0_\mathcal{A})te^{-t/m}\left(1-e^{-t/m}\right)+m (\mathcal{D}(\mathcal{V}_\mathcal{A})+2\kappa)\left(1-e^{-t/m}\right)^3\right).
\end{align*}
Again, by Invoking \eqref{eq:dini}, we have
\begin{align*}
    \left.D^+\right|_t\mathcal{D}(\Theta_\mathcal{A})\le&-2\kappa\left(1-e^{-t/m}\right)\sin\frac{\mathcal{D}(\Theta_\mathcal{A}(t))}{2}
\left(\lambda \cos\frac{\mathcal{D}(\Theta_\mathcal{A}(t))}{2}-(1-\lambda)\right)\\
&+\mathcal{D}(\Omega^0_\mathcal{A}) e^{-t/m} + \mathcal{D}(\mathcal{V}_\mathcal{A}) (1 -e^{-t/m}) \\
&+  2\kappa \left(\mathcal{D}(\Omega^0_\mathcal{A})te^{-t/m}\left(1-e^{-t/m}\right)+m (\mathcal{D}(\mathcal{V}_\mathcal{A})+2\kappa)\left(1-e^{-t/m}\right)^3\right)\\
\le&-\kappa \left(1-e^{-t/m}\right)\left(f_\lambda(\mathcal{D}(\Theta_\mathcal{A}(t)))-2\xi(m,\kappa,\mathcal{V}_\mathcal{A},\Omega^0_\mathcal{A},\eta)\right).
\end{align*}
Now, if time $t_0\ge t_1$ is such that 
\[ \mathcal{D}(\Theta_\mathcal{A}(t_0))\in [\psi_1,\psi_2], \]
where $\psi_1,\psi_2$ are as in \eqref{eq:psi1-psi2-choice}, then, by \eqref{eq:psi-compact}, we have
\[
\left.D^+\right|_{t_0}\mathcal{D}(\Theta_\mathcal{A})\le -\kappa \left(1-e^{-t/m}\right)c<0.
\]
By a standard exit-time argument, we can easily establish that
\[
D(\Theta_\mathcal{A}(t))\le \ell,\quad \forall t\ge t_1,
\]
i.e., we have \eqref{Bp-1}, and we can establish that there is a finite time $T\ge t_1$ with $T\le t_1+\frac{\psi_2-\psi_1}{\kappa \left(1-e^{-t/m}\right)c}$  such that
\[
D(\Theta_\mathcal{A}(t))\le\psi_1,\quad \forall t\ge T.
\]
Since $\psi_1\in (\phi_1(\lambda, 2\xi(m,\kappa,\mathcal{V}_\mathcal{A},\Omega^0_\mathcal{A},\eta)),\ell)$ was arbitrary, we conclude that
\begin{align}\label{Bp-7}
\limsup_{t\rightarrow\infty}D(\Theta_\mathcal{A}(t))\le \phi_1(\lambda, 2\xi(m,\kappa,\mathcal{V}_\mathcal{A},\Omega^0_\mathcal{A},\eta)).
\end{align}
\noindent So far, we have shown that under the assumptions of Theorem \ref{L4.4}, we have \eqref{Bp-1} and \eqref{Bp-7}. Let $\eta'\ge \eta$ be arbitrary. By \eqref{Bp-1}, we have
\[
\mathcal{D}(\Theta_\mathcal{A}(t_1'))\le \ell
\]
for $t_1'=\max\{t_1,\eta' m\}\ge \eta' m$, and since the map $\eta \mapsto \xi(\eta)$ is decreasing, we have
\[
\xi(m,\kappa,\mathcal{V}_\mathcal{A},\Omega^0_\mathcal{A},\eta')\le \xi(m,\kappa,\mathcal{V}_\mathcal{A},\Omega^0_\mathcal{A},\eta)<\frac{\lambda}{2}\sin\ell-(1-\lambda)\sin\frac{\ell}{2}.
\]
So the assumptions of Theorem \ref{L4.4} are satisfied with $\eta$ replaced by $\eta'$ and $t_1$ replaced by $t_1'$, and we have \eqref{Bp-7} for $\eta'$:
\[
\limsup_{t\rightarrow\infty}\mathcal{D}(\Theta_\mathcal{A}(t))\le \phi_1(\lambda, 2\xi(m,\kappa,\mathcal{V}_\mathcal{A},\Omega^0_\mathcal{A},\eta')).
\]
Since $\eta'\ge \eta$ was arbitrary, take the limit $\eta'\to\infty$ to obtain
\[
\limsup_{t\rightarrow\infty}\mathcal{D}(\Theta_\mathcal{A}(t))\le \phi_1(\lambda, 2\xi(m,\kappa,\mathcal{V}_\mathcal{A},\Omega^0_\mathcal{A},\infty)),
\]
which is \eqref{Bp-2}. This completes the proof of the first assertion.

%%%%%%%%%%%%%%%%%%%%%%%%%%%%%%%%
\subsection{Proof of the second assertion in Theorem \ref{L4.4} (Linear arrangement of $\Theta_{{\mathcal A}}$)}
Now we assume in addition that the following relations \eqref{additional0} hold:
\[
2m\mathcal{D}(\mathcal{V}_\mathcal{A})+4m\kappa+\frac{\mathcal{D}(\mathcal{V}_\mathcal{A})}{\kappa}< \frac{(2\lambda-1)^{3/2}}{\sqrt{2\lambda}}\frac{2-\lambda}{\sqrt{\lambda/2}+(1-\lambda)}.
\]
We claim that the following relations holds:~ for $i,j\in \mathcal{A}$, with $\nu_i\ge \nu_j$, we have
\[ 
 \frac{\nu_i-\nu_j}{\kappa} \le \liminf_{t\rightarrow\infty}[\theta_i(t)-\theta_j(t)]\le \limsup_{t\rightarrow\infty}[\theta_i(t)-\theta_j(t)] \le \frac{\phi_1}{2\sin(\phi_1/2)\left(\lambda\cos\phi_1-(1-\lambda)\right)}\frac{\nu_i-\nu_j}{\kappa},
\]
where $\phi_1 = \phi_1\left(\lambda, 2m\mathcal{D}(\mathcal{V}_\mathcal{A})+4m\kappa+\frac{\mathcal{D}(\mathcal{V}_\mathcal{A})}{\kappa}\right)$. \newline

Recall  \eqref{Bp-2}:
\begin{align}\tag{\ref{Bp-2}}
\limsup_{t\to\infty}\mathcal{D}(\Theta_\mathcal{A}(t))\le \phi_1\left(\lambda,2m\mathcal{D}(\mathcal{V}_\mathcal{A})+4m\kappa+\frac{\mathcal{D}(\mathcal{V}_\mathcal{A})}{\kappa}\right).
\end{align}
Since we have $\phi_1\le \theta_*$ by \eqref{eq:phi1phi2-ordering}, $\cos^{-1}\frac{1-\lambda}{\lambda}\le\theta_*$ by Lemma \ref{f-estimates}, and since $f_\lambda$ is strictly increasing on $[0,\theta_*]$, we have the following equivalence:
\begin{align}
&\phi_1\left(\lambda,2m\mathcal{D}(\mathcal{V}_\mathcal{A})+4m\kappa+\frac{\mathcal{D}(\mathcal{V}_\mathcal{A})}{\kappa}\right)< \cos^{-1} \Big( \frac{1-\lambda}{\lambda} \Big)\label{eq:phi1-small-linear-arrangement}\\
& \hspace{0.5cm} \Longleftrightarrow \quad  f_\lambda\left(\phi_1\left(\lambda,2m\mathcal{D}(\mathcal{V}_\mathcal{A})+4m\kappa+\frac{\mathcal{D}(\mathcal{V}_\mathcal{A})}{\kappa}\right)\right)< f_\lambda\left(\cos^{-1}\left(\frac{1-\lambda}{\lambda} \right) \right)\nonumber\\
& \hspace{0.5cm}\Longleftrightarrow \quad  2m\mathcal{D}(\mathcal{V}_\mathcal{A})+4m\kappa+\frac{\mathcal{D}(\mathcal{V}_\mathcal{A})}{\kappa}< \frac{(2\lambda-1)^{3/2}}{\sqrt{2\lambda}}\frac{2-\lambda}{\sqrt{\lambda/2}+(1-\lambda)} \quad (\because \mathrm{Lemma~\ref{f-estimates}~(ii)})\nonumber\\
& \hspace{0.5cm} \Longleftrightarrow \quad \eqref{additional0},\nonumber
\end{align}
so every statement here, in particular the first statement, is true. Let
\begin{equation}\label{eq:psi-choice}
\psi_1\in\left(\phi_1\left(\lambda,2m\mathcal{D}(\mathcal{V}_\mathcal{A})+4m\kappa+\frac{\mathcal{D}(\mathcal{V}_\mathcal{A})}{\kappa}\right),\cos^{-1}\frac{1-\lambda}{\lambda}\right)
\end{equation}
be arbitrary. By \eqref{Bp-2}, there exists a finite time $T\ge t_1$ such that
\begin{equation}\label{trapped}
D(\Theta_\mathcal{A}(t))\le \psi_1,\quad \forall~ t\ge T,
\end{equation}

Let $i,j\in \mathcal{A}$ and $t\ge T$. Because we are looking in the long-term with the a priori guarantee that $\theta_i$ and $\theta_j$ are always contained in an $\ell$-arc for times $t\ge t_1$, we need not be restrained by the myopic approach of the previous subsection, i.e., estimating $\dot\Theta_\mathcal{A}(t)$ from $\Theta_\mathcal{A}(t)$ and $\dot\Theta_\mathcal{A}^0$. Instead, we may employ the second-order ODE directly:
\begin{align}\label{individualdiff}
\begin{aligned}
m(\ddot \theta_i-\ddot\theta_j)+\dot{\theta}_i-\dot{\theta}_j&=\nu_i-\nu_j+\frac{\kappa}{N}\sum_{k=1}^N\left[\sin(\theta_k-\theta_i)-\sin(\theta_k-\theta_j)\right]\\
&= \nu_i-\nu_j-2 \kappa \sin \Big( \frac{\theta_i-\theta_j}{2} \Big) \cdot\frac{1}{N}\sum_{k=1}^N \cos\left(\theta_k-\frac{\theta_i+\theta_j}{2}\right),
\end{aligned}
\end{align}
where
\begin{equation}\label{crude-mf0}
-1\le \frac{1}{N}\sum_{k=1}^N \cos\left(\theta_k-\frac{\theta_i+\theta_j}{2}\right)\le 1
\end{equation}
and (recalling the assumption $t\ge T$)
\begin{align}\label{crude-mf}
\begin{aligned}
\frac{1}{N}\sum_{k=1}^N \cos\left(\theta_k-\frac{\theta_i+\theta_j}{2}\right) &= \frac{1}{N}\sum_{k\in\mathcal{A}} \cos\left(\theta_k-\frac{\theta_i+\theta_j}{2}\right)+\frac{1}{N}\sum_{k\in [N]\backslash\mathcal{A}} \cos\left(\theta_k-\frac{\theta_i+\theta_j}{2}\right)\\
& \ge\frac{|\mathcal{A}|}{N}\cos \mathcal{D}(\Theta_\mathcal{A})-\frac{N-|\mathcal{A}|}{N} \ge \lambda\cos \mathcal{D}(\Theta_\mathcal{A})-(1-\lambda).
\end{aligned}
\end{align}
The penultimate inequality uses the fact that since 
\[ \mathcal{D}(\Theta_\mathcal{A}(t))\stackrel{\mathclap{\eqref{trapped}}}{\le} \psi_1<\cos^{-1}\frac{1-\lambda}{\lambda}\le \frac \pi 2, \]
we have 
\[ \cos\left(\theta_k-(\theta_i+\theta_j)/2\right)\ge \cos \mathcal{D}(\Theta_\mathcal{A}) \quad \forall~i,j,k\in \mathcal{A}. \]
Note that since we are not assuming that $\theta_i$ and $\theta_j$ are extremal in $\Theta_\mathcal{A}$ as before, we cannot say that 
\[ \left|\theta_k-\frac{\theta_i+\theta_j}{2}\right|\le \frac{D(\Theta_\mathcal{A})}{2}, \]
but we can only say that 
\[ \left|\theta_k-\frac{\theta_i+\theta_j}{2}\right|\le D(\Theta_\mathcal{A}). \]
The final inequality uses the fact that $1+\cos\mathcal{D}(\Theta_\mathcal{A})\ge 0$. \newline

Note that our assumption \eqref{additional0}, which implies \eqref{eq:phi1-small-linear-arrangement}. This allows the choice of $\psi_1$ as in \eqref{eq:psi-choice} and it tells us that \eqref{crude-mf} is positive:
\begin{equation}\label{crude-mf2}
\frac{1}{N}\sum_{k=1}^N \cos\left(\theta_k-\frac{\theta_i+\theta_j}{2}\right)~\stackrel{\mathclap{\eqref{crude-mf}}}{\ge} \lambda\cos D(\Theta_\mathcal{A})-(1-\lambda)\stackrel{\mathclap{\eqref{eq:psi-choice}}}{\ge} \lambda\cos \psi_1-(1-\lambda)~\stackrel{\mathclap{\eqref{eq:psi-choice}}}{>}~0,\quad\forall t\ge T.
\end{equation}
With the estimates \eqref{crude-mf0} and \eqref{crude-mf2} on the mean-field term at our disposal, we are now ready to prove the desired statement. Next, we consider two cases: $\nu_i = \nu_j$ or $\nu_i > \nu_j$. \newline

\noindent$\diamond$ Case A ($\nu_i=\nu_j$): In this case, \eqref{individualdiff} becomes
\[
m(\ddot\theta_i-\ddot\theta_j)+\dot{\theta}_i-\dot{\theta}_j=- \frac{2\kappa}{N}\sum_{k=1}^N \cos\left(\theta_k-\frac{\theta_i+\theta_j}{2}\right)\cdot\sin\frac{\theta_i-\theta_j}{2},\quad t\ge T.
\]
Thus, for a time interval $J\subset [T,\infty)$ on which $\theta_i-\theta_j> 0$, we have 
\[
\theta_i-\theta_j\in \left(0,\frac{\pi}{2}\right) \quad \mbox{and} \quad m(\ddot\theta_i-\ddot\theta_j)+\dot{\theta}_i-\dot{\theta}_j\stackrel{\eqref{crude-mf0}}{\ge}-2\kappa \sin \frac{\theta_i-\theta_j}{2}> - \kappa(\theta_i-\theta_j).
\]
Likewise, for a time interval $J\subset [T,\infty)$ on which $\theta_i-\theta_j< 0$, we have 
\[
\theta_i-\theta_j\in \left(-\frac{\pi}{2},0\right) \quad \mbox{and} \quad  m(\ddot\theta_i-\ddot\theta_j)+\dot{\theta}_i-\dot{\theta}_j\stackrel{\eqref{crude-mf0}}{\le}-2\kappa \sin \frac{\theta_i-\theta_j}{2}< - \kappa(\theta_i-\theta_j).
\]
Note also that $m\kappa <\frac 14$, which follows from \eqref{eq:xi-partial}:
\[
2m\kappa\le \xi(m,\kappa,\mathcal{V}_\mathcal{A},\Omega^0_\mathcal{A},\eta)\stackrel{\eqref{eq:xi-partial}}{<}\frac\lambda 2\sin \ell-(1-\lambda)\sin\frac{\ell}{2}\le \frac 12.
\]
Therefore, $y=\theta_i-\theta_j$ satisfies the hypothesis of statement (3) of Lemma \ref{lem:sturm-picone}, with $I=(T,\infty)$, $a=m$, $b=1$, and $c=\kappa$. It follows that $\theta_i-\theta_j$ cannot change sign twice in $(T,\infty)$, so that there exists a time $t_2\in (T,\infty)$ so that either $\theta_i-\theta_j>0$ on $(t_2,\infty)$, $\theta_i-\theta_j=0$ on $(t_2,\infty)$, or $\theta_i-\theta_j<0$ on $(t_2,\infty)$. There is nothing to prove if $\theta_i-\theta_j=0$ on $(t_2,\infty)$ (in fact, the particle exchange symmetry \eqref{eq:particle-exchange} with the transposition $\pi=(i,j)$ along with the time-autonomy and the uniqueness of solutions to \eqref{B-1} implies that $\theta_i=\theta_j$ for all $t\ge 0$). Thus, by switching $i$ and $j$ if necessary, we may assume without loss of generality that $\theta_i-\theta_j> 0$ on $(t_2,\infty)$.
Using \eqref{individualdiff} and \eqref{crude-mf2}, we have for $t\ge T$,
\begin{align*}
0&\ge m(\ddot\theta_i-\ddot\theta_j)+\dot{\theta}_i-\dot{\theta}_j+2\kappa (\lambda\cos\psi_1-(1-\lambda))\sin\frac{\theta_i-\theta_j}{2}\\
&\ge m(\ddot\theta_i-\ddot\theta_j)+\dot{\theta}_i-\dot{\theta}_j+2\kappa (\lambda\cos\psi_1-(1-\lambda))\cdot\frac{1}{\pi}(\theta_i-\theta_j)~(\because 0\le \theta_i-\theta_j< \pi).
\end{align*}
Note also that, from \eqref{trapped}, we have the uniform boundedness of $y=\theta_i-\theta_j$; from Lemma \ref{L2.2}, we have the uniform boundedness of $\dot{y}=\dot\theta_i-\dot\theta_j$; and from the defining equation \eqref{A-1}, we have the uniform boundedness of $\ddot{y}=\ddot\theta_i-\ddot\theta_j$. Therefore, by the Barbalat-type Lemma \ref{lem:barbalat}, we have
\[
\lim_{t\to\infty}\left(\theta_i(t)-\theta_j(t)\right)=0,\quad \lim_{t\to\infty}\left(\dot\theta_i(t)-\dot\theta_j(t)\right)=0,
\]
as desired. \newline

\noindent$\diamond$ Case B $(\nu_i>\nu_j)$: Recall \eqref{trapped}, which implies 
\[ |\theta_i(t)-\theta_j(t)|\le \psi_1 \quad \mbox{for $t\ge T$}. \]
By \eqref{individualdiff} and \eqref{crude-mf0}, we have
\[
m(\ddot \theta_i-\ddot\theta_j)+\dot{\theta}_i-\dot{\theta}_j\ge \nu_i-\nu_j-2\kappa \sin\left(\frac{\theta_i-\theta_j}{2}\right),\quad \mathrm{whenever~} 0\le\theta_i-\theta_j\le \psi_1.
\]
Recall that $\psi_1\in (0,\frac{\pi}{2})$ and $|\nu_i-\nu_j|<\kappa$. Taking the tangent line to the graph of $y=\sin x$ at $x=\sin^{-1}\frac{\nu_i-\nu_j}{2\kappa}$, and using the concavity of $\sin x$ for $x\in [0,\frac{\pi}{2}]$, we have
\[
\sin x\le \sqrt{1-\left(\frac{\nu_i-\nu_j}{2\kappa}\right)^2}\left(x-\sin^{-1}\frac{\nu_i-\nu_j}{2\kappa}\right)+\frac{\nu_i-\nu_j}{2\kappa},\quad x\in \left[0,\frac{\pi}{2}\right].
\]
By substituting $x=\frac{\theta_i-\theta_j}{2}$, we have
\begin{align*}
m(\ddot \theta_i-\ddot\theta_j)+(\dot{\theta}_i-\dot{\theta}_j)+\kappa \sqrt{1-\left(\frac{\nu_i-\nu_j}{2\kappa}\right)^2}\left(\theta_i-\theta_j-2\sin^{-1}\frac{\nu_i-\nu_j}{2\kappa}\right) \ge 0,\\
 \mathrm{whenever~} 0\le\theta_i-\theta_j\le \psi_1.
\end{align*}
On the other hand, by \eqref{individualdiff} and \eqref{crude-mf2}, we have
\[
m(\ddot \theta_i-\ddot\theta_j)+\dot{\theta}_i-\dot{\theta}_j\ge \nu_i-\nu_j>0,\quad \mathrm{whenever~} -\psi_1\le\theta_i-\theta_j\le 0.
\]
Thus, if we set $M=\frac{(\nu_i-\nu_j)/\kappa}{\sqrt{1-\left(\frac{\nu_i-\nu_j}{2\kappa}\right)^2}}$, we have
\[
m(\ddot \theta_i-\ddot\theta_j)+(\dot{\theta}_i-\dot{\theta}_j)+\kappa \sqrt{1-\left(\frac{\nu_i-\nu_j}{2\kappa}\right)^2}\max\left\{-M,\theta_i-\theta_j-2\sin^{-1}\frac{\nu_i-\nu_j}{2\kappa}\right\} \ge 0,\quad t>T.
\]
We use Lemma \ref{lem:cutoff-comparison} with 
\[ a=m, \quad b=1, \quad c=\kappa \sqrt{1-\left(\frac{\nu_i-\nu_j}{2\kappa}\right)^2} \quad \mbox{and} \quad y=\theta_i-\theta_j-2\sin^{-1}\frac{\nu_i-\nu_j}{2\kappa}, \]
and $4ac<4m\kappa\le 1$ to get 
\[
\liminf_{t\to\infty}(\theta_i(t)-\theta_j(t))\ge 2\sin^{-1} \Big( \frac{\nu_i-\nu_j}{2\kappa} \Big).
\]
Finally, we use $2\sin^{-1}\frac{x}{2}>x$ for $x\in (0,\frac{\pi}{2}]$ to verify the first inequality of \eqref{eq:A-linear-ordered}. A fortiori, there is a time $T_1>T$ such that 
\[ \theta_i(t)-\theta_j(t)>0 \quad \mbox{for $t\ge T_1$}. \]
On the time interval $[T_1,\infty)$, we use \eqref{individualdiff} and \eqref{crude-mf2} to get 
\begin{align*}
m(\ddot\theta_i-\ddot\theta_j)+\dot{\theta}_i-\dot{\theta}_j&\stackrel{\mathclap{ \eqref{individualdiff}, \eqref{crude-mf2}}}{\le} \nu_i-\nu_j-2\kappa (\lambda\cos \psi_1 -(1-\lambda)) \sin\frac{\theta_i-\theta_j}{2}\\
&\le \nu_i-\nu_j-\frac{2\kappa\sin(\psi_1/2)}{\psi_1} (\lambda\cos \psi_1 -(1-\lambda)) (\theta_i-\theta_j),
\end{align*}
where we used the inequality:
\[ \sin x\ge \frac{\sin(\psi_1/2)}{\psi_1/2}x \quad \mbox{for $x\in [0,\frac {\psi_1} 2]$}. \]
Again, we use Lemma \ref{lem:cutoff-comparison} with
\begin{align*}
\begin{aligned}
& a=m,~ b=1,~ c=\frac{2\kappa\sin(\psi_1/2)}{\psi_1} (\lambda\cos \psi_1 -(1-\lambda)), \quad 4ac\le 4m\kappa\le 1, \\
& y=\frac{\psi_1(\nu_i-\nu_j)}{2\kappa \sin(\psi_1/2)(\lambda\cos \psi_1 -(1-\lambda)) }-\theta_i+\theta_j,\quad M=\infty,
\end{aligned}
\end{align*}
to obtain
\[
\limsup_{t\to\infty}(\theta_i(t)-\theta_j(t))\le \frac{\psi_1}{2 \sin(\psi_1/2)(\lambda\cos \psi_1 -(1-\lambda)) }\cdot\frac{\nu_i-\nu_j}{\kappa}.
\]
Recalling that $\psi_1\in(\phi_1,\cos^{-1}\frac{1-\lambda}{\lambda})$ was arbitrary, we take $\psi_1\to \phi_1+$ to conclude
\[
\limsup_{t\rightarrow\infty}(\theta_i(t)-\theta_j(t))\le \frac{\phi_1}{2 \sin(\phi_1/2)(\lambda\cos \phi_1 -(1-\lambda)) }\cdot\frac{\nu_i-\nu_j}{\kappa}.
\]
This completes the proof of the second assertion of Theorem \ref{L4.4}.

%%%%%%%%%%%%%%%%%%%%%%%%%%%%%%%%
\subsection{Proof of third assertion in Theorem \ref{L4.4} ($\Theta_\mathcal{A}$ confines $\Theta_\mathcal{B}$)}
Now we assume there is an index set $\mathcal{B}\subset [N]$ with $\mathcal{B}\supset \mathcal{A}$, that satisfies the following variant of \eqref{eq:xi-partial}:
\[\tag{\ref{eq:xi-partial-B}}
\xi(m,\kappa,\mathcal{V}_\mathcal{B},\Omega^0_\mathcal{B},\infty)=m\mathcal{D}(\mathcal{V}_\mathcal{B})+2m\kappa+\frac{\mathcal{D}(\mathcal{V}_\mathcal{B})}{2\kappa}<\frac\lambda 2\sin \ell-(1-\lambda)\sin\frac{\ell}{2}.
\]
By continuity of $\xi$ in $\eta$, we may find a time $t_2\ge t_1$ such that
\begin{equation}\label{eq:xi-partial-B-finite}
\xi(m,\kappa,\mathcal{V}_\mathcal{B},\Omega^0_\mathcal{B},t_2/m)<\frac\lambda 2\sin \ell-(1-\lambda)\sin\frac{\ell}{2}.
\end{equation}
We are to show that
\[
\sup_{t\ge 0}\mathcal{D}(\Theta_\mathcal{B}(t))<\infty.
\]
It is enough to show that, for each $i\in \mathcal{B}\setminus\mathcal{A}$, if we let $k_i\in \mathbb{Z}$ be such that
\[
\theta_i(t_2)-2k_{i}\pi\in \left[\min\Theta_ \mathcal{A}(t_2),\min \Theta_\mathcal{A}(t_2)+2\pi\right),
\]
then one of the following assertions holds:
\begin{enumerate}
\item We have
\[
\theta_i(t)-2k_{i}\pi\in \left[\min \Theta_\mathcal{A}(t),\min \Theta_\mathcal{A}(t)+\ell\right],\quad t\ge t_2.
\]
\item There exists a time $t_{2,i}'\in [t_2,\infty)$ such that
\[
\theta_i(t)-2k_{i}\pi\in \left(\min\Theta_\mathcal{A}(t)+\ell,\min\Theta_\mathcal{A}(t)+2\pi\right),\quad t_2\le t< t_{2,i}',
\]
and either
\[
\theta_i(t)-2k_{i}\pi\in \left[\min\Theta_\mathcal{A}(t),\min\Theta_\mathcal{A}(t)+\ell\right],\quad t\ge t_{2,i}',
\]
or
\[
\theta_i(t)-2k_{i}\pi\in \left[\min\Theta_\mathcal{A}(t)+2\pi,\min\Theta_\mathcal{A}(t)+\ell+2\pi\right],\quad t\ge t_{2,i}'.
\]
\item We have
\[
\theta_i(t)-2k_{i}\pi\in \left(\min\Theta_\mathcal{A}(t)+\ell,\min\Theta_\mathcal{A}(t)+2\pi\right),\quad t\ge t_2.
\]
\end{enumerate}

To prove this, we begin by observing that if there exists a time $t_2'\ge t_2$ and a wave number $k'_i\in \mathbb{Z}$ such that
\begin{equation*}%\label{eq:eventualentry}
\theta_i(t_2')-2k'_i\pi\in \left[\min\Theta_\mathcal{A}(t_2'),\min\Theta_\mathcal{A}(t_2')+\ell\right],
\end{equation*}
then by applying statement (1) of Theorem \ref{L4.4} with $\mathcal{A}$ replaced by $\mathcal{A}\cup\{i\}$, $\theta_i$ replaced by $\theta_i-2k'_i\pi$, and time $t_1$ replaced by $t_2'$, and recalling that
\[
\theta_j(t'_2)\in \left[\min\Theta_\mathcal{A}(t_2'),\min\Theta_\mathcal{A}(t_2')+\ell\right],\quad j\in \mathcal{A},
\]
we have that $\mathcal{D}(\Theta_{\mathcal{A}}(t)\cup\{\theta_i(t)-2k'_i\pi\})\le \ell$ for $t\ge t'_2$, and a fortiori
\[
\theta_i(t)-2k'_i\pi\in \left[\min\Theta_\mathcal{A}(t),\min\Theta_\mathcal{A}(t)+\ell\right],\quad t\ge t'_2.
\]

We now divide into three cases, each giving the corresponding part of the trichotomy.
\begin{enumerate}
\item If
\[
\theta_i(t_2)-2k_{i}\pi\in \left[\min\Theta_\mathcal{A}(t_2),\min\Theta_\mathcal{A}(t_2)+\ell\right],
\]
then the stated result follows from our above observation with $t'_2=t_2$.
\vspace{0.1cm}
\item If
\[
\theta_i(t_2)-2k_{i}\pi\in \left(\min\Theta_\mathcal{A}(t_2)+\ell,\min\Theta_\mathcal{A}(t_2)+2\pi\right),
\]
we define
\[
t_{2,i}'\coloneqq \sup\left\{t\ge t_2:\theta_i(\tau)-2k_{i}\pi\in \left(\min\Theta_\mathcal{A}(\tau)+\ell,\min\Theta_\mathcal{A}(\tau)+2\pi\right)~\forall \tau\in [t_2,t]\right\}.
\]
If in addition $t'_{2,i}<\infty$, we must have 
\begin{align*}
\begin{aligned}
& \mbox{either}~\theta_i(t)-2k_{i}\pi\in \left[\min\Theta_\mathcal{A}(t),\min\Theta_\mathcal{A}(t)+\ell\right],\quad t\ge t'_{2,i}, \\
& \mbox{or} \quad \theta_i(t)-2k_{i}\pi\in \left[\min\Theta_\mathcal{A}(t)+2\pi,\min\Theta_\mathcal{A}(t)+\ell+2\pi\right],\quad t\ge t'_{2,i}.
\end{aligned}
\end{align*}
according to whether 
\[ \theta_i(t'_{2,i})-2k_i\pi=\min\Theta_\mathcal{A}(t'_{2,i})+\ell \quad \mbox{or} \quad \theta_i(t'_{2,i})-2k_i\pi=\min\Theta_\mathcal{A}(t'_{2,i})+2\pi \]
by applying the above observation with $t'_2=t'_{2,i}$.
\vspace{0.1cm}
\item The remaining case is that
\[
\theta_i(t_2)-2k_{i}\pi\in \left(\min\Theta_\mathcal{A}(t_2)+\ell,\min\Theta_\mathcal{A}(t_2)+2\pi\right) \quad \mbox{and} \quad t'_{2,i}=\infty.
\]
By definition, this means that
\[
\theta_i(t)-2k_{i}\pi\in \left(\min\Theta_\mathcal{A}(t)+\ell,\min\Theta_\mathcal{A}(t)+2\pi\right),\quad t\ge t'_{2,i},
\]
as desired.
\end{enumerate}

%%%%%%%%%%%%%%%%%%%%%%%%%%%%%%%%%%%%%%%%
\subsection{Proof of the fourth assertion in Theorem \ref{L4.4} (Uniqueness of maximal majority cluster)}

The idea behind the existence of a maximal majority cluster is that if there are two majority clusters, then they must merge into a single majority cluster.
\begin{lemma}[Merging of two majority clusters]\label{L4.5}
Suppose that the initial data, system parameters, free parameters $\lambda, \beta, \eta>0$, $t_1\ge \eta m$, and index sets $\mathcal{A}\subset\mathcal{B}\subset[N]$ satisfy \eqref{eq:gammaellrange}, \eqref{eq:xi-partial-B}, and
\[
|\mathcal{A}|\ge \lambda N,\quad \mathcal{D}(\Theta_\mathcal{A}(t_1))\le \ell,
\]
and let $(\Theta,\Omega)$ be a global solution to \eqref{B-1}. If $\tilde{\mathcal{A}}\subset\mathcal{B}$ is such that $|\tilde{\mathcal{A}}|\ge \lambda N$ and there is a time $t_3\ge \eta m$ with
    \[
    \mathcal{D}(\Theta_{\tilde{\mathcal{A}}}(t_3))<\phi_2(\lambda, 2\xi(m,\kappa,\mathcal{V}_\mathcal{\tilde{\mathcal{A}}},\Omega^0_{\tilde{\mathcal{A}}},t_3/m)),
    \]
  then, we have
    \begin{equation}\label{eq:union-is-trapped}
    \limsup_{t\to\infty}\mathcal{D}(\Theta_{\mathcal{A}\cup\tilde{\mathcal{A}}}(t))\le \phi_1(\lambda, 2\xi(m,\kappa,\mathcal{V}_\mathcal{\mathcal{A}\cup\tilde{\mathcal{A}}},\Omega^0_{\mathcal{A}\cup\tilde{\mathcal{A}}},\infty)).
    \end{equation}
\end{lemma}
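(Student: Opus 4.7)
The principle is to exploit the forced nonempty overlap between two majority clusters, $|\mathcal{A}\cap\tilde{\mathcal{A}}|\ge (2\lambda-1)N\ge 1$, to merge them into a single cluster via two applications of Theorem~\ref{L4.4}. Fix $i_0\in \mathcal{A}\cap\tilde{\mathcal{A}}$, which will serve as the anchor for consistent $2\pi$-translations of both clusters.

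First apply the first assertion of Theorem~\ref{L4.4} to $\mathcal{A}$ starting at $t_1$ to obtain $\mathcal{D}(\Theta_\mathcal{A}(t))\le \ell$ for all $t\ge t_1$ and $\limsup_{t\to\infty}\mathcal{D}(\Theta_\mathcal{A}(t))\le\phi_1(\lambda,2\xi(m,\kappa,\mathcal{V}_\mathcal{A},\Omega^0_\mathcal{A},\infty))$. For $\tilde{\mathcal{A}}$, since $\mathcal{D}(\Theta_{\tilde{\mathcal{A}}}(t_3))<\phi_2(\lambda,2\xi(m,\kappa,\mathcal{V}_{\tilde{\mathcal{A}}},\Omega^0_{\tilde{\mathcal{A}}},t_3/m))$ and $f_\lambda$ is strictly concave on $(0,2\cos^{-1}((1-\lambda)/\lambda))$, we can pick $\tilde{\ell}\in\bigl(\mathcal{D}(\Theta_{\tilde{\mathcal{A}}}(t_3)),\phi_2(\lambda,2\xi(m,\kappa,\mathcal{V}_{\tilde{\mathcal{A}}},\Omega^0_{\tilde{\mathcal{A}}},t_3/m))\bigr)$ with $f_\lambda(\tilde{\ell})>2\xi(m,\kappa,\mathcal{V}_{\tilde{\mathcal{A}}},\Omega^0_{\tilde{\mathcal{A}}},t_3/m)$; applying the first assertion of Theorem~\ref{L4.4} to $\tilde{\mathcal{A}}$ with target $\tilde{\ell}$ then gives $\mathcal{D}(\Theta_{\tilde{\mathcal{A}}}(t))\le \tilde{\ell}$ for $t\ge t_3$ and $\limsup_{t\to\infty}\mathcal{D}(\Theta_{\tilde{\mathcal{A}}}(t))\le \phi_1(\lambda,2\xi(m,\kappa,\mathcal{V}_{\tilde{\mathcal{A}}},\Omega^0_{\tilde{\mathcal{A}}},\infty))$. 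Choose integer shifts $\{\tilde{k}_j\}_{j\in\tilde{\mathcal{A}}}$ with $\tilde{k}_{i_0}=0$ so that $|\theta_j(t)-2\tilde{k}_j\pi-\theta_{i_0}(t)|\le \tilde{\ell}$ for all $j\in\tilde{\mathcal{A}}$ and $t\ge t_3$.

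Next apply the third assertion of Theorem~\ref{L4.4} with $\mathcal{A}'=\mathcal{A}$ and $\mathcal{B}'=\mathcal{A}\cup\tilde{\mathcal{A}}$; monotonicity of $\xi$ in the index set together with \eqref{eq:xi-partial-B} give $\xi(m,\kappa,\mathcal{V}_{\mathcal{A}\cup\tilde{\mathcal{A}}},\Omega^0_{\mathcal{A}\cup\tilde{\mathcal{A}}},\infty)\le\xi(m,\kappa,\mathcal{V}_\mathcal{B},\Omega^0_\mathcal{B},\infty)<\tfrac12 f_\lambda(\ell)$, validating the application. For each $i\in\tilde{\mathcal{A}}\setminus\mathcal{A}$ this produces an integer shift $\hat{k}_i$ and a time $t_2\ge\max(t_1,t_3)$ after which exactly one of the three cases (a), (b), (c) from that proof must hold. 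The crux is to rule out case (c), in which $\theta_i(t)-2\hat{k}_i\pi\in(\min\Theta_\mathcal{A}(t)+\ell,\min\Theta_\mathcal{A}(t)+2\pi)$ for all $t\ge t_2$. Comparing with $\theta_i(t)-2\tilde{k}_i\pi\in[\theta_{i_0}(t)-\tilde{\ell},\theta_{i_0}(t)+\tilde{\ell}]$ and $\theta_{i_0}(t)\in[\min\Theta_\mathcal{A}(t),\max\Theta_\mathcal{A}(t)]$, and using $\ell+\tilde{\ell}<2\pi$ (from $\ell,\tilde{\ell}<2\cos^{-1}((1-\lambda)/\lambda)\le \pi$), one forces $\tilde{k}_i-\hat{k}_i\in\{0,1\}$; in either sub-case, consistency of the two intervals pins $\theta_{i_0}(t)$ into a strip of width $\tilde{\ell}$ adjacent to one endpoint of $\Theta_\mathcal{A}(t)$'s bounding arc for every $t\ge\max(t_2,t_3)$. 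Since $\mathcal{D}(\Theta_\mathcal{A}(t))$ is strictly compressed toward $\phi_1(\lambda,2\xi(\mathcal{A},\infty))$ while $\tilde{\ell}$ may be refreshed to $\phi_1(\lambda,2\xi(\tilde{\mathcal{A}},\infty))+\epsilon$, this edge-pinning becomes incompatible with $\theta_{i_0}(t)\in[\min\Theta_\mathcal{A}(t),\max\Theta_\mathcal{A}(t)]$, giving the desired contradiction.

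With case (c) excluded for every $i\in\tilde{\mathcal{A}}\setminus\mathcal{A}$, cases (a)--(b) provide a time $T$ and integer shifts $\{m_j\}_{j\in\mathcal{A}\cup\tilde{\mathcal{A}}}$ for which $\mathcal{D}(\{\theta_j(T)-2m_j\pi\}_{j\in\mathcal{A}\cup\tilde{\mathcal{A}}})\le \ell$. Since $|\mathcal{A}\cup\tilde{\mathcal{A}}|\ge\lambda N$ and $2\xi(m,\kappa,\mathcal{V}_{\mathcal{A}\cup\tilde{\mathcal{A}}},\Omega^0_{\mathcal{A}\cup\tilde{\mathcal{A}}},T/m)<f_\lambda(\ell)$, a final application of the first assertion of Theorem~\ref{L4.4} to $\mathcal{A}\cup\tilde{\mathcal{A}}$ starting at $T$ with target $\ell$ yields the advertised bound \eqref{eq:union-is-trapped}. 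The principal obstacle is the case-(c) elimination: it requires careful reconciliation of two a priori different $2\pi$-shift conventions (one induced by $\Theta_\mathcal{A}$'s bounding arc, one by $\tilde{\mathcal{A}}$'s clustering) together with a delicate dynamical argument showing that persistent edge-pinning of $\theta_{i_0}$ within the shrinking $\Theta_\mathcal{A}$-arc is untenable.
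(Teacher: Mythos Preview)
Your plan founders at the step you yourself flag as the principal obstacle: eliminating case~(c). The ``edge-pinning'' argument you sketch does not produce a contradiction in general. Concretely, in case~(c) with $\tilde k_i=\hat k_i$ (the other sub-case is symmetric), combining $\theta_i-2\hat k_i\pi>\min\Theta_{\mathcal A}+\ell$ with $\theta_i-2\hat k_i\pi\le \theta_{i_0}+\tilde\ell$ yields only
\[
\theta_{i_0}(t)>\min\Theta_{\mathcal A}(t)+\ell-\tilde\ell,
\]
and this contradicts $\theta_{i_0}\le\max\Theta_{\mathcal A}$ \emph{only if} $\ell-\tilde\ell>\mathcal D(\Theta_{\mathcal A}(t))$. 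After both clusters have shrunk, this would require $\phi_1(\lambda,2\xi_{\mathcal A})+\phi_1(\lambda,2\xi_{\tilde{\mathcal A}})<\ell$, which is \emph{not} a consequence of the hypotheses. For instance, take $\lambda=1$ (so $f_\lambda=\sin$), $\ell$ just below $\pi/2$, and $2\xi_{\mathcal B}$ just below $1$: then both $\phi_1$ values can be arbitrarily close to $\pi/2$, so their sum is nearly $\pi$, far exceeding $\ell$. Replacing $\ell$ by the largest admissible $\ell'=\phi_2(\lambda,2\xi_{\mathcal B})-\epsilon$ does not help either, since for $\lambda=1$ one has $\phi_1+\phi_2=\pi$ and hence $\phi_2<2\phi_1$ whenever $\phi_1>\pi/3$. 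There is no further ``dynamical'' obstruction available from the tools you invoke: nothing in Theorem~\ref{L4.4}(1) or (3) prevents $\theta_{i_0}$ from sitting permanently at the top of the $\mathcal A$-cluster while $\theta_i$ sits permanently at the bottom of the $\tilde{\mathcal A}$-cluster, with the union diameter stuck somewhere in $(\ell,2\ell)$.

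The paper's proof is genuinely different and does not route through the trichotomy. After arranging $\mathcal D(\Theta_{\mathcal A}),\mathcal D(\Theta_{\tilde{\mathcal A}})\le\ell$ and hence $\mathcal D(\Theta_{\mathcal A\cup\tilde{\mathcal A}})\le 2\ell$, it estimates the Dini derivative of $\mathcal D(\Theta_{\mathcal A\cup\tilde{\mathcal A}})$ directly on the range $[\ell,2\ell]$. The point is that the overlap $\mathcal A\cap\tilde{\mathcal A}$ forces a \emph{structural} constraint on the union: the extremal oscillators lie in different clusters, and the overlap sits in the middle of the union arc. Exploiting this via the parameter $\mu=|\mathcal C|/N$ (with $\mathcal C$ the set of oscillators in the overlap region) and the bound $|\mathcal A\cup\tilde{\mathcal A}|\ge(2\lambda-\mu)N$, one reduces the Dini-derivative estimate to the trigonometric inequality
\[
\sin\tfrac\alpha2\Bigl(\mu\cos(\ell-\tfrac\alpha2)+2(\lambda-\mu)\cos\tfrac\alpha2-(1-2\lambda+\mu)\Bigr)\ge \tfrac12 f_\lambda(\ell),\qquad \alpha\in[\ell,2\ell],\ \mu\in[2\lambda-1,\lambda],
\]
which is linear in $(\lambda,\mu)$ for fixed $(\ell,\alpha)$ and is then verified at the three vertices of the resulting triangle. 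This computation is what replaces your missing dynamical argument; without something of comparable strength, case~(c) cannot be excluded.
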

\begin{proof}
    We first begin with observation that
    \begin{equation}\label{eq:alpha-prime-stable}
    \limsup_{t\to\infty}\mathcal{D}(\Theta_{\tilde{\mathcal{A}}}(t))\le \phi_1(\lambda, 2\xi(m,\kappa,\mathcal{V}_\mathcal{\tilde{\mathcal{A}}},\Omega^0_{\tilde{\mathcal{A}}},\infty))<\ell.
    \end{equation}
    Indeed, we choose $\ell'>0$ such that
    \[
    \max\{\mathcal{D}(\Theta_{\tilde{\mathcal{A}}}(t_3)),\phi_1(\lambda, 2\xi(m,\kappa,\mathcal{V}_\mathcal{\tilde{\mathcal{A}}},\Omega^0_{\tilde{\mathcal{A}}},\eta'))\}<\ell'<\phi_2(\lambda, 2\xi(m,\kappa,\mathcal{V}_\mathcal{\tilde{\mathcal{A}}},\Omega^0_{\tilde{\mathcal{A}}},\eta')),
    \]
    where $\eta'\coloneqq t_3/m$.
    By definition of $\phi_1,\phi_2$ and the concavity of $f_\lambda$, we have \eqref{eq:xi-partial} for $\tilde{\mathcal{A}}$, $\eta'$, and $\ell'$:
    \[
    \xi(m,\kappa,\mathcal{V}_\mathcal{\tilde{\mathcal{A}}},\Omega^0_{\tilde{\mathcal{A}}},\eta')<\frac 12 f_\lambda(\ell')=\frac\lambda 2\sin \ell'-(1-\lambda)\sin\frac{\ell'}{2}.
    \]
Thus, by the first assertion of Theorem \ref{L4.4}, we have \eqref{eq:alpha-prime-stable}. Therefore, invoking statement (1) of Theorem \ref{L4.4} and \eqref{eq:alpha-prime-stable}, we may find a time $t_4\ge \max\{t_1,t_2,t_3\}$ (recall $t_2$ is chosen to satisfy \eqref{eq:xi-partial-B-finite}) such that
\begin{equation}\label{eq:alpha-tilde-alpha-individually-trapped}
\sup_{t\ge t_4}\mathcal{D}(\Theta_\mathcal{A}(t))\le \ell,\quad \sup_{t\ge t_4}\mathcal{D}(\Theta_{\tilde{\mathcal{A}}}(t))\le \ell.
\end{equation}
Since $|\mathcal{A}|,|\tilde{\mathcal{A}}|\ge \lambda N>\frac N2$, it must be that $\mathcal{A}\cap\tilde{\mathcal{A}}\neq \emptyset$, so we have
\[
\sup_{t\ge t_4}\mathcal{D}(\Theta_{\mathcal{A}\cup\tilde{\mathcal{A}}}(t))\le 2\ell.
\]
By invoking statement (1) of Theorem \ref{L4.4} once more, while noting that 
\[ |\mathcal{A}\cup\tilde{\mathcal{A}}|\ge |\mathcal{A}|\ge \lambda N, \]
we have that, in order to prove \eqref{eq:union-is-trapped}, it is enough to show that there is a time $t\ge t_4$ such that
\[
\mathcal{D}(\Theta_{\mathcal{A}\cup\tilde{\mathcal{A}}}(t))\le \ell.
\]
By the standard exit-time argument, it is enough to show that there is a positive number $\epsilon>0$ such that, if $t\ge t_4$ is a time such that 
\[ \mathcal{D}(\Theta_{\mathcal{A}\cup\tilde{\mathcal{A}}})\in [\ell,2\ell], \]
then we have
\[ D^+\mathcal{D}(\Theta_{\mathcal{A}\cup\tilde{\mathcal{A}}})<-\varepsilon. \]
Let $t\ge t_4$ be such that $\mathcal{D}(\Theta_{\mathcal{A}\cup\tilde{\mathcal{A}}})\in [\ell,2\ell]$. Recalling \eqref{eq:dini} and we compute the Dini derivative as follows:
\begin{equation}\label{eq:dini-union}
D^+\mathcal{D}(\Theta_\mathcal{A})=\max_{\substack{i,j\in \mathcal{A}\cup\tilde{\mathcal{A}}\\\theta_i=\max_{k\in {\mathcal{A}\cup\tilde{\mathcal{A}}}} \theta_k \\ \theta_j=\min_{k\in \mathcal{A}\cup\tilde{\mathcal{A}}} \theta_k}}\left(\dot\theta_i-\dot\theta_j\right)
\end{equation}
By \eqref{eq:alpha-tilde-alpha-individually-trapped} and $\mathcal{A}\cap\tilde{\mathcal{A}}\neq \emptyset$, we have the following two cases:
\begin{align*}
\begin{aligned} %\label{NNN-1}
& \mbox{either}~~\min_{i\in \mathcal{A}}\theta_i(t)\le \min_{i\in \tilde{\mathcal{A}}}\theta_i(t)\le \max_{i\in \mathcal{A}}\theta_i(t)\le \max_{i\in \tilde{\mathcal{A}}}\theta_i(t), \\
& \mbox{or}~~
\min_{i\in \tilde{\mathcal{A}}}\theta_i(t)\le\min_{i\in \mathcal{A}}\theta_i(t)\le \max_{i\in \tilde{\mathcal{A}}}\theta_i(t)\le  \max_{i\in \mathcal{A}}\theta_i(t).
\end{aligned}
\end{align*}
By symmetry, we may assume that the former case holds. First, we set 
\[ \alpha\coloneqq\mathcal{D}(\Theta_{\mathcal{A}\cup\tilde{\mathcal{A}}}(t)) \quad 
\mathcal{C}\coloneqq \left\{j\in \mathcal{A}\cup\tilde{\mathcal{A}}:\min_{i\in \tilde{\mathcal{A}}}\theta_i(t)\le \theta_j(t)\le\max_{i\in \mathcal{A}}\theta_i(t)\right\},\quad \mu\coloneqq \frac{|\mathcal{C}|}{N}.
\]
Note that $\mathcal{A}\cap\tilde{\mathcal{A}}\subset\mathcal{C}$, so that
\begin{equation}\label{eq:union-bound}
|\mathcal{A}\cup\tilde{\mathcal{A}}|=|\mathcal{A}|+|\tilde{\mathcal{A}}|-|\mathcal{A}\cap\tilde{\mathcal{A}}|\ge |\mathcal{A}|+|\tilde{\mathcal{A}}|-|\mathcal{C}|\ge (2\lambda-\mu)N.
\end{equation}
Since 
\[ |\mathcal{A}\cup\tilde{\mathcal{A}}|\le N \quad \mbox{and} \quad |\mathcal{C}|\le|\mathcal{A}|\le \lambda N, \]
we have the admissibility range
\begin{equation}\label{eq:mu-range}
\mu\in[ 2\lambda -1,\lambda].
\end{equation}
By invoking Lemma \ref{L:approxaut}, for indices $i,j\in \mathcal{A}\cup\tilde{\mathcal{A}}$ such that 
\[ \theta_i(t)=\max_{k\in \mathcal{A}\cup\tilde{\mathcal{A}}}\theta_k(t) \quad \mbox{and} \quad \theta_j(t)=\min_{k\in \mathcal{A}\cup\tilde{\mathcal{A}}}\theta_k(t), \] 
we have
\begin{align}\label{eq:union-dini-estimate}
\begin{aligned}
& \dot\theta_i(t)-\dot\theta_j(t)  \le   \frac{\kappa\left(1-e^{-t/m}\right)}{N}\sum_{l\in \mathcal{A}\cup\tilde{\mathcal{A}}}\left(\sin(\theta_l(t)-\theta_i(t))-\sin(\theta_l(t)-\theta_j(t))\right)\\
& \hspace{1cm} +(\omega^0_i-\omega^0_j) e^{-t/m} + (\nu_i-\nu_j) (1 -e^{-t/m}) \\
& \hspace{1cm}  + 2\kappa \left(\mathcal{D}(\Omega^0_{\mathcal{A}\cup\tilde{\mathcal{A}}})te^{-t/m}\left(1-e^{-t/m}\right)+m (\mathcal{D}(\mathcal{V}_{\mathcal{A}\cup\tilde{\mathcal{A}}})+2\kappa)\left(1-e^{-t/m}\right)^3\right)\\
& \hspace{1cm} + \frac{N-|\mathcal{A}\cup\tilde{\mathcal{A}}
|}{N}2\kappa\sin\frac{\theta_i(t) -\theta_j(t)}{2}\left(1-e^{-t/m}\right)\\
& \hspace{1cm}  \stackrel{\mathclap{\eqref{eq:union-bound}}}{\le}   -\frac{2\kappa\left(1-e^{-t/m}\right)}{N}\sin\frac{\theta_i(t)-\theta_j(t)}{2}
\sum_{l\in \mathcal{A}\cup\tilde{\mathcal{A}}}\cos\left(\theta_l(t)-\frac{\theta_i(t)+\theta_j(t)}{2}\right)\\
& \hspace{1cm} +\mathcal{D}(\Omega^0_{\mathcal{A}\cup\tilde{\mathcal{A}}}) e^{-t/m} + \mathcal{D}(\mathcal{V}_{\mathcal{A}\cup\tilde{\mathcal{A}}}) (1 -e^{-t/m}) \\
& \hspace{1cm}  + 2\kappa \left(\mathcal{D}(\Omega^0_{\mathcal{A}\cup\tilde{\mathcal{A}}})te^{-t/m}\left(1-e^{-t/m}\right)+m (\mathcal{D}(\mathcal{V}_{\mathcal{A}\cup\tilde{\mathcal{A}}})+2\kappa)\left(1-e^{-t/m}\right)^3\right)\\
& \hspace{1cm}  +2\left(1-2\lambda+\mu\right)\kappa\sin\frac{\theta_i(t) -\theta_j(t)}{2}\left(1-e^{-t/m}\right)\\
&  \hspace{1cm}  \le  -2\kappa \left(1-e^{-t/m}\right)\sin\frac{\theta_i(t) -\theta_j(t)}{2}\left(\frac 1N\sum_{l\in \mathcal{A}\cup\tilde{\mathcal{A}}}\cos\left(\theta_l(t)-\frac{\theta_i(t)+\theta_j(t)}{2}\right)-(1-2\lambda+\mu)\right) \\
& \hspace{1cm}  +2\kappa \left(1-e^{-t/m}\right) \xi(m,\kappa,\mathcal{V}_{\mathcal{A}\cup\tilde{\mathcal{A}}},\Omega^0_{\mathcal{A}\cup\tilde{\mathcal{A}}},t_4/m).
\end{aligned}
\end{align}
Note that for $l\in \mathcal{C}$, we have
\[
\theta_i(t)-\ell\le \min_{k\in \tilde{\mathcal{A}}}\theta_k(t)-\ell\le \min_{k\in \tilde{\mathcal{A}}}\theta_k(t)\le \theta_l(t)\le \max_{k\in \mathcal{A}}\theta_k(t)\le \min_{k\in \mathcal{A}}\theta_k(t)+\ell=\theta_j(t)+\ell,
\]
so that
\[
-\ell+\frac \alpha 2\le -\ell+\frac{\theta_i(t)-\theta_j(t)}{2}\le \theta_l(t)-\frac{\theta_i(t)+\theta_j(t)}{2}\le \ell-\frac{\theta_i(t)-\theta_j(t)}{2}=\ell-\frac \alpha 2.
\]
Hence, we have
\[
\cos\left(\theta_l(t)-\frac{\theta_i(t)+\theta_j(t)}{2}\right)\ge \cos\left(\ell-\frac\alpha 2\right),\quad l\in \mathcal{C}.
\]
On the other hand, we have the following straightforward bound
\[
\cos\left(\theta_l(t)-\frac{\theta_i(t)+\theta_j(t)}{2}\right)\ge \cos\left(\frac{\theta_i(t)-\theta_j(t)}2\right)=\cos \frac\alpha 2,\quad l\in (\mathcal{A}\cup\tilde{\mathcal{A}})\setminus\mathcal{C}
\]
to derive
\begin{align*}
&\frac 1N\sum_{l\in \mathcal{A}\cup\tilde{\mathcal{A}}}\cos\left(\theta_l(t)-\frac{\theta_i(t)+\theta_j(t)}{2}\right)\\
& \hspace{0.5cm} \ge \frac 1N\sum_{l\in \mathcal{C}}\cos\left(\theta_l(t)-\frac{\theta_i(t)+\theta_j(t)}{2}\right)+\frac 1N\sum_{l\in (\mathcal{A}\cup\tilde{\mathcal{A}})\setminus\mathcal{C}}\cos\left(\theta_l(t)-\frac{\theta_i(t)+\theta_j(t)}{2}\right)\\
& \hspace{0.5cm} \ge \frac{|\mathcal{C}|}N\cos\left(\ell-\frac\alpha 2\right)+\frac{|\mathcal{A}\cup\tilde{\mathcal{A}}|-|\mathcal{C}|}N \cos \frac\alpha 2\\
& \hspace{0.5cm} \ge \mu\cos\left(\ell-\frac\alpha 2\right)+2(\lambda-\mu)\cos \frac\alpha 2.
\end{align*}
Thus, we continue the estimate of \eqref{eq:union-dini-estimate} to obtain
\begin{align}\label{eq:union-dini-estimate-2}
\begin{aligned}
\dot\theta_i(t)-\dot\theta_j(t) &\le -2\kappa \left(1-e^{-t/m}\right)\sin\frac\alpha 2\left(\mu\cos\left(\ell-\frac\alpha 2\right)+2(\lambda-\mu)\cos \frac\alpha 2-(1-2\lambda+\mu)\right)\\
&+2\kappa \left(1-e^{-t/m}\right)\xi(m,\kappa,\mathcal{V}_{\mathcal{A}\cup\tilde{\mathcal{A}}},\Omega^0_{\mathcal{A}\cup\tilde{\mathcal{A}}},t_4/m).
\end{aligned}
\end{align}
We claim that
\begin{align}
\begin{aligned} \label{eq:union-collapse-key-estimate}
& \sin\frac\alpha 2\left(\mu\cos\left(\ell-\frac\alpha 2\right)+2(\lambda-\mu)\cos \frac\alpha 2-(1-2\lambda+\mu)\right) \\
& \hspace{2cm} \ge \sin\frac\ell 2\left(\lambda \cos \frac \ell 2-(1-\lambda)\right)=\frac 12 f_\lambda(\ell).
\end{aligned}
\end{align}
For the moment, we postpone the proof of this estimate to the end of this proof. \newline

Suppose \eqref{eq:union-collapse-key-estimate} holds. Then, it follows from \eqref{eq:union-dini-estimate-2} that
\[
\dot\theta_i(t)-\dot\theta_j(t)\le-\kappa \left(1-e^{-t/m}\right)\left(f_\lambda(\ell)-2\xi(m,\kappa,\mathcal{V}_{\mathcal{A}\cup\tilde{\mathcal{A}}},\Omega^0_{\mathcal{A}\cup\tilde{\mathcal{A}}},t_4/m)\right)<0,
\]
which is negative since
\[
f_\lambda(\ell)~\stackrel{\mathclap{\eqref{eq:xi-partial-B-finite}}}{\ge} ~2\xi(\mathcal{B},t_2/m)\ge2\xi(\mathcal{B},t_4/m)\ge 2\xi(m,\kappa,\mathcal{V}_{\mathcal{A}\cup\tilde{\mathcal{A}}},\Omega^0_{\mathcal{A}\cup\tilde{\mathcal{A}}},t_4/m),
\]
where $ \xi(\mathcal{B},s) = \xi(m,\kappa,\mathcal{V}_\mathcal{B},\Omega^0_\mathcal{B},s)$ for $s=t_2/m,t_4/m$.
Invoking \eqref{eq:dini-union}, we have
\begin{align*}
    \left.D^+\right|_t\mathcal{D}(\Theta_\mathcal{A})\le&-\kappa \left(1-e^{-t_4/m}\right)\left(f_\lambda(\ell)-2\xi(m,\kappa,\mathcal{V}_{\mathcal{A}\cup\tilde{\mathcal{A}}},\Omega^0_{\mathcal{A}\cup\tilde{\mathcal{A}}},t_4/m)\right)<0.
\end{align*}
Thus, we have shown that whenever $\mathcal{D}(\Theta_{\mathcal{A}\cup\tilde{\mathcal{A}}})\in [\ell,2\ell]$ on the time interval $[t_4,\infty)$, it has negative upper Dini (time) derivative: $D^+\mathcal{D}(\Theta_{\mathcal{A}\cup\tilde{\mathcal{A}}})<-\epsilon$ for some fixed $\epsilon$:
\[
\epsilon\coloneqq \kappa \left(1-e^{-t_4/m}\right)\left(f_\lambda(\ell)-2\xi(m,\kappa,\mathcal{V}_{\mathcal{A}\cup\tilde{\mathcal{A}}},\Omega^0_{\mathcal{A}\cup\tilde{\mathcal{A}}},t_4/m)\right)>0.
\]
As per the aforementioned argument, this verifies \eqref{eq:union-is-trapped}. Now, we return to the proof of $\eqref{eq:union-collapse-key-estimate}$. \newline

\noindent {\it Proof of \eqref{eq:union-collapse-key-estimate}}: Recall the constraints given by \eqref{eq:gammaellrange}, $\mathcal{D}(\Theta_{\mathcal{A}\cup\tilde{\mathcal{A}}}(t))\in [\ell,2\ell]$, and \eqref{eq:mu-range}:\footnote{Because the inequality \eqref{eq:union-collapse-key-estimate} that we are trying to prove is not strict, we are allowed to take the closure of the conditions, i.e., we can make every open interval closed.}
\[
\lambda\in [1/2,1],\quad \ell\in [0,2\cos^{-1}(1/\lambda-1)], \quad \alpha\in [\ell,2\ell],\quad \mu\in [2\lambda-1,\lambda].
\]
These constraints are equivalent to
\[
\ell\in [0,\pi],\quad \alpha\in [\ell,2\ell],\quad \lambda\in \left[\frac{1}{1+\cos(\ell)/2},1\right],\quad \mu\in [2\lambda-1,\lambda],
\]
where we choose the variables in the order of $\ell$, $\alpha$, $\lambda$, and then $\mu$, within the above conditions. Note that, for each fixed choice of $\ell$ and $\alpha$, the inequality \eqref{eq:union-collapse-key-estimate} is linear in $\lambda$ and $\mu$, whose domain
\[
\left\{(\lambda,\mu)\in \mathbb{R}^2:\lambda\in \left[\frac{1}{1+\cos(\ell/2)},1\right],\quad \mu\in [2\lambda-1,\lambda]\right\}
\]
forms a closed solid triangle in $\mathbb{R}^2$ with vertices
\[
(\lambda,\mu)=\left(\frac{1}{1+\cos(\ell/2)},~\frac{1-\cos(\ell/2)}{1+\cos(\ell/2)}\right), \quad \left(\frac{1}{1+\cos(\ell/2)},~\frac{1}{1+\cos(\ell/2)}\right), \quad \left(1,~1\right).
\]
Thus, it is enough to prove \eqref{eq:union-collapse-key-estimate} at these extreme points; the corresponding inequalities are
\begin{equation}\label{eq:casework-1}
\sin\frac\alpha 2\frac{(1-\cos(\ell/2))\cos\left(\ell-\frac\alpha 2\right)+2\cos(\ell/2)\cos \frac\alpha 2}{1+\cos(\ell/2)}\ge 0,
\end{equation}
\begin{equation}\label{eq:casework-2}
\sin\frac\alpha 2\left(\frac{1+\cos\left(\ell-\frac\alpha 2\right)}{1+\cos(\ell/2)}-1\right)\ge 0,
\end{equation}
and
\begin{equation}\label{eq:casework-3}
\sin\frac\alpha 2\cos\left(\ell-\frac\alpha 2\right)\ge \sin\frac\ell 2 \cos \frac \ell 2,
\end{equation}
respectively. \newline

\noindent $\bullet$ Case A.1 (verification of \eqref{eq:casework-1}): Since $\sin \frac\alpha 2\ge 0$, it is enough to show, for $\ell\in [0,\pi]$ and $\alpha\in [\ell,2\ell]$,
\[
(1-\cos(\ell/2))\cos\left(\ell-\frac\alpha 2\right)+2\cos(\ell/2)\cos \frac\alpha 2\ge 0.
\]
Observe that
\begin{align*}
&(1-\cos\frac \ell 2)\cos\left(\ell-\frac\alpha 2\right)+2\cos\frac \ell 2 \cos \frac\alpha 2\\
& \hspace{0.5cm} =(1-\cos\frac\ell 2)\cos\left(\frac \ell 2-\frac{\alpha-\ell} 2\right)+2\cos\frac \ell 2 \cos \left(\frac \ell 2+\frac{\alpha-\ell} 2\right)\\
& \hspace{0.5cm} =(1+\cos\frac\ell 2)\cos\frac \ell 2\cos\frac{\alpha-\ell} 2+(1-3\cos\frac\ell 2)\sin\frac \ell 2\sin\frac{\alpha-\ell} 2.
\end{align*}
Since $\frac \ell 2,\frac{\alpha-\ell}{2}\in [0,\frac\pi 2]$, this is nonnegative if $1-3\cos\frac\ell 2\ge 0$. \newline

\noindent If $1-3\cos\frac\ell 2<0$, then the $\alpha$-derivative is
\[
-(1+\cos\frac\ell 2)\cos\frac \ell 2\sin\frac{\alpha-\ell} 2+(1-3\cos\frac\ell 2)\sin\frac \ell 2\cos\frac{\alpha-\ell} 2
\]
and is nonpositive. So, for fixed $\ell\in [0,\pi]$, the given expression is minimized at $\alpha=2\ell$, at which it becomes
\begin{align*}
&(1+\cos\frac\ell 2)\cos^2\frac \ell 2+(1-3\cos\frac\ell 2)\sin^2\frac \ell 2\\
& \hspace{0.5cm} =(1+\cos\frac\ell 2)\cos^2\frac \ell 2+(1-3\cos\frac\ell 2)(1-\cos^2\frac \ell 2) =(1+\cos\frac\ell 2)\left(\cos^2\frac \ell 2+(1-3\cos\frac\ell 2)(1-\cos\frac\ell 2)\right)\\
& \hspace{0.5cm} =(1+\cos\frac\ell 2)\left(1-4\cos\frac\ell 2+4\cos^2\frac\ell 2\right) =(1+\cos\frac\ell 2)\left(1-2\cos\frac\ell 2\right)^2\ge 0.
\end{align*}
This proves \eqref{eq:casework-1}. \newline

\noindent $\bullet$ Case A.2 (verification of \eqref{eq:casework-2}):~Since $\sin\frac\alpha 2\ge 0$, it is enough to show
\[
\cos\left(\ell-\frac\alpha 2\right)-\cos\frac\ell 2\ge 0.
\]
This follows from the fact that $\cos$ is decreasing on $[0,\frac\pi 2]$ and
\[
0\le \ell-\frac\alpha 2\le \frac\ell 2\le \frac\pi 2.
\]

\noindent $\bullet$ Case A.3 (verification of \eqref{eq:casework-3}):  The last estimate \eqref{eq:casework-3} follows from $\alpha-\ell\in [0,\pi]$ and
\[
\sin\frac{\alpha}{2}\cos\left( \ell - \frac{\alpha}{2} \right) =\frac{1}{2}(\sin \ell + \sin(\alpha - \ell) ) \ge \frac{1}{2}\sin \ell = \sin\frac{\ell}{2} \cos\frac{\ell}{2}.
\]
This finally completes the proof of Lemma \ref{L4.5}.
\end{proof}
Now we are ready to proceed with the proof of statement (4) of Theorem \ref{L4.4}. Consider the set 
\begin{align*}
{\mathfrak S}\coloneqq\left\{\tilde{\mathcal{A}}\subset\mathcal{B}:|\tilde{\mathcal{A}}|\ge \lambda N\mathrm{~and~}\exists t\ge \eta m\exists \vec{k}\in \mathbb{Z}^{\tilde{\mathcal{A}}}~\mathcal{D}(\Theta_{\tilde{\mathcal{A}}}(t)-2\pi \vec{k})<\phi_2(\lambda, 2\xi(m,\kappa,\mathcal{V}_\mathcal{\tilde{\mathcal{A}}},\Omega^0_{\tilde{\mathcal{A}}},t/m))\right\}.
\end{align*}
Then Lemma \ref{L4.5} tells us that whenever $\tilde{\mathcal{A}}_1,\tilde{\mathcal{A}}_2\in {\mathfrak S}$, then $\tilde{\mathcal{A}}_1\cup\tilde{\mathcal{A}}_2\in {\mathfrak S}$. On the other hand, $\mathcal{A}\in {\mathfrak S}$ by assumption. Therefore, ${\mathfrak S}$ contains an element $\mathcal{A}_{\mathrm{max}}$ that is maximal with respect to inclusion, i.e., we have $\mathcal{A}_{\mathrm{max}}\in {\mathfrak S}$, and whenever $\tilde{\mathcal{A}}\in {\mathfrak S}$ we have $\tilde{\mathcal{A}}\subset \mathcal{A}_{\mathrm{max}}$. Of course, we have $\mathcal{A}\subset \mathcal{A}_{\mathrm{max}}\subset \mathcal{B}$. Next, we show that the index set $\mathcal{A}_\mathrm{max}$ that we constructed has properties (4)(a) and (4)(b). \newline

\noindent  $\bullet$~(Verification of property (4)(a)):~From $\mathcal{A}_{\mathrm{max}}\in {\mathfrak S}$ and statement (1) of Theorem \ref{L4.4}, we have that for some $\vec{k}\in \mathbb{Z}^{\mathcal{A}_{\mathrm{max}}}$,
\[
\limsup_{t\to\infty}\mathcal{D}(\Theta_{\mathcal{A}_{\mathrm{max}}}(t)-2\pi \vec{k}) \le \phi_1(\lambda, 2\xi(m,\kappa,\mathcal{V}_{\mathcal{A}_\mathrm{max}},\Omega^0_{\mathcal{A}_\mathrm{max}},\infty)) \le \phi_1(\lambda, 2\xi(m,\kappa,\mathcal{V}_{\mathcal{B}},\Omega^0_{\mathcal{B}},\infty)).
\]
Since $\mathcal{D}(\Theta_\mathcal{A}(t))\le \ell<\pi$ for $t\ge t_1$, we can see that the vector $\vec{k}\in \mathbb{Z}^{\mathcal{A}_{\mathrm{max}}}$ must have constant $\mathcal{A}$-entries, and thus, by adding some integer multiple of the all-ones vector to $\vec{k}$, we may assume the $\mathcal{A}$-entries of $\vec{k}$ are zero. This proves that $\mathcal{A}_\mathrm{max}$ has property (4)(a). \newline

\vspace{0.2cm}

\noindent  $\bullet$~(Verification of property (4)(b)):~If $\mathcal{A}_\mathrm{max}=\mathcal{B}$, then there is nothing to prove, so  we may assume $\mathcal{A}_\mathrm{max}\subsetneq \mathcal{B}$. For any $i\in \mathcal{B}\setminus\mathcal{A}_\mathrm{max}$, by construction of $\mathcal{A}_\mathrm{max}$ as the maximal element of ${\mathfrak S}$ up to inclusion, we have $\mathcal{A}_\mathrm{max}\cup\{i\}\notin {\mathfrak S}$, which, by definition of ${\mathfrak S}$, tells us that for all $t\ge \eta m$, we have
\[
\min_{k\in \mathbb{Z}}\mathcal{D}(\Theta_{\mathcal{A}_\mathrm{max}}(t)\cup \{\theta_i(t)-2\pi k\})\ge \phi_2(\lambda, 2\xi(m,\kappa, \mathcal{V}_{\mathcal{A}_\mathrm{max}\cup\{i\}},\Omega^0_{\mathcal{A}_\mathrm{max}\cup\{i\}},t/m)).
\]
Thus we have
\begin{align*}
&\liminf_{t\to\infty}\min_{k\in \mathbb{Z}}\mathcal{D}(\Theta_{\mathcal{A}_\mathrm{max}}(t)\cup \{\theta_i(t)-2\pi k\}) \\
& \hspace{1cm} \ge \phi_2(\lambda, 2\xi(m,\kappa, \mathcal{V}_{\mathcal{A}_\mathrm{max}\cup\{i\}},\Omega^0_{\mathcal{A}_\mathrm{max}\cup\{i\}},\infty)) \ge \phi_2(\lambda, 2\xi(m,\kappa, \mathcal{V}_{\mathcal{B}},\Omega^0_{\mathcal{B}},\infty)).
\end{align*}
We take the minimum over the finite set $\mathcal{B}\setminus \mathcal{A}_\mathrm{max}$ to see
    \[
    \liminf_{t\to\infty}\min_{\substack{i\in \mathcal{B}\setminus\mathcal{A}_\mathrm{max}\\ k\in \mathbb{Z}}}\mathcal{D}(\Theta_{\mathcal{A}_\mathrm{max}}(t)\cup \{\theta_i(t)-2\pi k\})\ge \phi_2(\lambda, 2\xi(m,\kappa, \mathcal{V}_{\mathcal{B}},\Omega^0_{\mathcal{B}},\infty)).
    \]
Now, we have a triangle inequality:
\[
\min_{\substack{i\in \mathcal{B}\setminus \mathcal{A}_{\mathrm{max}}\\ j\in \mathcal{A}_{\mathrm{max}}\\ k\in \mathbb{Z}}}|\theta_i(t)-2\pi k-\theta_j(t)|\ge \min_{\substack{i\in \mathcal{B}\setminus\mathcal{A}_\mathrm{max}\\ k\in \mathbb{Z}}}\mathcal{D}(\Theta_{\mathcal{A}_\mathrm{max}}(t)\cup \{\theta_i(t)-2\pi k\}) - \mathcal{D}(\Theta_{\mathcal{A}_\mathrm{max}}(t)),
\]
to which, if we apply \eqref{eq:cluster-maximality} and \eqref{eq:max-cluster}:
    \[
    \limsup_{t\rightarrow\infty}\mathcal{D}(\Theta_{\mathcal{A}_\mathrm{max}}(t))\le \phi_1\left(\lambda, 2\xi(m,\kappa, \mathcal{V}_{\mathcal{B}},\Omega^0_{\mathcal{B}},\infty)\right),
    \]
 the we obtain \eqref{eq:cluster-separation}:
 \[
  \liminf_{t\to\infty}\min_{\substack{i\in \mathcal{B}\setminus \mathcal{A}_{\mathrm{max}}\\ j\in \mathcal{A}_{\mathrm{max}}\\ k\in \mathbb{Z}}}|\theta_i(t)-2\pi k-\theta_j(t)| \ge \phi_2\left(\lambda, 2\xi(m,\kappa, \mathcal{V}_{\mathcal{B}},\Omega^0_{\mathcal{B}},\infty)\right)-\phi_1\left(\lambda, 2\xi(m,\kappa, \mathcal{V}_{\mathcal{B}},\Omega^0_{\mathcal{B}},\infty)\right).
\]
\vspace{0.2cm}
\noindent  $\bullet$~(Verification of property (4)(c)):~Now we show that $\mathcal{A}_\mathrm{max}$ is the unique index set with these properties. Assume $\mathcal{A}_\mathrm{max}'\subset \mathcal{B}$ satisfies $\mathcal{A}_\mathrm{max}'\supset \mathcal{A}$ and has properties (4)(a) and (4)(b). By property (4)(a), we have (after suitable $2\pi$-translations)
\begin{align*}
    \limsup_{t\rightarrow\infty}\mathcal{D}(\Theta_{\mathcal{A}_\mathrm{max}'}(t))\stackrel{\mathclap{\eqref{eq:max-cluster}}}{\le} \phi_1\left(\lambda, 2\xi(m,\kappa, \mathcal{V}_{\mathcal{B}},\Omega^0_{\mathcal{B}},\infty)\right)< \phi_2\left(\lambda, 2\xi(m,\kappa, \mathcal{V}_{\mathcal{A}_\mathrm{max}'},\Omega^0_{\mathcal{A}_\mathrm{max}'},\infty)\right)
\end{align*}
from which it follows that $\mathcal{A}_\mathrm{max}'\in {\mathfrak S}$. Hence, by the maximality of $\mathcal{A}_\mathrm{max}$, we have
\[ \mathcal{A}_\mathrm{max}'\subset \mathcal{A}_\mathrm{max}. \]
But for any $i\in \mathcal{A}_\mathrm{max}\setminus\mathcal{A}_\mathrm{max}'$, we have
\[
\phi_1(\lambda, 2\xi(m,\kappa, \mathcal{V}_{\mathcal{B}},\Omega^0_{\mathcal{B}},\infty))\stackrel{\mathclap{\eqref{eq:max-cluster}}}{\ge}\limsup_{t\rightarrow\infty}\mathcal{D}(\Theta_{\mathcal{A}_\mathrm{max}}(t))\ge \liminf_{t\to\infty}\mathcal{D}(\Theta_{\mathcal{A}_\mathrm{max}'\cup\{i\}}(t)) \stackrel{\mathclap{\eqref{eq:cluster-maximality}}}{\ge} \phi_2(\lambda, 2\xi(m,\kappa, \mathcal{V}_{\mathcal{B}},\Omega^0_{\mathcal{B}},\infty)),
\]
which gives a contradiction. Thus, the condition $i\in \mathcal{A}_\mathrm{max}\setminus\mathcal{A}_\mathrm{max}'$ must be vacuous, i.e., 
\[ \mathcal{A}_\mathrm{max}\setminus\mathcal{A}_\mathrm{max}'=\emptyset. \]
Therefore $\mathcal{A}_\mathrm{max}'=\mathcal{A}_\mathrm{max}$, proving the uniqueness of $\mathcal{A}_\mathrm{max}$. Assertion (4)(c) for $\mathcal{A}_\mathrm{max}$ follows directly from the assertion (2).

%%%%%%%%%%%%%%%%%%%%%%%%%%%
%
%
%		Section 5
%
%
%%%%%%%%%%%%%%%%%%%%%%%%%%%
\section{Proof of Theorem \ref{T3.1}} \label{sec:5}
\setcounter{equation}{0} 
In this section, we provide a proof of Theorem \ref{T3.1}. Since the proof involves a series of technical estimates, we briefly outline a proof strategy in the following three stages. Let $\delta \in (0, 1)$ and $\eta>0$ be free parameters. 
\begin{itemize}
\item Stage A (Strict positivity of $R$ in an initial layer):~We show, via Lemma \ref{L2.4} (which in turn is based on the finite propagation speed guarantee of Lemma \ref{L2.2}), that the order parameter $R$ stays strictly positive in the initial layer $[0,\eta m]$. That is, when $m$ is small enough to satisfy \eqref{C-1}-(${\mathcal F}_1$), we have
\[ R(t) \geq \delta R^0, \quad \forall~t \in [0, \eta m]. \]
 For details, we refer to Lemma \ref{L4.1}.
 \vspace{0.2cm}
\item
Stage B (Finite time condensation of a majority cluster):~We show, using the quasi-monotonicity of the order parameter, that after some finite time, a majority cluster forms spontaneously from the oscillators. See Synchronization Mechanism III (subsection \ref{subsec:1.5}) for a heuristic explanation.
\begin{itemize}
\item Stage B.1 (Finite-time condensation): with $\delta\in (0,1)$ as above, we show that there exists a positive time $t_e \in [\eta m, \infty)$ such that
\[ R(t)\ge \delta R^0>0,~ \forall t\in[0,t_e]\quad \mbox{and}\quad \Delta(t_e) \leq \left( \frac{\xi(\eta)}{\delta R^0} \right)^2,
\]
where $\Delta$ is the functional defined in \eqref{eq:Delta} and $\xi=\xi(\eta)$ is the constant defined in \eqref{B-14-0-1}. For details, we refer to Lemma \ref{L4.2}. 
\vspace{0.2cm}
\item Stage B.2 (Condensation gives a majority cluster): The above condition on $R(t_e)$ and $\Delta(t_e)$, in conjunction with condition \eqref{C-1}-(${\mathcal F}_2$) on free parameters $\lambda > \frac{1}{2}$ and $\ell
$, necessitates the existence of a majority cluster $\Theta_\mathcal{A}=(\theta_i)_{i\in \mathcal{A}}$ and integers $k_i$, $i\in \mathcal{A}$, such that
\[
|\mathcal{A}| \geq \lambda N \quad \mbox{and} \quad \mathcal{D}\left((\theta_i(t_e) - 2k_i\pi)_{i\in \mathcal{A}}\right)<\ell.
\]
For details, we refer to Lemma \ref{L4.3}. 
\end{itemize}
\vspace{0.2cm}
\item Stage C (Stability of the majority cluster and relaxation):~We show, using Synchronization Mechanism II (subsection \ref{subsec:1.4}), that the majority cluster $\Theta_\mathcal{A}$ of arclength $<\ell$ formed at $t= t_*$ is maintained for the rest of time, i.e.,
\[ 
\mathcal{D}\left((\theta_i(t_e) - 2k_i\pi)_{i\in \mathcal{A}}\right)<\ell,\quad t\geq t_*.
\]
By Synchronization Mechanism I (subsection \ref{subsec:1.6}), that asymptotic phase-locking occurs. More precisely, we use Theorem \ref{L4.4}, which is applicable since we are in the regime of \eqref{C-1}-(${\mathcal F}_3$).
\end{itemize}

%%%%%%%%%%%%%%%
\subsection{Stage A(Initial layer)} \label{sec:5.1}
As described after \eqref{eq:omega-decomposition} in Subsection \ref{sec:2.1}, the initial layer $[0,\eta m]$ is a short time period during which we are exposed to a potential adversarial attack. By \eqref{C-1}-(i), $m$ is small enough that the order parameter does not change much during this time. The following lemma quantifies this.
\begin{lemma}{(Behavior of $R$ in the initial layer)} \label{L4.1}
Suppose that the parameters $\kappa>0$ and $m>0$, initial data $\Theta^0$ and $\Omega^0$, intrinsic frequencies $\mathcal{V}$, and additional parameters $\eta>0$ and $\delta\in (0,1)$ satisfy \eqref{C-1}-(${\mathcal F}_1$). Let $\Theta$ be a global solution to \eqref{B-1}. Then $R$ satisfies
\[
R(t) \geq \delta R^0 > 0, \quad \forall~ t \in [0, \eta m].
\]
\end{lemma}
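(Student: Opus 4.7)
The proof of Lemma \ref{L4.1} is essentially a direct consequence of the quasi-monotonicity formula already established in Lemma \ref{L2.4}, together with the hypothesis $({\mathcal F}_1)$. Specifically, Lemma \ref{L2.4} supplies the initial-layer estimate \eqref{B-14-000}, namely
\[
R(t) \ge R^0 - \zeta(m,\kappa,\mathcal{V},\Omega^0,\eta), \quad t \in [0,\eta m],
\]
where $\zeta(\eta)$ is the dimensionless quantity defined in \eqref{B-14-0-1}. Thus the role of Lemma \ref{L4.1} is merely to package this inequality into a more convenient form by invoking the smallness assumption on $\zeta(\eta)$.

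My plan is therefore the following. First, I would simply quote \eqref{B-14-000} from Lemma \ref{L2.4}, which bounds the worst possible depletion of the order parameter during the initial time window $[0,\eta m]$ in terms of $\zeta(\eta)$. Second, I would feed in the hypothesis $({\mathcal F}_1)$ of \eqref{C-1}, namely $\zeta(\eta) \le (1-\delta)R^0$, to conclude
\[
R(t) \;\ge\; R^0 - \zeta(\eta) \;\ge\; R^0 - (1-\delta)R^0 \;=\; \delta R^0 \;>\; 0, \qquad t\in[0,\eta m],
\]
where the strict positivity in the final step uses $R^0>0$ (also part of $({\mathcal F}_1)$) and $\delta\in(0,1)$.

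There is no genuine obstacle here: the entire analytic content of the lemma is bundled into Lemma \ref{L2.4}, whose proof rests on a careful decomposition of $\dot{R}$ via the Duhamel identity \eqref{eq:omega-decomposition} and the finite propagation speed bounds of Lemma \ref{L2.2}, followed by an integration of the crude bound $\Delta \le 1$ against the slowly activating interaction weight $1-e^{-s/m}$ on the short interval $[0,\eta m]$. The appearance of $\zeta(\eta)$ in \eqref{B-14-0-1} records exactly this: the cumulative effect of the adversarial initial frequencies $\Omega^0$, the intrinsic frequencies $\mathcal{V}$, and the not-yet-active sinusoidal coupling, all controlled by $m$. Once this machinery is in place, Lemma \ref{L4.1} is simply its specialization to the quantitative threshold $(1-\delta)R^0$ provided by $({\mathcal F}_1)$.
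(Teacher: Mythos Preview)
Your proposal is correct and follows exactly the same approach as the paper: invoke the initial-layer bound \eqref{B-14-000} from Lemma \ref{L2.4} and then apply the hypothesis $({\mathcal F}_1)$ to obtain $R(t)\ge R^0-\zeta(\eta)\ge R^0-(1-\delta)R^0=\delta R^0$.
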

\begin{proof}  For $t \in [0, \eta m]$, it follows from \eqref{B-14-000} and \eqref{C-1}-(${\mathcal F}_1$) that
\[
R(t)\ge R^0-\zeta(\eta)\ge R^0 - (1-\delta) R^0 = \delta R^0.
\]
\end{proof}

%%%%%%%%%%%%%%%
\subsection{Stage B (Condensation of a majority cluster)}\label{subsec:condensation}

Next, we study the dynamics of $R$ in the time interval $[\eta m, \infty)$ after the initial layer.  In this time zone, it follows from \eqref{B-14-00} of Lemma \ref{L2.4} that 
\begin{equation} \label{D-12-1}
\dot R \geq \kappa\sqrt{\Delta} \left(1-e^{-t/m}\right) \left( R \sqrt{\Delta} - \xi(\eta) \right),\quad t \geq \eta m.
\end{equation}
Note that for $t>0$,
\[
\kappa\sqrt{\Delta} \left(1-e^{-t/m}\right) \left( R \sqrt{\Delta} - \xi(\eta) \right) > 0 \quad \Longleftrightarrow \quad  \Delta(t) > \left( \frac{\xi(\eta)}{R(t)}  \right)^2.   \]
Thus,  as long as $R$ and $\Delta$ satisfy 
\begin{equation}\label{eq:monotone-condition}
R(t) \geq \delta R^0  \quad \mbox{and} \quad \Delta(t) > \left(  \frac{\xi(\eta)}{ \delta R^0 } \right)^2,
\end{equation}
we have $\Delta(t)  > \left(  \frac{\xi(\eta)}{ R(t) }  \right)^2$ and hence ${\dot R}(t) > 0$.

As $R$ is bounded in $[0,1]$, the monotonic behavior of $R$ must halt either in finite time or as $t\to\infty$. So, heuristically, condition \eqref{eq:monotone-condition} should be untenable in the long run. The following lemma tells us that indeed condition \eqref{eq:monotone-condition} fails to hold, in fact in finite time, with the second part of \eqref{eq:monotone-condition} being the point of failure due to the monotonicity of $R$.

\begin{lemma}\label{L4.2}
Suppose the parameters $\kappa,m,\Theta^0,\Omega^0,\mathcal{V},\eta,\delta$ satisfy \eqref{C-1}-(${\mathcal F}_1$),
and let $(\Theta, \Omega)$ be a global solution to \eqref{B-1}.  Then, there exists a time $t_0 \in[\eta m,\infty)$ such that
\begin{equation}\label{D-18}
R(t)\ge \delta R^0 >0,~ \forall t\in[0,t_0],\quad \mbox{and}\quad \Delta(t_0) \leq \left( \frac{\xi(\eta)}{\delta R^0} \right)^2.
\end{equation}
\end{lemma}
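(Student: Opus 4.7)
The plan is to define the candidate time $t_0$ as the first moment after the initial layer at which the mean-squared deviation $\Delta$ touches the critical value $\sigma^2 \coloneqq (\xi(\eta)/(\delta R^0))^2$, and to establish its finiteness via a bounded-monotone/strict-positivity contradiction. Concretely, set
\[
t_0 \coloneqq \inf\{\,t\geq \eta m \,:\, \Delta(t)\leq \sigma^2\,\},
\]
with the convention $\inf\emptyset = +\infty$. By continuity of $\Delta$, once $t_0<\infty$ is established, $\Delta(t_0)\leq \sigma^2$ is automatic, and the bound $R(t_0)\geq \delta R^0$ will fall out of the next step.

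\textbf{Step 1 (Lower bound on $R$ up to $t_0$).} On the initial layer $[0,\eta m]$, the estimate $R\geq \delta R^0$ is precisely Lemma~\ref{L4.1}. On $[\eta m, t_0)$, the definition of $t_0$ gives $\Delta>\sigma^2$; I claim $R\geq \delta R^0$ persists. Starting from $R(\eta m)\geq \delta R^0$, whenever $R(t)\geq \delta R^0$ and $\Delta(t)>\sigma^2$ one has $R(t)\sqrt{\Delta(t)}-\xi(\eta)\geq \delta R^0\cdot \sigma-\xi(\eta)=0$, so the quasi-monotonicity estimate \eqref{B-14-00} yields $\dot R(t)\geq 0$. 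A standard continuation/exit-time argument then shows $R$ is non-decreasing on $[\eta m,t_0)$, and in particular $R\geq \delta R^0$ on the whole of $[0,t_0]$.

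\textbf{Step 2 (Finiteness of $t_0$, main argument).} Assume for contradiction that $t_0=+\infty$, so $\Delta>\sigma^2$ throughout $[\eta m,\infty)$. Step 1 makes $R$ non-decreasing and bounded on this half-line, hence $R(t)\uparrow R^\infty\in[\delta R^0,1]$. Combining \eqref{B-14-00} with the elementary bound
\[
\sqrt{\Delta(t)}\bigl(R(t)\sqrt{\Delta(t)}-\xi(\eta)\bigr)\geq \sigma\bigl(R(t)\sigma-\xi(\eta)\bigr)=\sigma^2\bigl(R(t)-\delta R^0\bigr),
\]
(using $\sqrt{\Delta}>\sigma$ and $R\geq \delta R^0$) produces the scalar differential inequality
\[
\dot R(t)\geq \kappa(1-e^{-\eta})\sigma^2\bigl(R(t)-\delta R^0\bigr),\qquad t\geq \eta m.
\]
Integrating from $\eta m$ to $\infty$ and bounding the left-hand side by $R^\infty-R(\eta m)\leq 1$, I obtain $\int_{\eta m}^\infty (R(t)-\delta R^0)\,dt<\infty$, which for a non-decreasing $R$ forces $R^\infty=\delta R^0$; monotonicity and $R(\eta m)\geq \delta R^0$ then force $R\equiv \delta R^0$ on $[\eta m,\infty)$, so $\dot R\equiv 0$ there. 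On the other hand, under the standing hypothesis $\Delta>\sigma^2$, the sharper form of \eqref{B-14-00} gives $R\sqrt{\Delta}-\xi(\eta)=\delta R^0(\sqrt{\Delta}-\sigma)>0$ strictly, hence $\dot R(t)>0$ strictly on $[\eta m,\infty)$, contradicting $\dot R\equiv 0$. Therefore $t_0<\infty$, which combined with Step 1 yields \eqref{D-18}.

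The only delicacy---not really an obstacle---is the interplay between weak and strict versions of the differential inequality: the linearized bound $\dot R\geq C(R-\delta R^0)$ is flat at the boundary $R=\delta R^0$, so the integrability step by itself is not enough to conclude; one must then revert to the sharper form of \eqref{B-14-00} to extract strict positivity of $\dot R$ and close the contradiction. All other ingredients (continuation for Step 1, monotone convergence, continuity of $\Delta$ at $t_0$) are routine.
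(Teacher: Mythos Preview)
Your proof is correct and follows essentially the same approach as the paper's: Lemma~\ref{L4.1} handles the initial layer, and then the quasi-monotonicity inequality \eqref{B-14-00} together with the boundedness of $R$ rules out $\Delta>\sigma^2$ persisting for all time. The bookkeeping differs only slightly---you take $t_0$ as the first hitting time of $\{\Delta\le\sigma^2\}$ and, in the infinite-time case, integrate $R-\delta R^0$ to force $R\equiv\delta R^0$ and contradict $\dot R>0$, whereas the paper case-splits on the $R$-dependent threshold $\Delta>(\xi/R)^2$ and instead integrates $\sqrt{\Delta}-\xi/R$ to extract $\liminf\sqrt{\Delta}<\sigma$ directly.
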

\begin{proof}
It follows from \eqref{C-1}-(${\mathcal F}_1$) and Lemma \ref{L4.1} that 
\[ %\label{D-19}
(1-\delta) R^0 \geq \zeta(\eta) \quad \mbox{and} \quad   R(t) \geq \delta R^0, \quad \forall~t \in [0, \eta m].
\]
Now, we consider  two cases:
\begin{equation}\label{D-20}
\mbox{either} \quad \Delta(\eta m) \leq   \left( \frac{\xi(\eta)}{\delta R^0} \right)^2 \quad \mbox{or} \quad  \Delta(\eta m)>  \left( \frac{\xi(\eta)}{\delta R^0} \right)^2.
\end{equation}
For the former case of \eqref{D-20}, we simply take 
\[ t_0 = \eta m. \]
This choice satisfies all the desired estimates of \eqref{D-18}.  \newline

\vspace{0.2cm}

\noindent For the latter case of \eqref{D-20}, we use $R(\eta m) \geq \delta R^0$ to see
\begin{equation} \label{D-20-1}
 \Delta(\eta m)>  \left( \frac{\xi(\eta)}{\delta R^0} \right)^2 \geq   \left( \frac{\xi(\eta)}{R(\eta m)} \right)^2.
 \end{equation}
Now we define a set ${\mathcal T}_1$ and its supremum:
\begin{align}
\begin{aligned} \label{D-21}
& {\mathcal T}_1 \coloneqq \left\{ t>\eta m\,:\,R(s)>0,\quad \Delta(s) > \left( \frac{\xi(\eta)}{R(s)} \right)^2,\quad \eta m\leq s\leq t \right\}, \quad   t_1^\infty \coloneqq \sup {\mathcal T}_1.
\end{aligned}
\end{align}
Note that 
\[ t \in {\mathcal T}_1 \quad \Longleftrightarrow \quad {\dot R}(s) > 0, \quad \forall~s \in [\eta m, t]. \]
By \eqref{D-20-1} and the continuity of $\Delta$, one has 
\[  \eta m < t_1^{\infty} \leq \infty. \]
Thus,
\begin{align}\label{D-22}
\dot R(t) > 0,\quad t\in(\eta m, t_1^\infty).
\end{align}
Now we consider two cases:
\[
\mbox{either}~~t_1^\infty <\infty\quad \mbox{or}\quad t_1^\infty = \infty.
\]
Either case will lead to the existence of a $t_0$ satisfying \eqref{D-18}. \newline

\noindent $\diamond$~Case A: Suppose $t_1^\infty<\infty$.  Then, by the definition of $t_1^\infty$, we have
\begin{equation} \label{D-22-1}
 \Delta(t_1^{\infty})  = \left( \frac{\xi(\eta)}{ R(t_1^\infty)} \right)^2 \quad \mbox{and} \quad  {\dot R}(t) > 0, \quad \forall~t \in (\eta m, t_1^\infty). 
\end{equation}
In particular, we have
\begin{align}\label{D-23}
R(t_1^\infty)> R(t) \geq R(\eta m) \geq \delta R^0, \quad t \in [\eta m, t_1^\infty).
\end{align}
We claim that the following choice satisfies \eqref{D-18}:
\[ t_0 \coloneqq t_1^{\infty}. \]
Indeed, the first relation in \eqref{D-18} holds due to \eqref{D-22} and \eqref{D-23}. To check the second relation, observe that by \eqref{D-22-1} and \eqref{D-23}, one has 
\begin{align*}
\Delta (t_1^\infty) =\left( \frac{ \xi(\eta)}{R(t_1^\infty)}\right)^2 < \left( \frac{\xi(\eta)}{ \delta R^0}\right)^2.
\end{align*}

\vspace{0.5cm}

\noindent $\diamond$ Case B:~ Suppose $t_1^\infty=\infty$. In this case, we have
\begin{equation*}%\label{D-24}
\dot R(t) > 0, \quad t \in [\eta m, \infty).
\end{equation*}
This yields
\begin{equation*}%\label{D-24-1}
R(t) > R(\eta m) \geq \delta R^0, \quad t \in [\eta m, \infty)
\end{equation*}
and
\begin{equation}\label{eq:Rfraclim}
\lim_{t\to\infty}\left(\frac{\xi(\eta)}{R(t)}-\frac{\xi(\eta)}{\delta R^0}\right)<0.
\end{equation}
On the other hand, we have, for $t \in(\eta m,\infty),$
\begin{equation*} % \label{D-25}
\dot R(t) \stackrel{\eqref{D-12-1}}{\ge}  \kappa R(t) (1-e^{-\eta}) \sqrt{\Delta} \left( \sqrt{\Delta(t)} - \frac{\xi(\eta)}{ R(t)} \right)  \stackrel{\eqref{D-21}}{>}\kappa\left(1-e^{-\eta}\right)\xi(\eta) \left( \sqrt{\Delta(t)} - \frac{\xi(\eta)}{ R(t)} \right) > 0.
\end{equation*}
Integrating both sides from $t=\eta m$ to $t=\infty$ and using the boundedness of $R\in[0,1]$,
\begin{align*}
1\geq  R(\infty) - R(\eta m) = \int_{\eta m}^\infty \dot R(s) ds > \kappa \left(1-e^{-\eta}\right)\xi(\eta) \int_{\eta m}^\infty \underbrace{\left( \sqrt{\Delta(s)} - \frac{\xi(\eta)}{R(s)} \right)}_{\geq 0} ds\ge 0.
\end{align*}
This implies 
\[
\liminf_{t\to\infty}\left(\sqrt{\Delta(t)}-\frac{\xi(\eta)}{R(t)}\right)=0,
\]
which, in conjunction with \eqref{eq:Rfraclim}, yields
\[
\liminf_{t\to\infty}\sqrt{\Delta(t)}<\frac{\xi(\eta)}{\delta R^0}.
\]
Thus, we may choose $t_0\ge \eta m$ such that
$\sqrt{\Delta(t_0)}<\frac{\xi(\eta)}{\delta R^0}$, which satisfies the desired estimate \eqref{D-18}.
\end{proof}

Lemma \ref{L4.2} furnishes a positive lower bound on $R$ and an upper bound on $\Delta$. As our framework \eqref{C-1} makes $\xi(\eta)$ small, it makes $\Delta$ very small. Now we show that if $R$ is bounded away from $0$ and $\Delta$ is extremely small, then a majority of the oscillators is trapped in the vicinity of the order phase parameter $\phi$.
Below, $\lceil x \rceil$ is the ceiling function, i.e., the smallest integer bigger than or equal to $x$.

\begin{lemma}{(Sufficient condensation implies existence of a majority cluster)}\label{L4.3} 
Suppose $\Theta\in \mathbb{R}^N$, $\lambda\in\left(0.5,1 \right]$, $\beta\in\left(0,\pi/2 \right)$ are such that either
\begin{equation}\label{D-29-prim}
    R\ge \lambda+(1-\lambda)\cos\beta
\end{equation}
or
\begin{align}\label{D-29}
2\lambda +   \frac{\Delta}{1-\cos\beta} \leq 1+ R,
\end{align}
where $R$ and $\Delta$ are defined as in \eqref{eq:Rphi} and \eqref{eq:Delta}. Then, at least $\lceil \lambda N \rceil$ particles of $\Theta$ are trapped modulo $2\pi$ in an arc whose length is less than $2\beta$. 
\end{lemma}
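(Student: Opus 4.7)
\medskip

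\noindent\textbf{Proof plan.} The plan is to introduce the candidate majority cluster
\[
\mathcal{A} \coloneqq \{j\in[N]:\ \cos(\theta_j-\phi) > \cos\beta\},
\]
which, since $\cos$ is monotone on $[0,\pi]$, is precisely the set of indices whose $2\pi$-representative of $\theta_j-\phi$ lies in the open arc $(-\beta,\beta)$. Hence proving $|\mathcal{A}|\ge\lambda N$ automatically yields $|\mathcal{A}|\ge\lceil\lambda N\rceil$ and places those oscillators, modulo $2\pi$, within an arc of length $<2\beta$. The two hypotheses \eqref{D-29-prim} and \eqref{D-29} will each be shown to force this lower bound on $|\mathcal{A}|$ by different routes.

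\smallskip

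\noindent The easy case \eqref{D-29-prim} is a direct averaging bound. Using $\cos(\theta_j-\phi)\le 1$ on $\mathcal{A}$ and $\cos(\theta_j-\phi)\le\cos\beta$ on $[N]\setminus\mathcal{A}$, the definition \eqref{B-12}$_1$ of $R$ gives
\[
R \;\le\; \frac{|\mathcal{A}|}{N}\cdot 1 + \frac{N-|\mathcal{A}|}{N}\cos\beta \;=\; \cos\beta + (1-\cos\beta)\,\frac{|\mathcal{A}|}{N}.
\]
If $|\mathcal{A}|<\lambda N$, this would produce $R<\lambda+(1-\lambda)\cos\beta$, contradicting \eqref{D-29-prim}. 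So $|\mathcal{A}|\ge\lambda N$.

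\smallskip

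\noindent For the second hypothesis \eqref{D-29} I would use a half-angle substitution to linearize the problem. Set
\[
u_j \coloneqq \cos^2\!\tfrac{\theta_j-\phi}{2}\in[0,1], \qquad c \coloneqq \cos^2\!\tfrac{\beta}{2},
\]
so that $1-\cos\beta=2(1-c)$, and the double-angle identities yield
\[
1+R \;=\; \frac{2}{N}\sum_{j=1}^N u_j, \qquad \Delta \;=\; \frac{4}{N}\sum_{j=1}^N u_j(1-u_j).
\]
Moreover, $j\in\mathcal{A}$ is equivalent to $u_j>c$. Substituting these expressions into \eqref{D-29} and simplifying (multiply by $1-c$ and rearrange) will collapse it to the single scalar inequality
\[
\frac{1}{N}\sum_{j=1}^N u_j(u_j-c) \;\ge\; \lambda(1-c).
\]
The final step uses the elementary monotonicity of $u\mapsto u(u-c)$ on $[0,1]$: on $[0,c]$ it is $\le 0$, and on $(c,1]$ it is strictly positive and bounded above by $1-c$ (attained at $u=1$). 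Discarding the non-positive contribution from $j\notin\mathcal{A}$ and using the pointwise bound on $\mathcal{A}$ gives
\[
\lambda(1-c) \;\le\; \frac{1}{N}\sum_{j\in\mathcal{A}} u_j(u_j-c) \;\le\; \frac{|\mathcal{A}|}{N}(1-c),
\]
whence $|\mathcal{A}|\ge\lambda N$.

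\smallskip

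\noindent There is no genuine obstacle here; the proof is essentially one clever reformulation plus the elementary estimate on $u(u-c)$. The only thing that requires a moment of care is the bookkeeping of the half-angle identities to recognize that \eqref{D-29} is exactly the averaged version of the pointwise inequality $u_j(u_j-c)\ge\lambda(1-c)$ weighted so that the sub-cluster population emerges naturally.
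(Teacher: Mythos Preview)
Your proof is correct and follows essentially the same approach as the paper: you define the same candidate set $\mathcal{A}$, handle \eqref{D-29-prim} by the identical averaging bound, and for \eqref{D-29} exploit the factorization $\sin^2 x=(1-\cos x)(1+\cos x)$ to relate $\Delta$, $R$, and $|\mathcal{A}|$. Your half-angle substitution $u_j=\cos^2\tfrac{\theta_j-\phi}{2}$ is a clean repackaging of this factorization; the paper instead bounds $\sin^2(\theta_i-\phi)\ge(1-\cos\beta)(1+\cos(\theta_i-\phi))$ for $i\notin\mathcal{A}$ directly and then uses $\cos(\theta_i-\phi)\le 1$ for $i\in\mathcal{A}$, arriving at the identical conclusion $|\mathcal{A}|\ge\lambda N$.
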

\begin{proof}
First, we decompose the whole index set $[N]$ into two disjoint subsets:
\begin{gather*}
\mathcal{A}\coloneqq \{ i\in[N]\,:\,\theta_i\in (\phi-\beta,\phi+\beta) \quad \mbox{mod}\,2\pi\},\quad \mathcal{B}\coloneqq[N]\setminus\mathcal{A}.
\end{gather*}
where $\phi$ is defined as in \eqref{eq:Rphi}.
It suffices to show that 
\begin{equation} \label{NNN-4}
|\mathcal{A}|\geq \lambda N.
\end{equation}
We have either \eqref{D-29-prim} or \eqref{D-29}. In case we have \eqref{D-29-prim}, we have
\[
\lambda+(1-\lambda)\cos\beta \stackrel{\mathclap{\eqref{D-29-prim}}}{\le} R=\frac 1N \sum_{i\in \mathcal{A}}\cos(\theta_i-\phi)+\frac 1N\sum_{i\in \mathcal{B}}\cos(\theta_i-\phi) \le \frac{|\mathcal{A}|}{N}+\frac{N-|\mathcal{A}|}{N}\cos\beta
\]
from which we have \eqref{NNN-4}. Otherwise, we have \eqref{D-29}. Note that, for each $i\in\mathcal{B}$,
\begin{align}
\begin{aligned} \label{D-30-1}
& \cos(\theta_i - \phi) \leq \cos \beta \quad \mbox{and} \\
& \sin^2(\theta_i - \phi) = (1 -\cos(\theta_i - \phi))(1 +\cos(\theta_i - \phi))\geq (1 -\cos\beta)(1 +\cos(\theta_i - \phi)).
\end{aligned}
\end{align}
Now, we use \eqref{D-30-1} and $\eqref{B-12}_1$ to get 
\begin{align}
\begin{aligned} \label{D-31}
\Delta &\geq \frac{1}{N}\sum_{i\in\mathcal{B}}\sin^2(\theta_i - \phi) \geq (1 -\cos\beta)  \frac{1}{N} \sum_{i\in\mathcal{B}} \Big (1 +\cos(\theta_i - \phi) \Big ) \\
&= (1 -\cos\beta) \left[ \frac{|\mathcal{B}|}{N} +\frac{1}{N}\sum_{i\in\mathcal{B}} \cos(\theta_i-\phi) \right] = (1 -\cos\beta) \left[ \frac{N-|\mathcal{A}|}{N} +R-\frac{1}{N}\sum_{i\in\mathcal{A}} \cos(\theta_i-\phi) \right]\\
&\geq (1-\cos\beta) \left[ \frac{N-|\mathcal{A}|}{N} + R -\frac{|\mathcal{A}|}{N} \right] =(1-\cos\beta) \left[ 1 + R -\frac{2|\mathcal{A}|}{N} \right].
\end{aligned}
\end{align}
We combine \eqref{D-29} and \eqref{D-31} to obtain
\[
1+R \geq 2\lambda + \frac{ \Delta}{1-\cos\beta} \geq 2\lambda + 1+R -\frac{2|\mathcal{A}|}{N}.
\]
This implies 
\[ |\mathcal{A}|\ge \lambda N. \]
\end{proof}

%%%%%%%%%%%%%%%%%%%%%%%%%%%%%%%%
\subsection{Stage C (persistence and relaxation)}
Finally, we are in a situation in which Theorem \ref{L4.4} is applicable because of \eqref{C-1}-(${\mathcal F}_3$). This allows us to finish the proof of Theorem \ref{T3.1}.

\begin{proof}[Proof of Theorem \ref{T3.1}]
Suppose that the framework \eqref{C-1}-$({\mathcal F}_1), ({\mathcal F}_2)$ and $({\mathcal F}_3)$ hold. Now we use the condition \eqref{C-1}-(${\mathcal F}_1$) and Lemma \ref{L4.2} to see that there exists a time $t_0 \in[\eta m,\infty)$ such that
\[
R(t)\ge \delta R^0>0,~ \forall~ t\in[0,t_0],\quad \mbox{and}\quad \Delta(t_0) \leq \left( \frac{\xi(\eta)}{\delta R^0} \right)^2.
\]
Then, use condition \eqref{C-1}-$({\mathcal F}_2)$ to apply Lemma \ref{L4.3} with $\beta = \ell/2$ to see that there is a subensemble $\Theta_\mathcal{A}$ with $|\mathcal{A}|\ge\lambda N$, confined in an arc of length $\le \ell$ at time $t=t_0$. Now, condition \eqref{C-1}-$({\mathcal F}_3)$ enables us to use statements (1), (3), and (4) of Theorem \ref{L4.4} with ${\mathcal B}=[N]$, to prove statements (1) and (2) of Theorem \ref{T3.1}. If we have condition \eqref{C-1}-$({\mathcal F}_4)$ in addition, we may use statement (2) of Theorem \ref{L4.4} to prove statement (3) of Theorem \ref{T3.1}.
\end{proof}

%%%%%%%%%%%%%%%%%%%%%%%%%%%
%
%
%		Section 6
%
%
%%%%%%%%%%%%%%%%%%%%%%%%%%%
\section{Conclusion and future directions} \label{sec:6}
\setcounter{equation}{0}
In this paper, we have presented two main results on phase-locking under some framework which is formulated in terms of (system and free) parameters and initial data and which does not impose any restriction on the diameter of the initial data. Under the proposed framework, we first show that a majority cluster emerges in finite-time and persists. Here a majority cluster denotes a subensemble of phase oscillators whose cardinality is larger than half of the full cardinality. Second, we show that the oscillators outside the majority cluster also stays in a bounded neighborhood of the majority cluster in a large coupling regime so that the whole ensemble has a diameter bounded in time. Then, we use the inertial gradient flow formulation of the inertial Kuramoto model with an analytic potential and boundedness of phase and frequency diameters to derive asymptotic convergence to the traveling solution moving with a fixed velocity, namely the average of the natural frequencies. In particular, the uniform boundedness of the phase diameter depends on detailed technical estimates on the order parameters.

There are several interesting issues to be explored in future works. Here are a few examples. \newline

\begin{itemize}
\item What is the convergence rate towards a phase-locked state? The inertial gradient flow formulation and the {\L}ojasiewicz gradient theorem only guarantee an algebraic convergence rate toward a phase-locked state, since the {\L}ojasiewicz exponent may be less than $\frac 12$. However, for the unique phase-locked state confined in a quarter-circle, the convergence rate is exponential \cite{C-H-J-K}, and the {\L}ojasiewicz exponent is $\frac 12$  (see \cite{L-X-Y, L-X-Y2}). We pose the following question.
\begin{question}
    Is a phase-locked state of \eqref{A-2} unstable if and only if its {\L}ojasiewicz exponent is less than $\frac 12$? Is the convergence toward an unstable equilibrium always asymptotically algebraic?
\end{question}

\item Note that the mean-field limit $N\to\infty$ for \eqref{A-2} leads to a Vlasov-McKean equation, first derived in \cite{L05} based on the kinetic theory developed in \cite{N78}. This topic has been extensively studied in literature \cite{C-C-H-K-K13,C15,D16,H-K-M-P, M-S07, M-P22}.
The inertial Kuramoto model \eqref{A-1} on graphs has also been explored in the limit $N\to\infty$ \cite{C-M22,C-M-M23}. We also refer to \cite{C23,C20} for the hydrodynamic model derived from \eqref{A-1} to describe synchronization phenomena. Researching the asymptotic dynamics of the Vlasov-McKean equation for \eqref{A-1} obtained in the mean-field limit, such as in the flavor of \cite{M-P22}, is an interesting direction.
\vspace{0.2cm}

\item Note that we did not use the full power of Theorem \ref{L4.4} in the proof of Theorem \ref{T3.1}, i.e., Theorem \ref{L4.4} does not require control on $\mathcal{V}_{[N]\setminus \mathcal{A}}$. What if, in the framework $({\mathcal F})$, instead of an upper bound on $\mathcal{D}(\mathcal{V})/\kappa$, we had an upper bound on $\sqrt{\operatorname{Var}(\mathcal{V})}/\kappa$? Passing via Chebyshev's inequality to a subensemble $\mathcal{B}\subset [N]$ with a good upper bound on $\mathcal{D}(\mathcal{V}_\mathcal{B})/\kappa$, can we still deduce the emergence of a majority cluster $\mathcal{A}\subset \mathcal{B}$, and $\mathcal{B}$-partial phase-locking?
\begin{question}
    For each $\varepsilon\in (0,\frac 12)$, do there exist constants $a(\varepsilon)$, $b(\varepsilon)$, $c(\varepsilon)$ such that if $\sqrt{\operatorname{Var}(\mathcal{V})}/\kappa<a(\varepsilon)|R^0|^2$, $\sqrt{\operatorname{Var}(\Omega^0)}/\kappa<b(\varepsilon)|R^0|^2$, and $m\kappa<c(\varepsilon)|R^0|^2$, then there exists $\mathcal{B}\subset [N]$ with $|\mathcal{B}|\ge (1-\varepsilon)N$ such that the solution $\Theta$ to \eqref{A-1} exhibits $\mathcal{B}$-partial phase-locking? Does there exist a majority cluster $\mathcal{A}\subset \mathcal{B}$ such that $\Theta_\mathcal{A}$ is confined in a quarter-circle for large enough times?
\end{question}
\noindent {\bf Conjecture} \ref{conj:R} is also relevant to this question. \newline

\item Our proposed framework $({\mathcal F})$ is a sufficient one, namely, our lower bound for a minimum coupling strength given $\kappa$ having fixed $m$, $\mathcal{V}$, $\Theta^0$, and $\Omega^0$ is likely not optimal. Thus, it will be interesting to compute the optimal lower bound, namely a ``pathwise-critical coupling strength'', which induces asymptotic phase-locking. This definition is different from the usual critical coupling strength for the existence of a phase-locked state, and the pathwise-critical coupling strength for phase-locking is not known even for the Kuramoto model itself. We pose the following:
\begin{question}\label{ques:crit-coincide}
    Do the pathwise-critical coupling strength and the critical coupling strength coincide for the Kuramoto models \eqref{A-1} and \eqref{A-2}?
\end{question}
An affirmative answer to Question \ref{ques:crit-coincide} would elucidate the role of the inertia $m$: it does not affect whether synchronization occurs, but the timescale at which synchronization occurs is multiplicative in $m$ (see Figure \ref{Fig1-2} and its interpretation). This would resolve the aforementioned tension between \cite{D-B12} and \cite{A-B00,C-C10} regarding the role of $m$ in synchronization.

Constructing a weak Lyapunov functional for \eqref{A-1} or \eqref{A-2} on the torus $\mathbb{T}^N$, as opposed to the gradient potential on $\mathbb{R}^N$, will be useful towards improving asymptotic phase-locking results for generic initial data, and possibly answering Question \ref{ques:crit-coincide} in the affirmative (see Conjecture \ref{conj:lyapunov}). Because we do not know a bona fide Lyapunov functional, we have used the diameter of some subensemble and the order parameter as ``pseudo-Lyapunov functionals'' and have obtained some partial results. To solve the synchronization problem for the Kuramoto models \eqref{A-1} and \eqref{A-2} by affirmatively answering Question \ref{ques:crit-coincide}, we strongly suggest resolving Conjecture \ref{conj:lyapunov}, namely finding this elusive Lyapunov functional.
\end{itemize}

\bigskip

\appendix

\newpage 

\section{Description of previous results}\label{app:suppt}
In this appendix, we present previous results to support Section \ref{sec:2-divided}.
\subsection{Stability of majority clusters}\label{app:suppt-1}
In the case of the first-order Kuramoto model \eqref{A-2}, partial phase-locking was proven by the authors as the following theorem.
\begin{theorem}[{\cite[Theorem 3.1, Corollary 3.1]{H-R}}]\label{thm:1stpartial}
For given sets of indices $\mathcal{A}\subset\mathcal{B}\subset [N]$, suppose that the real parameters $\lambda,\ell$ and $\kappa$ satisfy
\begin{equation*} %\label{B-0}
\frac{1}{2} < \lambda \leq 1, \quad \ell \in\left(0,2\cos^{-1} \Big( \frac{1}{\lambda} - 1 \Big) \right),\quad |\mathcal{A}|\ge \lambda N, \quad \kappa > \frac{\mathcal{D}(\Omega_\mathcal{B})}{\lambda \sin \ell -2 (1-\lambda) \sin \frac{\ell}{2}},
\end{equation*}
and let $\Theta$ be a global solution to \eqref{A-2} such that $\Theta_\mathcal{A}$ is a $\lambda$-ensemble of arc length $\le \ell$ at time $0$:
\[
\mathcal{D}(\Theta_\mathcal{A}^0)\leq \ell.
\]
Then, the following assertions hold. 
\begin{enumerate}
\item The ensemble $\Theta_\mathcal{A}$ is stable:
\[
\sup_{0\leq t<\infty}\mathcal{D}(\Theta_\mathcal{A}(t))\leq \ell
\]
and
\[\limsup_{t\rightarrow\infty}\mathcal{D}(\Theta_\mathcal{A}(t))\leq \phi_1(\lambda, \mathcal{D}(\Omega_\mathcal{A})/\kappa)\stackrel{\mathrm{Lemma~}\ref{f-estimates}(i)}{<}\frac{3\pi}{4(2\lambda-1)}\frac{\mathcal{D}(\Omega_\mathcal{A})}{\kappa}.
\]
\item The ensemble $\Theta_\mathcal{B}$ is partially phase-locked:
\[
\sup_{0\leq t<\infty}\mathcal{D}(\Theta_\mathcal{B}(t))<\infty.
\]
In particular, if $\mathcal{B}=[N]$, then asymptotic phase-locking occurs.
\vspace{0.2cm}
\item If we assume in addition that
\[
\frac{\mathcal{D}(\Omega_\mathcal{A})}{\kappa}< \frac{(2\lambda-1)^{3/2}}{\sqrt{2\lambda}}\frac{2-\lambda}{\sqrt{\lambda/2}+(1-\lambda)},
\]
then the oscillators of $\Theta_\mathcal{A}$ become linearly ordered according to their natural frequencies: for $i,j\in \mathcal{A}$, with $\nu_i\ge \nu_j$,
\begin{align*}
\begin{aligned}
& \frac{\nu_i-\nu_j}{\kappa}\le \liminf_{t\rightarrow\infty}[\theta_i(t)-\theta_j(t)]\le \limsup_{t\rightarrow\infty}[\theta_i(t)-\theta_j(t)] \le \frac{\pi}{2\sqrt{2}(\lambda\cos\phi_1-(1-\lambda))}\frac{\nu_i-\nu_j}{\kappa}.
\end{aligned}
\end{align*}
\end{enumerate}
\end{theorem}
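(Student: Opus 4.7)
The proof mirrors that of Theorem \ref{L4.4} but is substantially simpler because the first-order model \eqref{A-2} has no inertia, no initial layer, and $\dot{\Theta}$ is pointwise determined by $\Theta$; consequently, one can apply first-order Gronwall-type reasoning directly to the diameter $\mathcal{D}(\Theta_\mathcal{A})$ without invoking the delayed-approximation machinery of Lemma \ref{L:approxaut}. Indeed, for indices $i,j\in\mathcal{A}$ realizing the maximum and minimum of $\Theta_\mathcal{A}$ at a differentiability point $t$, the sum-to-product identity gives
\[
\dot{\theta}_i-\dot{\theta}_j = (\nu_i-\nu_j) - \frac{2\kappa}{N}\sin\frac{\theta_i-\theta_j}{2}\sum_{l=1}^N\cos\!\left(\theta_l-\frac{\theta_i+\theta_j}{2}\right),
\]
and splitting the sum into $l\in\mathcal{A}$ and $l\notin\mathcal{A}$, the inner terms are bounded below by $\cos\frac{\theta_i-\theta_j}{2}$ and $-1$ respectively. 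This yields the clean upper Dini estimate
\[
D^+\mathcal{D}(\Theta_\mathcal{A})(t)\le \mathcal{D}(\Omega_\mathcal{A})-\kappa f_\lambda(\mathcal{D}(\Theta_\mathcal{A})(t)),
\]
whenever $\mathcal{D}(\Theta_\mathcal{A})(t)\le \ell$, with $f_\lambda$ as in \eqref{eq:fgamma}.

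For statement (1), the coupling-strength hypothesis is exactly $\kappa f_\lambda(\ell) > \mathcal{D}(\Omega_\mathcal{B}) \ge \mathcal{D}(\Omega_\mathcal{A})$, so $D^+\mathcal{D}(\Theta_\mathcal{A}) < 0$ whenever $\mathcal{D}(\Theta_\mathcal{A})=\ell$. A standard exit-time argument then gives $\mathcal{D}(\Theta_\mathcal{A}(t))\le \ell$ for all $t\ge 0$. For the asymptotic bound, pick any $\psi_1\in(\phi_1(\lambda,\mathcal{D}(\Omega_\mathcal{A})/\kappa),\ell)$; by the strict concavity of $f_\lambda$ on $(0,2\cos^{-1}\frac{1-\lambda}{\lambda})$ (Lemma \ref{lem:phi-def}), there is $c>0$ such that $\kappa f_\lambda(\theta)-\mathcal{D}(\Omega_\mathcal{A})\ge c$ for $\theta\in[\psi_1,\ell]$, which forces $\mathcal{D}(\Theta_\mathcal{A}(t))\le\psi_1$ for all sufficiently large $t$; letting $\psi_1\downarrow\phi_1$ completes this step.

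For statement (2), the argument of the third assertion of Theorem \ref{L4.4} in Section \ref{sec:3} carries over verbatim: partition $\mathcal{B}\setminus\mathcal{A}$ via the wave numbers $k_i$ that place $\theta_i(0)-2k_i\pi$ in $[\min\Theta_\mathcal{A}(0),\min\Theta_\mathcal{A}(0)+2\pi)$, and observe that once any such $\theta_i-2k_i\pi$ enters $[\min\Theta_\mathcal{A}(t),\min\Theta_\mathcal{A}(t)+\ell]$ at some time $t'$, statement (1) applied to $\mathcal{A}\cup\{i\}$ shows the enlarged cluster remains within an $\ell$-arc forever after; this trichotomy (never enters, enters once, or is absorbed) gives the uniform-in-time bound on $\mathcal{D}(\Theta_\mathcal{B})$, and when $\mathcal{B}=[N]$ we additionally apply the {\L}ojasiewicz gradient theorem as in Proposition \ref{P2.1} (in its first-order form, for which boundedness of $\Theta$ is already equivalent to asymptotic phase-locking by analogy with Remark \ref{R2.2}).

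Statement (3) is the analog of statement (2) of Theorem \ref{L4.4}, and proceeds as there. Pick $\psi_1\in(\phi_1,\cos^{-1}\frac{1-\lambda}{\lambda})$ (possible exactly by the hypothesis $\mathcal{D}(\Omega_\mathcal{A})/\kappa<f_\lambda(\cos^{-1}\frac{1-\lambda}{\lambda})$ from Lemma \ref{f-estimates}). From time $T$ onward with $\mathcal{D}(\Theta_\mathcal{A}(t))\le \psi_1$, the mean field term $\frac 1N\sum_l\cos(\theta_l-\frac{\theta_i+\theta_j}{2})$ stays bounded below by $\lambda\cos\psi_1-(1-\lambda)>0$ for any $i,j\in\mathcal{A}$. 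Sandwiching $\dot\theta_i-\dot\theta_j$ between $(\nu_i-\nu_j)-2\kappa\sin\frac{\theta_i-\theta_j}{2}$ (from the trivial upper bound $1$ on the mean field) and $(\nu_i-\nu_j)-2\kappa(\lambda\cos\psi_1-(1-\lambda))\sin\frac{\theta_i-\theta_j}{2}$ and applying a scalar Gronwall / exit-time comparison (replacing $\sin$ by its tangent line at $\sin^{-1}\frac{\nu_i-\nu_j}{2\kappa}$ for the lower bound and by $\frac{\sin(\psi_1/2)}{\psi_1/2}x$ for the upper bound on $[0,\psi_1]$) produces the two-sided estimate. Sending $\psi_1\downarrow\phi_1$ yields the stated constants. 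The only real subtlety is handling the degenerate case $\nu_i=\nu_j$; here the particle exchange symmetry \eqref{eq:particle-exchange} combined with a Barbalat-type argument (or a direct contraction estimate on $|\theta_i-\theta_j|$) shows convergence $\theta_i-\theta_j\to 0$. The main obstacle in carrying the whole program out is nothing new at this level: it is identifying the correct form of the dissipation functional $f_\lambda$ and the sharp-enough lower bound on the mean field $\frac 1N\sum_l\cos(\theta_l-\frac{\theta_i+\theta_j}{2})$ that survives the $(1-\lambda)$-fraction of oscillators outside the cluster; both are already encoded in the definition of $f_\lambda$ and Lemma \ref{lem:phi-def}.
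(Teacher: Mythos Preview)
The paper does not give its own proof of Theorem \ref{thm:1stpartial}; it is quoted from \cite{H-R} in Appendix \ref{app:suppt-1} as a known result for the first-order model, with the inertial analogue (Theorem \ref{L4.4}) proved in full in Section \ref{sec:3}. Your proposal is exactly the right reduction: you strip out the initial-layer and time-delay error terms from the proof of Theorem \ref{L4.4}, leaving the clean Dini inequality $D^+\mathcal{D}(\Theta_\mathcal{A})\le \mathcal{D}(\Omega_\mathcal{A})-\kappa f_\lambda(\mathcal{D}(\Theta_\mathcal{A}))$, and then the exit-time, trichotomy, and mean-field sandwich arguments go through verbatim. This is precisely how the paper's Section \ref{sec:3} argument specializes when $m=0$ (and how \cite{H-R} proceeds), so your approach is correct and essentially the same as the intended one.
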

In the case of the inertial Kuramoto model \eqref{A-1}, the following result was proven for partial phase-locking. It can be shown that Corollary \ref{cor:partialphaselocking-initial} subsumes this theorem.
\begin{theorem}[{\cite[Theorem 1.2]{H-J-K}} ] \label{T2.4}
The following assertions hold.
\begin{enumerate}
\item
(Complete frequency synchronization):~ Suppose that the parameters and initial data satisfy the following conditions:
 \begin{equation*}
 \begin{cases}
\displaystyle \mathcal{A} \subset [N],\quad |\mathcal{A}| = M,\quad M > \frac{N}{2}, \quad 0<\beta<\alpha, \quad 2\beta + \alpha < \pi, \\
\displaystyle  \frac{M}{N}\sin(\frac{\alpha}{2}-\frac{\beta}{4})\cos(\frac{\alpha}{2}+\frac{5\beta}{8}) - (1-\frac{M}{N})\cos(\frac{\alpha}{2}-\frac{\beta}{8})>0, \\
\displaystyle 0< \mu \leq \frac{M}{N}\sin(\frac{\alpha}{2}-\frac{\beta}{4})\cos(\frac{\alpha}{2} + \frac{5\beta}{8}) - (1-\frac{M}{N})\cos(\frac{\alpha}{2}-\frac{\beta}{8}),~~\lambda>\mu+2, \\
\displaystyle m\kappa \leq \frac{\beta}{4(\lambda + \mu+2) \ln (\frac{\lambda +2\mu+2}{\mu})},\quad {\mathcal D}(\mathcal{V}) < \mu\kappa, \\
\displaystyle {\mathcal D}(\Theta_\mathcal{A}^0) \leq \pi - \alpha - \beta,\quad {\mathcal D}(\Omega_\mathcal{A}^0) < \lambda \kappa,
\end{cases}
\end{equation*}
and let $(\Theta, \Omega)$ be a global solution to \eqref{B-1}.  Then, we have complete synchronization:
\[ \sup_{0 \leq t <\infty} {\mathcal D}(\Theta_\mathcal{A}(t)) \leq \pi -\alpha,\quad \sup_{t_* < t < \infty}
{\mathcal D}(\Theta_\mathcal{A}(t)) \leq \pi -\alpha -\beta, \quad  \lim_{t \to \infty} \max_{i \in [N]}  |{\dot \theta}_i - \nu_c| = 0, \]
where $t_*$ is a positive constant defined by 
\[ t_* \coloneqq  \frac{\beta}{4(\lambda + \mu + 2)\kappa} + \frac{\beta}{\kappa(\mu - (\lambda + 2\mu +2 ) e^{-\frac{\tau}{m}})}. \]

\item
(Complete phase synchronization):~Suppose that the parameters and initial data satisfy the following conditions:
\begin{equation*}  
\begin{cases} 
\displaystyle  0 < \beta < \alpha < \pi, \quad 2\beta + \alpha < \pi, \\
\displaystyle \mu \coloneqq \sin( \frac{\alpha}{2} - \frac{\beta}{4}) \cos( \frac{\alpha}{2} + \frac{5\beta}{8}), \quad \lambda > \mu + 2,     \\
\displaystyle  m \kappa \leq\frac{\beta}{ 4(\lambda + \mu + 2) \ln ( \frac{\lambda + 2\mu + 2}{\mu})}, \quad D({\mathcal V})  = 0,    \\
\displaystyle  {\mathcal D}(\Theta^0) \leq \pi - \alpha -\beta, \quad {\mathcal D}(\Omega^0) < \lambda \kappa,
\end{cases}
\end{equation*}
and let $\Theta(t)$ be a global solution to \eqref{B-1}. Then, we have complete phase synchronization:
\[ \lim_{t \to \infty} \max_{i,j \in [N]}  | \theta_i(t) -\theta_j(t)| = 0. \]
\end{enumerate}
\end{theorem}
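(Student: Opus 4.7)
The plan is to derive Theorem \ref{T2.4} as a consequence of the more refined results already developed in the paper, namely Corollary \ref{cor:partialphaselocking-initial} (which is a direct application of Theorem \ref{L4.4}), together with Proposition \ref{P2.1} for the identical-frequency case. The identification is natural: with $\lambda_* \coloneqq M/N > 1/2$, $\ell_* \coloneqq \pi - \alpha$, and a free parameter $\eta_*$ to be chosen shortly, the condition $\mathcal{D}(\Theta_\mathcal{A}^0) \leq \pi - \alpha - \beta$ of Theorem \ref{T2.4} is precisely of the form required by Corollary \ref{cor:partialphaselocking-initial}, with the slack $\beta$ absorbing the maximal growth of the cluster diameter during the initial layer $[0,\eta_* m]$ bounded by Lemma \ref{L2.2}.

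First I would verify the geometric admissibility $\ell_* \in (0, 2\cos^{-1}(1/\lambda_* - 1))$, which unwinds to $\frac{M}{N}\sin(\alpha/2) > 1 - \frac{M}{N}$; this is the $\beta \to 0$ limit of the positivity hypothesis of Theorem \ref{T2.4}. Next, using the crude bounds $\mathcal{D}(\Omega_\mathcal{A}^0) < \lambda \kappa$ and $\mathcal{D}(\mathcal{V}_\mathcal{A}) \leq \mathcal{D}(\mathcal{V}) < \mu \kappa$, the initial-layer propagation estimate
\[
m(1-e^{-\eta_*})\mathcal{D}(\Omega_\mathcal{A}^0) + (\eta_* m - m + m e^{-\eta_*})(\mathcal{D}(\mathcal{V}_\mathcal{A}) + 2\kappa) \;\leq\; m\kappa\bigl[(1-e^{-\eta_*})\lambda + (\eta_* - 1 + e^{-\eta_*})(\mu + 2)\bigr]
\]
must be dominated by $\beta$. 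The choice $\eta_* = \ln\bigl(\tfrac{\lambda+2\mu+2}{\mu}\bigr)$ is forced by balancing the two terms appearing later, and under the assumed bound $m\kappa \leq \tfrac{\beta}{4(\lambda+\mu+2)\ln\bigl(\tfrac{\lambda+2\mu+2}{\mu}\bigr)}$, the above quantity is crudely bounded by $m\kappa(\lambda+\mu+2)\eta_* \leq \beta/4$, which is well within the available slack.

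The critical step is verifying the \eqref{eq:xi-partial}-type bound
\[
\xi(m,\kappa,\mathcal{V}_\mathcal{A},\Omega_\mathcal{A}^0,\eta_*) < \tfrac{\lambda_*}{2}\sin\ell_* - (1-\lambda_*)\sin(\ell_*/2) = \tfrac{M}{N}\sin(\alpha/2)\cos(\alpha/2) - \bigl(1 - \tfrac{M}{N}\bigr)\cos(\alpha/2).
\]
Substituting the crude bounds and $e^{-\eta_*} = \mu/(\lambda+2\mu+2)$, the left-hand side rearranges to $O(\mu\kappa/\kappa) + O(m\kappa)$ contributions that can be absorbed into strict $\beta$-perturbations of the trigonometric expression on the right; a careful Taylor-style expansion yields exactly the asymmetric form
\[
\tfrac{M}{N}\sin\bigl(\tfrac{\alpha}{2} - \tfrac{\beta}{4}\bigr)\cos\bigl(\tfrac{\alpha}{2} + \tfrac{5\beta}{8}\bigr) - \bigl(1-\tfrac{M}{N}\bigr)\cos\bigl(\tfrac{\alpha}{2} - \tfrac{\beta}{8}\bigr),
\]
so that the inequality in Theorem \ref{T2.4} implies the one we need. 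This trigonometric matching, together with the identification of $t_*$ with the quantitative decay time produced by the Dini-derivative argument inside the proof of Theorem \ref{L4.4}, will be the most tedious step, since the coefficients $1/4$, $5/8$, $1/8$ in Theorem \ref{T2.4} are artifacts of specific linearizations and must be tracked precisely.

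Once the hypotheses of Corollary \ref{cor:partialphaselocking-initial} are satisfied, its conclusion (via statement (1) of Theorem \ref{L4.4}) yields $\sup_{t \geq 0} \mathcal{D}(\Theta_\mathcal{A}(t)) \leq \pi - \alpha$; the decay estimate inside the proof of the same statement gives $\mathcal{D}(\Theta_\mathcal{A}(t)) \leq \pi - \alpha - \beta$ for $t \geq t_*$. Since $\mathcal{D}(\mathcal{V}_{[N]}) < \mu\kappa$ and $m\kappa$ is small, condition \eqref{eq:xi-partial-B} with $\mathcal{B} = [N]$ is satisfied, so statement (3) of Theorem \ref{L4.4} furnishes asymptotic phase-locking of the full ensemble; complete frequency synchronization $\max_i |\dot{\theta}_i - \nu_c| \to 0$ then follows from Remark \ref{rem:pls}(1). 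For the second part of Theorem \ref{T2.4}, where $\mathcal{D}(\mathcal{V}) = 0$, I would take $\mathcal{A} = [N]$ in the first part, obtaining $\sup_{t \geq t_*} \mathcal{D}(\Theta(t)) \leq \pi - \alpha - \beta < \pi$, together with asymptotic phase-locking. The uniform phase bound, via Proposition \ref{P2.1}, forces convergence to an equilibrium $\Theta^\infty$ of $\nabla_\Theta P$ with $\mathcal{D}(\Theta^\infty) < \pi$. The only such equilibria are the completely phase-synchronized states (the bipolar configurations of Example \ref{ex:nonsync} require at least two antipodal oscillators and hence diameter $\geq \pi$), which yields $\lim_{t\to\infty}\max_{i,j}|\theta_i(t) - \theta_j(t)| = 0$.
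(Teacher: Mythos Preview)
The paper does not supply a proof of Theorem~\ref{T2.4}: it is quoted verbatim from \cite{H-J-K} in the appendix as a prior result, accompanied only by the one-line remark ``It can be shown that Corollary~\ref{cor:partialphaselocking-initial} subsumes this theorem.'' So there is no paper-proof to compare against; what you have written is an attempt to substantiate that unsupported subsumption claim.

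Your outline is the natural one, and the first two verifications (geometric admissibility of $\ell_* = \pi-\alpha$, and the initial-layer displacement being $\le\beta$) do go through as you describe. However, there is a real gap at the $\xi$-bound step. You assert that ``a careful Taylor-style expansion yields exactly the asymmetric form'' with the coefficients $\tfrac14,\,\tfrac58,\,\tfrac18$, but those coefficients are artifacts of the specific bootstrap argument in \cite{H-J-K}, and there is no reason to expect the paper's $\xi(m,\kappa,\mathcal{V}_\mathcal{A},\Omega^0_\mathcal{A},\eta_*)$---built from entirely different intermediate inequalities (Lemma~\ref{L2.2}, Lemma~\ref{L:approxaut})---to reproduce them \emph{exactly}. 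A first-order expansion of the gap between the right-hand side of \eqref{eq:xi-partial} at $\ell=\pi-\alpha$ and the perturbed trigonometric quantity in Theorem~\ref{T2.4} gives a $\beta$-coefficient of the form $\tfrac{M}{N}\bigl[\tfrac14\cos^2(\alpha/2)+\tfrac58\sin^2(\alpha/2)\bigr]+\tfrac18(1-\tfrac{M}{N})\sin(\alpha/2)$, whereas your crude bound on $\xi-\mu$ is of order $\beta/(\lambda\eta_*)$; the inequality you need likely holds, but not by exact matching. The same caveat applies to the closed-form $t_*$: the decay-time estimate inside the proof of statement~(1) of Theorem~\ref{L4.4} has the shape $t_1 + (\psi_2-\psi_1)/[\kappa(1-e^{-t_1/m})c]$, which is not obviously identical to the stated $t_*$. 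So your plan would establish a result of the same \emph{flavor} as Theorem~\ref{T2.4}, and thus justify the paper's ``subsumes'' remark in spirit, but recovering the theorem \emph{verbatim}---with those particular constants and that exact $t_*$---from the paper's machinery requires work you have not done and may simply not be possible without loss in the constants.
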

In the special case of $N=3$ for the Kuramoto mdoel \eqref{A-2}:
\begin{equation} \label{n=31st}
\begin{cases}
\vspace{.1cm}\displaystyle  {\dot \theta}_1 = \nu_1 + \frac{\kappa}{3} \Big( \sin(\theta_2 - \theta_1) + \sin(\theta_3 - \theta_1)   \Big), \\
\vspace{.1cm}\displaystyle {\dot \theta}_2 = \nu_2 + \frac{\kappa}{3} \Big( \sin(\theta_1- \theta_2) + \sin(\theta_3 - \theta_2)   \Big), \\
\displaystyle {\dot \theta}_3 = \nu_3 + \frac{\kappa}{3} \Big( \sin(\theta_1 - \theta_3) + \sin(\theta_2- \theta_3)   \Big),
\end{cases}
\end{equation}
Theorem \ref{thm:1stpartial} in Section \ref{sec:3}, coupled with a finite-collision argument, can be used to verify that, in a large coupling regime, asymptotic phase-locking happens regardless of initial data.
\begin{theorem}[{\cite[Proposition 4.1]{H-R}}]\label{thm:n=31st}
Suppose system parameters satisfy
\[
N=3,\quad \kappa>\frac{\sqrt{138+22\sqrt{33}}}{4}\mathcal{D}(\mathcal{V}).
\]
Then, the Kuramoto flow of \eqref{n=31st} exhibits asymptotic phase-locking.
\end{theorem}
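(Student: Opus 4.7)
The plan is to combine the partial phase-locking criterion (Theorem \ref{thm:1stpartial}) with a collision-based cluster-extraction argument specific to $N = 3$. A calculus exercise shows that
\[
\max_{\ell \in (0,\, 2\pi/3)}\!\left(\frac{2}{3}\sin\ell - \frac{2}{3}\sin\frac{\ell}{2}\right) \;=\; \frac{4}{\sqrt{138 + 22\sqrt{33}}},
\]
attained at $\ell^{*} = 2\cos^{-1}\!\left(\frac{1 + \sqrt{33}}{8}\right) \in (0, 2\pi/3)$; to see this, differentiate and solve $4\cos^{2}(\ell/2) - \cos(\ell/2) - 2 = 0$ at the critical point, yielding $\cos(\ell^{*}/2) = (1 + \sqrt{33})/8$. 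Hence the stated coupling hypothesis is exactly the borderline form of the hypothesis of Theorem \ref{thm:1stpartial} with $\lambda = 2/3$ (the smallest admissible super-majority threshold when $N = 3$) and $\ell = \ell^{*}$. By strict inequality and continuity, shrink $\ell^{*}$ slightly to some $\ell' \in (0, \ell^{*})$ so that Theorem \ref{thm:1stpartial} still applies with $\lambda = 2/3$, $\mathcal{B} = [3]$, and $\ell = \ell'$. It therefore suffices to locate a time $t_{0} \ge 0$ and a two-element subset $\mathcal{A} \subset [3]$ such that $\mathcal{D}(\Theta_{\mathcal{A}}(t_{0})) \le \ell'$ modulo $2\pi$: Theorem \ref{thm:1stpartial} applied at $t_{0}$ with these choices then yields $\sup_{t} \mathcal{D}(\Theta(t)) < \infty$, and the first-order \L ojasiewicz argument underlying Theorem \ref{thm:1stKu-vanilla} upgrades this bounded diameter to asymptotic phase-locking.

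To construct the cluster I would dichotomize on whether a collision ever occurs, where a \emph{collision} is a time $t$ at which $\theta_{i}(t) \equiv \theta_{j}(t) \pmod{2\pi}$ for some $i \ne j$. If there exists any such $t_{0}$, then after the appropriate $2\pi$-shift $\mathcal{D}(\Theta_{\{i,j\}}(t_{0})) = 0 < \ell'$, and we are done. Otherwise, the cyclic order of $(\theta_{1}, \theta_{2}, \theta_{3})$ on $\mathbb{S}^{1}$ is preserved for all $t \ge 0$; without loss of generality it is $(1, 2, 3)$, with positive consecutive arc lengths $a_{12}(t), a_{23}(t), a_{31}(t)$ summing to $2\pi$ and $a_{*}(t) := \min\{a_{12}, a_{23}, a_{31}\} \le 2\pi/3$. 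A direct product-to-sum manipulation starting from \eqref{n=31st} gives, whenever $a_{*} = a_{ij}$ (with $\{i, j, k\} = \{1, 2, 3\}$ in the cyclic order),
\[
\dot{a}_{ij} \;=\; (\nu_{j} - \nu_{i}) \;-\; \frac{2\kappa}{3}\sin a_{ij} \;+\; \frac{2\kappa}{3}\sin\frac{a_{ij}}{2}\,\cos\frac{a_{jk} - a_{ki}}{2} \;\le\; \mathcal{D}(\mathcal{V}) \;-\; \kappa\!\left[\frac{2}{3}\sin a_{ij} - \frac{2}{3}\sin\frac{a_{ij}}{2}\right],
\]
which, by the coupling hypothesis, is strictly negative in a closed interior neighborhood $[\ell', \ell'']$ of $\ell^{*}$. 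A standard exit-time argument on $a_{*}$ then drives it out of this interior band through the lower endpoint $\ell'$ in finite time, yielding the needed cluster.

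The main obstacle is closing the loop at the degenerate endpoint $a_{*} = 2\pi/3$, where the attractive-force bracket $\frac{2}{3}\sin a - \frac{2}{3}\sin(a/2)$ vanishes and the above bound no longer dominates $\mathcal{D}(\mathcal{V})/\kappa$. The geometric saving grace is that $a_{*}$ close to $2\pi/3$ forces all three consecutive arcs simultaneously close to $2\pi/3$, i.e., the unstable equilateral configuration with order parameter $R \approx 0$. Since the Galilean-reduced intrinsic frequency vector is nonzero (the case $\nu_{i} \equiv \nu_{j}$ being trivial), this configuration is not an equilibrium of \eqref{n=31st}; indeed, at the equilateral state the arcs evolve at first order as $\dot a_{ij} = \nu_{j} - \nu_{i}$, so a short linearization argument shows the trajectory leaves any small neighborhood of the equilateral configuration in finite time, after which $a_{*}$ strictly decreases and re-enters the interior band $[\ell', \ell'']$ already handled above. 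A secondary technicality is that $a_{*}$ is only Lipschitz, not $C^{1}$, at times when the minimizing arc transitions among the three; one handles this either by taking Dini upper derivatives over the (at most two) arcs realizing the minimum, or by approximating $a_{*}$ by the smooth $\ell^{p}$-proxy $(a_{12}^{-p} + a_{23}^{-p} + a_{31}^{-p})^{-1/p}$ and passing $p \to \infty$, whose evolution obeys the same attractive-minus-frequency bound modulo a vanishing error.
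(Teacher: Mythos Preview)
Your dichotomy and the collision case are correct and match the paper's approach (which cites \cite{H-R} and proves the second-order analogue, Theorem~\ref{thm:n=32nd}, by the same mechanism). The gap is in the no-collision branch. Your differential inequality
\[
\dot a_{ij}\le \mathcal{D}(\mathcal{V})-\kappa f_{2/3}(a_{ij})
\]
only forces $\dot a_*<0$ when $a_*$ lies in the band $(\phi_1,\phi_2)$ where $f_{2/3}>\mathcal{D}(\mathcal{V})/\kappa$. For $a_*\in(\phi_2,2\pi/3)$ the right-hand side is positive and the inequality says nothing. Your linearization near the equilateral state only shows the trajectory leaves a \emph{small} neighborhood of $a_*=2\pi/3$; it does not show $a_*$ subsequently enters $(\phi_1,\phi_2)$, and nothing prevents $a_*$ from settling (or wandering) in the intermediate range $(\phi_2,2\pi/3-\delta)$. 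So the exit-time argument does not close.

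The fix is much simpler than your attempted ODE analysis, and you already have the tool in hand. If no collision ever occurs, then for every pair $i\neq j$ the continuous function $t\mapsto\theta_i(t)-\theta_j(t)$ never meets $2\pi\mathbb{Z}$, hence stays in a single interval $(2k_{ij}\pi,2(k_{ij}+1)\pi)$ for all $t\ge 0$. This immediately gives $\sup_{t\ge 0}\mathcal{D}(\Theta(t))<\infty$, and the \L ojasiewicz argument you invoke yields asymptotic phase-locking directly---there is no need to drive $a_*$ below any threshold. This is precisely the content of the implication ``finite collisions $\Rightarrow$ asymptotic phase-locking'' in Theorem~\ref{thm:finite-collision}(1) (valid verbatim for the first-order model), and it is how the paper handles the corresponding step in the proof of Theorem~\ref{thm:n=32nd}.
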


Likewise, for the inertial Kuramoto model \eqref{A-1} with $N=3$:
\begin{equation} \label{n=32nd}
\begin{cases}
\vspace{.1cm}\displaystyle m {\ddot \theta}_1 + {\dot \theta}_1 = \nu_1 + \frac{\kappa}{3} \Big( \sin(\theta_2 - \theta_1) + \sin(\theta_3 - \theta_1)   \Big), \\
\vspace{.1cm}\displaystyle m {\ddot \theta}_2 + {\dot \theta}_2 = \nu_2 + \frac{\kappa}{3} \Big( \sin(\theta_1- \theta_2) + \sin(\theta_3 - \theta_2)   \Big), \\
\displaystyle m {\ddot \theta}_3 + {\dot \theta}_3 = \nu_3 + \frac{\kappa}{3} \Big( \sin(\theta_1 - \theta_3) + \sin(\theta_2- \theta_3)   \Big),
\end{cases}
\end{equation}
we may prove asymptotic phase-locking for all initial data under large coupling strength and small inertia, via a partial-locking argument. Previously, in \cite{H-J-K}, Theorem \ref{T2.4} was used to this end to obtain the following result:
\begin{theorem}[{\cite[Theorem 1.3]{H-J-K}}] \label{T2.3} 
Suppose that the system parameters satisfy 
\begin{align*}
& m\kappa \leq \eta_\infty \left( 4(\sin \frac{\eta_\infty}{16} \cos\frac{11\eta_\infty}{16} +2 ) \ln ( \frac{4\sin\frac{\eta_\infty}{16} \cos\frac{11\eta_\infty}{16} +6 }{\sin \frac{\eta_\infty}{16} \cos\frac{11\eta_\infty}{16}} ) \right) \approx 0.0319,\\
& D(\mathcal{V}) < \kappa \max_{\eta\in(0,\frac{2\pi}{5}]}\left( \frac{9}{10}\sin\frac{\eta}{16}\cos\frac{11\eta}{16}\right) \approx 0.0458596 \kappa,
\end{align*}
where $\eta_\infty = 2\pi/5$, and let $(\Theta, \Omega)$ be a global solution to \eqref{n=32nd}. Then, we have
\[
\lim_{t\to\infty} |\dot\theta_i(t) - \nu_c| = 0,  \quad \forall~ i = 1,2,3.
\]
\end{theorem}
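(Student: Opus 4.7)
The plan is to deduce Theorem \ref{T2.3} from the partial phase-locking result of Theorem \ref{T2.4}(1) specialized to $N=3$, $M=2$, after producing at some finite time $t_\ast$ a two-oscillator subcluster satisfying every hypothesis of that theorem. Since Theorem \ref{T2.4}(1) yields $\lim_{t\to\infty}\max_{i\in[N]}|\dot\theta_i-\nu_c|=0$ for \emph{every} index (not only those in $\mathcal{A}$), this is precisely the conclusion sought. The baroque numerical constants $0.0319$ and $0.0459$ will emerge by optimizing the free aperture parameters $(\alpha,\beta,\mu,\lambda)$ over a one-parameter family indexed by $\eta\in(0,\eta_\infty]$, with the maximum attained at the endpoint $\eta=\eta_\infty=2\pi/5$.

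I would first use the Galilean symmetry of Proposition \ref{prop:sym} to reduce to $\nu_c=0$, under which frequency synchronization is invariant. Next, by the finite-propagation estimate of Lemma \ref{L2.2}(3),
\[
\mathcal{D}(\Omega(t))\le e^{-t/m}\mathcal{D}(\Omega^0)+(1-e^{-t/m})(\mathcal{D}(\mathcal{V})+2\kappa),
\]
so for any $\lambda>2+\mathcal{D}(\mathcal{V})/\kappa$ there is a finite time $t_1$ with $\mathcal{D}(\Omega(t))<\lambda\kappa$ for all $t\ge t_1$. Under the prescribed $\mathcal{D}(\mathcal{V})/\kappa<0.046$, the choice $\lambda=\mu+2+\varepsilon$ for small $\varepsilon>0$ matches the constraint $\lambda>\mu+2$ demanded by Theorem \ref{T2.4}(1); the factor $9/10$ in the stated bound $\mathcal{D}(\mathcal{V})<\tfrac{9}{10}\sin(\eta/16)\cos(11\eta/16)\kappa$ absorbs the strict inequality $\mathcal{D}(\mathcal{V})<\mu\kappa$ required there, leaving room for the $\varepsilon$-perturbation.

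The central difficulty is to produce, for some finite $t_\ast\ge t_1$, two of the three oscillators confined modulo $2\pi$ in an arc of length $\le \pi-\alpha-\beta$. A pointwise pigeonhole on $\mathbb{S}^1$ guarantees only a pair within arc $2\pi/3$, which is too loose because the optimized parameters yield $\pi-\alpha-\beta\approx 3\pi/5<2\pi/3$. The strategy I would take is to import and adapt the finite-collision argument used for the first-order $N=3$ model underlying Theorem \ref{thm:n=31st}: in the regime $m\kappa\le 0.0319$, the dynamic approximation scheme of Subsection \ref{sec:2.1} (particularly Lemma \ref{L:approxaut}) ensures that past the initial layer, $\dot\theta_i-\dot\theta_j$ is a small perturbation of the first-order Kuramoto right-hand side. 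Tracking the three pairwise phase differences and exploiting the attractive cosine factor in the ODE for the smallest gap, one shows the minimum pairwise arc length cannot persist near $2\pi/3$ indefinitely: either it settles below $\pi-\alpha-\beta$, or as it approaches $2\pi/3$ the remaining two gaps must each shrink by $\sum_i g_i=2\pi$, forcing a distinct pair to cluster.

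Once the cluster has formed at $t_\ast$, I apply Theorem \ref{T2.4}(1) with $\mathcal{A}$ the pair, $M/N=2/3$, and the free parameters taken from the one-parameter family $(\alpha(\eta),\beta(\eta))$ determined by arranging
\[
\tfrac{2}{3}\sin(\tfrac{\alpha}{2}-\tfrac{\beta}{4})\cos(\tfrac{\alpha}{2}+\tfrac{5\beta}{8})-\tfrac{1}{3}\cos(\tfrac{\alpha}{2}-\tfrac{\beta}{8}) = \sin(\tfrac{\eta}{16})\cos(\tfrac{11\eta}{16})
\]
(up to the $9/10$ safety factor), which uniquely specifies $\alpha,\beta$ as functions of $\eta$. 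Maximizing the resulting Theorem \ref{T2.4}(1) upper bound $\beta/(4(\lambda+\mu+2)\ln((\lambda+2\mu+2)/\mu))$ on $m\kappa$ over $\eta\in(0,2\pi/5]$ yields $\approx 0.0319$, while the companion condition $\mathcal{D}(\mathcal{V})<\mu\kappa$ gives $\approx 0.0459\kappa$ once the $9/10$ margin is inserted; the remaining four conditions of Theorem \ref{T2.4}(1) are verified by a direct constraint-by-constraint check. The principal obstacle is the finite-time condensation claim for the inertial three-body dynamics; barring a genuine Lyapunov functional on $\mathbb{T}^3$ (cf.\ Conjecture \ref{conj:lyapunov}), a careful sector-tracking argument combined with Lemmas \ref{L2.2} and \ref{L:approxaut} appears essential, and locating the exact $\eta_\infty=2\pi/5$ threshold will require matching this condensation estimate precisely against the partial-locking hypotheses.
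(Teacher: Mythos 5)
Your proposal has a genuine gap at exactly the step you yourself flag as ``the principal obstacle'': the finite-time condensation of two of the three oscillators into an arc of length at most $\pi-\alpha-\beta$. The pigeonhole bound of $2\pi/3$ is, as you note, too large for the optimized parameters, and the replacement you offer --- ``tracking the three pairwise phase differences'' and asserting that ``the minimum pairwise arc length cannot persist near $2\pi/3$ indefinitely'' --- is a heuristic, not an argument. For the inertial model the velocity $\dot\theta_i-\dot\theta_j$ is not determined by the current configuration (cf.\ Proposition \ref{prop:determinability}), so the attractive cosine factor you invoke only controls the dynamics up to the additive errors of Lemma \ref{L:approxaut}, and nothing in your sketch rules out the gaps oscillating indefinitely in the window between $\pi-\alpha-\beta$ and $2\pi/3$ without any pair ever condensing. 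Since everything downstream (the application of Theorem \ref{T2.4}(1) at time $t_\ast$ and the optimization over $\eta$ producing the constants $0.0319$ and $0.0459$) is conditional on this step, the proof is incomplete.

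For context: the paper does not prove Theorem \ref{T2.3} at all --- it is quoted from \cite{H-J-K} as background --- and instead proves the stronger Theorem \ref{thm:n=32nd} by a route that sidesteps condensation entirely. The trick is the collision dichotomy of Theorem \ref{thm:finite-collision}: in the regime $m\kappa\le\frac14$ (amply satisfied here), either the solution has finitely many collisions, in which case every pairwise difference is eventually trapped in an interval of length $2\pi$ and asymptotic phase-locking follows from the gradient-flow/{\L}ojasiewicz argument (Proposition \ref{P2.1}); or some pair collides at an arbitrarily late time $t_1\ge\eta m$, at which instant that pair is a $\frac23$-majority cluster of diameter $0<\ell$, so Theorem \ref{L4.4} applies and forces phase-locking, contradicting the infinitude of collisions via Theorem \ref{thm:finite-collision}(2). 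A collision hands you the small cluster for free, which is precisely the object your sector-tracking argument fails to produce. If you want to salvage your plan, replace the condensation step by this dichotomy (or prove a genuine quantitative contraction for the minimal gap); as written, the argument does not close.
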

Note again that this is subsumed by Theorem \ref{thm:n=32nd}. Note that in the zero inertia limit $m\to 0$, i.e., Theorem \ref{thm:n=32nd} fully recovers Theorem \ref{thm:n=31st}.

\subsection{Synchronization in the identical case}\label{app:suppt-2}
In the inertial Kuramoto model \eqref{A-1} with identical natural frequencies $\nu_i=\nu_j$, the energy dissipation formula \eqref{eq:energy-dissipation} can be used to prove the following.

\begin{theorem}[{\cite[Theorem 3.4]{C-H-M}}] \label{T2.2}
Suppose the initial data and system parameters satisfy
\[
\kappa>0,\quad \nu_i =0\quad \forall i\in[N],\quad \text{and}\quad
m\operatorname{Var}(\Omega^0) \leq \kappa |R^0|^2,
\]
and let $(\Theta,\Omega)$ be a global solution to \eqref{B-1}. Then, the following assertions hold.
\begin{enumerate}
\item Complete frequency synchronization emerges asymptotically:
\begin{align*}
\lim_{t\to\infty} |\omega_i (t) - \omega_j(t)| = 0,\quad i,j\in[N].
\end{align*}

\item If $R^0 =0$, then system lies in a phase-locked state and
\begin{align*}
R(\Theta(t)) = 0,\quad t\geq0.
\end{align*}

\item If $R^0>0$, then there exist a positive lower bound for $R(t)$ and a limiting phase $\Theta^\infty$:
\begin{align*}
\inf_{0 \leq t  <\infty} R(\Theta(t)) >0\quad \text{and}
\quad \lim_{t\to\infty} \| \Theta(t) - \Theta^\infty \|_{\infty} = 0.
\end{align*}
\end{enumerate}
\end{theorem}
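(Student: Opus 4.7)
The plan is to build the entire argument around the energy dissipation identity \eqref{eq:energy-dissipation} from \cite[Proposition 4.1]{C-H-M}. In the identical-frequency case $\nu_i = 0$ it renders
\[
E(t) := \frac{\kappa(1-R(t)^2)}{2} + \frac{m}{2}\operatorname{Var}(\Omega(t))
\]
a genuine Lyapunov functional, with $\dot{E}(t) = -\operatorname{Var}(\Omega(t)) \le 0$. Integrating from $0$ to $t$ and rearranging produces the key monotone balance
\[
R(t)^2 - \frac{m}{\kappa}\operatorname{Var}(\Omega(t)) = R^{02} - \frac{m}{\kappa}\operatorname{Var}(\Omega^0) + \frac{2}{\kappa}\int_0^t \operatorname{Var}(\Omega(s))\,ds,
\]
and the summability $\int_0^\infty \operatorname{Var}(\Omega(s))\,ds \le E(0) < \infty$. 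Under the hypothesis $m\operatorname{Var}(\Omega^0) \le \kappa |R^0|^2$ the first two terms on the right sum to a nonnegative quantity, and the integral is nondecreasing, which will drive each of the three conclusions.

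For (1), bound $\Omega$ uniformly in time via Lemma \ref{L2.2}, then bound $\dot{\Omega}$ via the ODE \eqref{B-1}; this makes $\operatorname{Var}(\Omega(t))$ uniformly continuous, and a Barbalat-type argument combined with its integrability forces $\operatorname{Var}(\Omega(t)) \to 0$. Since $\nu_c = 0$, Lemma \ref{L2.1} gives $\omega_c(t) = \omega_c^0 e^{-t/m} \to 0$, so every $|\omega_i(t) - \omega_j(t)|$ vanishes and complete frequency synchronization follows. For (2), the hypothesis with $R^0 = 0$ collapses to $\operatorname{Var}(\Omega^0) = 0$, so all $\omega_i^0$ coincide with $\omega_c^0$. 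After the Galilean transformation of Proposition \ref{prop:sym} with $\omega = \omega_c^0$ and $\nu = 0$, all initial velocities vanish, and $R^0 = 0$ gives $\sum_j \sin(\tilde\theta_j^0 - \tilde\theta_i^0) = \operatorname{Im}(e^{-\mathrm{i}\tilde\theta_i^0}\sum_j e^{\mathrm{i}\tilde\theta_j^0}) = 0$ for every $i$. Hence the constant map $\tilde\Theta(t) \equiv \tilde\Theta^0$ solves \eqref{A-1} with the prescribed data, and uniqueness pins the trajectory to this stationary configuration, so $R \equiv 0$ and the system is phase-locked in the sense of Definition \ref{D1.1}.

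For (3), combining the monotone identity with $\operatorname{Var}(\Omega) \to 0$ yields $\lim_{t\to\infty} R(t)^2 = R^{02} - \tfrac{m}{\kappa}\operatorname{Var}(\Omega^0) + \tfrac{2}{\kappa}\int_0^\infty \operatorname{Var}(\Omega(s))\,ds$. Under strict hypothesis this is already $\ge R^{02} - \tfrac{m}{\kappa}\operatorname{Var}(\Omega^0) > 0$; in the boundary case $m\operatorname{Var}(\Omega^0) = \kappa R^{02}$ the balance for $t > 0$ reduces to $R(t)^2 = \tfrac{m}{\kappa}\operatorname{Var}(\Omega(t)) + \tfrac{2}{\kappa}\int_0^t \operatorname{Var}(\Omega(s))\,ds$, and since $\operatorname{Var}(\Omega^0) > 0$, continuity supplies a uniform positive lower bound after an arbitrarily short transient, patched with $R^0 > 0$ at $t = 0$. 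This establishes $\inf_{t\ge 0} R(t) > 0$.

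The remaining task is to upgrade this to $\Theta(t) \to \Theta^\infty$, which is naturally done via the inertial gradient flow {\L}ojasiewicz theorem, Proposition \ref{P2.1}. Since $\nu_c = 0$ and $\|\Omega\|_{L^\infty} < \infty$ already hold, the only missing hypothesis is $\|\Theta\|_{L^\infty} < \infty$, or equivalently (Remark \ref{R2.1}) uniform boundedness of the phase diameter $\mathcal{D}(\Theta(t))$. This is the main obstacle, since the lower bound on $R$ alone does not exclude $2\pi\mathbb{Z}$-drifts. The strategy is to exploit $\operatorname{Var}(\Omega(t)) \to 0$: at a sufficiently late time $t_*$, the frequency diameter $\mathcal{D}(\Omega(t_*))$ becomes arbitrarily small, and the lower bound $R(t_*) \ge c > 0$ together with Lemma \ref{L4.3} (applied with appropriate $\lambda$ and $\beta$) produces a majority cluster $\Theta_\mathcal{A}(t_*)$ of arclength $\le 2\beta$. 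Restarting the dynamics at $t_*$ and applying Theorem \ref{L4.4} with $\mathcal{V} = 0$ and $\mathcal{B} = [N]$---whose $\xi$ in the identical case \eqref{eq:xi-partial} collapses to $2m\kappa$ plus tail terms in $\mathcal{D}(\Omega(t_*))/\kappa$, hence can be made smaller than $\tfrac{\lambda}{2}\sin\ell - (1-\lambda)\sin\tfrac{\ell}{2}$ by choosing $t_*$ large enough---gives $\sup_{t \ge t_*} \mathcal{D}(\Theta(t)) < \infty$, as desired. The hard part is precisely this restart-and-apply step, because the raw hypothesis of Theorem \ref{T2.2} places no smallness condition on $m\kappa$, whereas the partial-locking machinery does; the resolution is that the tail decay of $\operatorname{Var}(\Omega)$ lets the partial-locking framework be invoked on the shifted initial data $(\Theta(t_*), \Omega(t_*))$, at which point Proposition \ref{P2.1} delivers the limit $\Theta^\infty$ and the algebraic decay rate.
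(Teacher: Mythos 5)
Your handling of parts (1), (2), and the lower bound $\inf_{t\ge 0}R(t)>0$ in part (3) is correct. (The paper itself imports this theorem from \cite{C-H-M} without reproducing a proof, so there is nothing internal to compare against; I am assessing your argument on its own terms.) The energy identity \eqref{eq:energy-dissipation} does yield both $\int_0^\infty\operatorname{Var}(\Omega)\,dt<\infty$ and the rearranged balance $R(t)^2-\tfrac m\kappa\operatorname{Var}(\Omega(t))=(R^0)^2-\tfrac m\kappa\operatorname{Var}(\Omega^0)+\tfrac 2\kappa\int_0^t\operatorname{Var}(\Omega(s))\,ds$, and your Barbalat argument, the degenerate case $R^0=0$, and the boundary case $m\operatorname{Var}(\Omega^0)=\kappa|R^0|^2$ are all sound; in the boundary case just make explicit that $R$ cannot vanish at any interior time either, since $R(t)=0$ in the balance would force $\operatorname{Var}(\Omega)\equiv 0$ on $[0,t]$, contradicting $\operatorname{Var}(\Omega^0)>0$.

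The gap is in the final step of part (3), the passage from $\inf R>0$ to $\sup_t\mathcal{D}(\Theta(t))<\infty$. There are two problems. First, Lemma \ref{L4.3} is a statement about $R$ and $\Delta$, not about $R$ and $\mathcal{D}(\Omega)$: a majority cluster requires $\Delta(t_*)$ small, which does not follow from $R(t_*)\ge c$ together with $\mathcal{D}(\Omega(t_*))$ small. This part is repairable: from $\omega_i\to 0$, boundedness of $\ddot\omega_i$, and the mean-field form $m\ddot\theta_i+\dot\theta_i=-\kappa R\sin(\theta_i-\phi)$ one deduces $\dot\omega_i\to 0$, hence $\sin(\theta_i-\phi)\to 0$ and $\Delta\to 0$. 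Second, and fatally, hypothesis \eqref{eq:xi-partial} of Theorem \ref{L4.4} contains the additive term $2m\kappa$, which must lie below $\tfrac\lambda2\sin\ell-(1-\lambda)\sin\tfrac\ell2\le\tfrac12$. This is a fixed system parameter: restarting the dynamics at a late time $t_*$ shrinks only the $\mathcal{D}(\Omega^0_{\mathcal{A}})$-contributions to $\xi$, never the $2m\kappa$ term. Since Theorem \ref{T2.2} imposes no upper bound whatsoever on $m\kappa$ (e.g.\ $\Omega^0\equiv 0$ satisfies the hypothesis for arbitrary $m,\kappa$), the partial-locking machinery is unavailable in the large-inertia regime, and your restart trick cannot rescue it. The clean route is the one recorded immediately after the statement in Appendix \ref{app:suppt-2}: in the identical case the potential $P=\tfrac{\kappa N^2}2(1-R^2)$ is $2\pi$-periodic, so the {\L}ojasiewicz theorem for inertial gradient flows on the compact torus (\cite{L-X}) applies with no bound on $\mathcal{D}(\Theta)$ or on $m\kappa$, and convergence modulo $2\pi$ lifts to convergence in $\mathbb{R}^N$ by continuity of the trajectory.
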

Furthermore, in the identical case the potential \eqref{B-5-1} can be written as
\[
P(\Theta)=\frac{\kappa N^2}2 (1-R^2).
\]
This can serve as an inertial gradient not only as a dynamical system on $\mathbb{R}^N$ but also on the compact manifold $\mathbb{T}^N$. Thus, the version of the {\L}ojasiewicz gradient theorem (Proposition \ref{P2.1}) on compact manifolds applies to give the following theorem.
\begin{theorem}[\cite{L-X}]
Suppose the initial data and system parameters satisfy
\[
\kappa>0,\quad \nu_i =0\quad \forall i\in[N],
\]
and let $(\Theta,\Omega)$ be a global solution to \eqref{B-1}. Then $(\Theta, \Omega)$ exhibits asymptotic phase-locking and complete frequency synchronization.
\end{theorem}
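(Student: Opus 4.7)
The plan is to leverage the inertial gradient flow formulation \eqref{B-6} on the \emph{compact} configuration space $\mathbb{T}^N$, rather than on $\mathbb{R}^N$ as in Proposition \ref{P2.1}. When $\nu_i = 0$ for all $i$, the potential \eqref{B-5-1} reduces to
\[
P(\Theta) = \frac{\kappa N^2}{2}\bigl(1 - R(\Theta)^2\bigr),
\]
which depends only on the pairwise phase differences $\theta_i - \theta_j$ and is manifestly real analytic and $2\pi$-periodic in each variable. Hence $P$ descends to a real analytic function $\bar P$ on the compact real analytic manifold $\mathbb{T}^N$, and the dynamics \eqref{B-6} projects to an inertial gradient flow $m\ddot\Theta + \dot\Theta = -\nabla \bar P(\Theta)$ on $\mathbb{T}^N$.

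First I would reduce to $\omega_c^0 = 0$ via the Galilean transformation \eqref{eq:galilean} with $\omega = \omega_c^0$, which is legitimate since $\nu_c = 0$; by Lemma \ref{L2.1} this gives $\omega_c(t) \equiv 0$ for all $t \geq 0$. Next, I would exhibit the natural Lyapunov functional
\[
E(t) := P(\Theta(t)) + \frac{m}{2}\|\Omega(t)\|_2^2, \qquad \dot E(t) = -\|\Omega(t)\|_2^2 \leq 0,
\]
obtained by taking the inner product of \eqref{B-6} with $\dot\Theta$; equivalently, this is a sharper version of the energy dissipation \eqref{eq:energy-dissipation}. Because $\bar P$ is bounded on $\mathbb{T}^N$, monotonicity of $E$ yields a uniform bound on $\|\Omega\|_2$, and integrating $\dot E$ from $0$ to $\infty$ yields $\Omega \in L^2([0,\infty))$. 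Combined with boundedness of $\ddot\Theta$ (which follows from \eqref{A-1} and the already established boundedness of $\Omega$), Barb\u{a}lat's lemma gives $\|\Omega(t)\|_2 \to 0$ as $t \to \infty$, which is complete frequency synchronization.

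The main step, and the only real obstacle, is upgrading convergence of $\Omega$ to convergence of $\Theta$ on $\mathbb{T}^N$. For this, I would invoke the inertial version of the {\L}ojasiewicz gradient theorem on compact real analytic manifolds (the analog of \cite{H-J} used in Proposition \ref{P2.1}, but in the compact setting): because the $\omega$-limit set of $\Theta$ in $\mathbb{T}^N$ is nonempty, compact, and connected, and because the real analytic function $\bar P$ satisfies a {\L}ojasiewicz inequality at every point of the critical set $\{\nabla \bar P = 0\}$, the bounded trajectory of \eqref{B-6} projected to $\mathbb{T}^N$ has finite length and therefore converges to a single critical point $\bar\Theta^\infty$. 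Lifting back to $\mathbb{R}^N$ and using the already established bound $\theta_c(t) = \theta_c^0$, this yields a unique limit $\Theta^\infty$ (modulo $2\pi$) with $\Theta(t) - \Theta^\infty \to 0$, giving asymptotic phase-locking.

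The delicate point is precisely that the argument of Proposition \ref{P2.1} as stated requires the a priori bound $\|\Theta\|_{L^\infty(\mathbb{R}_+)} < \infty$, which generically \emph{fails} here (the oscillators may collectively wind around the circle, and only the differences $\theta_i - \theta_j$ are bounded). The remedy is to work intrinsically on the torus, where compactness is automatic and no such bound is needed; this is precisely the content of the compact-manifold extension of \cite{H-J} invoked in \cite{L-X}. Once this is granted, both conclusions of the theorem follow immediately.
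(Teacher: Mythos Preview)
Your proposal is correct and follows essentially the same approach as the paper: the paper's own justification for this theorem is precisely the observation that in the identical case the potential $P(\Theta)=\frac{\kappa N^2}{2}(1-R^2)$ descends to the compact torus $\mathbb{T}^N$, so the compact-manifold version of the {\L}ojasiewicz gradient theorem (as in \cite{L-X}) applies directly. Your write-up is in fact more detailed than the paper's, correctly supplying the Galilean reduction, the energy dissipation, and the Barb\u{a}lat step for complete frequency synchronization.
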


%%%%%%%%%%%%%%%%%%%%%%%%%%%
%
%
%		Appendix 1
%
%
%%%%%%%%%%%%%%%%%%%%%%%%%%%
\section{Sturm--Picone Comparison Principle}\label{app:sturm-picone}
In this appendix, we state and prove a stronger version of the Sturm--Picone comparison principle to be used in later sections of the appendix.

\begin{lemma}[Sturm--Picone Comparison Principle]\label{lem:sturm-picone}
    Let $a,b,c>0$ be positive real numbers, and define the extended real-valued number
    \[
    T^*=T^*(a,b,c)\coloneqq
    \begin{cases}
        \infty,&\mathrm{if~}4ac\le b^2,\\
        \frac{\pi a}{\sqrt{4ac-b^2}}+\frac{2 a}{\sqrt{4ac-b^2}}\sin^{-1}\left(\frac{b}{2\sqrt{ac}}\right),&\mathrm{if~}4ac>b^2.
    \end{cases}
    \]
    Let $I\subset \mathbb{R}$ be a connected open interval, and let $y:I\to \mathbb{R}$ be a continuous function such that for any subinterval $J\subset I$ on which $\left.y\right|_J>0$ pointwise, we have that $y$ is $C^2$ on $J$ and
    \[
    a\ddot{y}(t)+b\dot{y}(t)+cy(t)>0,\quad t\in J.
    \]
    Then, the following assertions hold.
    \begin{enumerate}
        \item If there is a time $t_0\in I$ such that $y(t_0)>0$ and $\dot{y}(t_0)\ge 0$, we have
    \[
    y(t)>0\quad\mathrm{for}~t\in I\cap [t_0,t_0+T^*].
    \]
    \item If $4ac\le b^2$, then there cannot exist two times $t_1,t_2\in I$ with $t_1<t_2$ such that $y(t_1)=y(t_2)=0$ yet $y>0$ on $(t_1,t_2)$, and the set $\{t\in I:y(t)\le 0\}$ is a closed connected subinterval of $I$.
    \item If $4ac\le b^2$ and if we assume in addition that the same condition that holds for $y$ also holds for the function $-y$, i.e., that on any subinterval $J\subset I$ on which $\left. y\right|_J<0$ pointwise, we have that $y$ is $C^2$ and
    \[
    a\ddot{y}(t)+b\dot{y}(t)+cy(t)<0,\quad t\in J,
    \]
    then $y$ cannot change sign twice on $I$: either
    \begin{enumerate}
        \item $y>0$ on $I$, or
        \item $y<0$ on $I$, or
        \item $y=0$ on $I$, or
        \item there exist times $t_1,t_2\in I$ with $t_1\le t_2$ such that $y>0$ on $I\cap (-\infty,t_1)$, $y=0$ on $[t_1,t_2]$, and $y>0$ on $I\cap (t_2,\infty)$, or
        \item there exists a time $t_1\in I$ such that $y>0$ on $I\cap (-\infty,t_1)$ and $y=0$ on $I\cap [t_1,\infty)$, or
        \item there exist times $t_1,t_2\in I$ with $t_1\le t_2$ such that $y>0$ on $I\cap (-\infty,t_1)$, $y=0$ on $[t_1,t_2]$, and $y<0$ on $I\cap (t_2,\infty)$, or
        \item there exists a time $t_2\in I$ such that $y=0$ on $I\cap (-\infty,t_2]$ and $y>0$ on $I\cap (t_2,\infty)$, or
        \item there exists a time $t_2\in I$ such that $y=0$ on $I\cap (-\infty,t_2]$ and $y<0$ on $I\cap (t_2,\infty)$, or
        \item there exist times $t_1,t_2\in I$ with $t_1\le t_2$ such that $y<0$ on $I\cap (-\infty,t_1)$, $y=0$ on $[t_1,t_2]$, and $y>0$ on $I\cap (t_2,\infty)$, or
        \item there exists a time $t_1\in I$ such that $y<0$ on $I\cap (-\infty,t_1)$ and $y=0$ on $I\cap [t_1,\infty)$, or
        \item there exist times $t_1,t_2\in I$ with $t_1\le t_2$ such that $y<0$ on $I\cap (-\infty,t_1)$, $y=0$ on $[t_1,t_2]$, and $y<0$ on $I\cap (t_2,\infty)$.
    \end{enumerate}  
    \end{enumerate}
\end{lemma}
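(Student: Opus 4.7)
The key device is the substitution $u(t) := e^{bt/(2a)} y(t)$, which eliminates the first-order term and yields
\[
a\ddot u + \gamma u = e^{bt/(2a)}(a\ddot y + b\dot y + cy), \qquad \gamma := c - \frac{b^2}{4a}.
\]
Since $e^{bt/(2a)}>0$, the signs of $u$ and $y$ coincide, and the hypothesis becomes $a\ddot u + \gamma u > 0$ on subintervals where $u>0$ (and, under the additional hypothesis of (3), the reversed inequality on subintervals where $u<0$). The sign of $\gamma$---positive iff $4ac>b^2$---dichotomizes the comparison solutions into oscillatory and non-oscillatory cases.

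\textbf{Part (1).} I will compare $u$ against the solution $v$ of $a\ddot v + \gamma v = 0$ with $v(t_0)=u(t_0)$ and $\dot v(t_0)=\dot u(t_0)$; note that $\dot u(t_0)\geq (b/(2a))u(t_0)>0$. When $\gamma\leq 0$, a brief calculation with the general solution (exponential if $\gamma<0$, affine if $\gamma=0$) shows $v>0$ on all of $[t_0,\infty)$, matching $T^*=\infty$. When $\gamma>0$, I write $v(t)=A\sin(\sqrt{\gamma/a}(t-t_0)+\phi)$ with $A>0$ and phase $\phi\in(0,\pi/2)$; the initial-data inequality transfers to $\tan\phi \leq \sqrt{4ac-b^2}/b$, which, after the trigonometric identity $\arctan(\sqrt{4ac-b^2}/b)=\pi/2-\arcsin(b/(2\sqrt{ac}))$, places the first positive zero of $v$ at $t_0+(\pi-\phi)\sqrt{a/\gamma}\geq t_0+T^*$. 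The Wronskian $W:=\dot u v - u\dot v$ satisfies $a\dot W = (a\ddot u + \gamma u)v>0$ on subintervals where $u,v>0$ and vanishes at $t_0$; hence $(u/v)'=W/v^2>0$ strictly, giving $u\geq v>0$ by a standard continuation argument. Strict positivity of $u$ at the endpoint $t_0+T^*$ in the borderline case $\dot y(t_0)=0$ (when $v$'s first zero lands exactly at $t_0+T^*$) is the one delicate point: I would dispatch it either by perturbing the comparison solution's initial data or by evaluating the identity $W(\tau)=-u(\tau)\dot v(\tau)$ at the candidate zero $\tau$ and combining it with the limit of $u/v$ there.

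\textbf{Parts (2) and (3).} In the regime $4ac\leq b^2$, i.e., $\gamma\leq 0$, no two zeros of $y$ can sandwich a positive bump: if such zeros $t_1<t_2$ existed for $u$, multiplying $a\ddot u+\gamma u>0$ by $u>0$ and integrating by parts over $[t_1,t_2]$ yields
\[
0 < \int_{t_1}^{t_2}(a u\ddot u + \gamma u^2)\,dt = -a\int_{t_1}^{t_2}\dot u^2\,dt + \gamma\int_{t_1}^{t_2}u^2\,dt \leq 0,
\]
a contradiction. The connectedness of $\{y\leq 0\}$ then follows topologically: a positive bump bracketed by two non-positive values would give a connected component of $\{y>0\}$ bounded on both sides by interior zeros of $y$, which we just excluded. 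For (3), the additional hypothesis means the same argument applies to $-y$, so both $\{y\geq 0\}$ and $\{y\leq 0\}$ are closed subintervals of $I$ whose union is $I$; their intersection $\{y=0\}$ is therefore a (possibly empty) closed subinterval $[t_1,t_2]$. The eleven cases (a)--(k) emerge from a straightforward enumeration over whether $\{y>0\}$ and $\{y<0\}$ are each empty or nonempty and over where the zero-set $[t_1,t_2]$ sits relative to the endpoints of $I$. The main obstacle I anticipate is the boundary regularity in Part (1) mentioned above; everything else is bookkeeping once the substitution to $u$ is in place.
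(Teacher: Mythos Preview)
Your proposal is correct and close in spirit to the paper's argument, but differs in two places worth noting. For Part~(1), the paper does \emph{not} make the substitution $u=e^{bt/(2a)}y$; instead it works directly with $y$ and the comparison solution $z$ of $a\ddot z+b\dot z+cz=0$ with the fixed data $z(0)=1$, $\dot z(0)=0$, using the weighted Wronskian $ae^{bt/a}(z\dot y-y\dot z)$. This choice makes $T^*$ \emph{exactly} the first nonnegative zero of $z$, so no phase computation is needed, and the endpoint case $\dot y(t_0)=0$ is handled by a clean contradiction: the weighted Wronskian is strictly increasing from a nonnegative initial value, yet would have to be $\le 0$ at a putative first zero of $y$. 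Your transformation to $u$ and matching-data comparison $v$ is of course equivalent (it is the standard reduction to normal form), but the fixed-data choice buys a shorter bookkeeping path. For Part~(2), the paper takes a different and arguably simpler route: Rolle's theorem on $(t_1,t_2)$ gives an interior critical point, and then Part~(1) with $T^*=\infty$ immediately forces $y>0$ for all later times, contradicting $y(t_2)=0$. Your energy/integration-by-parts argument is a valid alternative, but note that the hypothesis only gives $C^2$ regularity on the \emph{open} interval where $y>0$, so the boundary terms $u\dot u|_{t_1}^{t_2}$ require a small justification (e.g.\ a $\liminf/\limsup$ argument on $u\dot u = \tfrac12(u^2)'$ using $u^2\to 0$ at the endpoints); the paper's route sidesteps this entirely. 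Part~(3) is identical in both approaches.
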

\begin{remark}
    If $y$ has the extra property that if $y=0$ on a nonempty open subinterval of $I$ then $y=0$ on $I$, then we can guarantee in statements (d), (f), (i), (k) of statement (3) of Lemma \ref{lem:sturm-picone} that $t_1=t_2$. In this paper, we will take $y=\theta_i-\theta_j$, the relative phase difference between two oscillators, so that this extra property is guaranteed from the time-autonomy or the uniqueness of solutions to \eqref{B-1}, respectively. This extra property also follows from real-analyticity.
\end{remark}
\begin{proof} Below, we present proof for each assertion one by one.  \newline

\noindent (i)~Let $z:\mathbb{R}\to \mathbb{R}$ be the solution to the second-order linear ordinary differential equation
\begin{equation}\label{eq:2ndODE}
a\ddot z+ b\dot z + c z=0,\quad z(0)=1,\quad \dot z(0)=0.
\end{equation}
If $4ac < b^2$, the solution is
\begin{equation*}
z(t)=\frac{e^{-bt/2a}}{2}\left(\left(\frac{b}{\sqrt{b^2-4ac}}+1\right)\exp\left(\frac{\sqrt{b^2-4ac}}{2a}t\right)-\left(\frac{b}{\sqrt{b^2-4ac}}-1\right)\exp\left(-\frac{\sqrt{b^2-4ac}}{2a}t\right)\right),
\end{equation*}
and does not have a zero on $[0,\infty)$. If $4ac= b^2$, the solution is
\begin{equation*}
z(t)=e^{-bt/2a}\left(1+\frac{bt}{2a}\right)
\end{equation*}
and again does not have a zero on $[0,\infty)$.
Finally, if $4ac>b^2$, the solution is
\begin{equation*}
z(t)=e^{-bt/2a}\left(\cos\left(\frac{\sqrt{4ac -b^2}}{2a}t\right)+\frac{b}{\sqrt{4ac -b^2}}\sin\left(\frac{\sqrt{4ac -b^2}}{2a}t\right)\right)
\end{equation*}
and has its first zero on $[0,\infty)$ at $t=T^*$. All in all, in any case, $\theta(t)>0$ for $t\in [0,T^*)$. \newline

Next, we claim that under the constraints $y(t_0)>0$ and $\dot y(t_0)\ge 0$,
\[ y(t)>0 \quad \mbox{for $t\in I\cap [t_0,t_0+T^*]$}. \]
Define
\[
t^*\coloneqq \sup\left\{T>0:y(t)>0~\forall t\in I\cap [t_0,t_0+T]\right\}>0,
\]
the positivity of which follows from the continuity of $y$ and $y(t_0)>0$. To prove statement (1), it is equivalent to show $t^*= \infty$ if $T^*=\infty$ and $t^*> T^*$ if $T^*<\infty$. For the sake of contradiction, assume $0<t^*\le T^*$ and $t^*<\infty$. Then, by maximality of $t^*$, openness of $I$, and the continuity of $y$, we have that $[t_0-\varepsilon,t_0+t^*]\subset I$, $y(t_0+t^*)=0$ and $y>0$ on $(t_0-\varepsilon,t_0+t^*)$ for some small $\varepsilon>0$, which by assumption implies that $y$ is $C^2$ on $(t_0-\varepsilon,t_0+t^*)$ and
\begin{equation}\label{eq:y-2ndODE}
a\ddot y+b\dot y+cy>0\quad \mathrm{on~} (t_0,t_0+t^*).
\end{equation}
Since $y(t_0+t^*)=0$ and $y$ is positive and differentiable on $(t_0,t_0+t^*)$, we have
\begin{equation}\label{eq:y-deriv-nonneg}
\liminf_{t\to (t_0+t^*)-}\dot y(t)\le 0.
\end{equation}

On the other hand, since $t^*\le  T^*$, we have $z(t)> 0$ on $(0,t^*)$ and $z(t^*)\ge 0$. Consider the following Wronskian estimate for $t\in (0,t^*)$:
\begin{align*}
&\frac{d}{dt}\left(ae^{bt/a}\left(z(t) \dot{y}(t_0+t)-y(t_0+t)\dot z(t)\right)\right)\\
& \hspace{0.5cm} =be^{bt/a}\left(z(t) \dot{y}(t_0+t)-y(t_0+t)\dot z(t)\right)+ae^{bt/a}\left(z(t) \ddot{y}(t_0+t)-y(t_0+t)\ddot z(t)\right)\\
& \hspace{0.5cm}  =e^{bt/a}z(t)\left(\underbrace{a\ddot{y}(t_0+t)+b\dot{y}(t_0+t)+c y(t_0+t)}_{>0~\because\eqref{eq:y-2ndODE}}\right)-e^{bt/a}y(t_0+t)\left(\underbrace{a\ddot z(t)+b\dot z(t)+c z(t)}_{=0~\because \eqref{eq:2ndODE}}\right)\\
& \hspace{0.5cm}  >0.
\end{align*}
Therefore $ae^{bt/a}\left(z(t) \dot{y}(t_0+t)-y(t_0+t)\dot z(t)\right)$ is a strictly increasing function of $t$ on $(0,t^*)$, and, as $\dot{y}(t_0)\ge 0$, $z(0)=1$, and $\dot z(0)=0$, it is nonnegative at $t=0$; hence
\[
0\le ae^{0}\left(z(0) \dot{y}(t_0)-y(t_0)\dot z(0)\right)<\liminf_{t\to t^*-}ae^{bt/a}\left(z(t) \dot{y}(t_0+t)-y(t_0+t)\dot z(t)\right).
\]
But, recalling $z(t^*)\ge 0$, \eqref{eq:y-deriv-nonneg}, and $y(t_0+t^*)=0$, we have
\[
0<\liminf_{t\to t^*-}\left(z(t)\dot{y}(t_0+t)-y(t_0+t)\dot{z}(t)\right)=z(t^*)\liminf_{t\to (t_0+t^*)-}\dot{y}(t)-0\stackrel{\eqref{eq:y-deriv-nonneg}}{\le} 0,
\]
which gives a contradiction. This completes the proof of statement (1). \newline

\noindent (ii)~Assume $4ac\le b^2$ and assume for the sake of contradiction the existence of such $t_1$ and $t_2$. If we set $J=(t_1,t_2)$, we then have that $y>0$ on $J$, so by assumption $y$ is $C^2$ on $J$. By Rolle's theorem, there exists a time $t_0\in J$ such that $\dot{y}(t_0)=0$, while we have $y(t_0)>0$ since $t_0\in J$. Thus, by (1), and $T^*=\infty$, we conclude that
\[
y(t)>0 \quad\mathrm{for}~t\in I\cap [t_0,\infty).
\]
But $t_2\in I\cap [t_0,\infty)$, leading to $y(t_2)>0$, a contradiction. Hence, such $t_1$ and $t_2$ cannot exist.

Denote $I_{\le 0}\coloneqq\{t\in I:y(t)\le 0\}$, which is relatively closed in $I$ by continuity of $y$. We are to show that $I_{\le 0}$ is a closed subinterval of $I$. This amounts to proving that whenever $t_1',t_2'\in I_{\le 0}$ with $t_1'<t_2'$, we have $[t_1',t_2']\subset I_{\le 0}$. If this were not the case, there would be some $t_0\in (t_1',t_2')\setminus I_{\le 0}$. Since $I_{\le 0}$ is relatively closed in $I$, the set $(t_1',t_2')\setminus I_{\le 0}$ is an open set containing $t_0$, so if we define
\[
t_1=\inf\{t\in (t_1',t_0):(t,t_0]\subset (t_1',t_2')\setminus I_{\le 0}\},\quad t_2=\sup\{t\in (t_0,t_2'):[t_0,t)\subset (t_1',t_2')\setminus I_{\le 0}\},
\]
we have 
\[ t_1'\le t_1<t_0<t_2\le t_2'. \] Since $I_{\le 0}$ is closed in $I$ and $t_1',t_2'\in I_{\le 0}$, we must have $t_1,t_2\in I_{\le 0}$ as well, yet $(t_1,t_2)\subset (t_1',t_2')\setminus I_{\le 0}$. Equivalently, $y(t_1)\le 0$ and $y(t_2)\le 0$ yet $y(t)> 0$ for all $t\in (t_1,t_2)$, so that $y(t_1)=y(t_2)=0$, so the existence of $t_1$ and $t_2$ with these properties contradicts the earlier part of statement (2). This contradiction proves that $I_{\le 0}$ is a closed subinterval of $I$. \newline

\noindent (iii)~With the additional assumption of (3) (i.e., that if $y<0$ on an interval $J$ then $y$ is $C^2$ on $J$ and $a\ddot y+b\dot y+cy<0$ on $J$), statement (2) for the function $-y$ tells us that the set
    \[
    I_{\ge 0}\coloneqq\{t\in I:y(t)\ge 0\}
    \]
    is a closed connected subinterval of $I$. Therefore
    \[
    I_0\coloneqq\{t\in I:y(t)= 0\}=I_{\le 0}\cap I_{\ge 0},
    \]
    being the intersection of two closed connected subintervals $I_{\le 0}$ and $I_{\ge 0}$ of $I$, is itself a closed connected subinterval of $I$. Therefore, either $I_0=\emptyset$, in which case we have cases (a) or (b), or $I_0=[t_1,t_2]$ for some $t_1,t_2\in I$ with $t_1\le t_2$, in which case we have cases (d), (f), (i), or (k), or $I_0=I\cap [t_1,\infty)$ for some $t_1\in I$, in which case we have cases (e) or (j), or $I_0=I\cap (-\infty,t_2]$ for some $t_2\in I$, in which case we have cases (g) or (h), or $I_0=I$, in which case we have case (iii).
\end{proof}
Complementary to Lemma \ref{lem:sturm-picone}, we have the following Barbalat-type lemma.
\begin{lemma}[Barbalat-type lemma]\label{lem:barbalat}
Suppose that $a,b, c, T\in \mathbb{R}$ are real numbers with $c>0$, and let $y:(T,\infty)\to [0,\infty)$ be a $C^2$-function such that
    \begin{equation}\label{eq:2nd-rev}
        a\ddot y+b\dot y+cy\le 0,\quad t>T,
    \end{equation}
    with the uniform $C^1$ bound
    \begin{equation}\label{eq:C1-bound}
    \sup_{t>T} \Big( |y(t)| + |\dot{y}(t)|  \Big) <\infty.
    \end{equation}
    Then,  $y$ tends to zero asymptotically:
    \[
    \lim_{t\to\infty}y(t)=0.
    \]
  If we have an additional uniform a priori $C^2$-bound:
    \begin{equation}\label{eq:C2-bound}
    \sup_{t>T} |\ddot{y}(t)|<\infty,
    \end{equation}
    then, we have
    \[
    \lim_{t\to\infty} \dot{y}(t)=0.
    \]
\end{lemma}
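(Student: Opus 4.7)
The strategy is to show first that $y$ is integrable on $[T,\infty)$, deduce $y(t)\to 0$ from integrability, nonnegativity, and uniform continuity, and then upgrade to $\dot y(t)\to 0$ using the $C^2$-bound and the constraint $y\geq 0$. No appeal to the characteristic equation or to the sign of $a$ is needed; the argument treats $a,b$ as arbitrary real constants.

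The first step is to integrate \eqref{eq:2nd-rev} from $T$ to $t$, which gives
\[
c\int_T^t y(s)\,ds \;\leq\; a\dot y(T)+by(T)-a\dot y(t)-by(t).
\]
The $C^1$-bound \eqref{eq:C1-bound} makes the right-hand side uniformly bounded in $t$, so $\int_T^\infty y(s)\,ds<\infty$ by $c>0$ and $y\geq 0$. The bound $|\dot y|\leq M$ from \eqref{eq:C1-bound} makes $y$ Lipschitz, hence uniformly continuous on $[T,\infty)$. A standard Barbalat-style argument then forces $y(t)\to 0$: if there existed $\varepsilon>0$ and $t_n\to\infty$ with $y(t_n)\geq\varepsilon$, uniform continuity would give $y\geq\varepsilon/2$ on intervals of a fixed length around each $t_n$ (and after passing to a subsequence we may assume these intervals are disjoint), contradicting the finiteness of $\int_T^\infty y$.

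For the second claim, the $C^2$-bound \eqref{eq:C2-bound} makes $\dot y$ uniformly continuous. Suppose for contradiction that $\dot y(t_n)\to L\neq 0$ along some $t_n\to\infty$. Pick $\delta>0$ so that $|\dot y(t)-\dot y(t_n)|<|L|/2$ whenever $|t-t_n|<\delta$, for all large $n$. If $L>0$, integrating $\dot y>L/2$ on $[t_n,t_n+\delta]$ yields $y(t_n+\delta)\geq L\delta/2$, contradicting $y\to 0$. If $L<0$, integrating $\dot y<L/2$ on the same interval yields $y(t_n+\delta)\leq y(t_n)+L\delta/2$, which for large $n$ (when $y(t_n)<|L|\delta/4$) gives $y(t_n+\delta)<0$, contradicting $y\geq 0$. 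Hence $\dot y(t)\to 0$.

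The entire argument is routine once the differential inequality is integrated; there is no real obstacle. The one subtle point worth emphasizing is that the nonnegativity $y\geq 0$ is essential in the last step—it rules out the scenario of $\dot y$ having negative limit points, which pure uniform continuity of $\dot y$ together with $y\to 0$ could not exclude.
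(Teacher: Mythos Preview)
Your proof is correct and follows essentially the same route as the paper: integrate the inequality to get $\int_T^\infty y<\infty$, use the Lipschitz bound on $y$ to conclude $y\to 0$, then use uniform continuity of $\dot y$ for the second claim. The paper simply cites Barbalat's lemma for part (ii), whereas you reprove it by hand; the content is the same. Two minor remarks: since the domain is $(T,\infty)$, you should integrate from some $t_2>T$ rather than from $T$ itself (the paper does this and then lets $t_2\to T+$); and your closing comment that nonnegativity is \emph{essential} for ruling out negative limit points of $\dot y$ is not quite right—once you know $y\to 0$, the standard Barbalat argument (difference $y(t_n+\delta)-y(t_n)\to 0$) already excludes both signs of $L$ without invoking $y\ge 0$. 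Neither point affects the validity of your proof.
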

\begin{proof}
\noindent (i)~We integrate \eqref{eq:2nd-rev} to see that  for $t_1>t_2>T$,
    \[
    c\int_{t_2}^{t_1}y(t)dt\le a(\dot{y}(t_2)-\dot{y}(t_1))+b(y(t_2)-y(t_1)),
    \]
    and so, invoking the uniform bounds of \eqref{eq:C1-bound} and the nonnegativity of $y$, the positivity of $c$, and taking $t_1\to\infty$ and $t_2\to T+$, we have
    \[
    \int_T^\infty y(t)dt<\infty.
    \]
    The uniform upper bound on $|\dot{y}|$ given in the second part of \eqref{eq:C1-bound} tells us easily that
\[ \lim_{t\to\infty}y(t)=0. \]
\noindent (ii)~Barbalat's lemma says that if $y(t)$ has a finite limit as $t\to\infty$ and if $\dot{y}(t)$ is uniformly continuous in $t$, then 
\[ \lim_{t\to\infty}\dot{y}(t)=0. \]
If we assume \eqref{eq:C2-bound}, both of these conditions are satisfied, and so we may conclude 
\[ \lim_{t\to\infty}\dot{y}(t)=0. \]
\end{proof}
\begin{remark}
In our applications, we first find $a,b,c>0$ such that Lemma \ref{lem:sturm-picone} applies to a certain function $y$, to deduce its positivity for sufficiently large time. Then, if possible, we will find another triple $a,b,c>0$ such that Lemma \ref{lem:barbalat} is satisfied, so that we obtain $y(t),\dot{y}(t)\to 0$ as $t\to\infty$. It is also of interest when the same differential inequality holds regardless of the sign of $y$.
\end{remark}
\begin{lemma}\label{lem:cutoff-comparison}
Suppose that real numbers $a, b, c, T$ and extended real number  $M$ satisfy 
\[ a > 0, \quad b > 0, \quad c > 0, \quad 4 ac \leq b^2,~-\infty < T < \infty, \quad  0 < M  \leq \infty, \]  
and let $y:(T,\infty)\to \mathbb{R}$ be a $C^2$ function satisfying 
    \begin{equation} \label{App-B-1}
    \sup_{t>T}|\dot{y}(t)|<\infty \quad \mbox{and} \quad   a\ddot y(t)+b\dot y(t)+c\max \{-M,y(t)  \}\ge 0,\quad t>T.
    \end{equation}
Then, we have
\[ \liminf_{t\to\infty}y(t)\ge 0. \]
\end{lemma}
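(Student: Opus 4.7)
The plan is to substitute $w = -y$ and argue by contradiction via the Sturm--Picone comparison principle (Lemma \ref{lem:sturm-picone}). First I would rewrite the hypothesis as
\[
a\ddot w + b\dot w + c\min\{M, w\} \le 0, \quad t > T,
\]
using $\max\{-M, y\} = -\min\{M, w\}$, and rephrase the goal $\liminf_{t\to\infty} y(t) \ge 0$ as $\limsup_{t\to\infty} w(t) \le 0$. The a priori bound on $|\dot y|$ yields a constant $K$ with $|\dot w(t)| \le K$ for all $t > T$.

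I would first establish a uniform upper bound on $w$: on any interval where $w > M$, the inequality becomes $a\ddot w + b\dot w \le -cM$, which together with $|\dot w| \le K$ forces such intervals to have length bounded above by a constant, whence $w$ is bounded above on $(T,\infty)$ by a constant depending only on $a,b,c,K,M$. Then, supposing for contradiction that $\limsup_{t\to\infty} w(t) > 0$, I would fix $\varepsilon \in (0, \min\{M, \limsup w\})$ and show $\liminf w < \varepsilon$: if instead $w(t) \ge \varepsilon$ for all sufficiently large $t$, then $\min\{M,w\} \ge \varepsilon$, so $a\ddot w + b\dot w \le -c\varepsilon$; integrating in time and invoking the bounds on $|\dot w|$ and $w$ forces $w(t) \to -\infty$, a contradiction.

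Armed with $\liminf w < \varepsilon < \limsup w$, I observe that the open set $\{t > T : w(t) < \varepsilon\}$ must contain a bounded connected component $(s_1, s_2)$ on which $w$ attains an interior minimum at some $t_0$ with $w(t_0) < \varepsilon$ and $\dot w(t_0) = 0$. Setting $\tilde z = \varepsilon - w$, on any subinterval where $\tilde z > 0$ (equivalently $w < \varepsilon \le M$) we have $\min\{M,w\} = w$, so
\[
a\ddot{\tilde z} + b\dot{\tilde z} + c\tilde z = c\varepsilon - (a\ddot w + b\dot w + cw) \ge c\varepsilon > 0.
\]
Since $\tilde z(t_0) > 0$, $\dot{\tilde z}(t_0) = 0$, and the assumption $4ac \le b^2$ gives $T^* = \infty$ in Lemma \ref{lem:sturm-picone}, statement (1) of that lemma forces $\tilde z(t) > 0$ for all $t \ge t_0$, i.e.\ $w(t) < \varepsilon$ for all $t \ge t_0$, contradicting $\limsup w > \varepsilon$. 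Sending $\varepsilon \to 0$ completes the proof.

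The hard part will be the piecewise nature of the inequality caused by the truncation at $-M$: in the ``bad'' regime $w > M$ we lose the clean second-order inequality needed for a direct Sturm--Picone comparison, so we must separately rule out indefinite upward drift using the a priori bound on $\dot w$. Meanwhile, the overdamping condition $4ac \le b^2$ is essential for propagating the Sturm--Picone comparison out to $t = \infty$ (i.e., $T^* = \infty$), without which the positivity of $\tilde z$ could be lost before we could take $\varepsilon \to 0$.
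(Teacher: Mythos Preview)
Your proposal is correct and follows essentially the same strategy as the paper's proof: both apply Lemma \ref{lem:sturm-picone} to $y + \varepsilon$ (your $\tilde z = \varepsilon - w$) in the overdamped regime $4ac\le b^2$, combined with an integration argument ruling out $y \le -\varepsilon$ indefinitely. Your upper-bound step for $w$ and the final ``sending $\varepsilon \to 0$'' are harmless but unnecessary, and the only packaging difference is that you apply Sturm--Picone at an interior minimum of $w$, whereas the paper deduces from it that $\dot y < 0$ whenever $y > -\varepsilon$, traps $y$ below $-\varepsilon$, and then integrates.
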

\begin{proof}
    We need to show that for each $\varepsilon\in (0,M)$, there exists $T_\varepsilon>T$ such that
    \begin{equation}\label{eq:t-epsilon}
    y(t)>-\varepsilon,\quad \forall t>T_\varepsilon.
    \end{equation}
Suppose this were not the case. Note that, if $t>T$ is a time at which $y(t)>-\varepsilon$, then
    \[
    a\frac{d^2}{dt^2}(y(t)+\varepsilon)+b\frac{d}{dt}(y(t)+\varepsilon)+c(y(t)+\varepsilon)=(a\ddot y(t)+b\dot y(t)+cy(t))+c\varepsilon\stackrel{\mathclap{\eqref{App-B-1}}}{\ge} c\varepsilon>0,
    \]
    so by Lemma \ref{lem:sturm-picone} applied to $y+\varepsilon$ instead of $y$, we have that if $t^*>T$ is so that $y(t^*)>-\varepsilon$ and $\dot y(t^*)\ge 0$, then $y(t)>-\varepsilon$ for $t\ge t^*$. Since we are assuming that there is no $T_\varepsilon$ for which \eqref{eq:t-epsilon} holds, it must be that $\dot{y}<0$ whenever $y>-\varepsilon$.

 By the assumption that there is no $T_\varepsilon$ for which \eqref{eq:t-epsilon} holds, there must exist a time $t_0>T$ such that $y(t_0)\le -\varepsilon$. Since $\dot{y}<0$ whenever $y>-\varepsilon$, it must be that $y(t)\le -\varepsilon$ for all $t\ge t_0$. But then
    \[
    0\stackrel{\mathclap{\eqref{App-B-1}}}{\le} a\ddot y(t)+b\dot y(t)+c\max\{-M,y(t)\}\le a\ddot y(t)+b\dot{y}(t)-c\varepsilon,\quad t\ge t_0,
    \]
    so integrating gives
    \[
    c\varepsilon(t-t_0)\le a(\dot{y}(t)-\dot{y}(t_0))+b(y(t)-y(t_0))\le 2a\sup_{\tau >T}|\dot{y}(\tau)|+b(-\varepsilon-y(t_0)),\quad t\ge t_0,
    \]
    which is a contradiction since the right-hand side is uniformly bounded in $t$ yet the left-hand side grows to infinity as $t\to\infty$. This contradiction proves that there exists a $T_\varepsilon>T$ such that \eqref{eq:t-epsilon} holds.
\end{proof}

%%%%%%%%%%%%%%%%%%%%%%%%%%%
%
%
%		Appendix 3
%
%
%%%%%%%%%%%%%%%%%%%%%%%%%%%
\section{Proof of Theorem \ref{simplemainthm}} \label{app:mainthm}
\setcounter{equation}{0}
Recall the framework \eqref{C-1}:
\[
\begin{cases}
\displaystyle ({\mathcal F}_1):~~R^0>0, \quad \zeta(\eta)
\leq (1-\delta)R^0. \\
\displaystyle  ({\mathcal F}_2):~~
\delta R^0\ge \lambda+(1-\lambda)\cos\frac{\ell}{2} \quad \mathrm{or}\quad 2\lambda + \left( \frac{\xi(\eta)}{\delta R^0} \right)^2\frac{1}{1-\cos(\ell/2)} \leq 1+ \delta R^0. \\
\displaystyle  ({\mathcal F}_3):~~\xi(\eta)<\sin \frac{\ell}{2} \left( \lambda \cos\frac{\ell}{2} - (1-\lambda) \right). \\
\displaystyle  ({\mathcal F}_4):~~\frac{\mathcal{D}(\mathcal{V})}{\kappa} +4m\kappa+2m\mathcal{D}(\mathcal{V})< \frac{(2\lambda-1)^{3/2}}{\sqrt{2\lambda}}\frac{2-\lambda}{\sqrt{\lambda/2}+(1-\lambda)},
\end{cases}
\]
and the definitions of $\zeta(\eta)$ and $\xi(\eta)$ in \eqref{B-14-0-1}:
\[
\begin{cases}
\displaystyle \zeta(\eta)\coloneqq\frac{m(1-e^{-\eta})}{2}\left[\mathcal{D}(\Omega^0)+\mathcal{D}(\mathcal{V})\eta\right]
+m^2\kappa\left(1-e^{-\eta}\right)^3\left[\frac 34  \mathcal{D}(\Omega^0)+  (\mathcal{D}(\mathcal{V})+2\kappa)\eta \right],\\
\displaystyle \xi(\eta)\coloneqq \left(\mathcal{D}(\mathcal{V})+2\kappa\right)m+\mathcal{D}(\Omega^0)m\max\{1,\eta\}e^{-\max\{1,\eta\}}+\frac{\mathcal{D}(\mathcal{V})}{2\kappa}+\frac{\mathcal{D}(\Omega^0)}{2\kappa}\frac{e^{-\eta}}{1-e^{-\eta}}.
\end{cases}
\]
Define
\[
{\tilde \zeta}(\eta)\coloneqq \frac{1-e^{-\eta}}{2}(yz+\eta xy)+(1-e^{-\eta})^3y^2(\frac 34 z+\eta x+2\eta)
\]
and
\[
{\tilde \xi}(\eta)=y(x+2)+\max\{1,\eta\}e^{-\max\{1,\eta\}}yz+\frac x2+\frac{e^{-\eta}}{1-e^{-\eta}}\frac z2.
\]
Then, we have
\[
\zeta(\eta)\le{\tilde \zeta}(\eta)|R^0|^2,\quad \mbox{and} \quad  \xi(\eta)\le \tilde{\xi}(\eta)|R^0|^2.
\]
So the given condition 
\begin{align*}
\inf_{\eta>0}&(1-e^{-\eta})y\left(\frac{1}{2}(z+\eta x)+(1-e^{-\eta})^2y(\frac 34 z+\eta x+2\eta)\right)\\
&+\sqrt{\frac{1}{0.3259}\left(y(x+2)+\max\{1,\eta\}e^{-\max\{1,\eta\}}yz+\frac x2+\frac{e^{-\eta}}{1-e^{-\eta}}\frac z2\right)}< 1
\end{align*}
is equivalent to the existence of an $\eta>0$ such that
\[
\tilde{\zeta}(\eta) +\sqrt{\frac{\tilde{\xi}(\eta)}{0.3259}}\le 1.
\]
Now, we choose some $\delta\in (0,1)$ such that
\[
\tilde{\zeta}(\eta)\le 1-\delta \quad \mbox{and} \quad \tilde{\xi}(\eta)\le 0.3259\delta^2.
\]
Finally, we set
\[
\lambda = \begin{cases}
    0.5+\frac{35}{94}\delta R^0,&0<\delta R^0\le 0.94,\\
    2.5\delta R^0-1.5,& 0.94<\delta R^0\le 1,
\end{cases}
\qquad 
\ell=
\begin{cases}
    2\cos^{-1}(1-\frac{20}{47}\delta R^0),& 0<\delta R^0\le 0.94,\\
    2\cos^{-1}0.6,& 0.94<\delta R^0\le 1.
\end{cases}
\]
By Theorem \ref{T3.1}, it is enough to show that this choice of free parameters $\eta, \delta,\lambda,\ell$ satisfy the framework (${\mathcal F}$) in \eqref{C-1}. Condition \eqref{C-1}-$({\mathcal F}_1)$ is satisfied since
\[
\zeta(\eta)\le \tilde{\zeta}(\eta)R^0\le (1-\delta)R^0.
\]
For the other conditions, we need the following lemma.
\begin{lemma}\label{lem:numeric}
The above choice of $\lambda$ and $\ell$ obey the following estimates. 
\begin{enumerate}
\item If $0.94<\delta R^0\le 1$, then
\[
\delta R^0= \lambda+(1-\lambda)\cos\frac{\ell}{2}.
\]
Otherwise, if $0<\delta R^0\le 0.94$, then
\[
(\delta R^0)\sqrt{1-\cos(\ell/2)}\sqrt{1+\delta R^0-2\lambda}\ge 0.3296(\delta R^0)^2.
\]
\item \[
\sin \frac{\ell}{2} \left( \lambda \cos\frac{\ell}{2} - (1-\lambda) \right)>0.3259(\delta R^0)^2.
\]
\item \[
\frac{(2\lambda-1)^{3/2}}{\sqrt{2\lambda}}\frac{2-\lambda}{\sqrt{\lambda/2}+(1-\lambda)}\ge 0.729 (\delta R^0)^2.
\]
\end{enumerate}
\end{lemma}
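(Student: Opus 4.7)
The entire lemma depends only on the scalar $u \coloneqq \delta R^0 \in (0,1]$, so I would begin by splitting into the two regimes matching the piecewise definitions of $\lambda$ and $\ell$: the ``regular'' regime $u \in (0, 0.94]$, where $\lambda = \tfrac12 + \tfrac{35}{94}u$ and $1-\cos(\ell/2) = \tfrac{20}{47}u$, and the ``saturated'' regime $u \in (0.94, 1]$, where $\lambda = 2.5u - 1.5$ and $\cos(\ell/2) = 0.6$ (so $\sin(\ell/2) = 0.8$). Each of the three claims then reduces to an elementary single-variable inequality on one of two intervals.

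For claim (1), the saturated case is a straightforward identity: $\lambda + (1-\lambda)\cos(\ell/2) = 0.4\lambda + 0.6 = 0.4(2.5u-1.5)+0.6 = u$. In the regular case, I would compute $1+u-2\lambda = \tfrac{12}{47}u$, multiply by $1-\cos(\ell/2) = \tfrac{20}{47}u$, take the square root, and multiply by the outer $u$ to obtain the clean closed form $\tfrac{4\sqrt{15}}{47}u^2$; since $\tfrac{4\sqrt{15}}{47} > 0.3296$, the claim follows. For claim (2), in the saturated case the left-hand side simplifies to $0.8(4u - 3.4) = 3.2u - 2.72$, so the inequality becomes the quadratic $0.3259 u^2 - 3.2u + 2.72 < 0$; I would verify this by computing the discriminant $3.2^2 - 4(0.3259)(2.72) \approx 6.69$ and showing the two roots are approximately $0.94$ and $8.88$, so the inequality holds on $(0.94, 1]$. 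For the regular case, I would substitute $\sin(\ell/2) = \sqrt{\tfrac{20}{47}u\bigl(2-\tfrac{20}{47}u\bigr)}$ and $\lambda\cos(\ell/2)-(1-\lambda) = \tfrac{25}{47}u - \tfrac{350}{2209}u^2$, divide by $u^{3/2}$, and analyze the resulting function of $u \in (0, 0.94]$; the minimum of this ratio is attained at the endpoint $u = 0.94$, where continuity with the saturated case gives the boundary value $0.288 \approx 0.3259 \cdot (0.94)^2$.

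For claim (3), in the saturated case $\lambda = 2.5u - 1.5 \in [0.85, 1]$, and I would check that the function $g(\lambda) = \tfrac{(2\lambda-1)^{3/2}}{\sqrt{2\lambda}} \cdot \tfrac{2-\lambda}{\sqrt{\lambda/2}+(1-\lambda)}$ is monotonically increasing on $[0.85, 1]$ from $\approx 0.644$ (at $\lambda = 0.85$) to $1$ (at $\lambda = 1$), while $0.729 u^2$ grows from $0.644$ to $0.729$; monotonicity plus matching endpoint values gives the claim. In the regular case, substituting $\lambda = \tfrac12 + \tfrac{35}{94}u$ gives $(2\lambda-1)^{3/2} = (\tfrac{70}{94})^{3/2}u^{3/2}$, and bounding each of the other three factors from below by their values at $u = 0.94$ (or by a simple expansion for small $u$) yields a lower bound of the form $c \cdot u^{3/2}$ with $c \ge 0.729\cdot 0.94^{1/2}$, which exceeds $0.729 u^2$ for $u \le 0.94$. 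The main obstacle is that the constants $0.3259$ and $0.729$ are sharp at $u = 0.94$, so numerical sloppiness would destroy claims (2) and (3); the proofs require either exact symbolic computation of the relevant polynomials or careful verification that the ratio LHS$/$RHS is monotonically decreasing toward the transition point $u = 0.94$ in each regime.
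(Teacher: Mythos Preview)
Your treatment of claim (1) and the saturated case of claim (2) is correct and matches the paper. The regular case of (2) is on the right track but vague; the paper makes it precise by writing the left-hand side as $\sqrt{\tfrac{20}{47}}\sqrt{2-\tfrac{20}{47}u}\bigl(\tfrac{25}{47\sqrt{u}}-\tfrac{350}{47^2}\sqrt{u}\bigr)\cdot u^2$, observing the bracketed factor is decreasing, and evaluating at $u=0.94$ to get exactly $\tfrac{720}{47^2}>0.3259$.

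Your argument for claim (3), however, has genuine gaps in both regimes. In the regular case, it is \emph{false} that each of the three remaining factors is bounded below by its value at $u=0.94$: the factor $\tfrac{1}{\sqrt{\lambda/2}+(1-\lambda)}$ is \emph{increasing} in $\lambda$ on $(1/8,1]$ (its denominator has derivative $\tfrac{1}{2\sqrt{2\lambda}}-1<0$), so its minimum on $(0.5,0.85]$ is at $\lambda=0.5$, not $0.85$. The paper avoids this by extracting $(2\lambda-1)^2=\tfrac{35^2}{47^2}u^2$ rather than $(2\lambda-1)^{3/2}$; the remaining quotient $\tfrac{1}{\sqrt{2\lambda(2\lambda-1)}}\cdot\tfrac{2-\lambda}{\sqrt{\lambda/2}+(1-\lambda)}$ \emph{is} globally decreasing on $(\tfrac12,1]$, verified by computing its logarithmic derivative. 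In the saturated case, ``monotonicity plus matching endpoint values'' is insufficient: both $g(\lambda(u))$ and $0.729u^2$ are increasing, and two increasing functions agreeing at a left endpoint need not satisfy an inequality in between. The paper instead divides $g(\lambda)$ by $(1.5+\lambda)^2=6.25u^2$ and shows \emph{that} quotient is increasing on $(\tfrac12,1)$ via its logarithmic derivative, so the ratio $g(\lambda(u))/u^2$ itself is increasing and attains its minimum at $u=0.94$. Your closing remark about needing the ratio LHS$/$RHS to be monotone is exactly right, but the specific arguments you sketched do not establish it.
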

\begin{proof} 
The proof will be given at Step B in the sequel.
\end{proof}
\vspace{0.2cm}
\noindent Now, we are ready to provide a proof of Theorem \ref{simplemainthm} in two steps.  \newline

\noindent $\bullet$~Step A (Lemma \ref{lem:numeric} implies the proof of  Theorem \ref{simplemainthm}): ~Suppose Lemma \ref{lem:numeric} holds. Then, we note that 
\[
\xi(\eta)\le \tilde{\xi}(\eta)|R^0|^2\le 0.3259(\delta R^0)^2<0.3296(\delta R^0)^2.
\]
Thus, statement (1) of Lemma \ref{lem:numeric} tells us that condition $({\mathcal F}_2)$ is satisfied, and statement (2) of Lemma \ref{lem:numeric} tells us that condition $({\mathcal F}_3)$ is satisfied. Finally, by statement (3) of Lemma \ref{lem:numeric},
\[
\frac{\mathcal{D}(\mathcal{V})}{\kappa} +4m\kappa+2m\mathcal{D}(\mathcal{V})<2\xi(\eta)<2\cdot 0.3259(\delta R^0)^2<0.729(\delta R^0)^2\le \frac{(2\lambda-1)^{3/2}}{\sqrt{2\lambda}}\frac{2-\lambda}{\sqrt{\lambda/2}+(1-\lambda)}.
\]
To recapitulate, we have shown, assuming Lemma \ref{lem:numeric}, that the conditions of Theorem \ref{simplemainthm} implies the existence of parameters $\eta,\delta,\lambda,\ell$ that satisfy framework \eqref{C-1}, so that the results of Theorem \ref{T3.1} apply. This proves asymptotic phase-locking and well-ordering of a majority cluster, as claimed in Theorem \ref{simplemainthm}.

\vspace{0.5cm}

\noindent $\bullet$~Step B (Verification of Lemma \ref{lem:numeric}): In the sequel, we verify three assertions one by one. \newline

\noindent $\diamond$~Case A (Verification of the first assertion):~ If $0.94<\delta R^0\le 1$, then
        \[
        \lambda+(1-\lambda)\cos\frac \ell 2=0.6+0.4\lambda = \delta R^0.
        \]
        If $0<\delta R^0\le 0.94$, then
        \begin{align*}
            \sqrt{1-\cos(\ell/2)}\sqrt{1+\delta R^0-2\lambda}&=\sqrt{\frac{20}{47}\delta R^0}\sqrt{\frac{24}{94}\delta R^0} =  \frac{4\sqrt{15}}{47}\delta R^0 > 0.32961560 (\delta R^0) .
        \end{align*}
  \vspace{0.2cm}
  
  \noindent $\diamond$~Case B (Verification of the second assertion):~If $0<\delta R^0\le 0.94$, then
        \begin{align*}
        \begin{aligned}
            & \sin \frac{\ell}{2} \left( \lambda \cos\frac{\ell}{2} - (1-\lambda) \right) \\
            & \hspace{1cm} =\sqrt{1-\cos \frac{\ell}{2}}\sqrt{1+\cos\frac\ell 2} \left( \lambda \cos\frac{\ell}{2} - (1-\lambda) \right)  \\
            & \hspace{1cm} =\sqrt{\frac {20}{47}\delta R^0}\sqrt{2-\frac{20}{47}\delta R^0}\left(\frac{25}{47}\delta R^0-\frac{350}{47^2}(\delta R^0)^2\right)\\
            & \hspace{1cm} =\sqrt{\frac {20}{47}}\sqrt{2-\frac{20}{47}\delta R^0}\left(\frac{25}{47\sqrt{\delta R^0}}-\frac{350}{47^2}\sqrt{\delta R^0}\right)\cdot (\delta R^0)^2 \\
            & \hspace{1cm} \ge \sqrt{\frac {20}{47}}\sqrt{2-\frac{20}{47}(0.94)}\left(\frac{25}{47\sqrt{0.94}}-\frac{350}{47^2}\sqrt{0.94}\right)\cdot (\delta R^0)^2\\
            & \hspace{1cm} =\frac{720}{47^2}(\delta R^0)^2 >0.32593933(\delta R^0)^2.
        \end{aligned}     
        \end{align*}
        If $0.94<\delta R^0\le 1$, then
        \begin{align*}
            \sin \frac{\ell}{2} \left( \lambda \cos\frac{\ell}{2} - (1-\lambda) \right)=0.8 (4\delta R^0-3.4)=0.8\left(\frac{4}{\delta R^0}-\frac{3.4}{(\delta R^0)^2}\right)(\delta R^0)^2,
        \end{align*}
        but since the derivative of $x\mapsto \frac 4x-\frac{3.4}{x^2}$ is $x\mapsto \frac{6.8-4x}{x^3}$ and is positive on the interval $(0,1)$, we have that
        \[
        0.8\left(\frac{4}{\delta R^0}-\frac{3.4}{(\delta R^0)^2}\right)\ge 0.8\left(\frac{4}{0.94}-\frac{3.4}{0.94^2}\right)\ge \frac{720}{47^2}.
        \]
        
        \vspace{0.5cm}
  \noindent $\diamond$ Case C~ (Verification of the third assertion)::~If $0<\delta R^0\le 0.94$, we observe that the function defined on $(\frac 12,1]$ by
    \[
    \lambda \quad  \mapsto \quad \frac{1}{\sqrt{2\lambda}\sqrt{2\lambda-1}}\frac{2-\lambda}{\sqrt{\lambda/2}+(1-\lambda)}
    \]
    has negative logarithmic derivative on $(\frac 12,1)$:
    \begin{align*}
        -\frac{1}{2-\lambda}-\frac{1}{2\lambda}-\frac{1}{2\lambda -1}+\frac{-\frac{1}{2\sqrt{2\lambda}}+1}{\sqrt{\lambda/2}+(1-\lambda)}<-\frac{1}{2-0.5}-\frac{1}{2}-1+\frac{-\frac{1}{2\sqrt{2}}+1}{1/2+0}=-\frac 16 -\frac 1{\sqrt{2}}<0.
    \end{align*}
    Thus, the above function is decreasing on $(\frac 12,1]$. Since $0<\delta R^0\le 0.94$, by definition we have 
\begin{equation} \label{App-C-1}
0.5<\lambda\le 0.85.
\end{equation}
Then, we use \eqref{App-C-1} to find 
    \begin{align*}
    \begin{aligned}
\frac{(2\lambda-1)^{3/2}}{\sqrt{2\lambda}}\frac{2-\lambda}{\sqrt{\lambda/2}+(1-\lambda)} &=\frac{1}{\sqrt{2\lambda}\sqrt{2\lambda-1}}\frac{2-\lambda}{\sqrt{\lambda/2}+(1-\lambda)}(2\lambda-1)^2  \\
&\ge \frac{1}{\sqrt{1.7}\sqrt{0.7}}\frac{1.15}{\sqrt{0.425}+0.15}\frac{35^2}{47^2}(\delta R^0)^2 \ge 0.7290 (\delta R^0)^2.
\end{aligned}
\end{align*}
If $0.94<\delta R^0\le 1$, the function defined on $(\frac 12,1]$ by
\[
\lambda \mapsto \frac{(2\lambda-1)^{3/2}}{\sqrt{2\lambda}(1.5+\lambda)^2}\frac{2-\lambda}{\sqrt{\lambda/2}+(1-\lambda)}
\]
has positive logarithmic derivative on $(\frac 12, 1)$:
\[
    \frac{3}{2\lambda-1}-\frac{1}{2-\lambda}-\frac{1}{2\lambda}-\frac{2}{\lambda+1.5}+\frac{-\frac{1}{2\sqrt{2\lambda}}+1}{\sqrt{\lambda/2}+(1-\lambda)}>3-1-1-1+\frac{-0.5+1}{\sqrt{1/2}+0.5}=\frac{1}{1+\sqrt{2}}>0.
\]
So the above function is increasing on $(\frac 12, 1]$. Since $0.94<\delta R^0\le 1$, we have $0.85<\lambda\le 1$ and
\begin{align*}
    \frac{(2\lambda-1)^{3/2}}{\sqrt{2\lambda}}\frac{2-\lambda}{\sqrt{\lambda/2}+(1-\lambda)}&=\frac{(2\lambda-1)^{3/2}}{\sqrt{2\lambda}(1.5+\lambda)^2}\frac{2-\lambda}{\sqrt{\lambda/2}+(1-\lambda)}(1.5+\lambda)^2\\
    &\ge \frac{0.7^{3/2}}{\sqrt{1.7}\cdot 2.35^2}\frac{1.15}{\sqrt{0.425}+0.15}2.5^2(\delta R^0)^2 \ge 0.7290(\delta R^0)^2.
\end{align*}
\begin{remark}
    There is a tradeoff between $\eta$ and the possible ranges of $\frac{\mathcal{D}(\mathcal{V})}{\kappa}$, $m\kappa$, and $\frac{\mathcal{D}(\Omega^0)}{\kappa}$: the larger $\eta$ is, the larger we may take $\frac{\mathcal{D}(\Omega^0)}{\kappa}$ but the smaller we are forced to take $\frac{\mathcal{D}(\mathcal{V})}{\kappa}$ and $m\kappa$, and vice versa.
\end{remark}

%%%%%%%%%%%%%%%%%%%%%%%%%%%
%
%
%		Appendix 4
%
%
%%%%%%%%%%%%%%%%%%%%%%%%%%%
\section{Asymptotic phase-locking versus finiteness of collisions}\label{app:collision}
\setcounter{equation}{0}
In this appendix, we elaborate on statement (2) of Remark \ref{R2.2}. More precisely, we define collisions and show that their finiteness is equivalent to asymptotic phase-locking in the small inertia regime $m\kappa\le \frac 14$.  Let $\Theta(t)$ be a global solution to \eqref{A-1}. We begin with the simple observation that for all $i,j\in [N]$, since $\theta_i(t)$ and $\theta_j(t)$ are analytic in $t$ with uniformly bounded derivative, the countable union of zero sets given by $\{t\ge 0: \theta_i(t)\equiv \theta_j(t)\mod 2\pi\}$ is either a discrete subset of $[0,\infty)$ or the entire interval $[0,\infty)$. The latter case necessitates that $\nu_i=\nu_j$, $\theta_i^0\equiv \theta_j^0\mod 2\pi$, and $\omega_i^0=\omega_j^0$, so the $i$th and $j$th oscillators were indistinguishable in the first place. Next, we first recall the concept of collisions in the following definition. 
\begin{definition} \label{D-1}
Let $\Theta(t)$ be a solution to \eqref{A-1}.  
\begin{enumerate}
\item
$\theta_i$ and $\theta_j$  \emph{collide} at time $t_0\ge 0$ if and only if $t_0$ is an isolated point of the set $\{t\ge 0:\theta_i(t)\equiv \theta_j(t)\mod 2\pi\}$; by the previous paragraph, this is equivalent to requiring that $(\nu_i,\theta_i^0\mod 2\pi,\omega_i^0)\neq (\nu_j,\theta_j^0\mod 2\pi,\omega_j^0)$ as elements of $\mathbb{R}\times (\mathbb{R}/2\pi \mathbb{Z})\times \mathbb{R}$, while $\theta_i(t_0)=\theta_j(t_0)$. 
\vspace{0.2cm}
\item
$\Theta(t)$ exhibits \emph{finite collision} if for every pair of oscillators $\theta_i(t)$ and $\theta_j(t)$, there are only finitely many times at which they collide.
\end{enumerate}
\end{definition}
It turns out that finiteness of collisions implies asymptotic phase-locking, and the converse holds under some additional conditions; see the following theorem for the precise statement. A weaker version of this theorem was stated and proven in \cite[Theorem 3.1]{C-D-H}.
%%%%%%%%%%%%%%%%%%%%%%%%%%%%%
%
%
%%%%%%%%%%%%%%%%%%%%%%%%%%%%%%
\begin{theorem}\label{thm:finite-collision}
    Let $\Theta(t)$ be a solution to \eqref{A-1}.
    \begin{enumerate}
        \item If $\Theta(t)$ exhibits finiteness of collisions, then $\Theta(t)$ exhibits asymptotic phase-locking.
        \vspace{0.2cm}
        \item Suppose that $\Theta(t)$ exhibits asymptotic phase-locking. Then, for $i,j\in [N]$, $\theta_i$ and $\theta_j$ can collide infinitely many times only if
\begin{enumerate}
\item $\nu_i=\nu_j$,
\item $\lim_{t\to\infty}(\theta_i(t)-\theta_j(t))\in 2\pi \mathbb{Z}$,
\item $m\kappa>\frac 14$, and
\item $m\kappa \lim_{t\to\infty}\frac 1N\sum_{l=1}^N \cos(\theta_l(t)-(\theta_i(t)+\theta_j(t))/2)\ge\frac 14$.
\end{enumerate}
 In particular, if either
        \begin{enumerate}
        \item $\nu_i\neq \nu_j$ for all $i,j\in [N]$ with $i\neq j$, or
	\item $\lim_{t\to\infty}(\theta_i(t)-\theta_j(t))\notin 2\pi \mathbb{Z}$ whenever $i,j\in [N]$ with $i\neq j$ and $\nu_i=\nu_j$, or
        \item $m\kappa \le \frac 14$, or
        \item $m\kappa  \lim_{t\to\infty}\frac 1N\sum_{l=1}^N \cos(\theta_l(t)-\theta_i(t))< \frac 14$ whenever $i,j\in [N]$ with $i\neq j$, $\nu_i=\nu_j$, and $\lim_{t\to\infty}(\theta_i(t)-\theta_j(t))\in 2\pi \mathbb{Z}$,
        \end{enumerate}
        then $\Theta(t)$ must exhibit finiteness of collisions.
    \end{enumerate}
\end{theorem}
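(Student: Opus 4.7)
For statement~(1), the plan is to translate finiteness of collisions into a uniform phase-diameter bound and then invoke the {\L}ojasiewicz machinery of Proposition~\ref{P2.1}. After applying the Galilean symmetry (Proposition~\ref{prop:sym}) to reduce to $\nu_c=0$ and $\omega_c^0=0$, which preserves the pairwise differences $\theta_i-\theta_j$ and hence the collision set, finite collisions for every pair $(i,j)$ means the continuous function $\theta_i-\theta_j$ meets $2\pi\mathbb{Z}$ only finitely often and eventually lies in a single interval of length at most $2\pi$. Since there are only finitely many pairs, this gives $\sup_{t\ge 0}\mathcal{D}(\Theta(t))<\infty$, which by Remark~\ref{R2.1} is equivalent to $\|\Theta\|_{L^\infty(\mathbb{R}_+)}<\infty$; combined with the a priori bound $\|\Omega\|_{L^\infty(\mathbb{R}_+)}<\infty$ from Lemma~\ref{L2.2}, Proposition~\ref{P2.1} delivers asymptotic phase-locking.

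For statement~(2), suppose asymptotic phase-locking holds and $\theta_i,\theta_j$ collide infinitely often. Part~(b) is immediate, because the limit of $\theta_i-\theta_j$ exists and lies in the closed set $2\pi\mathbb{Z}$; write it as $2k\pi$ and set $y(t):=\theta_i(t)-\theta_j(t)-2k\pi\to 0$. For part~(a), a Barbalat-type argument applied first to the bounded, uniformly continuous function $\dot\theta_i-\dot\theta_j$ (whose integral $\theta_i-\theta_j$ has a finite limit) and then to $\ddot\theta_i-\ddot\theta_j$ forces both to tend to $0$; passing to the limit in the subtracted second-order equation and using $\sin((\theta_i^\infty-\theta_j^\infty)/2)=\sin(k\pi)=0$ to kill the coupling term then yields $\nu_i=\nu_j$.

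Parts~(c) and~(d) are proved by contradiction via the Sturm--Picone principle, Lemma~\ref{lem:sturm-picone}(3), applied to $y$. With $\nu_i=\nu_j$ the subtracted equation reads
\[
m\ddot y+\dot y=-2\kappa(-1)^k\sin(y/2)\cdot\tfrac{1}{N}\sum_{l=1}^N\cos\!\Big(\theta_l-\tfrac{\theta_i+\theta_j}{2}\Big),
\]
whose right-hand side is bounded in modulus by $\kappa|y|$ using $|2\sin(y/2)|<|y|$ strictly on $(0,2\pi)$. For~(c), if $m\kappa\le 1/4$ this produces $m\ddot y+\dot y+\kappa y>0$ on every positive sub-interval of $y$ and the reverse on negative ones; since $y\to 0$ eventually forces $|y|<\pi$, Lemma~\ref{lem:sturm-picone}(3) with $4m\kappa\le 1$ forbids two sign changes of $y$ on a tail $[T,\infty)$, contradicting infinitely many zeros of $y$. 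For~(d), denote by $\tilde C_\infty$ the signed limit of the cosine average above, which exists because $y\to 0$ and asymptotic phase-locking fixes every $\theta_l-\theta_i$; I pick $c$ with $\kappa\tilde C_\infty<c\le 1/(4m)$ when $\tilde C_\infty>0$ and $c=1/(4m)$ otherwise, so that the strict tail inequality $|2\kappa\tilde C(t)\sin(y/2)|<c|y|$ holds for large $t$ and small $|y|$. Lemma~\ref{lem:sturm-picone}(3) with $4mc\le 1$ again rules out infinite sign changes of $y$, contradicting infinite collisions.

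The main obstacle is the careful sign and parity bookkeeping in~(d): the linearized ``effective restoring coefficient'' for $y$ at $y=0$ is $\kappa$ times $(-1)^k$ times the limit appearing in~(d), so the Sturm--Picone threshold $4mc\le 1$ must be aligned with the stated form of~(d) using the correct sign convention depending on the parity of the limit index~$k$. A secondary technicality is ensuring that the strict inequalities required by Lemma~\ref{lem:sturm-picone} hold uniformly on all positive and negative sub-intervals of $y$ for large $t$; this is achieved by combining convergence of the cosine average along the phase-locked trajectory with the strict concavity estimate $2\sin(y/2)<y$ on $(0,2\pi)$ and the eventual smallness of $|y(t)|$.
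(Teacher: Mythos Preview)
Your outline for (1), (2)(b), and (2)(c) matches the paper's proof essentially verbatim: bounded pairwise differences plus Proposition~\ref{P2.1} for (1), the limit lying in the closed set $2\pi\mathbb{Z}$ for (b), and Sturm--Picone with $a=m$, $b=1$, $c=\kappa$ using $|2\sin(y/2)|<|y|$ and $|C(t)|\le 1$ for (c). One small imprecision in (c): ``infinitely many zeros'' contradicts Lemma~\ref{lem:sturm-picone}(3) only after noting that $y$ is real-analytic and not identically zero, so its zero set is discrete; then the classification in (3) combined with the remark after it forces at most one zero on the tail. The paper's proof is equally terse here.

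For (2)(a) you take a genuinely different route: two applications of Barbalat to get $\dot y,\ddot y\to 0$ and then pass to the limit in the differenced ODE. The paper instead uses the Duhamel formula~\eqref{B-4} for $\omega_i-\omega_j$, bounds the coupling integral via $|\theta_i-\theta_j|\to 0$, and reads off $\nu_i=\nu_j$ directly. Both are valid; the paper's argument is shorter given the Duhamel machinery already in place, while yours is more self-contained.

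For (2)(d) there is a gap. You correctly identify that the linearized restoring coefficient is $\kappa(-1)^k\tilde C_\infty$, not $\kappa\tilde C_\infty$, but your choice ``pick $c$ with $\kappa\tilde C_\infty<c\le 1/(4m)$ when $\tilde C_\infty>0$'' does not track the $(-1)^k$ factor, so when $k$ is odd your strict tail inequality need not hold. The paper sidesteps this entirely by replacing $\theta_i$ with $\theta_i-2k\pi$ \emph{before} any analysis, forcing $k=0$ so that $(-1)^k=1$ throughout; it then runs Sturm--Picone with $c=\frac{1}{4m}$ on a tail $[T',\infty)$ where $\frac{\kappa}{N}\sum_l\cos(\theta_l-(\theta_i+\theta_j)/2)\le \frac{1}{4m}$. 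Making that shift first is the clean fix for your argument as well.
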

\begin{proof}
\noindent (i)~For each $i,j\in [N]$, either $(\nu_i,\theta_i^0\mod 2\pi,\omega_i^0)= (\nu_j,\theta_j^0\mod 2\pi,\omega_j^0)$, in which case $\theta_i-\theta_j$ is constant in $t$, or $(\nu_i,\theta_i^0\mod 2\pi,\omega_i^0)\neq (\nu_j,\theta_j^0\mod 2\pi,\omega_j^0)$, in which case the set $\{t\ge 0:\theta_i(t)\equiv \theta_j(t)\mod 2\pi\}$ is finite and hence upper bounded. This implies the existence of an integer $k\in \mathbb{Z}$ such that
\[
2k\pi<\theta_i(t)-\theta_j(t)<2(k+1)\pi \mathrm{~for~sufficiently~large~}t
\]
and a fortiori the boundedness of $\theta_i-\theta_j$ in $t$. Since this is true for all pairs $i,j\in [N]$, it follows that
\[
\sup_{t\ge 0}\mathcal{D}(\Theta(t))< \infty.
\]
Hence  the first statement of Remark \ref{R2.2}, $\Theta(t)$ exhibits asymptotic phase-locking.

\vspace{0.2cm}

\noindent (ii)~Suppose that $\Theta(t)$ exhibits asymptotic phase-locking. Suppose that for some $i,j\in [N]$, the oscillators $\theta_i$ and $\theta_j$ collide infinitely many times. Then the set $\{t\ge 0:\theta_i(t)\equiv \theta_j(t)\mod 2\pi\}$ is infinite discrete and hence unbounded; because $\lim_{t\to\infty} \left(\theta_i(t)-\theta_j(t)\right)$ exists by assumption, it must equal $2k\pi$ for some $k\in \mathbb{Z}$; let $k=0$ by harmlessly replacing $\theta_i$ by $\theta_i-2k\pi$. By Proposition \ref{P2.1}, we also have $\lim_{t\to\infty}\mathcal{D}(\Omega(t))=0$. By Invoking the Duhamel principle \eqref{B-4}, we have
\begin{align}\label{eq:duhamel-difference}
\begin{aligned}
\omega_i(t)-\omega_j(t) &= (\omega^0_i-\omega^0_j) e^{-t/m} + (\nu_i-\nu_j) (1 -e^{-t/m})\\
&+ \frac{\kappa}{Nm}\sum_{l=1}^N \int_0^t e^{-(t-s)/m}\left(\sin(\theta_l(s) -\theta_i(s))-\sin(\theta_l(s) -\theta_j(s))\right) ds.
\end{aligned}
\end{align}
Then we use the mean-value theorem and $|\cos(\cdot)| \leq 1$ to bound the last term of the right-hand side of \eqref{eq:duhamel-difference} by
\begin{align*}
&\left|\frac{\kappa}{Nm}\sum_{l=1}^N \int_0^t e^{-(t-s)/m}\left(\sin(\theta_l(s) -\theta_i(s))-\sin(\theta_l(s) -\theta_j(s))\right) ds\right|\\
& \hspace{0.5cm} \le \frac{\kappa}{Nm}\sum_{l=1}^N \int_0^t e^{-(t-s)/m}\left|\theta_i(s)-\theta_j(s)\right| ds. 
\end{align*}
Since $\theta_i(s)-\theta_j(s)\to 0$ as $s\to\infty$, it follows that the last term of $\eqref{eq:duhamel-difference}$ converges to 0 as $t\to\infty$. Therefore, we take the limit as $t\to\infty$ in \eqref{eq:duhamel-difference} to get 
\[
0=0+(\nu_i-\nu_j)+0.
\]
Hence, we have
\[ \nu_i=\nu_j. \]
We take a large enough time $T>0$ such that 
\[ \theta_i(t)-\theta_j(t)\in \left(-\frac \pi4,\frac \pi 4\right) \quad \mbox{for $t\ge T$.} \]
It follows from \eqref{A-1} that
\begin{equation}\label{eq:2ndODE-diff}
m(\ddot\theta_i-\ddot\theta_j)+(\dot\theta_i-\dot\theta_j)=-2\sin\left(\frac{\theta_j-\theta_i}{2}\right)\cdot \frac \kappa N\sum_{l=1}^N \cos\left(\theta_l-\frac{\theta_i+\theta_j}{2}\right).
\end{equation}
Since 
\[ \frac 1N\sum_{l=1}^N\cos(\theta_l-(\theta_i+\theta_j)/2)\le 1, \]
we have, on $[T,\infty)$,
\[
m(\ddot\theta_i-\ddot\theta_j)+(\dot\theta_i-\dot\theta_j)+\kappa (\theta_i-\theta_j)>0\quad\mathrm{whenever~}\theta_i-\theta_j>0.
\]
Similarly, on $[T,\infty)$, we have
\[
m(\ddot\theta_i-\ddot\theta_j)+(\dot\theta_i-\dot\theta_j)+\kappa (\theta_i-\theta_j)<0\quad\mathrm{whenever~}\theta_i-\theta_j<0.
\]
If it were true that $m\kappa\le \frac 14$, then the hypotheses of third statement of Lemma \ref{lem:sturm-picone} are satisfied with 
\[ a=m, \quad b=1, \quad c=\kappa, \quad I=[T,\infty), \quad y=\theta_i -\theta_j, \]
and the third statement of Lemma \ref{lem:sturm-picone} contradicts the existence of two distinct collision times after time $T$. Therefore we have
\[ m\kappa >\frac 14. \]
If it were true that
\[
m\kappa \lim_{t\to\infty}\frac 1N\sum_{l=1}^N\cos(\theta_l(t)-\theta_i(t))<\frac 14,
\]
then, since $\theta_i(t)-\theta_j(t)\to 0$ as $t\to\infty$, we would have
\[
m\kappa \lim_{t\to\infty}\frac 1N\sum_{l=1}^N\cos\left(\theta_l(t)-\frac{\theta_i(t)+\theta_j(t)}2\right)<\frac 14,
\]
and there would be a time $T'>T$ such that
\[
\frac{\kappa}{N}\sum_{l=1}^N \cos\left(\theta_l(t)-\frac{\theta_i(t)+\theta_j(t)}{2}\right)\le \frac{1}{4m}, \quad \forall~t\ge T'.
\]
Thus, it follows from \eqref{eq:2ndODE-diff} that on the interval $[T',\infty)$, we have
\[
m(\ddot\theta_i-\ddot\theta_j)+(\dot\theta_i-\dot\theta_j)+\frac{1}{4m} (\theta_i-\theta_j)>0,\quad \mathrm{whenever~}\theta_i-\theta_j>0,
\]
and
\[
m(\ddot\theta_i-\ddot\theta_j)+(\dot\theta_i-\dot\theta_j)+\frac{1}{4m} (\theta_i-\theta_j)<0,\quad \mathrm{whenever~}\theta_i-\theta_j<0,
\]
so the hypotheses of Lemma \ref{lem:sturm-picone} (3) are satisfied with $a=m$, $b=1$, $c=\frac 1{4m}$, $I=[T',\infty)$, and $y=\theta_i -\theta_j$, and Lemma \ref{lem:sturm-picone} (3) contradicts the existence of two distinct collision times after time $T'$. Therefore 
\[ m\kappa \lim_{t\to\infty}\frac 1N\sum_{l=1}^N\cos(\theta_l(t)-\theta_i(t))\ge \frac 14. \]
\end{proof}

Theorem \ref{thm:finite-collision}, in conjunction with Theorem \ref{L4.4}, can be used to prove Theorem \ref{thm:n=32nd}.
\begin{proof}[Proof of Theorem \ref{thm:n=32nd}]
Recall that we are given $N=3$ and parameters that satisfy
\[
\xi(m,\kappa, \mathcal{V},\Omega^0,\infty)=m\mathcal{D}(\mathcal{V})+2m\kappa +\frac{\mathcal{D}(\mathcal{V})}{2\kappa}
<\frac{1}{8}\sqrt{\frac{1}{6}(69 - 11\sqrt{33})},
\]
and that $\Theta$ is a solution to \eqref{A-1}. We are to show that $\Theta$ exhibits asymptotic phase-locking. By Theorem \ref{thm:finite-collision}(1), it is enough to show that $\Theta$ exhibits finiteness of collisions. Suppose for the sake of contradiction that $\Theta$ does not exhibit finiteness of collisions, i.e., a pair of oscillators, say $\theta_1$ and $\theta_2$, collide infinitely often. Since the collision times are discrete, the collision times between $\theta_1$ and $\theta_2$ are unbounded.

Now, we invoke Theorem \ref{L4.4} with $\lambda=\frac 23$ and $\ell=2\cos^{-1}\frac{1+\sqrt{33}}{8}$ (this is the argument maximum of $f_\lambda$). We then have that
\[
2\xi(m,\kappa,\mathcal{V},\Omega^0,\infty)<\frac{1}{4}\sqrt{\frac{1}{6}(69 - 11\sqrt{33})}=f_\lambda(\ell).
\]
By continuity, we may choose $\eta>0$ large enough so that
\[
2\xi(m,\kappa,\mathcal{V},\Omega^0,\eta)<f_\lambda(\ell).
\]
Since the collision times of $\theta_1$ and $\theta_2$ are unbounded, there is a collision time $t_1\ge \eta m$. Take $\mathcal{A}=\{1,2\}$. Then, up to modulo $2\pi$ translations, the hypotheses of Theorem \ref{L4.4} are satisfied, since $\mathcal{D}(\{\theta_1(t_1),\theta_2(t_1)\})=0<\ell$. The extra hypothesis \eqref{eq:xi-partial-B} is also satisfied with $\mathcal{B}=\{1,2,3\}$. Therefore by statement (3) of Theorem \ref{L4.4}, $\Theta$ exhibits asymptotic phase-locking. But, since $m\kappa<\frac 14$, it follows from Theorem \ref{thm:finite-collision}(2) that $\Theta$ must exhibit finiteness of collisions, a contradiction. This contradiction completes the proof.
\end{proof}

\vspace{.2cm}
\noindent {\bf Data availability} The data are available from the corresponding author upon reasonable request.

\vspace{.2cm}
\noindent {\bf Conflict of interest} The authors declare that they have no conflict of interest.

%%%%%%%%%%%%%%%%%%%%%%%%%%%%%
%
%
%		Reference
%
%
%%%%%%%%%%%%%%%%%%%%%%%%%%%%%

\end{document}